\documentclass[11pt]{article}
\usepackage{mathrsfs}  
\usepackage{amsthm}
\usepackage{thmtools}
\usepackage{amsmath}
\usepackage{amssymb}
\usepackage{bm}
\usepackage{color}
\usepackage{natbib}
\usepackage{mathtools}
\usepackage{xcolor}
\mathtoolsset{showonlyrefs}

\usepackage{hyperref}
\usepackage{cleveref}
\usepackage{algorithm}
\usepackage{algpseudocode}

\newcommand{\1}{\text{\usefont{U}{bbold}{m}{n}1}}
\usepackage[margin=1.25in]{geometry}

\crefname{assumption}{assumption}{assumptions}
\Crefname{assumption}{Assumption}{Assumptions}

\theoremstyle{plain}

\newtheorem{lemma}{Lemma}

\theoremstyle{definition}
\newtheorem{definition}{Definition}

\newtheorem{example}{Example}

\usepackage{notation}

\theoremstyle{plain}
\newtheorem{assumption}{Assumption}

\theoremstyle{plain}
\newtheorem{remark}{Remark}

\title{Semiparametric inference on identification sets in choice modeling}

\usepackage{authblk}

\begin{document}

\author[* 1, 2]{Antoine Scheid}
\author[* 3]{Jia Wan}

\author[4]{Guy Aridor}
\author[2,5]{Nathan Kallus}
\author[2]{Aur\'elien Bibaut}

\affil[1]{Ecole polytechnique}
\affil[2]{Netflix}
\affil[3]{MIT}
\affil[4]{Northwestern University, Kellogg School of Management}
\affil[5]{Cornell University and Cornell Tech}
\affil[*]{Equal contribution, alphabetical order}

\maketitle

\begin{abstract}
    In a discrete choice model, choice probabilities observed for a finite collection of choice sets may not identify a counterfactual choice probability under an unobserved choice set. We represent this counterfactual probability as a linear functional of a mixing distribution. Because the target is a functional of a distribution whose support is not restricted to a finite set, the parameter space is infinite-dimensional, while the data impose only finitely many moment restrictions. Therefore, observed choice probabilities need not point identify such a target. The identified set is defined as the set of target values compatible with observed choice probabilities. Rather than imposing conditions to ensure point identification, we characterize the identified set, and conduct inference on its lower and upper endpoints. We represent each endpoint as the value of a linear program over probability measures, and give conditions to obtain pathwise differentiability of the identification bounds. As a consequence, we are able to prove asymptotic normality of plug-in endpoint estimators. Finally, we provide an Expectation-Maximization-like algorithm for certifying membership of candidate values in the identified set and establish local convergence guarantees.
\end{abstract}

\section{Introduction}

Discrete choice models quantify how demand changes when available alternatives
change. Transportation agencies use choice models to evaluate new modes of
travel \citep{ben1985discrete}, retailers use them to evaluate product
assortments \citep{guadagni1983logit}, and media platforms use them to evaluate
catalogs and recommendations \citep{chou2024estimating}. Each application can
require choice probabilities under a choice set that is absent from the data
used to fit the model. We refer to such a set as a \emph{counterfactual choice
set}. A random utility model (RUM) imposes a rationality restriction on the collection of choice probabilities across choice sets by requiring the choice probabilities to be generated by the maximization of latent utilities. In statistical terms, a RUM is a semiparametric model, since rationality restricts the choice probabilities, while the latent utility distribution remains infinite-dimensional. The multinomial logit model (MNL) is a simple RUM in which each alternative’s utility is the sum of a systematic component and an independent type-I extreme-value shock. Its pairwise choice and ranking counterparts are commonly known as the Bradley–Terry \citep{bradley1952rank} and Plackett–Luce models \citep{plackett1975analysis,luce1959individual}, respectively. Beyond its modeling simplicity, a celebrated empirical application of MNLs is the extremely accurate transportation demand substitution prediction induced by the introduction of the BART in the Bay Area \citep{mcfadden1974measurement}. Yet, the MNL model implies independence of irrelevant alternatives (IIA)--type substitution patterns that may be implausible in some settings \citep{arrow2012social, ray1973independence}. In the well-known red bus/blue bus example \citep{train2009discrete}, a consumer initially chooses equally often between a car and a blue bus. Introducing a red bus that is otherwise identical to the blue bus should primarily split the original bus demand. Instead, multinomial logit assigns equal probability to all three alternatives, reducing the car share from one half to one third and increasing total bus demand from one half to two thirds. 

A mixed MNL model represents choice probabilities as mixtures of MNL choice
probabilities \citep{revelt1998mixed,mcfadden2000mixed}. Each component MNL
model is indexed by what we call a \emph{preference type}, which determines the
systematic utility assigned to each alternative and hence the component's MNL
choice probabilities. For any choice set, the mixed MNL probability of choosing
an alternative is the weighted average, across preference types, of the
corresponding component MNL probabilities, with weights specified by a mixing
measure. Under a population interpretation, the mixture describes a population
of MNL decision makers with heterogeneous utilities. For repeated choices by a
single decision maker, the components may instead index latent states, such as
intent or mood, under which the same alternatives receive different systematic
utilities. Mixing across components can therefore generate substitution
patterns beyond those of a single MNL model. Under regularity conditions, for any RUM, and any approximation tolerance, \citet{mcfadden2000mixed} construct a mixed MNL model whose choice probabilities approximate those of the RUM uniformly over the domain of choices.

A task of interest in discrete choice modeling is to infer population level choice probabilities for agents facing \emph{counterfactual choice sets}. For example, in the context of an online streaming platform, it may be of interest to perform inference on the probability that a user watches a given title if the platform adds a specified set of titles with known characteristics that are not currently available on the service \citep[see,][]{zielnicki2025value}. If the analyst imposes no restrictions on substitution patterns beyond those implied by rationality, the observed data generally do not point-identify choice probabilities for counterfactual choice sets. One way to understand this is to consider a nonparametric mixed MNL \citep[see][]{mcfadden2000mixed}, in which population heterogeneity is described by an unknown distribution of preferences. In this representation, rationality alone leaves the mixing distribution unrestricted and potentially with an infinite support, while choice probabilities from finitely many observed choice sets impose only a finite number of identifying restrictions on that distribution. Consequently, the observed data distribution is compatible with multiple mixing distributions that satisfy the identifying restrictions but imply different choice probabilities under a counterfactual choice set. An \emph{identified set} contains all counterfactual choice probabilities compatible with the observed data generating process. Counterfactual choice probabilities are therefore set-identified rather than point-identified. In this paper, we conduct inference on the lower and upper bounds of these identified sets of choice probabilities under counterfactual choice sets.

Inference for identification bounds is a well-studied problem in statistics. In particular, \citep{manski1989anatomy,manski2003partial} formalize it as an inferential goal. Using asymptotically normal estimators of the endpoints, \citet{imbens2004confidence} construct Wald-type confidence intervals for the identified set, rather than seeking simultaneous coverage of the entire identified interval. Such problems arise naturally for a variety of targets in a variety of settings. For example, in instrumental-variable models in which the instrument, treatment, and outcome are binary, the average treatment effect is generally an interval rather than point-identified \citep{balke1997bounds}. In our setting, the observed choice probabilities are finitely many linear observations of an unrestricted distribution of preferences, while a counterfactual choice probability is another linear functional of that distribution. Previous works on linear inverse problems give conditions under which a linear functional of an infinite-dimensional parameter is point-identified, even when the underlying infinite-dimensional parameter is not \citep{evans2002inverse,severini2006some,severini2012efficiency}. We do not impose these point identification conditions, and instead conduct inference on the identification bounds of the counterfactual functional. These bounds are values of linear programs over probability distributions. This linear-programming formulation parallels \citet{benmichael2025partial}, who studies finite-dimensional linear programs conditional on covariates. In contrast, we consider optimization problems over an unrestricted probability distribution on a potentially continuous preference space subject to finitely many choice-probability restrictions. It is of interest to note that in some linear inverse problems, the identified set for a linear functional can be unbounded; for example, \citet{freyberger2015identification} show that, in many cases, an unidentified linear functional in a nonparametric instrumental variable model can take any real value. In our setting though, it is in general not the case.

For a fixed finite collection of observed choice sets, the lower and upper identification bounds are functions of the observed choice probabilities and take values in [0,1]. Each bound is the optimal value of a linear program over preference distributions that reproduce these probabilities. We establish conditions under which strong duality holds for the optimization problem. Under strong duality, the upper bound is the infimum and the lower bound is the supremum of affine functions of observed choice probabilities. \citet{hirano2012impossibility} show that functionals defined by minima or maxima of smooth functionals of the data-generating process are non-differentiable \citep{pfanzagl1982contributions} at ties and therefore admit no regular estimator. To tackle this issue, \citet{benmichael2025partial} imposes a margin condition on the difference between the optimal value and the closest nonoptimal basic feasible value and, for a first-order expansion independent of the chosen basis, requires either a unique optimal basis or a nondegenerate primal optimum that yields a unique dual solution. \citet{jordan2026data} assume strong duality and pathwise differentiability of the target parameter, 
and require a unique nondegenerate optimum with a locally stable basis. In this work we isolate a set of sufficient conditions for regularity of the targets. Under our conditions, the optimum of the dual problem is unique and constant in a neighborhood (for the topology from \Cref{definition:neighborhood_mnpn}) of the optimum. Local linearity yields an asymptotically normal plug-in estimator and a consistent variance estimator.

Approximating the preference space by a finite grid produces a finite-dimensional optimization program, but can narrow the identified interval by lowering its upper endpoint or raising its lower endpoint. \citet{benmichael2025partial} computes bounds from finite-dimensional conditional linear programs using solver output rather than enumerating all vertices of each feasible set, whereas \citet{kalouptsidi2026counterfactual} compute counterfactual endpoints by constrained nonlinear optimization, and construct confidence sets by inverting tests based on the distance between estimated moments and the set of model-implied moments. We show that every value in a nonempty identified interval is attained by a preference distribution supported on at most $r+2$ preference values, where $r$ is the number of restrictions imposed by the observed choice probabilities. We define a membership criterion that determines whether a candidate counterfactual choice probability can be generated by a preference distribution that also reproduces the observed choice probabilities, and express it as the minimum KL divergence between two subsets of a finite-dimensional probability simplex. The first set contains probability vectors whose marginals equal the observed choice probabilities and a candidate counterfactual choice probability. The second set contains probability vectors generated by preference distributions with a fixed number of support points. Under realizability and compactness conditions, the identified interval consists exactly of the candidate values for which this minimum equals zero. For each candidate value, we propose an expectation-maximization (EM)-inspired algorithm \citep{dempster1977maximum} that alternates KL projections between these two sets. When a candidate value admits a representation satisfying our local regularity conditions, the resulting KL divergence converges to zero from a set of initializations with positive probability.

\subsection*{Related literature}

Our paper contributes to several literatures in economics and statistics.

\paragraph{Identification in Multinomial Choice Models.} Our paper contributes to a growing literature in industrial organization that studies identification problems in the context of a widely used choice model: the mixed multinomial logit model. The application of this model in economics dates back to \cite{mcfadden1972conditional}, largely since it can capture arbitrary substitution patterns implied by random utility maximization \citep{mcfadden2000mixed}. Rather than seek point identification of the full mixing distribution, we leave it unrestricted and derive sharp identification bounds on its linear functionals under a generic measurable choice kernel.

Within this literature, the closest papers are \cite{fox2011identifying, fox2012random, fox2016nonparametric} who study identification and estimation of the distribution of consumer heterogeneity in multinomial choice models. Their work focuses on settings where there is variation in the choice sets faced by different consumers, and establishes point identification of the heterogeneity distribution under relatively strong assumptions, including that the distribution of heterogeneity can be represented by a finite set of consumer types. Classical results in statistics establish identifiability of finite mixtures for specific families \citep{teicher1963identifiability}. Our framework does not
impose finite support or component-family conditions sufficient for such
identification. Relatedly \citet{tebaldi2023nonparametric} exploit quasilinear utility and finitely many price vectors to reduce the identification problem to a finite-dimensional linear program over finitely many types. More broadly, \cite{berry2014identification, berry2016identification, berry2024nonparametric} provide conditions for nonparametric point identification of differentiated-products demand systems. Building on the point-identification conditions of \citet{berry2014identification}, \citet{compiani2022market} develops a nonparametric estimator of structural demand functions and associated market counterfactuals. Similarly, \cite{briesch2010nonparametric, raval2017semiparametric} propose estimators that impose structure on heterogeneity in situations where the target may not be nonparametrically point identified. In an assortment choice setting, \citet{kallus2016revealed} estimate heterogeneous customer preferences from the assortment offered to each customer and the single alternative selected by imposing a low-rank structure on the utility matrix \citep[see][for a general treatment of low-rank models]{udell2016generalized}. In our context, the conditions for point identification are not satisfied since we observe choices from a finite set of choice sets and allow the heterogeneity distribution to be infinite-dimensional. Finally, \citet{pakes2024moment} sharply characterize the identified set for the covariate index in a two-period panel multinomial choice model using conditional moment inequalities, and \citet{athey2025identification} identify the average treatment effect on the treated in a nonparametric panel model without identifying its latent unit and time components.

\paragraph{Partial identification and inference.} Finitely many observed choice probabilities need not identify an
unrestricted distribution of preferences and, consequently, need not identify
a linear functional of this distribution. Following the partial-identification
framework introduced in \citet{manski1989anatomy}, we conduct inference on the identified set. Subsequent work ranges from bootstrap confidence intervals for individual bound
endpoints \citep{manski1992alternative}, and joint asymptotic inference for
plug-in endpoint estimators \citep{horowitz2000nonparametric} to confidence intervals for interval-identified scalar parameters \citep{imbens2004confidence}, and confidence regions for identified sets
characterized by criterion functions and moment restrictions \citep{chernozhukov2007estimation}. More recently, \citet{kaido2016dual} develops Wald-type inference for compact convex
identified sets by representing each set through maxima of linear
functionals, and \citet{mbakop2023identification} characterizes the identified sets for common parameters in semiparametric panel choice models by generating conditional moment inequalities. \citet{manski2003partial} provides a systematic treatment of
partial identification of probability distributions, and \citet{molinari2020microeconometrics} reviews identification and inference in
partially identified microeconometric models.

\paragraph{Dynamic discrete choice and inverse reinforcement learning.} Our formulation can accommodate likelihoods induced by sequential decision problems. In the canonical bus engine replacement problem, \citet{rust1987optimal} represents replacement decisions
as the optimal policy of a controlled Markov process and estimates the underlying cost and transition parameters as the solution of a Bellman equation. Structural dynamic discrete-choice models are closely connected to maximum-entropy inverse reinforcement learning \citep{ziebart2010modeling}. Under additive payoff shocks that are i.i.d. type I extreme value across actions and time, optimal conditional choice probabilities have the same softmax representation as entropy-regularized policies \citep[see][the paper recovers the reward function by sequentially estimating the policy, Q-function, and reward with deep learning]{geng2020deep}. Recent work by \citet{vanderlaan2025inverse} exploits this connection to recover a normalized reward using classification followed by Bellman regressions.

\paragraph{Measuring Substitution Patterns.} Finally, we contribute to the growing literature on measuring substitution patterns in the ``attention economy``, where the scarce resource is user attention and goods are typically costless to consume \citep{brynjolfsson2019using, calvano2021market, yuan2025competing}.  Existing work relies on generated or exogenous good unavailability variation \citep{conlon2013demand, raval2022using, aridor2025measuring, zielnicki2025value}, second-choice data \citep{conlon2023estimating}, or using survey-based methods to elicit choices under hypothetical scenarios \citep{dertwinkel2024defining, bursztyn2025measuring}
to estimate second-choice diversion ratios. In this paper we show how to conduct inference using this type of variation and optimize it to be maximally informative about specific counterfactuals of interest.

\subsection*{Notation}

Throughout this paper, $\NN$ denotes the set of positive integers, $\RR$ denotes the set of real numbers, and $\RR_+$ denotes the set of nonnegative real numbers. For any $m\in\NN$, we write $[m]= \{1, \ldots, m\}$.

For any set $S$, $2^S$ denotes its power set and $\Card(S)$ denotes its cardinality whenever $S$ is finite. If $S$ is a subset of a finite-dimensional Euclidean space, $\overline{S}$ denotes its closure with respect to the Euclidean norm. For any measurable space $(\Xcal,\mathscr{X})$, $\Pcal(\Xcal, \mathscr{X})$ denotes the set of probability measures on $(\Xcal,\mathscr{X})$. For a distribution $P$, we write $\PP_P$, $\EE_P$, and $\Var_P$ for probability, expectation, and variance under $P$, respectively. We write $\Ncal(0,1)$ for the normal distribution with mean zero and variance one. For any $K \in \NN$, we write $\Delta_K= \{ q \in[0,1]^K \colon \sum_{k=1}^K q_k=1 \}$. We interpret each $q \in \Delta_K$ as a probability mass function on $[K]$, represented by a vector. For any measurable space $(E,\Ecal)$ and any $x\in E$, let $\delta_x \in \Pcal(E,\Ecal)$ denote the Dirac measure at $x$.

For a vector or matrix $u$, $u^\top$ denotes its transpose, and $\| u \|$ denotes its Euclidean norm. For a collection $A$ of vectors or functions, $\Span(A)$ denotes its linear span.

By convention, the supremum of the empty set is $-\infty$ and the infimum of the empty set is $+\infty$. We also use the convention $0\log 0=0$. For probability measures $r$ and $q$ on a common finite set $\Xcal$, $\KL(r\| q)$ denotes the Kullback--Leibler divergence from $r$ to $q$, $\KL(r \| q)=\sum_{x \in \Xcal} r(x)\log\frac{r(x)}{q(x)}$, with the conventions $0\log (0 / q) =0$, and $r \log (r / 0) =+\infty$ for $r > 0$.

\section{Setting}

\paragraph{Design, data domain, and nonparametric model.} Let $J, T\geq 1$ be positive integers, $\Acal_T=\{A_1\times \cdots \times A_T \colon
A_t\subseteq [J], A_t\ne \varnothing\}$. For any positive integer $N$, let $a^N_1,\ldots,a^N_N$ be a sequence of tuples such that for any $i \in [N], a^N_i \in \Acal_T$. Let $\Ycal^N = a^N_1 \times \ldots \times a^N_N$. We refer to $[J]$ as the set of \textit{alternatives}, to any subset thereof as a \textit{choice set}, to $(a^N_1,\ldots,a^N_N)$ as the \textit{design}, to $\Ycal^N$ as the \textit{data domain}, and to $N$ as a sample size. We define
\begin{align}
    \Mcal^{np,N} = \Pcal(\Ycal^N, 2^{\Ycal^N}) .
\end{align}
The model $\Mcal^{np,N}$ contains every probability measure on $\Ycal^N$ and therefore imposes no restrictions on the joint distribution of observed choices. We interpret $P^N \in \Mcal^{np,N}$ as a joint distribution of choice vectors under the fixed design $(a_1^N,\ldots,a_N^N)$.

\paragraph{Preferences, choice kernel, and choice model.} Let $(\Bcal, \mathscr{B})$ be a measurable space, $\Qcal_\beta = \Pcal(\Bcal, \mathscr{B})$. We will refer to $\Bcal$ as the \textit{preference set}. Let $\Zcal_i = \Bcal \times a^N_i, \mathscr{Z}_i = \mathscr{B}\times 2^{a^N_i}$, and $(\Zcal^N, \mathscr{Z}^N) = (\Zcal_1\times \ldots \times \Zcal_N, \mathscr{Z}_1\times \ldots \times \mathscr{Z}_N)$.

\begin{definition}\label{definition:kernel_s}[Choice kernel]
    We say that a map $s \colon \Bcal \times \Acal_T \times [J]^T \to [0,1]$ is a choice kernel if it satisfies the following three conditions.
    \begin{enumerate}
        \item $s$ is $(\mathscr{B} \times 2^{\Acal_T} \times 2^{[J]^T})$-measurable.
        \item For any $(\beta, a, y) \in  \Bcal \times \Acal_T  \times [J]^T$, $s(\beta, a, y) = 0$ if $y \not \in a$.
        \item For any $(\beta, a) \in \Bcal \times \Acal_T, \sum_{y \in a} s(\beta, a, y) = 1$. 
    \end{enumerate}
\end{definition}

For any $(\beta,a) \in \Bcal \times\Acal_T$, the map $s(\beta, a, \cdot)$ is a probability mass function supported on $a$.
We interpret $\beta$ as a preference vector and $s(\beta,a,y)$ as the conditional probability of choice $y$ under choice set tuple $a$ and preference value $\beta$. \Cref{example:low_rank_mnl,example:mixture_transformers} provide two specifications of the choice kernel $s$.

For any $Q_\beta \in \Pcal(\Bcal, \mathscr{B})$, let $P^{F,s,N}(Q_\beta)$ be the probability measure over $(\Zcal^N, \mathscr{Z}^N)$ defined for any $Z^N =((B_1,Y^N_1),\ldots,(B_N, Y^N_N)) \in \mathscr{Z}^N$, as
\begin{align}
    P^{F,s,N}(Q_\beta)(Z^N) =  \prod_{i=1}^N \int_{B_i} dQ_\beta(\beta_i)\sum_{y\in Y^N_i} s(\beta_i, a^N_i, y).
\end{align}
Let $\Mcal^{F,s,N} = \{P^{F,s,N}(Q_\beta) \colon Q_\beta \in \Qcal_\beta\}$. We refer to $\Mcal^{F,s,N}$ as the \textit{complete data choice model} induced by the preference space $(\Bcal, \mathscr{B})$, choice kernel $s$, and design $a^N_1,\ldots,a^N_N$. Under $P^{F,s,N}(Q_\beta)$, the preference values
$\beta_1,\ldots,\beta_N$ are independent draws from $Q_\beta$, and the
choices are conditionally independent given these preference values.
We interpret $P^{F,s,N}(Q_\beta)$ as the joint distribution of latent
preferences and observed choices.

\paragraph{Marginalization operator.} Let $M_{\Ycal^N} \colon \Pcal(\Zcal^N, \mathscr{Z}^N) \to \Mcal^{np,N}$ be such that for any $P^{F,N} \in \Pcal(\Zcal^N, \mathscr{Z}^N), (y^N_1,\ldots, y^N_N)\in \Ycal^N$
\begin{align}
    M_{\Ycal^N}(P^{F,N})(y^N_1,\ldots, y^N_N) = \int_{\beta_1 \in \Bcal} \ldots \int_{\beta_N \in \Bcal} d P^{F,N}(\beta_1,y^N_1,\ldots, \beta_N, y^N_N) .
\end{align}
The marginalization operator maps a probability measure on the complete-data space $\Zcal^N$, containing latent preferences and observed choices, to its marginal distribution on the observed-choice space $\Mcal^{np,N}$.

\paragraph{Target Operator.} Let $g \colon \Bcal \to [0,1], \beta \mapsto g(\beta)$ be a function measurable with respect to $\mathscr{B}$. Let $\tilde{\Psi}$ be the operator defined as
\begin{align}\label{equation:target_operator}
    \tilde{\Psi} \colon \Qcal_\beta & \to \RR , \\
    Q_\beta & \mapsto \int_\Bcal g(\beta) d Q_\beta(\beta) .
\end{align}
The value $\tilde{\Psi}(Q_\beta)$ is the expectation of $g(\beta)$ under $Q_\beta$. When $Q_\beta$ represents a population distribution of preferences, we interpret this expectation as a population average. Note that, if $g(\beta) = s(\beta, a^\star, y^\star)$ for some $a^\star, y^\star \in \Acal_T \times [J]^T, a^\star \notin \{a^N_1, \ldots, a^N_N\}\}$, then $\tilde{\Psi}(Q_\beta)$ is the probability of choice vector $y^\star$ under the counterfactual choice set $a^\star$, averaged over $Q_\beta$. Let $\Psi^N$ be defined as
\begin{align}
    \Psi^N \colon \Mcal^{np,N} & \to 2^{\RR} \\
    P^N & \mapsto \left\{\tilde{\Psi}(Q_\beta) \colon Q_\beta \in \Qcal_\beta,  M_{\Ycal^N}(P^{F,s,N}(Q_\beta))= P^N \right\} .
\end{align}
$\Psi^N(P^N)$ contains all the target values compatible with observed-choice distribution $P^N$ under choice kernel $s$. We define the target functional $\Psi^{+,N}$ as
\begin{align}
    \Psi^{+,N} \colon \Mcal^{np, N} & \to \bar{\RR} \\
    P^N & \mapsto \sup \Psi^N(P^N) ,
\end{align}
and note that $\Psi^{+,N}(P^N)$ is the upper endpoint of the \emph{identification set} $\Psi^N(P^N)$.

The mixed multinomial logit model represents choice probabilities as averages of conditional logit probabilities over a latent distribution of preferences \citep{mcfadden2000mixed}. We study this latent-mixture structure, but replace the multinomial logit kernel with any choice kernel $s$ satisfying the conditions in \Cref{definition:kernel_s}, and shift the object of inference. Rather than seeking point identification of the full mixing law as the objective, we leave the latent preference distribution $Q_\beta$ unrestricted and study identification bounds on any linear functional as defined in \eqref{equation:target_operator}. The finitely many observed choice probabilities impose moment restrictions on $Q_\beta$, but need not identify either the full distribution or a counterfactual functional of it.

\begin{figure}[htbp]
  \centering
\includegraphics[width=1.0\textwidth]{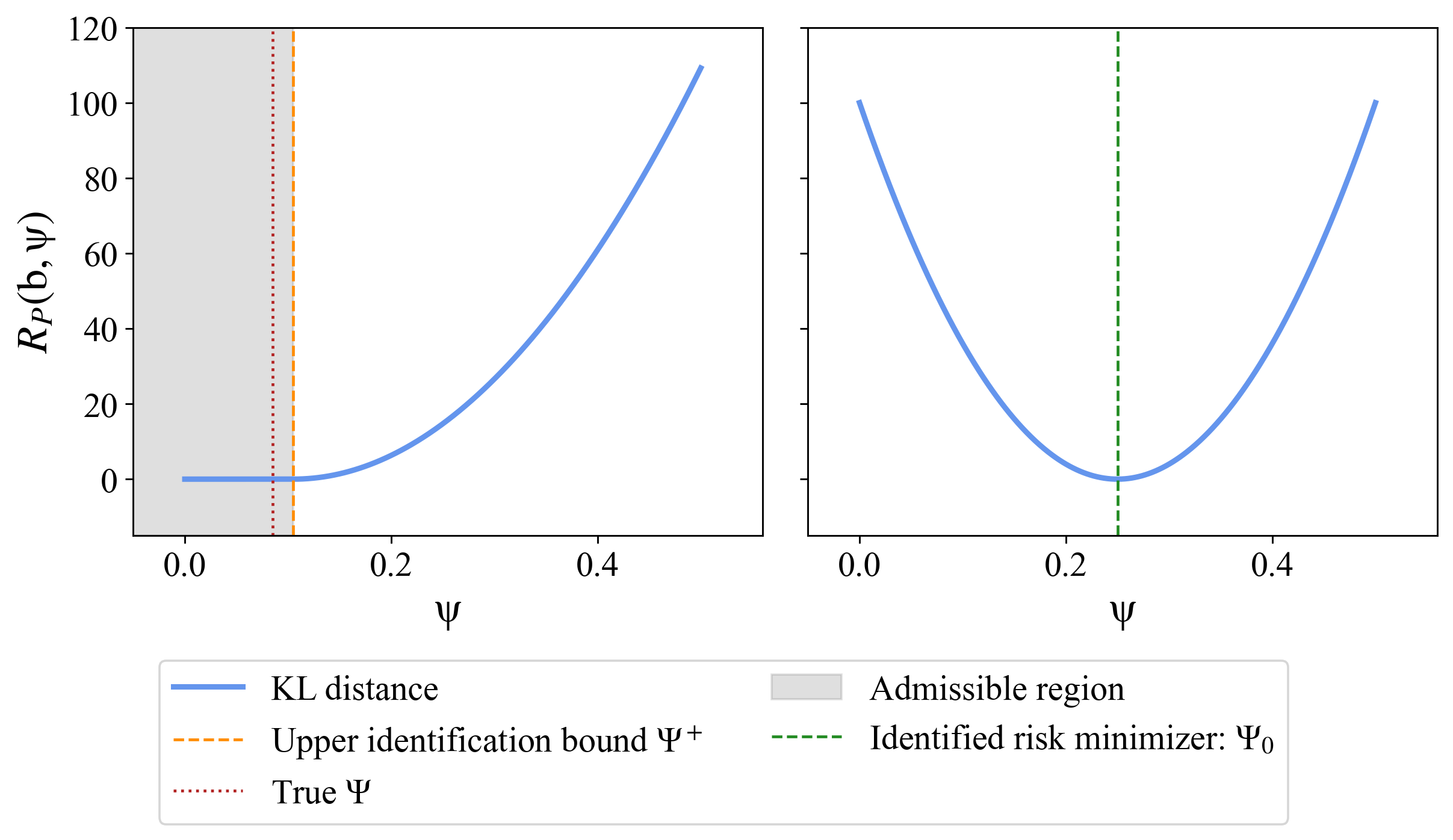}
  \caption{Upper identification bound for a partially identified target. Right: identified functional.}
  \label{figure:illustration1}
\end{figure}

\begin{example}\label{example:low_rank_mnl}[Low-rank multinomial logit]
Let $I, J, T, r \in \NN$ satisfy
$r \leq \min\{I,J\}$. Define $\Mcal_J = \{\tilde{a} \subseteq [J] \colon \tilde{a} \neq \varnothing \}$ and $\Acal_T = \Mcal_J^T$. An element $\tilde{a} = (\tilde{a}_1, \ldots, \tilde{a}_T) \in \Acal_T$ is a $T$-tuple of nonempty subsets of $[J]$. Let $U \in \RR^{I \times J}$ satisfy
$\text{rank} (U) \leq r$, and define the mapping
\begin{align}
s \colon[I] \times \Acal_T \times[J]^T \to [0,1] , \; (i,a,y) \mapsto
\begin{cases}
\displaystyle \prod_{t=1}^T \frac{\exp(U_{i y_t})}
{\sum_{j\in \tilde{a}_t}\exp(U_{ij})}
\quad \text{ if } y \in a , \\
0 \quad \text{otherwise} .
\end{cases}
\end{align}
For any $(i,a) \in [I] \times \Acal_T$, note that
\begin{align}
\sum_{y\in[J]^T} s(i,a,y) = \sum_{y_1 \in \tilde{a}_1} \cdots \sum_{y_T \in \tilde{a}_T} \prod_{t=1}^T \frac{\exp(U_{i y_t})}{\sum_{j\in \tilde{a}_t}\exp(U_{ij})} = \prod_{t=1}^T \sum_{y_t\in \tilde{a}_t} \frac{\exp(U_{i y_t})}{\sum_{j\in \tilde{a}_t} \exp(U_{ij})} = 1 .
\end{align}
Thus, $s(i,a,\cdot)$ is a probability mass function on $[J]^T$ for any $(i,a) \in [I] \times \Acal_T$. Since $\text{rank}(U) \leq r$, there exist $X \in \RR^{I\times r}$, and $Z \in \RR^{J\times r}$ such that $U = X Z^\top$. Let $x_i^\top$ denote row $i$ of $X$, and let $z_j^\top$ denote row $j$ of $Z$. For any $(i,a,y) \in [I] \times \Acal_T\times[J]^T$ such that $y\in a$,
\begin{align}
s(i, a, y) = \prod_{t=1}^T \frac{\exp(x_i^\top z_{y_t})}{\sum_{j\in \tilde{a}_t} \exp(x_i^\top z_j)} .
\end{align}
Let $q=(q_1,\ldots,q_I)\in[0,1]^I$ satisfy
$\sum_{i=1}^I q_i=1$. Define
\begin{align}
p_q \colon\mathcal{A}_T\times[J]^T\to[0,1] , \; (a,y) \mapsto \sum_{i=1}^I q_i s(i,a,y) .
\end{align}
Consequently, for a sequence $(A_t)_{t \in [T]}$ such that for any $t \in [T], A_t \subseteq [J]$, we have that
\begin{align}
p_q(a,y) = \begin{cases}
\displaystyle
\sum_{i=1}^I q_i \prod_{t=1}^T \frac{\exp(x_i^\top z_{y_t})}
{\sum_{j\in A_t}\exp(x_i^\top z_j)},
\; \text{ if } y \in a, \\ 
0 \; \text{ otherwise} .
\end{cases}
\end{align}
We interpret $i$ as a consumer preference, $j$ as an alternative, $A_t$ as a choice set at time $t$, and $U_{ij}$ as systematic utility for preference $i$ and alternative $j$. Conditional on type $i$, choices at distinct times $t \in [T]$ are independent. The vector $q$ gives the probabilities of preference types. Hence, $p_q(a,y)$ is the probability of choice sequence $y$ under choice sets sequence $a$. We refer to
$\text{rank}(U)\leq r$ as the low-rank restriction.
\end{example}

\begin{example}\label{example:mixture_transformers}[Mixture of transformer choice models, see \citet{zielnicki2025value}]
Let $\Ycal = \{0, \ldots, J\}$, where $0$ is the outside option. A choice set is
\begin{align}
a = (A_1, \ldots, A_T, (C_t)_{t \in [T]}),
\end{align}
where $A_t \subseteq \Ycal$, $0 \in A_t$ (independently of the choice set, the outside option is always available), and $C_t \subseteq A_t \setminus \{0\}$ for any $t \in [T]$. We interpret $C_t$ as the set of alternatives recommended at decision point $t$. A preference $\beta \in \Bcal$ consists of alternative representations $B_{\beta,j} \in \RR^d$, recommendation effects $\gamma_{\beta,j} \in \RR$, and mappings
\begin{align}
F_{\beta,t} \colon \Ycal^{t-1} \to \RR^d, \qquad t \in [T].
\end{align}
The vector $F_{\beta,t}(y_1, \ldots, y_{t-1})$ represents the decision maker's preference vector at decision point $t$ as a function of the previous choices. It is modeled by a transformer that is masked so that it does not use future choices. For a feasible choice sequence $y \in A_1 \times \cdots \times A_T$, define for any $j \in A_t \setminus \{0\}$
\begin{align}
u_{\beta,t}(j \mid y,a) = F_{\beta,t}(y_1, \ldots, y_{t-1})^\top B_{\beta,j} + \gamma_{\beta,j}\ones_{\{j \in C_t\}} ,
\end{align}
and $u_{\beta,t}(0 \mid y,a) = 0$ can be used as a normalization of the logits. In that case, the choice kernel is
\begin{align}
s(\beta,a,y) = \begin{cases}
\displaystyle \prod_{t=1}^T \frac{\exp\{u_{\beta,t}(y_t \mid y,a)\}}{\sum_{j \in A_t}\exp\{u_{\beta,t}(j \mid y,a)\}} , \text{ if } y \in A_1 \times \cdots \times A_T , \\
0 , \text{ otherwise}.
\end{cases}
\end{align}
Given a probability measure $Q_\beta$ on $\Bcal$, the population probability of $y$ under $a$ is
\begin{align}
p_{Q_\beta}(a,y) = \int_{\Bcal}s(\beta,a,y) dQ_\beta(\beta) .
\end{align}
In the setting of \citet{zielnicki2025value}, the decision maker is a Netflix user, an alternative is a title, and each decision point corresponds to one day. The outside option means that the user consumes no title that day. The preference $\beta$ is drawn once for a user and remains fixed across the $T$ decision points, while $Q_\beta$ describes preference heterogeneity across users.
\end{example}

\section{Von Mises expansion of the target functional}\label{section:vonmises}

\paragraph{Feasible choices and marginalization operator.} Let $\pairay^N = \{(a,y) \colon a \in \{a^N_1,\ldots,a^N_N\},\, y \in a\}$ denote the set of feasible choice set tuples. Otherwise stated, the elements of $\pairay^N$ are pairs $(a,y)$, where $a$ is a choice set tuple appearing in the design and $y$ is a feasible choice vector under $a$.

For any $a \subseteq [J]^T$, let $\couNA = \{i \in [N] \colon a^N_i =a\}$, $n_a = \Card(\couNA)$, and
\begin{align}
    \Qcal = \left\{Q \colon \pairay^N \to [0,1] \colon Q(a,\cdot) \in \Pcal(a,2^a) \text{ for any } a \in \{a^N_1,..,a^N_N\} \right\} .
\end{align}
Let $Q^{np,N}_i \colon \Mcal^{np, N} \to \Pcal(a^N_i, 2^{a^N_i})$ which maps any $P^N \in \Mcal^{np,N}$ to $Q^{np,N}_i(P^N)$, defined as
\begin{align}
Q^{np,N}_i(P^N) \colon a^N_i & \to [0,1] \\
y & \mapsto \sum_{y^N_1 \in a^N_1} .. \sum_{y^N_{i-1} \in a^N_{i-1}} \sum_{y^N_{i+1} \in a^N_{i+1} } \ldots \sum_{y^N_N \in a^N_N} P^N(y^N_1, \ldots , y^N_{i-1}, y, y^N_{i+1}, \ldots , y^N_N) .
\end{align}
In words, $Q_i^{np,N}(P^N)$ is the $i$-th marginal
of $P^N$. Let $Q^{np,N}_{Y \mid a} \colon \Mcal^{np, N} \to \Qcal$ which to any $P^N \in \Mcal^{np,N}$ maps $Q^{np,N}_{Y \mid a}(P^N)$ defined as
\begin{align}
Q^{np,N}_{Y \mid a}(P^N) \colon \pairay^N & \to [0,1] \\
(a,y) & \mapsto
\frac{1}{n_a} \sum_{i \in \couNA} Q^{np,N}_i(P^N)(y) .
\end{align}
Note that $Q^{np,N}_{Y\mid a}(P^N)(a,\cdot)$ averages the marginal probability measures $Q_i^{np,N}(P^N)$ over indices $i$ satisfying $a_i^N=a$. We interpret this average as a distribution of choices in choice set $a$, under $P^N$.

\begin{definition}\label{definition:correct_fit}
For any $P^N \in \Mcal^{np,N}$, we say that $P^N$ is realizable under $(\Qcal_\beta, s)$ if there exists $Q_\beta \in \Qcal_\beta$ such that $P^N = M_{\Ycal^N}(P^{F,s,N}(Q_\beta))$.
\end{definition}

If $P^N$ is realizable, and $P^N = M_{\Ycal^N}(P^{F,s,N}(Q_\beta))$, we interpret $Q_\beta$ as a probability distribution of preference values whose implied distribution of observed choices equals $P^N$. In particular, it implies that $\Psi^N(P^N)$ is nonempty, and it imposes both common preference distribution $Q_\beta$ across indices and independence of observed choices across indices under the induced marginal probability measure.

When $P^N$ is realizable, any $Q_\beta \in \Qcal_\beta$ satisfying $P^N = M_{\Ycal^N}(P^{F,s,N}(Q_\beta))$ may be interpreted as a distribution of preferences that generates $P^N$ under the choice kernel $s$ and the design $\{a^N_1, \ldots, a^N_N\}$. In particular, for any $(y_1^N, \ldots, y_N^N) \in \Ycal^N$, we have that
\begin{align}
P^N(y_1^N, \ldots, y_N^N) = \prod_{i=1}^N
\int_{\Bcal} s (\beta, a^N_i, y^N_i) dQ_\beta (\beta) ,
\end{align}
which shows that observed choices are independent across any $i \in [N]$ under $P^N$. These marginal distributions may differ because the choice set tuples $a_i^N$ may differ across indices. Moreover, every such $Q_\beta$ satisfies $\tilde{\Psi}(Q_\beta)\in\Psi^N(P^N)$, so realizability implies that $\Psi^N(P^N)$ is nonempty.

\paragraph{Moments.} Let $L^N = \Card(\pairay^N)$ be the number of distinct choice tuples and compatible choices, and let $\{(a(l),y(l))\}_{l \in [L^N]}$ be an arbitrary ordering of $\{(a,y) \colon a \in \{a^N_1,\ldots, a^N_N\}, y \in a\}$. For any $l \in [L^N]$, let
\begin{align}\label{equation:rjerlkj}
    t^N_l \colon \Bcal & \to \RR \\
    \beta & \mapsto s(\beta, a(l), y(l)) .
\end{align}
By definition of $s, t^N_l$ is measurable with respect to $\mathscr{B}$ for any $l \in [L^N]$. We define the mapping
\begin{align}
    \Bar{b}^N \colon \Mcal^{np,N} & \to \RR^{L^N} \\
    P^N & \mapsto (Q^{np,N}_{Y\mid a}(P^N)(a(1),y(1)),\ldots, Q^{np,N}_{Y\mid a}(P^N)(a(L^N),y(L^N)))^\top .
\end{align}
Let $\tilde{L}^N+1$ be the maximal cardinality of a linearly independent subset of $\{1, t^N_1,\dots, t^N_{L^N}\}$. Without loss of generality, we suppose that $1, t^N_1, \dots, t^N_{\tilde{L}^N}$ is such a subset. Since $t^N_{\tilde{L}^N+1}, \ldots$ $,t^N_{L^N}$ lie in the linear span of $1, t^N_1,\ldots,t^N_{\tilde{L}^N}$, the corresponding moment constraints are redundant for every $P^N \in \Mcal^{np,N}$ realizable under $(\Qcal_\beta, s)$; throughout the rest of the paper, we write $\Bar{t}^N(\beta)=(t^N_1(\beta),\ldots,t^N_{L^N}(\beta))^\top$, $t^N(\beta)=(t^N_1(\beta),\ldots,t^N_{\tilde{L}^N}(\beta))^\top$, and $b^N(P^N)=(\Bar{b}^N_1(P^N),\ldots, \Bar{b}^N_{\tilde{L}^N}(P^N))^\top$.

For any $P^N \in \Mcal^{np,N}$, let $\textsc{Primal}(P^N)$ be the linear program
\begin{align}
\sup_{Q_\beta \in \Qcal_\beta} \int g(\beta)d Q_\beta(\beta) \text{ subject to } \int t^N_l(\beta)d Q_\beta (\beta)=b^N_l(P^N) \text{ for any } l =1,\ldots,\tilde{L}^N ,
\end{align}
and define $\tilde{\Psi}^{+,N}(P^N)$ as the value of the linear program $\textsc{Primal}(P^N)$. As in the general conditional linear program framework of \citet{benmichael2025partial}, the identification bound is characterized as the value of a linear optimization problem. Here, $\textsc{Primal}(P^N)$ is a supremum over probability measures on the preference space subject to the choice probability moment restrictions.

\begin{restatable}{theorem}{theoremrepresentation}\label{theorem:representation1}
Let $P^N \in \Mcal^{np,N}$ such that $P^N$ is realizable under $(\Qcal_\beta, s)$. Then, we have
\begin{align}
     \Psi^{+,N}(P^N) = \tilde{\Psi}^{+,N}(P^N) .
\end{align}
\end{restatable}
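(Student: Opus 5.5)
The plan is to show that the two optimization problems have \emph{the same feasible set} of preference distributions. Define
\[
F_1=\{Q_\beta\in\Qcal_\beta \colon M_{\Ycal^N}(P^{F,s,N}(Q_\beta))=P^N\}
\]
and
\[
F_2=\{Q_\beta\in\Qcal_\beta \colon \textstyle\int t^N_l\,dQ_\beta=b^N_l(P^N),\ l\in[\tilde L^N]\}.
\]
Both problems maximize the same objective $\tilde\Psi(Q_\beta)=\int g\,dQ_\beta$, so $F_1=F_2$ gives $\Psi^{+,N}(P^N)=\sup_{F_1}\tilde\Psi=\sup_{F_2}\tilde\Psi=\tilde\Psi^{+,N}(P^N)$. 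Realizability is needed only for the inclusion $F_2\subseteq F_1$.

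\textbf{Step 1 (product formula).} For any $Q_\beta\in\Qcal_\beta$, I will compute $M_{\Ycal^N}(P^{F,s,N}(Q_\beta))$ explicitly. The rectangle formula defining $P^{F,s,N}(Q_\beta)$ determines the measure, and integrating out $\beta_1,\dots,\beta_N$ is a Tonelli/Fubini computation. It should give, for every $(y^N_1,\dots,y^N_N)\in\Ycal^N$,
\[
M_{\Ycal^N}(P^{F,s,N}(Q_\beta))(y^N_1,\dots,y^N_N)=\prod_{i=1}^N\int_\Bcal s(\beta,a^N_i,y^N_i)\,dQ_\beta(\beta).
\]
Writing $l(i,y)$ for the index with $(a(l),y(l))=(a^N_i,y)$, each factor equals $\int t^N_{l(i,y^N_i)}\,dQ_\beta$.

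\textbf{Step 2 ($F_1\subseteq F_2$).} Take $Q_\beta\in F_1$. Sum the product formula over all coordinates except the $i$-th. Since $\sum_{y\in a}s(\beta,a,y)=1$, every other factor sums to one, so
\[
Q^{np,N}_i(P^N)(y)=\int s(\beta,a^N_i,y)\,dQ_\beta(\beta).
\]
This depends on $i$ only through $a^N_i$. Averaging over $i\in\couNA$ therefore gives
\[
Q^{np,N}_{Y\mid a}(P^N)(a,y)=\int s(\beta,a,y)\,dQ_\beta(\beta),
\]
that is, $\int t^N_l\,dQ_\beta=\bar b^N_l(P^N)$ for all $l\in[L^N]$. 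In particular the constraints for $l\le\tilde L^N$ hold, so $Q_\beta\in F_2$.

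\textbf{Step 3 ($F_2\subseteq F_1$).} This is the step where care is needed. By realizability there is $Q^0_\beta$ with $M_{\Ycal^N}(P^{F,s,N}(Q^0_\beta))=P^N$. By Step 2, $\int t^N_l\,dQ^0_\beta=\bar b^N_l(P^N)$ for all $l\in[L^N]$.

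Now fix $l>\tilde L^N$. Linear dependence gives constants with $t^N_l=c_0+\sum_{k\le\tilde L^N}c_kt^N_k$ identically on $\Bcal$. Take $Q_\beta\in F_2$. Integrating this identity against $Q_\beta$ and against $Q^0_\beta$ (both are probability measures, and all $t^N_k$ are bounded) gives
\[
\int t^N_l\,dQ_\beta=c_0+\sum_{k\le\tilde L^N}c_k b^N_k(P^N)=\int t^N_l\,dQ^0_\beta=\bar b^N_l(P^N).
\]
So all $L^N$ moments of $Q_\beta$ agree with those of $Q^0_\beta$. The product formula of Step 1 depends on $Q_\beta$ only through these moments. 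Hence $M_{\Ycal^N}(P^{F,s,N}(Q_\beta))=M_{\Ycal^N}(P^{F,s,N}(Q^0_\beta))=P^N$, and $Q_\beta\in F_1$.

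The main obstacle is Step 3. Dropping the redundant moments is harmless only because realizability pins down their values through a reference $Q^0_\beta$. Without realizability, the reduced constraints could be satisfiable while the full marginalization equality is not. Step 1 is routine measure-theoretic bookkeeping on finite products.
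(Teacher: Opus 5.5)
Your proposal is correct and is essentially the paper's proof. Your Steps 1--2 and the product-formula comparison against the reference $Q^0_\beta$ in Step 3 reproduce \Cref{lemma:equivalence_matching_moments}, and your recovery of the dropped moments via linear dependence and $Q^0_\beta$ is exactly \Cref{lemma:redundant_constraints_realizable}; both arguments then conclude because the two suprema range over the same feasible set. One refinement to your closing remark: in Step 3, realizability does more than pin down the redundant moments. It also identifies $P^N$ with a product law, and even matching all $L^N$ moments does not force $M_{\Ycal^N}(P^{F,s,N}(Q_\beta))=P^N$ without it.
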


\begin{remark}
    \Cref{theorem:representation1} replaces the equality constraint on the observed-choice probability measures with $\tilde{L}^N$ scalar moment equalities. Hence, for any realizable $P^N, \Psi^{+,N}(P^N)$ depends on $P^N$ solely through $b^N(P^N)$.
\end{remark}

\begin{definition}
[Nondegeneracy]\label{definition:nondegeneracy} For any $P^N \in \Mcal^{np,N}$, we say that $P^N$ is nondegenerate if $\textsc{Primal}(P^N)$ admits a maximizer of the form 
$Q = \sum_{l=1}^{\tilde{L}^N+1}Q_l \delta_{\beta_l}$ with $Q_l > 0$ for all $l = 1,\dots, \tilde{L}^N+1$, $\sum_{l=1}^{\tilde{L}^N +1 } Q_l = 1$, $\beta_1,\dots, \beta_{\tilde{L}^N+1} \in \Bcal$ such that
\begin{align}
    M(Q) = \begin{bmatrix}
        \tilde M(Q) \\
        \bm{1}_{1\times (\tilde{L}^N+1)}
    \end{bmatrix} \in \RR^{(\tilde{L}^N + 1) \times (\tilde{L}^N + 1)} ,
\end{align}
is full rank, where $\tilde{M}(Q) \in \RR^{\tilde{L}^N \times (\tilde{L}^N + 1)}$ collects at every row $l=1,\ldots,\tilde{L}^N$ the constraint function $t^N_l$ evaluated at the atoms $\beta_k$, $k=1,\ldots,\tilde{L}^N+1$, that is
\begin{align}
\tilde{M}(Q) = [t^N_l(\beta_k)]_{l=1,\ldots, \tilde{L}^N, k=1,\ldots \tilde{L}^N+1} .
\end{align}
\end{definition}

\begin{remark}
    We note that positivity of $Q_1,\ldots,Q_{\tilde L^N+1}$ and invertibility of $M(Q)$ imply that, for every $b$ in a neighborhood of $b^N(P^N)$, there exists a positive weight vector $w(b)\in\Delta_{\tilde L^N+1}$ such that $b = \sum_{k=1}^{\tilde{L}^N+1} w_k(b) t^N(\beta_k)$. Thus, nearby moment vectors can be represented by changing the weights while keeping the atom locations fixed. The condition in \Cref{definition:nondegeneracy} is similar to the finite-dimensional nondegeneracy condition in \citet{benmichael2025partial}, under which an optimal solution is associated with an invertible constraint submatrix and has strictly positive coordinates.
\end{remark}

\begin{assumption}\label{assumption:g_not_in_span}
$g \notin \mathrm{Span}\{1, t^N_1,\dots, t^N_{\tilde{L}^N}\}$.
\end{assumption}

\begin{remark}
We note that, if \Cref{assumption:g_not_in_span} fails, then there exist
$c_0,c_1,\ldots,c_{\tilde L^N}\in\RR$ such that for any $\beta \in \Bcal$, we have $g(\beta) = c_0 + \sum_{l=1}^{\tilde{L}^N}c_l t^N_l (\beta)$. Consequently, any $Q_\beta$ feasible for $\textsc{Primal}(P^N)$ satisfies
\begin{align}
\tilde{\Psi}(Q_\beta) = \int_\Bcal g(\beta) dQ_\beta(\beta) = \int_\Bcal \{c_0 + \sum_{l=1}^{\tilde{L}^N}c_l t^N_l (\beta)\} dQ_\beta(\beta) = c_0 + \sum_{l=1}^{\tilde L^N} c_l b_l^N(P^N).
\end{align}
Therefore, whenever $P^N$ is realizable, the identification set is the singleton
\begin{align}
\Psi^N(P^N) = \left\{c_0+\sum_{l=1}^{\tilde L^N}c_l b_l^N(P^N) \right\}.
\end{align}
Thus, \Cref{assumption:g_not_in_span} excludes the case in which the
moment restrictions point identify the target, even though they may not
identify $Q_\beta$.
\end{remark}

For any $P^N \in \Mcal^{np,N}$, let $\textsc{Dual}(P^N)$ be the linear program
\begin{align}
\inf_{\lambda\in \RR^{\tilde{L}^N}, \nu\in \RR} \lambda^\top b^N(P^N)+\nu \text{ subject to } \lambda^\top t^N(\beta)+\nu \geq g(\beta) \text{ for any } \beta \in \Bcal .
\end{align}

\begin{restatable}{proposition}{propositionstrongduality}\label{proposition:strong_duality}
Suppose that \Cref{assumption:g_not_in_span} holds and that $P^N$ is nondegenerate in the sense of \Cref{definition:nondegeneracy}. Then $\textsc{Primal}(P^N)$ and $\textsc{Dual}(P^N)$ have the same value, and $\textsc{Dual}(P^N)$ admits a unique minimizer.
\end{restatable}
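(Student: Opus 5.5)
The plan is to prove weak duality directly, get strong duality from a supergradient of the primal value function viewed as a function of the moment vector, and then get uniqueness of the dual minimizer from complementary slackness together with the invertibility of $M(Q)$.

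\textbf{Weak duality.} Write $b = b^N(P^N)$ and $t = t^N$. If $Q_\beta$ is primal-feasible and $(\lambda,\nu)$ is dual-feasible, integrating the constraint $\lambda^\top t(\beta)+\nu \ge g(\beta)$ against $Q_\beta$ gives $\int g\,dQ_\beta \le \lambda^\top b + \nu$. So $\textsc{Primal}(P^N) \le \textsc{Dual}(P^N)$.

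\textbf{Value function and interiority.} Let $C=\{\int t\,dQ_\beta : Q_\beta\in\Qcal_\beta\}\subseteq \RR^{\tilde L^N}$; it is convex because $\Qcal_\beta$ is convex. For $b'\in C$, define $V(b')$ as the supremum of $\int g\,dQ_\beta$ over $Q_\beta$ with $\int t\,dQ_\beta=b'$. The function $V$ is concave on $C$ by mixing feasible measures, and it takes values in $[0,1]$ since $g\in[0,1]$.

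Nondegeneracy gives a maximizer $Q^\ast=\sum_{k} Q_k\delta_{\beta_k}$ with all $Q_k>0$ and $M(Q^\ast)$ invertible. By the remark following \Cref{definition:nondegeneracy}, every $b'$ near $b$ can be written as $\sum_k w_k(b')t(\beta_k)$ with positive weights. Hence $b$ lies in the interior of $C$.

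\textbf{Strong duality via a supergradient.} A finite concave function on a convex set has a supergradient at every interior point. So there is $\lambda\in\RR^{\tilde L^N}$ with
\begin{align}
V(b')\le V(b)+\lambda^\top(b'-b) \quad \text{for all } b'\in C .
\end{align}
Fix $\beta\in\Bcal$ and $\varepsilon\in(0,1)$, and consider $Q_\varepsilon=(1-\varepsilon)Q^\ast+\varepsilon\delta_\beta$. Its moment vector is $b_\varepsilon=(1-\varepsilon)b+\varepsilon t(\beta)\in C$, and its value is $(1-\varepsilon)V(b)+\varepsilon g(\beta)$. Since $Q_\varepsilon$ is feasible for $b_\varepsilon$, this value is at most $V(b_\varepsilon)$, which gives
\begin{align}
(1-\varepsilon)V(b)+\varepsilon g(\beta)\le V(b)+\varepsilon\lambda^\top\bigl(t(\beta)-b\bigr).
\end{align}
Dividing by $\varepsilon$ yields $g(\beta)\le \lambda^\top t(\beta)+\nu$ with $\nu:=V(b)-\lambda^\top b$. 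Thus $(\lambda,\nu)$ is dual-feasible with objective $\lambda^\top b+\nu = V(b)$, which is the primal value. Combined with weak duality, the two values coincide and $(\lambda,\nu)$ attains the dual infimum.

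\textbf{Uniqueness.} Let $(\lambda',\nu')$ be any dual minimizer. Strong duality and optimality of $Q^\ast$ give
\begin{align}
\int\bigl(\lambda'^\top t+\nu'-g\bigr)\,dQ^\ast = \lambda'^\top b+\nu' - V(b) = 0 .
\end{align}
The integrand is nonnegative and every $Q_k$ is strictly positive, so $\lambda'^\top t(\beta_k)+\nu' = g(\beta_k)$ for each $k=1,\ldots,\tilde L^N+1$. In matrix form this reads $M(Q^\ast)^\top(\lambda';\nu') = (g(\beta_k))_k$. Since $M(Q^\ast)$ is invertible, this system has exactly one solution, so the dual minimizer is unique.

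\textbf{Main obstacle and remaining points.} The delicate step is the supergradient argument, specifically checking that $b$ lies in the interior of $C$ relative to $\RR^{\tilde L^N}$. This is where full rank of $M(Q^\ast)$ and positivity of the weights are needed: the map $w\mapsto \sum_k w_k t(\beta_k)$ restricted to the affine hull of the simplex is an affine bijection onto $\RR^{\tilde L^N}$, so the image of the open simplex is open. \Cref{assumption:g_not_in_span} is not needed for the argument itself. It guarantees that the dual constraint is not identically tight, so the problem is not the point-identified case, and I will only note this consistency rather than rely on it.
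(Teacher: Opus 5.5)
Your proof is correct, but it reaches strong duality by a different route from the paper. The paper separates at the level of the bounded joint moment set $\Scal$. It shows that $(b^N(P^N),\tilde{\Psi}^{+,N}(P^N))$ lies on the boundary of $\overline{\Scal}$ and takes a supporting hyperplane with normal $(\kappa,\tau)$. It then has to rule out two cases separately:
\begin{itemize}
\item $\tau=0$, using interiority of $b^N(P^N)$ in $\Tcal$. This is your interiority step, which the paper proves in \Cref{lemma:b_is_interior}.
\item $\tau<0$. This is the only place \Cref{assumption:g_not_in_span} is used, through \Cref{lemma:dualitygap_g_inspan,lemma:b1b2existencedifferentvalues}. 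When $g$ lies in the span, $\Scal$ is contained in a hyperplane, and the supporting hyperplane theorem may return the wrongly oriented normal.
\end{itemize}
You instead take a supergradient of the concave value function $V$ at an interior point of its domain. This amounts to separating the hypograph of $V$ rather than $\Scal$. Because the hypograph is unbounded below and $b$ is interior to $\Tcal$, the resulting hyperplane is automatically non-vertical and correctly oriented. Both the boundary argument and the sign argument therefore disappear.

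This confirms your remark that \Cref{assumption:g_not_in_span} is not needed. If $g=c_0+\sum_l c_l t^N_l$, the proposition still holds, with unique dual solution $(c,c_0)$.

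The price of your route is that it invokes the existence of subgradients of a proper convex function at relative-interior points of its domain (e.g.\ Theorem 23.4 in Rockafellar's \emph{Convex Analysis}, applied to $-V$ extended by $+\infty$ off $\Tcal$). The paper only uses the supporting hyperplane theorem for closed convex sets. Your uniqueness argument coincides with the paper's Step 5.

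A minor simplification: the mixture $Q_\varepsilon$ is unnecessary. Since $\delta_\beta$ is feasible at $t^N(\beta)\in\Tcal$, you get $g(\beta)\le V(t^N(\beta))\le V(b)+\lambda^\top(t^N(\beta)-b)$ directly. This also shows that strong duality and dual attainment use only $b\in\mathrm{int}(\Tcal)$. The optimality of the atomic $Q^\ast$ is needed only for uniqueness.
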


\begin{definition}\label{definition:neighborhood_mnpn}[Neighborhood in $\Mcal^{np,N}$.]
We say that $\Vcal \subset \Mcal^{np,N}$ is a neighborhood of a given $P^N \in \Mcal^{np,N}$ if there exists a neighborhood $v$ of $b^N(P^N) \in \RR^{\tilde{L}^N}$ such that for any $\Bar{P}^{N} \in \Mcal^{np,N}$ such that $b^N(\Bar{P}^{N}) \in v$, we have $\Bar{P}^{N} \in \Vcal$. 
\end{definition}

We note that $\textsc{Primal}(P^N)$ and $\textsc{Dual}(P^N)$ depend on $P^N$ through $b^N(P^N)$. This is why we define neighborhoods of $P^N \in \Mcal^{np,N}$ through neighborhoods of $b^N(P^N) \in \RR^{\tilde{L}^N}$.

\begin{restatable}{theorem}{theorempiecewiselinearity}\label{theorem:piecewise_linearity}
Suppose that \Cref{assumption:g_not_in_span} holds, and that $P^N$ is nondegenerate in the sense of \Cref{definition:nondegeneracy}. Then there exists a neighborhood $\Vcal$ of $P^N$ in $\Mcal^{np,N}$ such that for any $P \in \Vcal$,
\begin{align}
\tilde{\Psi}^{+,N}(P) - \tilde{\Psi}^{+,N}(P^N) = \lambda(P)^\top\{b^N(P)-b^N(P^N)\} ,
\end{align}
where $(\lambda(P),\nu(P))$ is the unique minimizer of $\textsc{Dual}(P)$. Moreover, we have that $\lambda(P) = \lambda(P^N)$.
\end{restatable}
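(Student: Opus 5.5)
The plan is to fix the nondegenerate maximizer $Q^\star=\sum_{k=1}^{\tilde L^N+1}Q_k\delta_{\beta_k}$ from \Cref{definition:nondegeneracy} and the unique dual minimizer $(\lambda^\star,\nu^\star)=(\lambda(P^N),\nu(P^N))$ given by \Cref{proposition:strong_duality}. Then, for nearby $P$, we build a primal-feasible measure on the same atoms and a dual-feasible certificate with the same value. The first step is complementary slackness at $P^N$. Strong duality gives
\begin{align}
\int g\, dQ^\star = \lambda^{\star\top} b^N(P^N)+\nu^\star = \int (\lambda^{\star\top}t^N+\nu^\star)\,dQ^\star .
\end{align}
The integrand $\lambda^{\star\top}t^N+\nu^\star-g$ is nonnegative, and every $Q_k>0$. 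Hence $\lambda^{\star\top}t^N(\beta_k)+\nu^\star=g(\beta_k)$ for every atom $k$.

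The second step uses the remark following \Cref{definition:nondegeneracy}. Because $M(Q^\star)$ is invertible, the map $b\mapsto w(b)=M(Q^\star)^{-1}(b^\top,1)^\top$ is affine and continuous, and $w(b^N(P^N))=(Q_1,\ldots,Q_{\tilde L^N+1})$ has all coordinates positive. So there is a neighborhood $v$ of $b^N(P^N)$ on which $w(b)$ has positive entries summing to one. Let $\Vcal$ be the set of $P$ with $b^N(P)\in v$; it is a neighborhood in the sense of \Cref{definition:neighborhood_mnpn}. For $P\in\Vcal$, the measure $Q_P=\sum_k w_k(b^N(P))\delta_{\beta_k}$ is feasible for $\textsc{Primal}(P)$. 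By the slackness identities, its value is
\begin{align}
\sum_k w_k g(\beta_k)=\sum_k w_k(\lambda^{\star\top}t^N(\beta_k)+\nu^\star)=\lambda^{\star\top}b^N(P)+\nu^\star .
\end{align}
The dual constraints do not depend on $P$, so $(\lambda^\star,\nu^\star)$ is feasible for $\textsc{Dual}(P)$ with objective $\lambda^{\star\top}b^N(P)+\nu^\star$. Weak duality, which is immediate from integrating the dual constraint against any feasible $Q_\beta$, then forces $\tilde\Psi^{+,N}(P)=\lambda^{\star\top}b^N(P)+\nu^\star$. This also shows that $Q_P$ is optimal and $(\lambda^\star,\nu^\star)$ is a dual minimizer for $\textsc{Dual}(P)$. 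Subtracting the same identity at $P^N$ gives the displayed formula with $\lambda^\star$.

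The remaining step is to identify $\lambda^\star$ with \emph{the} unique minimizer $\lambda(P)$ of $\textsc{Dual}(P)$. The cleanest route is to note that $Q_P$ has positive weights on the same atoms with the same invertible matrix $M(Q_P)=M(Q^\star)$. Thus $P$ is itself nondegenerate, and \Cref{proposition:strong_duality} (with \Cref{assumption:g_not_in_span}) applies at $P$, giving uniqueness. Since $(\lambda^\star,\nu^\star)$ is a minimizer, $\lambda(P)=\lambda^\star=\lambda(P^N)$.

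One can also see uniqueness directly. Any dual minimizer at $P$ must satisfy complementary slackness against the primal optimum $Q_P$, so $\lambda^\top t^N(\beta_k)+\nu=g(\beta_k)$ for all $k$. This is the linear system $M(Q^\star)^\top(\lambda,\nu)=(g(\beta_k))_k$, whose solution is unique by invertibility.

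The main obstacle is this nondegeneracy-transfer and uniqueness step: making sure the neighborhood keeps all weights strictly positive, so that complementary slackness pins down the dual on all $\tilde L^N+1$ atoms. The invertibility of $M(Q^\star)$ does all the work there. The fact that the right-hand side of the formula involves $\lambda(P)$ rather than $\lambda(P^N)$ is then a consequence of this identification.
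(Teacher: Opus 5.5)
Your proposal is correct and follows essentially the same route as the paper: complementary slackness at the atoms of the nondegenerate maximizer, reweighting those same atoms through $M(Q^\star)^{-1}$ to stay primal-feasible for nearby moment vectors, a weak-duality sandwich showing $(\lambda(P^N),\nu(P^N))$ stays dual-optimal at $P$, and uniqueness from invertibility of $M(Q^\star)$. The paper handles uniqueness by pointing back to Step 5 of the proof of \Cref{proposition:strong_duality}, which is exactly your direct argument; your remark that $P$ is itself nondegenerate is a clean way to make that step explicit.
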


\begin{remark}
We note that the upper identification bound $\tilde{\Psi}^{+,N}$ is affine in the observed moment vector throughout a neighborhood around $P^N$, with constant slope $\lambda(P^N)$. Each coordinate of $\lambda(P^N)$ measures the sensitivity of the upper bound to the corresponding moment.
\end{remark}

\section{Inference}\label{section:semiparametric_inference}

Let $(P^N)_{N \geq 1}$ be a sequence of probability distributions such that for any positive integer $N, P^N \in \Mcal^{np,N}$. For any $N \geq 1$, let $(Y^N_1, \ldots, Y^N_N) \sim P^N, P_N = \delta_{(Y^N_1, \ldots, Y^N_N)}$. In this section, we study $\tilde{\Psi}^{+,N}(P_N)$ as an estimator of $\Psi^{+,N}(P^N)$. Under \Cref{assumption:g_not_in_span,assumption:stabilizing_design,assumption:Pconverges,assumption:stability_dual_N_grows,assumption:nondegenerate_limiting_variance}, realizability of $P^N$ under $(\Qcal_\beta, s)$ and nondegeneracy of $P^N$, we prove that a rescaled estimation error converges in distribution to a standard normal random variable. For any $a \in \{a^N_1,
\ldots, a^N_N\}$, define
\begin{align}\label{equation:definitionpropensity}
    e^N(a) = n_a / N .
\end{align}
Note that the quantity $e^N(a)$ is the fraction of indices $i \in [N]$ for which the design $a^N_i$ is equal to $a$.

\begin{assumption}\label{assumption:stabilizing_design} There exist a positive integer $N_0$, and $a_1,\ldots,a_M$, such that, for any $N \geq N_0$, $\{a_1^N,\ldots,a_N^N\}=\{a_1,\ldots,a_M\}$. Moreover, for any $a\in\{a_1,\ldots,a_M\}$, there exists $e(a) >0$ such that
\begin{align}
    e^N(a) \to e(a) \text{ as } N \to \infty.
\end{align}
\end{assumption}
We note that $e^N(a) = n_a / N$ converges to a fixed, and strictly positive limit under \Cref{assumption:stabilizing_design}. Consequently, each design frequency stabilizes and $n_a$ grows proportionally to $N$ for $N$ large enough.

Throughout this section, whenever \Cref{assumption:stabilizing_design}
holds, we fix an ordering
$\{(a(l),y(l))\}_{l\in[L]}$ of the common set
$\{(a,y) \colon a\in \{a_1^N,\ldots,a_N^N\},y\in a\} = \{(a,y) \colon a\in\{a_1,\ldots,a_M\},y\in a\}$ for any sufficiently large $N \geq N_0$, where $N_0$ is as in \Cref{assumption:stabilizing_design}. With this ordering, $L^N=L$ and $t_l^N=t_l$ for any $N \geq N_0$. We also choose the nonredundant set
$\{1,t_1,\ldots,t_{\tilde{L}}\}$ consistently, so that $\tilde{L}^N=\tilde{L}$ for any $N \geq N_0$.

\begin{assumption}\label{assumption:Pconverges}
There exist $N_1 \geq N_0$,
$b\in\RR^{\tilde{L}}$, and $\lambda\in\RR^{\tilde{L}}$ such that, for any $N \geq N_1$ such that $\textsc{Dual}(P^N)$ admits a unique minimizer $(\lambda(P^N), \nu(P^N))$, we have
\begin{align}\label{equation:fjsefrewijr}
    b^N(P^N) = b,\quad \lambda(P^N) = \lambda .
\end{align}
\end{assumption}

\begin{assumption}\label{assumption:stability_dual_N_grows}
There exists a positive integer $N_2$, $\eta>0$ such that for any $N \geq N_2$, if $\textsc{Dual}(P^N)$ admits a unique minimizer $(\lambda(P^N), \nu(P^N))$, then for any $\Bar{P}^N \in \Mcal^{np,N}$ satisfying $\|b^N(\Bar{P}^N)-b^N(P^N)\|<\eta$, we have that  $\textsc{Dual}(\Bar{P}^N)$ admits a unique minimizer $(\lambda(\Bar{P}^N),\nu(\Bar{P}^N))$, the von Mises expansion from \Cref{theorem:piecewise_linearity} holds at $\Bar{P}^N$, and
\begin{align}
    \lambda(\Bar{P}^N)=\lambda(P^N) .
\end{align}
\end{assumption}

\begin{assumption}\label{assumption:nondegenerate_limiting_variance}
    We have
\begin{align}
    \sum_{a\in\{a_1,\ldots,a_M\}}
    \frac{1}{e(a)} \left[\sum_{l \colon a(l)=a} b_l\lambda_l^2-\left\{\sum_{l \colon a(l)=a} b_l\lambda_l\right\}^2 \right] >0 ,
\end{align}
where $\lambda, b$ are defined in \eqref{equation:fjsefrewijr}.
\end{assumption}

For any $\bar{P}^N \in\Mcal^{np,N}$ such that $\textsc{Dual}(\bar{P}^N)$ admits a unique
minimizer, let $(\lambda(\bar{P}^N),\nu(\bar{P}^N))$ denote this minimizer. Define
\begin{align}\label{equation:definitionvarianceXNi}
    \sigma_N^2(\bar{P}^N) = \sum_{a\in\{a_1^N,\ldots,a_N^N\}}
    \frac{1}{e^N(a)}
    \left[\sum_{l\colon a(l)=a}b_l^N(\bar{P}^N)\lambda_l(\bar{P}^N)^2 - \left\{ \sum_{l \colon a(l)=a} b_l^N(\bar{P}^N) \lambda_l(\bar{P}^N)\right\}^2\right] ,
\end{align}
and $\sigma_N(\bar{P}^N) = \sqrt{\sigma_N^2(\bar{P}^N)}$.

We note that $\lambda_l(\bar{P}^N)$ is the coefficient on $b_l^N(\bar{P}^N)$ in the local linear representation of $\tilde{\Psi}^{+,N}$. For any $a \in \{a^N_1, \ldots, a^N_N\}$, the expression in square brackets in \eqref{equation:definitionvarianceXNi} is the variance of $\sum_{l\in[\tilde L^N]\colon a(l)=a}\lambda_l(\bar P^N)\1\{Y=y(l)\}$ when $Y\sim Q^{np,N}_{Y\mid a}(\bar{P}^N)(a,\cdot)$. The factor $1/e^N(a)=N/n_a$ accounts for the number $n_a$ of indices assigned choice-set tuple $a$. Thus, $\sigma_N^2(\bar{P}^N)$ is the variance of the local linear representation. When $\bar{P}^N = P_N$, then $\sigma_N^2(P_N)$ is the plug-in estimator of $\sigma_N^2(\bar{P}^N)$.

\begin{restatable}{theorem}{theoremasymptoticnormality}\label{theorem:asymptotic_normality}
Suppose that \Cref{assumption:g_not_in_span,assumption:stabilizing_design,assumption:Pconverges,assumption:stability_dual_N_grows,assumption:nondegenerate_limiting_variance} hold. Suppose that there exists a positive integer $N_3$, such that for any $N \geq N_3$, $P^N$ is realizable under $(\Qcal_\beta,s)$ in the sense of \Cref{definition:correct_fit}, and is nondegenerate in the sense of \Cref{definition:nondegeneracy}. Then
\begin{align}
    \sqrt{N} \frac{\tilde{\Psi}^{+,N}(P_N)-\Psi^{+,N}(P^N)}{\sigma_N(P_N) } \xrightarrow{d} \Ncal(0,1) \text{ as } N \to \infty .
\end{align}
\end{restatable}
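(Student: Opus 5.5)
The plan is to reduce the estimation error to a linear statistic of cellwise empirical frequencies, on an event whose probability tends to one, and then apply a central limit theorem together with Slutsky's lemma.

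\emph{Identifying the target.} Fix $N\geq \max\{N_0,N_1,N_2,N_3\}$. Since $P^N$ is realizable, \Cref{theorem:representation1} gives $\Psi^{+,N}(P^N)=\tilde{\Psi}^{+,N}(P^N)$. Nondegeneracy together with \Cref{assumption:g_not_in_span} lets us invoke \Cref{proposition:strong_duality}, so $\textsc{Dual}(P^N)$ has a unique minimizer. Then \Cref{assumption:Pconverges} yields $b^N(P^N)=b$ and $\lambda(P^N)=\lambda$, both free of $N$.

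\emph{Structure of the empirical moments.} Realizability gives $P^N(y_1,\ldots,y_N)=\prod_i \int s(\beta,a_i^N,y_i)\,dQ_\beta(\beta)$. Hence the $Y_i^N$ are independent, and every $i\in\couNA$ has the same marginal law, namely $p(a,y)=\int s(\beta,a,y)\,dQ_\beta(\beta)$. Consequently $b_l^N(P_N)=n_{a(l)}^{-1}\sum_{i\in \mathcal{I}_{a(l)}}\1\{Y_i^N=y(l)\}$, which is an average of i.i.d.\ bounded indicators with mean $b_l$. By Chebyshev and \Cref{assumption:stabilizing_design} (so $n_a\to\infty$), $b^N(P_N)\to b$ in probability. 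Thus the event $E_N=\{\|b^N(P_N)-b\|<\eta\}$ has probability tending to one.

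\emph{Linearization and CLT.} On $E_N$, \Cref{assumption:stability_dual_N_grows} gives a unique dual minimizer at $P_N$ with $\lambda(P_N)=\lambda$, and the expansion of \Cref{theorem:piecewise_linearity} holds there. This gives
\begin{align}
\sqrt{N}\{\tilde{\Psi}^{+,N}(P_N)-\Psi^{+,N}(P^N)\}=\sum_{a}\frac{1}{\sqrt{e^N(a)}}\,\frac{1}{\sqrt{n_a}}\sum_{i\in\couNA}\Big(\sum_{l\colon a(l)=a}\lambda_l\big[\1\{Y_i^N=y(l)\}-b_l\big]\Big).
\end{align}
For each cell $a$, the inner summands are i.i.d., bounded and centered, with variance $v(a)=\sum_{l\colon a(l)=a} b_l\lambda_l^2-\{\sum_{l\colon a(l)=a} b_l\lambda_l\}^2$, and the summands from different cells are independent. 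The distributions do not depend on $N$ once $N\geq N_0$, so a classical (or Lindeberg, since the summands are bounded) CLT applies in each cell. Combined with independence across cells and $e^N(a)\to e(a)>0$, the display converges in distribution to $\Ncal(0,\sigma^2)$ with $\sigma^2=\sum_a v(a)/e(a)$. By \Cref{assumption:nondegenerate_limiting_variance}, $\sigma^2>0$. Since $\PP(E_N)\to 1$, the discrepancy off $E_N$ is negligible.

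\emph{Variance estimation.} On $E_N$ we have $\lambda(P_N)=\lambda$, so $\sigma_N^2(P_N)$ is a continuous function of $(b^N(P_N),e^N)$. It therefore converges in probability to $\sigma^2>0$, and Slutsky's lemma concludes the proof.

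The main obstacle is making the linearization rigorous. The expansion and the identity $\lambda(P_N)=\lambda$ hold only on the random event $E_N$, so I would write the error as its value on $E_N$ plus a remainder multiplied by $\1_{E_N^c}$, and show that this remainder vanishes in probability. A second point requiring care is that the $\tilde L$ moments used are only a subset of the cell's indicators, so the variance formula must be checked against that subset.
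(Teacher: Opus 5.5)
Your proposal is correct and follows essentially the paper's route: identify the target via \Cref{theorem:representation1}, use Chebyshev to place $b^N(P_N)$ in the $\eta$-ball of \Cref{assumption:stability_dual_N_grows} with probability tending to one, linearize with the fixed slope $\lambda$, apply a CLT, and conclude with Slutsky and a consistent variance estimate. The only difference is minor: you apply a classical CLT cell by cell, normalize by the limiting variance $\sigma^2$, and show $\sigma_N^2(P_N)\to\sigma^2$; this is legitimate because each cell's summand has a law determined by $(b_l,\lambda_l)$ alone, hence fixed in $N$. The paper instead applies the Lindeberg CLT to the whole triangular array $X_i^N$, normalized by $\sigma_N(P^N)$, and then proves $\sigma_N(P_N)/\sigma_N(P^N)\to 1$.
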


We note that \Cref{theorem:asymptotic_normality} establishes asymptotic normality of the plug-in estimator of the upper identification bound, which is achieved through a von Mises expansion of $\tilde{\Psi}^{+,N}$, from \Cref{theorem:piecewise_linearity}.

\begin{assumption}\label{assumption:fixed_dgp}
There exists $Q_\beta \in \Qcal_\beta$, a positive integer $N_4$ such that, for any $N \geq N_4$, we have
\begin{align}
    P^N = M_{\Ycal^N}\left(P^{F,s,N}(Q_\beta)\right) .
\end{align}
\end{assumption}

\begin{restatable}{lemma}{lemmapopulationmomentsdualfixed}\label{lemma:population_moments_dual_fixed}
Suppose that there exists a positive integer $N_5$ such that for any $N \geq N_5$, $P^N$ is nondegenerate in the sense of \Cref{definition:nondegeneracy}, and that \Cref{assumption:g_not_in_span,assumption:stabilizing_design,assumption:fixed_dgp} hold. Then, \Cref{assumption:Pconverges,assumption:stability_dual_N_grows} hold.
\end{restatable}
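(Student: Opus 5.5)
The plan is to show that, under \Cref{assumption:fixed_dgp} and \Cref{assumption:stabilizing_design}, everything in $\textsc{Primal}(P^N)$ and $\textsc{Dual}(P^N)$ stops depending on $N$ once $N$ is large. Both assumptions then reduce to statements about a single fixed linear program, to which \Cref{proposition:strong_duality} and \Cref{theorem:piecewise_linearity} apply.

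\textbf{Step 1: the moment vector is constant.} Set $\bar N=\max\{N_0,N_4,N_5\}$ and fix $N\geq \bar N$.
\begin{itemize}
\item By \Cref{assumption:stabilizing_design}, the design set is $\{a_1,\ldots,a_M\}$, the ordering $\{(a(l),y(l))\}_{l}$ is fixed, and the constraint functions $t^N_l=t_l$ and the index $\tilde L^N=\tilde L$ do not depend on $N$.
\item By \Cref{assumption:fixed_dgp}, the displayed product formula for realizable $P^N$ gives $Q^{np,N}_i(P^N)(y)=\int s(\beta,a^N_i,y)\,dQ_\beta(\beta)$.
\item Averaging over $i\in\couNA$ leaves this value unchanged. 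Hence $b^N_l(P^N)=\int t_l\,dQ_\beta=:b_l$, which is independent of $N$.
\end{itemize}

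\textbf{Step 2: \Cref{assumption:Pconverges} holds.} Since $\textsc{Dual}(P^N)$ depends on $P^N$ only through $b^N(P^N)$ and the fixed functions $t,g$, it is literally the same program for all $N\geq \bar N$.
\begin{itemize}
\item Because $P^N$ is nondegenerate and \Cref{assumption:g_not_in_span} holds, \Cref{proposition:strong_duality} gives a unique minimizer $(\lambda,\nu)$.
\item This minimizer is common to all $N\geq\bar N$, so \Cref{assumption:Pconverges} holds with $N_1=\bar N$.
\end{itemize}

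\textbf{Step 3: \Cref{assumption:stability_dual_N_grows} holds, with an $\eta$ uniform in $N$.} This is the main step. Fix a nondegenerate optimal $Q=\sum_k Q_k\delta_{\beta_k}$ for the common program, with $M(Q)$ invertible.
\begin{itemize}
\item \emph{Complementary slackness at the base point.} Strong duality together with feasibility of $Q$ and of $(\lambda,\nu)$ forces $\lambda^\top t(\beta_k)+\nu=g(\beta_k)$ for every atom $\beta_k$.
\item \emph{Choice of $\eta$.} By invertibility of $M(Q)$ and positivity of the $Q_k$, there is $\eta>0$ such that for every $b'$ with $\|b'-b\|<\eta$ the weights $w(b')=M(Q)^{-1}(b',1)$ are strictly positive. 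This $\eta$ depends only on $Q$, $t$ and $b$, hence not on $N$.
\item \emph{Optimality near $b$.} Take $\bar P^N$ with $b'=b^N(\bar P^N)$. Then $Q'=\sum_k w_k(b')\delta_{\beta_k}$ is feasible for $\textsc{Primal}(\bar P^N)$. Since $(\lambda,\nu)$ remains dual feasible (the constraint does not involve $b'$), complementary slackness gives
\begin{align}
\int g\,dQ'=\sum_k w_k(b')g(\beta_k)=\lambda^\top b'+\nu .
\end{align}
This is the dual objective at $(\lambda,\nu)$, so weak duality makes $Q'$ primal-optimal and $(\lambda,\nu)$ dual-optimal.
\item \emph{Nondegeneracy of $\bar P^N$.} The maximizer $Q'$ has the same atoms, hence the same invertible matrix $M(Q')=M(Q)$, and strictly positive weights. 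So $\bar P^N$ is nondegenerate in the sense of \Cref{definition:nondegeneracy}.
\item \emph{Conclusion.} \Cref{proposition:strong_duality} applied at $\bar P^N$ shows that $\textsc{Dual}(\bar P^N)$ has a unique minimizer, which must be $(\lambda,\nu)$; thus $\lambda(\bar P^N)=\lambda=\lambda(P^N)$. Finally, \Cref{theorem:piecewise_linearity} applies with $\bar P^N$ as the centre point, since it is nondegenerate and \Cref{assumption:g_not_in_span} holds. This gives the von Mises expansion at $\bar P^N$, and \Cref{assumption:stability_dual_N_grows} holds with $N_2=\bar N$.
\end{itemize}

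The main obstacle is the uniformity of $\eta$ in $N$. I handle it by noting that after $\bar N$ the base program, the atoms $\beta_k$, and hence $M(Q)$ are identical for every $N$. A second point needing care is that uniqueness and the expansion must hold at every nearby $\bar P^N$, not only at $P^N$. The "same atoms, reweighted" construction is what transfers nondegeneracy to $\bar P^N$, so that \Cref{proposition:strong_duality} and \Cref{theorem:piecewise_linearity} can be invoked there directly.
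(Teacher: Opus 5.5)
Your proof is correct and follows the same route as the paper. You show that $b^N(P^N)$ is constant under the fixed data-generating process, apply \Cref{proposition:strong_duality} to the resulting single $N$-free dual program, and get stability from the reweighted-atom argument behind \Cref{theorem:piecewise_linearity}. The only difference is that you reprove that theorem's content inline instead of citing it, which makes explicit that $\eta$ depends only on $(Q,t,b)$ and is therefore uniform in $N$. Also, the expansion centred at $P^N$ required by \Cref{assumption:stability_dual_N_grows} already follows from your identity $\tilde{\Psi}^{+,N}(\bar P^N)=\lambda^\top b'+\nu$, so re-centring the theorem at $\bar P^N$ is unnecessary.
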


We note that \Cref{assumption:fixed_dgp} requires the existence of a latent distribution of preferences $Q_\beta$ to generate the observed choice distribution $P^N$, for
any sufficiently large $N$. This condition ensures that $P^N$ is realizable for any sufficiently large $N$. Together with \Cref{assumption:g_not_in_span,assumption:stabilizing_design}
and nondegeneracy of $P^N$, it also implies
\Cref{assumption:Pconverges,assumption:stability_dual_N_grows}, and thus, it is enough to obtain \Cref{theorem:asymptotic_normality}.

\section{Representation of the target set as an MLE}

In this section, for any realizable $P^N \in \Mcal^{np,N}$ satisfying \Cref{assumption:compact_continuity_certify}, we express the target set $\Psi^N(P^N)$ as a nonparametric maximum likelihood estimand (NPMLE) over preference distributions $\Qcal_\beta$. Nonparametric maximum likelihood estimation over an unrestricted mixing
distribution originates in the classical work of \citet{kiefer1956consistency}, who establish consistency under identifiability
and regularity conditions. \citet{laird1978nonparametric} characterizes the NPMLE through a self-consistency property and shows that, under suitable
conditions, it admits a representation supported by a finite mixture.

Here, instead, the mixing distribution need not be point-identified. We consider the negative log-likelihood induced by each $Q_\beta \in \Qcal_\beta$, profile it over values of $\tilde{\Psi}(Q_\beta)$, and show that, when $P^N$ is realizable under $(\Qcal_\beta,s)$, and \Cref{assumption:compact_continuity_certify} holds, the resulting profile maximum likelihood estimand is exactly $\Psi^N(P^N)$. We then give an equivalent finite-support representation of $\Psi^{+,N}(P^N)$, as a supremum over discrete preference distributions $\sum_{k=1}^K q_k\delta_{\beta_k} \in \Qcal_\beta, K \leq \tilde{L}^N +2, q \in \Delta_K$. This representation is close to the convex geometry of mixture likelihoods in \citet{lindsay1983geometry}. Whereas the latter derives finite-support results for NPMLEs, we use this geometry to characterize the identified set as a profile-NPMLE set with bounded finite-support representations.

\paragraph{Maximum Likelihood.} For any $Q_\beta \in \Qcal_\beta, (y^N_1,\ldots, y^N_N) \in \Ycal^N$, let
\begin{align}
    \loglike(P^{F,s,N}(Q_\beta))(y^N_1,\ldots, y^N_N) &= - \log M_{\Ycal^N}(P^{F,s,N}(Q_\beta))(y^N_1,\ldots, y^N_N) .
\end{align}
We refer to $\loglike(P^{F,s,N}(Q_\beta))(y^N_1,\ldots, y^N_N)$ as the negative log-likelihood loss evaluated at $(y^N_1,\ldots,y^N_N)$ under preference distribution $Q_\beta$.

Following empirical process notation, for any $f \colon \Ycal^N \to \RR$ measurable with respect to $2^{\Ycal^N}$ and $P^N \in \Mcal^{np, N}$, let
\begin{align}
    P^N f = \sum_{(y^N_1, \ldots, y^N_N) \in \Ycal^N}f(y^N_1, \ldots, y^N_N) P^N(y^N_1, \ldots, y^N_N) .
\end{align}
For any $P^N \in \Mcal^{np, N}$ and $Q_\beta \in \mathcal{Q}_\beta$, let
\begin{align}
    \risk(P^N, P^{F,s,N}(Q_\beta)) & = P^N \loglike (P^{F,s,N}(Q_\beta)) .
\end{align}
We refer to $\risk(P^N, P^{F,s,N}(Q_\beta))$ as the risk induced by the negative log-likelihood loss under $P^N$. For any $\psi \in [0,1]$, let
\begin{align}
    \profileloglike(P^N, \psi) = \inf_{Q_\beta \in \Qcal_\beta \colon \Tilde{\Psi}(Q_\beta) = \psi} \risk(P^N, P^{F,s,N}(Q_\beta)) .
\end{align}
We refer to $\profileloglike(P^N, \psi)$ as the profile negative log-likelihood risk of $\psi$ under $P^N$ \citep[see][]{murphy2000profile}. Note that $\arginf_\psi \profileloglike(P^N, \psi)$ may be set-valued. For any $P^N \in \Mcal^{np, N}$, let
\begin{align}
    \Psi^{\mle, N}(P^N) = \arginf_{\psi \in [0,1]} \profileloglike(P^N, \psi) .
\end{align}
We refer to $\Psi^{\mle,N}(P^N)$ as the profile maximum likelihood estimand of $\tilde{\Psi}$ under $P^N$. $\Psi^{\mle,N}(P^N)$ is the set of values $\psi\in[0,1]$ for which the profile negative log-likelihood is minimal under $P^N$.

\begin{assumption}[Compactness and continuity]
\label{assumption:compact_continuity_certify}
The set $\Bcal$ is a compact metric space, $\mathscr{B}$ is the Borel $\sigma$-field induced by its metric topology, and the maps
$\beta\mapsto g(\beta)$ and
$\beta\mapsto \bar t_l^N(\beta)$, $l=1,\ldots,L^N$, are continuous on $\Bcal$.
\end{assumption}

\begin{restatable}{theorem}{theoremmatchingmle}\label{theorem:matching_mle}
    Suppose that \Cref{assumption:compact_continuity_certify} holds. Let $P^N \in \Mcal^{np, N}$ such that $P^N$ is realizable under $(\Qcal_\beta, s)$ in the sense of \Cref{definition:correct_fit}. Then
    \begin{align}
    \Psi^{\mle, N}(P^N) = \Psi^N(P^N) .
\end{align}
\end{restatable}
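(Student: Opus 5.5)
Writing $P^Q = M_{\Ycal^N}(P^{F,s,N}(Q))$ for brevity, the plan is to reduce the profile likelihood to a KL/Gibbs inequality. Under realizability the minimal risk equals the entropy of $P^N$ and is attained exactly by those $Q_\beta$ that reproduce $P^N$; compactness is then used to show the profile infimum is attained.

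\emph{Step 1 (Gibbs inequality).} Fix any $Q\in\Qcal_\beta$. Then
\[
\risk(P^N,P^{F,s,N}(Q)) = H(P^N) + \KL(P^N\,\|\,P^Q),
\]
where $H(P^N)=-\sum P^N\log P^N$ is finite because $\Ycal^N$ is finite. Hence $\risk\ge H(P^N)$, with equality if and only if $P^Q=P^N$. By realizability some $Q^0$ attains equality, so
\[
\inf_{\psi}\profileloglike(P^N,\psi)=H(P^N),
\]
and this value is attained at $\psi^0=\tilde\Psi(Q^0)$.

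\emph{Step 2 (the inclusion $\Psi^N(P^N)\subseteq\Psi^{\mle,N}(P^N)$).} If $\psi\in\Psi^N(P^N)$, there is a $Q$ with $\tilde\Psi(Q)=\psi$ and $P^Q=P^N$. Step 1 then gives $\profileloglike(P^N,\psi)=H(P^N)$, which is the minimum, so $\psi\in\Psi^{\mle,N}(P^N)$. No topology is needed here. Note also that $g$ takes values in $[0,1]$, so $\Psi^N(P^N)\subseteq[0,1]$.

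\emph{Step 3 (the reverse inclusion, the main obstacle).} Let $\psi\in\Psi^{\mle,N}(P^N)$, so $\profileloglike(P^N,\psi)=H(P^N)$. This only yields a sequence $Q_m$ with $\tilde\Psi(Q_m)=\psi$ and $\KL(P^N\|P^{Q_m})\to0$; the infimum need not be attained a priori, and this is where Assumption~\ref{assumption:compact_continuity_certify} enters.
\begin{itemize}
\item Since $\Bcal$ is a compact metric space, $\Qcal_\beta$ is weakly compact by Prokhorov's theorem, so a subsequence $Q_m\Rightarrow Q^\ast$.
\item Each $\bar t^N_l$ and $g$ is continuous and bounded, so $\int g\,dQ^\ast=\psi$ and $\int\bar t^N_l\,dQ^\ast=\lim_m\int\bar t^N_l\,dQ_m$ for every $l$.
\item The law $P^Q$ at any point $(y_1,\ldots,y_N)$ is the product $\prod_i\int s(\beta,a_i^N,y_i)\,dQ(\beta)$. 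Each factor is one of the moments $\int\bar t^N_l\,dQ$, so $P^{Q_m}\to P^{Q^\ast}$ pointwise on the finite set $\Ycal^N$.
\item By Pinsker's inequality, $\KL(P^N\|P^{Q_m})\to0$ forces $P^{Q_m}\to P^N$ in total variation. Hence $P^{Q^\ast}=P^N$.
\end{itemize}
Thus $\psi=\tilde\Psi(Q^\ast)\in\Psi^N(P^N)$.

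The point needing care is continuity of the marginalization map $Q\mapsto P^Q$. This holds because it depends on $Q$ only through the finitely many continuous moments $\int\bar t^N_l\,dQ$, which is why Assumption~\ref{assumption:compact_continuity_certify} is imposed on all $L^N$ functions $\bar t^N_l$ and not just on the nonredundant ones.
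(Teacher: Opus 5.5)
Your proof is correct and follows essentially the paper's route. The paper also writes the risk as a constant plus a KL term, though per choice set as $\sum_a n_a\,\KL(\cdot\|\cdot)-C^N(P^N)$ rather than your joint $H(P^N)+\KL(P^N\|P^Q)$; it gets $\Psi^N(P^N)\subseteq\Psi^{\mle,N}(P^N)$ directly, and it gets the reverse inclusion from weak compactness of $\Qcal_\beta$, proving attainment of the profile infimum by lower semicontinuity where you use a Prokhorov--Pinsker sequence argument. Your joint-KL form has the small advantage that $\KL=0$ gives $P^{Q^\ast}=P^N$ without the paper's moment-matching equivalence lemma. One correction to your closing remark: the redundant $\bar t^N_l$ are linear combinations of $1,t^N_1,\ldots,t^N_{\tilde L^N}$, so continuity of the nonredundant functions already implies continuity of all of them.
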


\Cref{theorem:matching_mle} proves that, under realizability and \Cref{assumption:compact_continuity_certify}, we can recast the identified set $\Psi^N(P^N)$ as a profile NPMLE. We further show that the upper endpoint $\Psi^{+,N}(P^N)$ of this set can be written as a supremum of $\sum_{k=1}^K q_k g(\beta_k), q \in \Delta_K$, over all discrete preference distributions $\sum_{k=1}^K q_k\delta_{\beta_k}$, with $K\leq \tilde L^N+2$, that match the moment vector $b^N(P^N)$.

The proof of \Cref{theorem:matching_mle} relies on a finite-dimensional
extension of Tchakaloff's theorem \citep[see][]{tchakaloff1957formules}. Tchakaloff's theorem states that the integral of every polynomial up to a fixed degree against a compactly supported
positive measure equals a nonnegative weighted sum of its values at finitely many points. We use an extension by proving a similar result on the space spanned by $1, t^N_1, \ldots, t^N_{\tilde{L}^N}, g$, instead of polynomials up to a certain degree \citep[see,][Theorem~5.1]{berschneider2012theorem}. It shows that any probability measure on $\Bcal$ can be replaced by a probability measure with finite support that preserves the moments of these functions. The argument is closely related to Carathéodory's theorem \citep{caratheodory1911variabilitatsbereich} but this theorem alone only states that for a map $\phi \colon \Bcal \to \RR^M$, a point already known to belong to $\Conv({\phi(\beta):\beta\in\Bcal})\subset\RR^M$ can be written as a convex combination of at most $M+1$ points of the set $\{\phi(\beta) \colon \beta\in\mathcal B\}$. The Tchakaloff step is precisely the assertion that the barycenter $\int_{\Bcal} \phi(\beta) d\pi(\beta)$ belongs to $\Conv({\phi(\beta) \colon \beta\in\Bcal})$, and hence can be represented by an atomic probability measure matching the same moments.

For any $N \geq 1, P^N \in \Mcal^{np,N}$, define
\begin{align}\label{equation:weoriuwjeioru}
\Ccal^N(P^N) = \left\{ (K,q,\beta) \colon K\leq \tilde L^N+2,\  q\in\Delta_K, \beta\in\Bcal^K,\sum_{k=1}^K q_k t^N(\beta_k)=b^N(P^N) \right\}.
\end{align}
An element $(K,q,\beta)\in\Ccal^N(P^N)$ consists of a number of support atoms $K$, a vector of weights $q \in \Delta_K$, and atom locations $\beta = (\beta_1, \ldots, \beta_K) \in \Bcal^K$, with $K\leq \tilde{L}^N+2$, such that the discrete distribution $\sum_{k=1}^K q_k\delta_{\beta_k}$ satisfies the moment restriction $\sum_{k=1}^K q_k t^N(\beta_k)=b^N(P^N)$. In other words, the set $\Ccal^N(P^N)$ consists of all finite-support representations of preference distributions that match the moment vector $b^N(P^N)$ induced by $P^N$.

\begin{restatable}{theorem}{theoremdefinitionpsiwithsets}\label{theorem:definition_psi_with_sets}
Let $P^N \in \Mcal^{np, N}$ be realizable under
$(\Qcal_\beta, s)$ in the sense of \Cref{definition:correct_fit}. Then
\begin{align}\label{equation:riuweoiru}
\Psi^{+,N}(P^N) = \sup_{ (K,q,\beta) \in \Ccal^N (P^N)} \sum_{k=1}^K q_k g(\beta_k) .
\end{align}
\end{restatable}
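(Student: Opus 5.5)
The plan is to prove the two inequalities in \eqref{equation:riuweoiru} separately, starting from \Cref{theorem:representation1}. Since $P^N$ is realizable, that theorem gives $\Psi^{+,N}(P^N)=\tilde{\Psi}^{+,N}(P^N)$, the value of $\textsc{Primal}(P^N)$. It therefore suffices to show that the supremum of $\int g\,dQ_\beta$ over measures $Q_\beta$ feasible for $\textsc{Primal}(P^N)$ equals the supremum of $\sum_k q_k g(\beta_k)$ over $\Ccal^N(P^N)$. Realizability also ensures that both feasible sets are nonempty, so no convention about $\sup\varnothing$ is needed.

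\emph{Inequality ``$\geq$''.} Take $(K,q,\beta)\in\Ccal^N(P^N)$. The measure $Q=\sum_{k=1}^K q_k\delta_{\beta_k}$ belongs to $\Qcal_\beta$, and $\int t^N\,dQ=\sum_k q_k t^N(\beta_k)=b^N(P^N)$. Hence $Q$ is feasible for $\textsc{Primal}(P^N)$, and its objective value is $\sum_k q_k g(\beta_k)$. Taking the supremum over $\Ccal^N(P^N)$ shows that the right-hand side of \eqref{equation:riuweoiru} is at most $\tilde{\Psi}^{+,N}(P^N)$.

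\emph{Inequality ``$\leq$''.} Take any $Q_\beta$ feasible for $\textsc{Primal}(P^N)$ and set $\phi=(t^N,g)\colon\Bcal\to[0,1]^{M}$ with $M=\tilde L^N+1$. This map is bounded and measurable, so its barycenter $m=\int\phi\,dQ_\beta$ is well defined. The key step is to show that
\begin{align}
m\in\Conv(\phi(\Bcal)) .
\end{align}
Granting this, Carathéodory's theorem writes $m=\sum_{k=1}^K q_k\phi(\beta_k)$ with $K\leq M+1=\tilde L^N+2$ and $q\in\Delta_K$. Reading off the first $\tilde L^N$ coordinates gives $\sum_k q_k t^N(\beta_k)=b^N(P^N)$, so $(K,q,\beta)\in\Ccal^N(P^N)$. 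The last coordinate gives $\sum_k q_k g(\beta_k)=\int g\,dQ_\beta$. Taking the supremum over feasible $Q_\beta$ then yields ``$\leq$''.

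\emph{Main obstacle: the barycenter claim.} Note that \Cref{assumption:compact_continuity_certify} is not assumed here, so the claim cannot rely on compactness or continuity. Instead of invoking Tchakaloff-type results, I would prove it directly by induction on the dimension of the affine hull of $\phi(\Bcal)$, for an arbitrary probability measure $\pi$ and bounded measurable $\phi$.
\begin{itemize}
\item If the dimension is $0$, then $\phi$ is constant and the claim is trivial.
\item Otherwise, suppose for contradiction that $m\notin C=\Conv(\phi(\Bcal))$. Finite-dimensional separation of a point from a convex set (not necessarily closed) gives $u\neq 0$ in the direction space of the affine hull with $u^\top x\leq u^\top m$ for all $x\in C$.
\item Then $u^\top\phi\leq u^\top m$ pointwise, while the two sides have the same integral under $\pi$. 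Hence $u^\top\phi=u^\top m$ $\pi$-almost surely.
\item So $\pi$ is concentrated on the measurable set $\Bcal'=\{\beta\colon u^\top\phi(\beta)=u^\top m\}$, and $\phi(\Bcal')$ lies in an affine subspace of strictly smaller dimension.
\item Applying the induction hypothesis to $\pi$ conditioned on $\Bcal'$, which has the same barycenter $m$, gives $m\in\Conv(\phi(\Bcal'))\subseteq C$. This is a contradiction.
\end{itemize}
The only delicate point is choosing $u$ within the affine hull's direction space, so that the dimension strictly drops. This is guaranteed because $m$ lies in the affine hull of $\phi(\Bcal)$ (integrate the affine equations defining the hull) and the hull is not reduced to a point.
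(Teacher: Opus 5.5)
Your proposal is correct and follows essentially the same route as the paper. In \Cref{lemma:h_in_convex_hull}, the paper proves that the barycenter of $(t^N,g)$ lies in $\mathrm{Conv}(\{(t^N(\beta),g(\beta)) \colon \beta\in\Bcal\})$ by the same induction on the dimension of the affine hull: a supporting hyperplane inside the affine hull, then restriction to the level set on which it is attained. \Cref{lemma:finite_support_realization} then applies Carath\'eodory with $K\leq \tilde L^N+2$, and the converse direction uses the redundancy of the moments indexed by $l>\tilde L^N$.

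The differences are cosmetic. You pass through \Cref{theorem:representation1} rather than using \Cref{lemma:equivalence_matching_moments} directly. You also separate $m$ from the possibly non-closed hull in one step, instead of first placing it in the closure via \Cref{lemma:h_in_closure_hull}.
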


The right-hand side of \eqref{equation:riuweoiru} is a supremum rather than a maximum because, although $K$ is bounded and $q$ ranges over a finite-dimensional simplex, $\beta_1,\ldots,\beta_K$ range over $\Bcal$, and without compactness of $\Bcal$, the set $\Ccal^N(P^N)$ from \eqref{equation:weoriuwjeioru} need not be compact, and attainment is not guaranteed. However, under \Cref{assumption:compact_continuity_certify}, $\Bcal^K$ is compact, thus $\Ccal^N(P^N)$ is compact as well, and the supremum becomes a maximum.

\Cref{theorem:definition_psi_with_sets} proves the existence of an optimal distribution with a finite support. It justifies the optimization procedure from \Cref{section:optimization_procedure}

\section{Target characterization via the EM algorithm}\label{section:optimization_procedure}

In this section, we propose an EM algorithm based procedure that certifies membership of a given target candidate value $\psi$ in the NPMLE set $\Psi^{\mle,N}(P^N)$. Our procedure is inspired by the alternating optimization structure of the EM algorithm \citep{dempster1977maximum} with latent data. In standard EM, the two steps are constructed to increase the observed-data likelihood under a specified latent-variable model. Instead, for a fixed candidate of the target $\psi$, we alternate $\KL$ projections between a data-matching set, and an auxiliary model set. The procedure therefore does not seek a single maximum-likelihood estimate of a point-identified latent model. Instead, it searches for a zero-KL intersection of these two sets, which, by \Cref{proposition:validity_idealized_certify}, certifies that $\psi \in \Psi^{\mle,N}(P^N)$.

\paragraph{Data matching set.} In this paragraph, we introduce what we refer to as a \emph{data-matching set}. This terminology originates in the information geometry literature \citep{amari1995information,amari2016information}. In words, it consists of the set of probability distributions over what we have referred to earlier as the full data-structure  that marginalize to the observed data-distribution. Formally, for any positive integer $N$, and scalar $\psi \in [0,1]$, define for any $l \in [L^N]$
\begin{align}\label{equation:definition_objects_alpha}
    e_l^N &= e^N(a(l)) , \quad  \bar{\alpha}_l^N(P^N) = e_l^N \bar{b}_l^N(P^N) .
\end{align}
We interpret $e_l^N$ as the fraction of individuals under the design of experiment $N$ that are exposed to choice set tuple $a(l)$. We interpret $\bar{b}^N_l(P^N)$ as the probability for a unit in experiment $N$ of choosing alternative $y(l)$ under choice set $a(l)$. We note that for any $l \in [L^N], \bar{\alpha}_l^N(P^N) \geq 0$, and $\sum_{l=1}^{L^N} \bar{\alpha}^N_l(P^N) = 1$. Hence, $(\bar\alpha_l^N(P^N))_{l \in [L^N]}$ is a probability mass function on $[L^N]$, and corresponds to the probability that an experimental unit is assigned to $a(l)$ and chooses $y(l)$. We interpret it as the joint distribution of a design-choice cell. 

For any $P^N \in \Mcal^{np, N}$, and $\psi\in[0,1]$, let $\varpsi \colon [2] \to [0,1], 1 \mapsto \psi, 2 \mapsto 1-\psi$, and define
\begin{align}\label{eq:full_cell_constraint_set}
    \Rcal^{N,K} (P^N, \psi) = \left\{
    r \in \Delta_{K \times L^N \times 2} \colon \sum_{k=1}^K \sum_{m=1}^2 r(k, l, m) = \bar{\alpha}^N_l(P^N) , \sum_{k=1}^K \sum_{l=1}^{L^N} r(k,l, m) = \varpsi(m) \right\}.
\end{align}

In words, $\Rcal^{N,K}(P^N, \psi)$ is the set of probability vectors on $[K] \times [L^N]$ such that for any $r \in \Rcal^{N,K}(P^N)$, the marginal of $r$ with respect to $l \in [L^N]$ is equal to the moments $(\bar{\alpha}^N_l (P^N))_{l \in [L^N]}$, and the marginal of $r$ with respect to $m$ is equal to the moments of $g$.

\paragraph{Model set.} In this paragraph, we introduce what we refer to as an \emph{auxiliary model set}, such that for any $\psi \in [0,1]$, the $\KL$ distance between $\Rcal^{N,K}(P^N, \psi)$ and this set is null if, and only if $\psi \in \Psi^{\mle, N}$ (see \Cref{proposition:validity_idealized_certify}). For any positive integer $K$, probability vector $w \in \Delta_K$, and tuple $\beta  = (\beta_1, \ldots, \beta_K) \in \Bcal^K$, let $\gamma \colon [2] \times \Bcal \to [0,1], (m, \beta) \mapsto \1_{\{1\}}(m) g(\beta) + \1_{\{1\}}(m - 1) (1-g(\beta))$, and define
\begin{align}\label{equation:finite_atom_cell_model}
    q_{w,\beta}^N (k,l, m) & = w_k e_l^N \bar{t}_l^N(\beta_k) \gamma (m, \beta_k) , 
    \quad k\in[K],\ l \in [L^N] , m \in [2] , \\
    \Mcal_K^{N} & = \left\{q_{w,\beta}^N \colon w \in \Delta_K, \beta_1,\dots, \beta_K \in\Bcal \right\} ,
\end{align}
as well as
\begin{align}\label{equation:finite_atom_kl_distance}
    D_K^N(P^N,\psi) = \inf_{w \in \Delta_K, \beta \in \Bcal^K} \inf_{r \in \Rcal^{N,K}(P^N, \psi)} \KL(r \| q_{w, \beta}^N ) .
\end{align}
In words, $D_K^N(P^N,\psi)$ is the infimal $\KL$ distance between an element of the set $\Rcal^{N,K}(P^N, \psi)$, and an element of the set $\Mcal_K^N$. Kullback--Leibler divergence is nonnegative. If the two infima in \eqref{equation:finite_atom_kl_distance} are attained, then $D_K^N (P^N, \psi) = 0$ if, and only if $\Rcal^{N,K}(P^N,\psi)$ and $\Mcal_K^N$ contain a common probability mass function.

\begin{restatable}{proposition}{propositionvalidityidealizedcertify}
\label{proposition:validity_idealized_certify}
Let $P^N\in\Mcal^{np,N}$ be realizable under $(\Qcal_\beta,s)$. Under
\Cref{assumption:compact_continuity_certify}, for any $\psi\in[0,1]$, we have that there exists $K \leq \tilde{L}^N+2$ such that $D_K^N(P^N,\psi)=0$ if, and only if $\psi \in \Psi^{\mle, N}(P^N)$.
\end{restatable}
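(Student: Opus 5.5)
The plan is to combine \Cref{theorem:matching_mle} with a direct analysis of when $\Rcal^{N,K}(P^N,\psi)$ and $\Mcal^N_K$ intersect. Since $P^N$ is realizable and \Cref{assumption:compact_continuity_certify} holds, \Cref{theorem:matching_mle} gives $\Psi^{\mle,N}(P^N)=\Psi^N(P^N)$. It therefore suffices to show that there is some $K\le \tilde L^N+2$ with $D_K^N(P^N,\psi)=0$ if and only if $\psi\in\Psi^N(P^N)$.

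The first step reduces $D_K^N=0$ to a nonempty intersection, and I expect this attainment argument to be the main obstacle. Under \Cref{assumption:compact_continuity_certify}, $\Delta_K\times\Bcal^K$ is compact. The map $(w,\beta)\mapsto q^N_{w,\beta}$ is continuous because each $\bar t^N_l$ and $g$ is continuous. The set $\Rcal^{N,K}(P^N,\psi)$ is a closed subset of a simplex, hence compact. The map $(r,q)\mapsto \KL(r\|q)$ is jointly lower semicontinuous, including at the value $+\infty$. So if $D_K^N=0$, I take a minimizing sequence, extract a convergent subsequence, and conclude that some $r^\ast$ and $q^N_{w^\ast,\beta^\ast}$ satisfy $\KL(r^\ast\|q^N_{w^\ast,\beta^\ast})=0$. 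This forces $r^\ast=q^N_{w^\ast,\beta^\ast}$, so the two sets share an element. The converse direction, that a common element gives $D_K^N=0$, is immediate.

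The second step translates a common element $r=q^N_{w,\beta}$ into moment conditions.
\begin{itemize}
\item Summing over $(k,m)$ and using $\gamma(1,\cdot)+\gamma(2,\cdot)=1$ gives $e^N_l\sum_k w_k\bar t^N_l(\beta_k)=e^N_l\bar b^N_l(P^N)$. Since $e^N_l>0$, this yields $\sum_k w_k \bar t^N_l(\beta_k)=\bar b^N_l(P^N)$ for every $l\in[L^N]$.
\item For any $\beta$, $\sum_l e^N_l\bar t^N_l(\beta)=\sum_a e^N(a)\sum_{y\in a}s(\beta,a,y)=\sum_a e^N(a)=1$. Summing over $(k,l)$ at $m=1$ therefore gives $\sum_k w_k g(\beta_k)=\psi$.
\end{itemize}
Set $Q=\sum_k w_k\delta_{\beta_k}$. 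Realizability means $P^N$ is the product over $i$ of its marginals $Q^{np,N}_{Y\mid a^N_i}(P^N)(a^N_i,\cdot)$, and these marginals are coordinates of $\bar b^N(P^N)$. Hence matching all $L^N$ moments gives $M_{\Ycal^N}(P^{F,s,N}(Q))=P^N$. Together with $\tilde\Psi(Q)=\psi$, this shows $\psi\in\Psi^N(P^N)$.

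The third step is the converse. Take $\psi\in\Psi^N(P^N)$, witnessed by some $Q_\beta$. Apply the Tchakaloff-type extension used for \Cref{theorem:matching_mle} and \Cref{theorem:definition_psi_with_sets} to the $\tilde L^N+1$ functions $(t^N_1,\dots,t^N_{\tilde L^N},g)$; this gives an atomic measure $\sum_{k=1}^K w_k\delta_{\beta_k}$ with $K\le\tilde L^N+2$ that preserves their integrals. The remaining $\bar t^N_l$ lie in $\mathrm{Span}\{1,t^N_1,\dots,t^N_{\tilde L^N}\}$, so their integrals are preserved as well. Running the computation of the second step in reverse then shows $q^N_{w,\beta}\in\Rcal^{N,K}(P^N,\psi)$, and hence $D_K^N(P^N,\psi)=0$.
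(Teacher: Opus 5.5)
Your proposal is correct and follows essentially the same route as the paper. You reduce to $\Psi^N(P^N)$ via \Cref{theorem:matching_mle}, obtain an exact common element $r=q^N_{w,\beta}$ from $D_K^N(P^N,\psi)=0$ by compactness and lower semicontinuity of $\KL$ (this is the paper's \Cref{lemma:attainmentKL}), and read off the moment and target conditions by marginalizing $q^N_{w,\beta}$. For the converse you use the same finite-support (Carath\'eodory/Tchakaloff) representation underlying \Cref{theorem:definition_psi_with_sets}, and your product-structure argument plays the role of \Cref{lemma:equivalence_matching_moments}.
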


\begin{algorithm}[H]
\caption{$\textsc{Certify}(\psi,P^N, K, \epsilon, H_{\max})$}
\label{algorithm:fixed_psi_certify}
\begin{algorithmic}[1]
\State \textbf{Input} Candidate $\psi$, distribution $P^N$, atom number $K$, tolerance
$\varepsilon$, maximum iterations
$H_{\max}$.
\State Draw random initialization
$(w^{(0)},\beta^{(0)}) \in \Delta_K \times\Bcal^K$.
\State Compute $r^{(0)} \leftarrow \textsc{LeftProjection}(\psi, w^{(0)}, \beta^{(0)}, P^N, \varepsilon)$.
\For{$h = 0, 1, \ldots,H_{\max}-1$}
    \State 
    \begin{align}
        (w^{(h+1)}, \beta^{(h+1)}) & \leftarrow \textsc{RightProjection}(K, \Bcal, r^{(h)}, \Bar{t}, e, \gamma) , \\
        r^{(h+1)} & \leftarrow
        \textsc{LeftProjection}(\psi, w^{(h+1)}, \beta^{(h+1)}, P^N) , \\
        d^{(h+1)}(\psi) & \leftarrow \KL( r^{(h+1)} \| q_{w^{(h+1)}, \beta^{(h+1)}}^N ).
    \end{align}
    \If{$d^{(h+1)}(\psi)\leq \varepsilon$}
        \State \Return $\mathrm{TRUE}$, $w^{(h+1)},\beta^{(h+1)},r^{(h+1)},d^{(h+1)}(\psi)$
    \EndIf
\EndFor
\State \Return $\mathrm{FALSE},\varnothing,\varnothing,\varnothing,\varnothing$
\end{algorithmic}
\end{algorithm}

\begin{assumption}\label{assumption:left_projection_support}
For any $l\in[L^N], m \in [2]$, whenever $\bar{\alpha}^N_l(P^N)>0$, we necessarily have $\sum_{k=1}^K q^N_{w,\beta}(k,l,m)>0$.
\end{assumption}

$\textsc{LeftProjection}( \psi,w,\beta,P^N)$ is any optimization oracle that returns a minimizer
$r \in \Rcal^{N,K}(P^N, \psi)$ of
the optimization problem
\begin{align}\label{equation:left_projection_problem}
\inf_{r \in \Rcal^{N,K}(P^N, \psi)}
\KL (r \| q_{w,\beta}^N) ,
\end{align}
and we refer to \eqref{equation:left_projection_problem} as the left projection problem. Under \Cref{assumption:left_projection_support}, by \Cref{lemma:left_proj_solution}, the
minimizer of \eqref{equation:left_projection_problem} is obtained by an exponential tilting of $q^N_{w, \beta}$.

\begin{assumption}\label{assumption:convexity_beta_optim}
    $\Bcal$ is a convex subset of $\RR^d$, for a positive integer $d$, and for any $l \in [L^N]$, the mappings $\bar{t}_l^N \colon \Bcal\to(0,\infty)$, and $g \colon \Bcal\to(0,1)$ are log-concave.
\end{assumption}

$\textsc{RightProjection}(K,\Bcal,r,\bar t^N,e^N, \gamma)$ is any optimization oracle that returns a minimizer
$((w_k)_{k \in [K]},$ $(\beta_k)_{k \in [K]}) \in \Delta_K \times \Bcal^K$ of the optimization problem
\begin{align}\label{equation:eroiueroiuwe}
\min_{w \in\Delta_K,(\beta_1,\ldots,\beta_K)\in\Bcal^K}
\sum_{k=1}^K \sum_{l=1}^{L^N} \sum_{m = 1}^2
r(k,l, m)\log\frac{r(k,l, m)}{w_k e_l^N \bar{t}^N_l(\beta_k) \gamma(m, \beta_k)} .
\end{align}
and we refer to \eqref{equation:eroiueroiuwe} as the right projection problem. \Cref{lemma:right_proj_solution} characterizes any minimizer $(w_k, \beta_k)_{k \in [K]}$ of \eqref{equation:eroiueroiuwe}. $(w_k)_{k \in [K]}$ is given explicitly, and $(\beta_k)_{k \in [K]}$ is given as the solution of a minimization problem. Additionally, under \Cref{assumption:convexity_beta_optim}, and log-concavity of $1-g$, \Cref{lemma:convex_atom_update} proves that the optimization problem characterizing any minimizer $(\beta_k)_{k \in [K]}$ of \eqref{equation:eroiueroiuwe} is convex. Therefore, if the maps $(\log \bar{t}_l)_{l \in [L^N]}$ are differentiable, and projection onto $\Bcal$ is available, $(\beta_k)_{k \in [K]}$ can be performed using a projected gradient method.

\begin{assumption}\label{assumption:interior_point}
There exist $r^\star \in \Rcal^{N, K}(P^N, \psi), (w^\star,\beta^\star) \in \Delta_K \times \Bcal^K$ such that $r^\star = q^N_{w^\star, \beta^\star} \in \Rcal^{N,K}(P^N,\psi)$, and for any $k\in[K], w^\star_k>0, \beta^\star_k \in \mathrm{int}(\Bcal)$.
\end{assumption}

\begin{assumption}\label{assumption:local_identifiability}
$\Bcal$ is a subset of $\RR^d$, for a positive integer $d$, and there exist an open $\Theta_0\subseteq \RR^{K-1} \times \RR^{\dim (\Bcal) K}$ and a neighborhood $U_0$ of $(w^\star, \beta^\star)$ in $\Delta_K \times \Bcal^K$, with $\overline{U_0}\subseteq \Delta_K \times \mathrm{int}(\Bcal)^K$, such that
\begin{enumerate}
\item[(i)] the map $\iota \colon \Theta_0 \to U_0, \theta=(w_1,\ldots,w_{K-1},\beta_1,\ldots,\beta_K) \mapsto (w,\beta)$, $w_K = 1-\sum_{k<K}w_k$, is a $\Ccal^1$ bijection with $\iota(\theta^\star)=(w^\star,\beta^\star)$;
\item[(ii)] $(w^\star,\beta^\star)$ is the unique element $(w, \beta) \in \overline{U_0}$ such that $q_{(w, \beta)} = q_{(w^\star, \beta^\star)}$.
\end{enumerate}
\end{assumption}

\begin{assumption}\label{assumption:smoothness_certify}
There exists an open $B_0\subseteq \mathrm{int}(\Bcal)$ containing $\{\beta^\star_k\}_{k\in[K]}$ on which $\bar t_l^N(\cdot)$ $l\in[L^N]$, and $g(\cdot)$ are twice continuously differentiable.
\end{assumption}

\begin{assumption}\label{assumption:Icom_nonsingular}
$\theta\mapsto \KL(r^\star\| q_{\iota(\theta)})$ is twice continuously differentiable on $\Theta_0$, and
\begin{align}
\nabla^2_\theta\, \KL(r^\star \| q_{\iota(\theta)}) \big|_{\theta^\star}
\end{align}
is positive definite.
\end{assumption}

\begin{assumption}\label{assumption:missing_info}
Writing $\rho(\theta)=\inf_{r\in\Rcal^{N,K}(P^N,\psi)}\KL(r \| q_{\iota(\theta)}), \rho$ is twice continuously differentiable at $\theta^\star$, and we have
\begin{align}
\nabla^2_\theta\,\rho(\theta)\big|_{\theta^\star} \succ 0 .
\end{align}
\end{assumption}

\begin{assumption}\label{assumption:local_selection}
There is a neighborhood $W_0\subseteq \Rcal^{N,K}(P^N,\psi)$ of $r^\star$ such that for any $r \in W_0, \mathrm{RightProjection}(K,\Bcal,r)$ is unique, and we have that $r \mapsto \mathrm{RightProjection}(K,\Bcal,r) \in \Ccal^1(W_0)$.
\end{assumption}

In EM terminology, the left projection plays the role of an E-step, while the right projection plays the role of an M-step.
\Cref{assumption:interior_point} requires the existence of an optimum of \eqref{equation:finite_atom_kl_distance} in the interior of the data matching and model sets. \Cref{assumption:local_identifiability} requires the model parameter to be
locally identified near this fixed optimum.
\Cref{assumption:Icom_nonsingular} and \Cref{assumption:missing_info} require
nonsingular complete data and observed data information matrices, respectively. The proof of \Cref{theorem:local_basin_certify} shows that these curvature conditions make the EM update locally contractive. \Cref{assumption:smoothness_certify} is used to establish that the E-step is continuously differentiable with respect to the current parameter, a property that holds trivially when the E-step has a closed-form expression. \Cref{assumption:local_selection}
requires the M-step solution to be locally unique and continuously
differentiable. Note that we prove that these conditions ensure convergence for initializations near $(w^\star, \beta^\star)$, but do not imply convergence from arbitrary initializations.

\begin{restatable}{theorem}{localbasincertify}
\label{theorem:local_basin_certify}
Suppose that \Cref{assumption:convexity_beta_optim,assumption:compact_continuity_certify,assumption:interior_point,assumption:local_identifiability,assumption:smoothness_certify,assumption:Icom_nonsingular,assumption:missing_info,assumption:local_selection} hold. Then, there exists a subset $U \subseteq \Delta_K \times \Bcal^K$ such that, if $(w^{(0)}, \beta^{(0})) \in U$, we have that $d^{(h)}(\psi) \to 0$ as $h \to \infty$. As a consequence, for any $\epsilon > 0$, there exists a positive integer $H$ such that for any $h \geq H$, we have that $\PP(\mathrm{Certify}(\psi,P^N,K,\epsilon, h) = \mathrm{True}) > 0$.
\end{restatable}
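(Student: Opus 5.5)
The plan is to view one iteration of \textsc{Certify} as a map $T$ on the parameter $\theta$ and show that $\theta^\star$ is an attracting fixed point. Write $\theta^{(h)}=\iota^{-1}(w^{(h)},\beta^{(h)})$. One iteration is the composition $T=\mathrm{RP}\circ \mathrm{LP}$, where
\[
\mathrm{LP}(\theta)=\arg\min_{r\in\Rcal^{N,K}(P^N,\psi)}\KL(r\|q_{\iota(\theta)}),
\]
and $\mathrm{RP}$ is the right projection. By \Cref{assumption:interior_point}, $r^\star=q_{\iota(\theta^\star)}$ lies in $\Rcal^{N,K}(P^N,\psi)$. Hence $\mathrm{LP}(\theta^\star)=r^\star$, since the KL divergence vanishes there. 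Also $\mathrm{RP}(r^\star)=\iota(\theta^\star)$, since the KL divergence attains $0$ and this minimizer is unique by \Cref{assumption:local_identifiability}(ii) together with \Cref{assumption:local_selection}. So $T(\theta^\star)=\theta^\star$.

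\textbf{Smoothness of $T$.} First, $\mathrm{LP}$ is $\Ccal^1$ near $\theta^\star$. By \Cref{lemma:left_proj_solution}, the left projection is an exponential tilting of $q_{\iota(\theta)}$. Its tilting parameters solve a finite system of moment equations, whose Jacobian is a positive definite covariance-type matrix. The implicit function theorem then applies, using the $\Ccal^2$ regularity of $\bar t_l^N$ and $g$ from \Cref{assumption:smoothness_certify}. Second, $\mathrm{RP}$ is $\Ccal^1$ on $W_0$ by \Cref{assumption:local_selection}. By continuity, $\mathrm{LP}$ maps a neighborhood of $\theta^\star$ into $W_0$. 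Hence $T$ is $\Ccal^1$ near $\theta^\star$.

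\textbf{Jacobian of $T$ at $\theta^\star$.} I would compute $DT(\theta^\star)$ by the classical EM argument of Dempster--Laird--Rubin. Differentiate the first-order condition
\[
\nabla_\theta \KL\bigl(\mathrm{LP}(\theta)\,\big\|\,q_{\iota(T(\theta))}\bigr)=0 .
\]
This gives $DT(\theta^\star)=I_{\mathrm{com}}^{-1}I_{\mathrm{mis}}$, where $I_{\mathrm{com}}=\nabla^2_\theta\KL(r^\star\|q_{\iota(\theta)})|_{\theta^\star}$. This matrix is positive definite by \Cref{assumption:Icom_nonsingular}. Next, by the envelope/Pythagorean identity for the I-projection, the observed-information matrix satisfies $\nabla^2\rho(\theta^\star)=I_{\mathrm{com}}-I_{\mathrm{mis}}$. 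Here $I_{\mathrm{mis}}\succeq 0$ is the curvature lost through the left projection. Consequently
\[
DT(\theta^\star)=I-I_{\mathrm{com}}^{-1}\nabla^2\rho(\theta^\star).
\]
This matrix is similar to $I-I_{\mathrm{com}}^{-1/2}\nabla^2\rho\, I_{\mathrm{com}}^{-1/2}$. Since $0\prec\nabla^2\rho\preceq I_{\mathrm{com}}$ by \Cref{assumption:missing_info}, its eigenvalues lie in $[0,1)$. The spectral radius of $DT(\theta^\star)$ is therefore strictly less than $1$.

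\textbf{Main obstacle.} The delicate step is establishing $I_{\mathrm{mis}}\succeq 0$ and the identity for $\nabla^2\rho$ rigorously in this constrained-KL setting. The approach I would take first is as follows. Since $\rho(\theta)=\min_r \KL(r\|q_\theta)$ and $\rho(\theta)\le \KL(r^\star\|q_\theta)$ with equality at $\theta^\star$, the difference is minimized at $\theta^\star$, which yields $\nabla^2\rho\preceq I_{\mathrm{com}}$. The cross term is then obtained by differentiating the optimality condition of $\mathrm{LP}$.

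\textbf{Conclusion.} Fix a norm adapted to $DT(\theta^\star)$. Then $T$ is a contraction on a small ball around $\theta^\star$; take $U$ to be its image under $\iota$, intersected with $U_0$. Iterates from $U$ converge to $\theta^\star$. By continuity, $r^{(h)}=\mathrm{LP}(\theta^{(h)})\to r^\star$ and $q_{\iota(\theta^{(h)})}\to r^\star$. This gives $d^{(h)}(\psi)\to 0$, because KL is continuous on the relevant support, which is ensured by the interior point and positivity assumptions. Finally, $U$ has nonempty interior, so the random initialization lands in $U$ with positive probability, assuming a full-support initialization law. On that event $d^{(h)}\le\epsilon$ for all $h\ge H$, uniformly on a slightly smaller compact subset of $U$. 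Hence $\PP(\textsc{Certify}=\mathrm{TRUE})>0$.
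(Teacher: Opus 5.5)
Your proposal is correct and follows essentially the same argument as the paper: the fixed point $T(\theta^\star)=\theta^\star$, the envelope identity $\nabla\rho(\theta)=\nabla_\theta \mathrm{KL}(r\,\|\,q_{\iota(\theta)})|_{r=\eta(\theta)}$, the nonnegative gap $u(\theta)=\mathrm{KL}(r^\star\|q_{\iota(\theta)})-\rho(\theta)$ minimized at $\theta^\star$ (so $\nabla^2 u(\theta^\star)\succeq 0$), the formula $DT(\theta^\star)=I_{\mathrm{com}}^{-1}\nabla^2u(\theta^\star)$ with spectrum in $[0,1)$, and contraction in the $I_{\mathrm{com}}^{1/2}$-weighted norm on a small ball whose image under $\iota$ is $U$. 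The differences are minor: the paper gets smoothness of the left projection from the implicit function theorem on a single scalar multiplier $\nu$, since the other multipliers are explicit, and you state more explicitly that you need a full-support initialization law and an $H$ uniform over the basin, which the paper's uniform rate $y^h r_0$ provides.
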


Like \citet[][Theorem 1]{hero1995convergence}, \Cref{theorem:local_basin_certify} proves local linear convergence through contractivity near a fixed point. The proof of \citet[][Theorem 1]{hero1995convergence} relies on the existence of a neighborhood of the optimum in which EM updates contract the distance to the fixed point by a factor $\alpha < 1$, whereas we prove that the Jacobian of the update has norm strictly smaller than one at the fixed point, and extend this property to a neighborhood by continuity.

\paragraph{Specialization to the mixed MNL.} We now specialize the optimization procedure to a mixed multinomial logit model. We show that the logit structure makes each atom update a convex optimization problem and provide sufficient conditions for
\Cref{assumption:local_identifiability,assumption:smoothness_certify,assumption:Icom_nonsingular,assumption:missing_info,assumption:local_selection} to hold. Together with the existence of an interior solution to \eqref{equation:finite_atom_kl_distance}, these
results allow \Cref{theorem:local_basin_certify} to hold, and establish local convergence of \Cref{algorithm:fixed_psi_certify}.

\begin{assumption}\label{assumption:MNL_general_design}
$J \geq 2$, $M \geq1$, and $T=1$. The choice sets $\tilde{a}_1,\ldots, \tilde{a}_M, \tilde{a}^\star \in \Acal_1$ are nonempty, $\tilde{a}^\star \notin\{\tilde{a}_1, \ldots, \tilde{a}_M\}$, and $\Card(\tilde{a}^\star) \geq 2$. Moreover, $\{\tilde{a}_i^N \colon i\in[N]\} = \{\tilde{a}_1, \ldots, \tilde{a}_M\}$ and $e_r^N=e^N(\tilde{a}_r) > 0$ for any $r \in [M]$.
\end{assumption}

\begin{assumption}[Preferences, kernel, and target]\label{assumption:MNL_general_kernel}
$d \geq 1, \Bcal$ is a compact and convex subset of $\RR^d$ with nonempty interior, and $z_j \in \RR^d$ for any $j\in[J]$. Let $y^\star\in \tilde{a}^\star$. For any $\beta \in \Bcal$, $\tilde{a} \in \Acal_1$, and $y\in[J]$,
\begin{align}\label{equation:s_instantiated_mnl}
s(\beta, \tilde{a}, y) =
\begin{cases}
\dfrac{\exp(z_y^\top\beta)}{\sum_{j\in \tilde{a}} \exp(z_j^\top\beta)}&\text{if } y \in \tilde{a} , \\
0 & \text{otherwise},
\end{cases} ,
\end{align}
and $g(\beta) = s(\beta, \tilde{a}^\star, y^\star)$. For $l \in [L^N]$ such that $a(l) = \tilde{a}_r, e_l^N = e_r^N, \bar{t}^N_l(\beta) = s(\beta, \tilde{a}_r,y(l))$.
\end{assumption}

\begin{assumption}\label{assumption:MNL_general_richness}
For any $k \in [K]$, we have that
\begin{align}
\Span(\{z_j - z_{j'} \colon j,j' \in \tilde{a}_r,\ r\in[M]\} \cup \{\nabla g(\beta_k^\star) \})  = \RR^d .
\end{align}
\end{assumption}

We note that \Cref{assumption:MNL_general_richness} requires observed alternatives' feature differences to span $\RR^d$ entirely. We interpret it as condition on the richness of the observed features.

\begin{restatable}{lemma}{lemmamnlgeneralconvexity}\label{lemma:mnl_general_convexity}
Under \Cref{assumption:MNL_general_design,assumption:MNL_general_kernel}, \Cref{assumption:convexity_beta_optim} holds.
\end{restatable}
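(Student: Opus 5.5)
The plan is to check the two requirements of \Cref{assumption:convexity_beta_optim} directly: convexity of $\Bcal$, and positivity, range and log-concavity of each $\bar t^N_l$ and of $g$. The first is immediate, since \Cref{assumption:MNL_general_kernel} states that $\Bcal$ is a compact convex subset of $\RR^d$ with $d\geq 1$.

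For the moment functions, fix $l\in[L^N]$. Let $a(l)=\tilde a_r$ and $y(l)=y\in\tilde a_r$; membership $y\in\tilde a_r$ holds by the definition of $\pairay^N$. By \Cref{assumption:MNL_general_kernel},
\begin{align}
\bar t^N_l(\beta)=\frac{\exp(z_y^\top\beta)}{\sum_{j\in\tilde a_r}\exp(z_j^\top\beta)} .
\end{align}
This is a ratio of strictly positive numbers, so $\bar t^N_l$ maps $\Bcal$ into $(0,\infty)$; indeed it lies in $(0,1]$. Its logarithm is
\begin{align}
\log\bar t^N_l(\beta)=z_y^\top\beta-\mathrm{LSE}_r(\beta), \qquad \mathrm{LSE}_r(\beta)=\log\sum_{j\in\tilde a_r}\exp(z_j^\top\beta) .
\end{align}
The first term is linear in $\beta$. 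So log-concavity reduces to convexity of $\mathrm{LSE}_r$, the log-sum-exp function composed with the linear map $\beta\mapsto(z_j^\top\beta)_{j\in\tilde a_r}$.

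To show this convexity, I would compute the Hessian. It equals
\begin{align}
\nabla^2\mathrm{LSE}_r(\beta)=\sum_{j}\pi_j(\beta)z_jz_j^\top-\Big(\sum_j\pi_j(\beta)z_j\Big)\Big(\sum_j\pi_j(\beta)z_j\Big)^\top ,
\end{align}
where $\pi_j(\beta)$ are the softmax weights. This is the covariance matrix of $z_{J}$ with $J\sim\pi(\beta)$, hence positive semidefinite. As an alternative, Hölder's inequality gives $\mathrm{LSE}_r(\theta\beta+(1-\theta)\beta')\le\theta\,\mathrm{LSE}_r(\beta)+(1-\theta)\mathrm{LSE}_r(\beta')$ directly. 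Either way, $\log\bar t^N_l$ is concave on $\Bcal$.

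The same argument applies to $g(\beta)=s(\beta,\tilde a^\star,y^\star)$ with $y^\star\in\tilde a^\star$, which gives $g>0$ and log-concavity. The one point requiring care, and the only place where \Cref{assumption:MNL_general_design} is used beyond the design bookkeeping, is the strict upper bound $g<1$. Since $\Card(\tilde a^\star)\geq2$, there is some $j'\in\tilde a^\star\setminus\{y^\star\}$. The denominator therefore exceeds the numerator by at least $\exp(z_{j'}^\top\beta)>0$, so $g(\beta)\in(0,1)$ for every $\beta$. I expect no substantive obstacle: the argument consists of the standard convexity of log-sum-exp plus this range check. I would also remark that log-concavity of $1-g$ is not part of \Cref{assumption:convexity_beta_optim}, so it does not need to be verified here.
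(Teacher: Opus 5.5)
Your proof is correct and follows essentially the same route as the paper. Both arguments write $\log s(\beta,\tilde a,y)$ as a linear term minus a log-sum-exp, note that its Hessian is minus a softmax-weighted covariance matrix (so $\bar t^N_l$ and $g$ are strictly positive and log-concave), and use $\Card(\tilde a^\star)\geq 2$ to get $g<1$; your remark that log-concavity of $1-g$ is not required by \Cref{assumption:convexity_beta_optim} is also accurate.
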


\begin{restatable}{lemma}{lemmamnlgeneralcompletecurvature}\label{lemma:mnl_general_complete_curvature}
Under \Cref{assumption:MNL_general_design,assumption:MNL_general_kernel,assumption:interior_point,assumption:MNL_general_richness}, we have that \Cref{assumption:local_identifiability,assumption:smoothness_certify,assumption:Icom_nonsingular} hold.
\end{restatable}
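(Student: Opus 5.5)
The plan is to verify the three assumptions in increasing order of difficulty. Smoothness is immediate, the curvature condition (\Cref{assumption:Icom_nonsingular}) is the core computation, and local identifiability (\Cref{assumption:local_identifiability}) then follows from that computation through a local injectivity argument.

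\emph{Smoothness.} Under \Cref{assumption:MNL_general_kernel}, each $\bar t_l^N(\beta)=s(\beta,\tilde a_r,y(l))$ and $g(\beta)=s(\beta,\tilde a^\star,y^\star)$ is a ratio of finite sums of exponentials of linear forms in $\beta$, with a strictly positive denominator. These maps are therefore $\Ccal^\infty$ on all of $\RR^d$. Hence \Cref{assumption:smoothness_certify} holds with $B_0=\mathrm{int}(\Bcal)$, which contains every $\beta_k^\star$ by \Cref{assumption:interior_point}.

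\emph{Chart.} Since $w_k^\star>0$ and $\beta_k^\star\in\mathrm{int}(\Bcal)$, I take $\Theta_0$ to be a small open ball around $\theta^\star$ in $\RR^{K-1}\times\RR^{dK}$. I choose it small enough that its image under the affine map $\iota$ stays inside $\mathrm{int}(\Delta_K)\times \mathrm{int}(\Bcal)^K$, with closure in the same set, and set $U_0=\iota(\Theta_0)$. Then $\iota$ is an affine, hence $\Ccal^1$, bijection onto $U_0$, which gives (i).

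\emph{Curvature.} On $\Theta_0$ every cell $q_{\iota(\theta)}(k,l,m)$ is strictly positive, because $w_k>0$, MNL probabilities are positive, $g\in(0,1)$ and $e_l^N>0$. So $\theta\mapsto \KL(r^\star\|q_{\iota(\theta)})$ is $\Ccal^2$. Its Hessian is
\begin{align}
\sum_{k,l,m} r^\star(k,l,m)\Big\{\frac{\nabla q\,\nabla q^\top}{q^2}-\frac{\nabla^2 q}{q}\Big\}.
\end{align}
At $\theta^\star$ we have $r^\star=q_{\iota(\theta^\star)}$, so the second term becomes $-\nabla^2\sum_{k,l,m}q_{\iota(\theta)}(k,l,m)$. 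This vanishes because $\sum_{l,m} e_l^N\bar t_l^N(\beta)\gamma(m,\beta)=\sum_r e_r^N=1$, so the total mass is identically one. The Hessian therefore reduces to the Fisher information $\sum r^\star\,\nabla\log q\,\nabla\log q^\top$.

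To show the Fisher information is positive definite, I take a direction $v=(\delta w_1,\ldots,\delta w_{K-1},\delta\beta_1,\ldots,\delta\beta_K)$ with $v^\top\nabla\log q(k,l,m)=0$ for every cell, and set $\delta w_K=-\sum_{k<K}\delta w_k$. Since $\log q=\log w_k+\log e_l^N+\log\bar t_l^N(\beta_k)+\log\gamma(m,\beta_k)$, the condition reads
\begin{align}
\delta w_k/w_k+\delta\beta_k^\top\nabla\log \bar t_l^N(\beta_k)+\delta\beta_k^\top\nabla\log\gamma(m,\beta_k)=0 \quad\text{for all } l,m .
\end{align}
I extract three consequences in turn.
\begin{itemize}
\item Subtracting the equations for $m=1,2$ gives $\delta\beta_k^\top\nabla g(\beta_k^\star)/\{g(1-g)\}=0$.
\item Subtracting the equations for two cells $l,l'$ with the same $a(l)=a(l')=\tilde a_r$ uses $\nabla\log \bar t_l^N-\nabla\log \bar t_{l'}^N=z_{y(l)}-z_{y(l')}$. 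This gives $\delta\beta_k\perp z_j-z_{j'}$ for all $j,j'\in\tilde a_r$.
\item By \Cref{assumption:MNL_general_richness}, these two facts force $\delta\beta_k=0$. Then $\delta w_k=0$ for every $k$, including $k=K$, so $v=0$.
\end{itemize}
Hence the Hessian at $\theta^\star$ is positive definite, which establishes \Cref{assumption:Icom_nonsingular}.

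\emph{Local identifiability (ii).} This is the step needing most care. The positive definiteness just shown means the Jacobian of the $\Ccal^1$ map $\theta\mapsto \log q_{\iota(\theta)}\in\RR^{K\times L^N\times 2}$ has trivial kernel at $\theta^\star$, i.e.\ the map is an immersion there. An immersion is injective on some neighborhood of the point (by the constant-rank or inverse function theorem applied to a left inverse). Shrinking $\Theta_0$ so that $\overline{U_0}$ lies inside that neighborhood of injectivity, $(w^\star,\beta^\star)$ is then the unique point of $\overline{U_0}$ with $q_{(w,\beta)}=q_{(w^\star,\beta^\star)}$. The shrinking keeps (i) intact, so \Cref{assumption:local_identifiability} holds.

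The main obstacle is the bookkeeping in the curvature step, specifically checking that every cell used in the differencing argument has $r^\star>0$. This holds because positivity of all cells follows from the interior and positivity conditions.
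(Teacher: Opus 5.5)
Your proof is correct, but it is organized differently from the paper's.

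\emph{The paper's route.} The paper works atom by atom. It first shows $\Ical_{\mathrm{com}}(\beta_k^\star)=\Ical_{\mathrm{obs}}(\beta_k^\star)+\Ical_g(\beta_k^\star)\succ 0$ from \Cref{assumption:MNL_general_richness}. From this it deduces that $F\colon\beta\mapsto((s(\beta,\tilde a_r,y))_{r,y},g(\beta))$ has injective differential at each $\beta_k^\star$, hence is injective on a neighborhood $\tilde V_k$. It then builds the product chart $\Theta_0=W\times V_1\times\cdots\times V_K$. Part (ii) follows by marginalizing $q$: summing over $(l,m)$ recovers $w_k$ exactly, summing over $m$ recovers $\bar t_l^N(\beta_k)$, and summing over $l$ at $m=1$ recovers $g(\beta_k)$. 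Curvature comes separately from the additive split $\KL(r^\star\|q_{\iota(\theta)})=C^\star+\Lcal_w(w)+\sum_k\Lcal_k(\beta_k)$. This gives a block-diagonal Hessian, with $w$-block $\operatorname{diag}(1/w_k^\star)+\bm{1}\bm{1}^\top/w_K^\star$ and $\beta_k$-blocks $w_k^\star\Ical_{\mathrm{com}}(\beta_k^\star)$.

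\emph{Your route.} You reverse the order. The Fisher-information identity holds because the total mass of $q_{\iota(\theta)}$ is identically one on the chart. Combined with one kernel computation by differencing cells, it gives \Cref{assumption:Icom_nonsingular}. You then read the same trivial-kernel fact as an immersion of $\theta\mapsto\log q_{\iota(\theta)}$, which gives joint local injectivity and hence (ii).

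\emph{What each buys.} Your argument is more economical and essentially model-agnostic. It needs only strictly positive smooth cells and a rank condition on the score vectors, with no explicit covariance formulas. The paper's argument instead produces an explicit Hessian. In particular, the block $w_k^\star\Ical_{\mathrm{com}}(\beta_k^\star)$ is reused verbatim as the M-step Hessian in the proof of \Cref{lemma:mnl_general_projection_regularities}. It also yields a product-structured neighborhood that matches the atom-by-atom decoupling of the right projection.
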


\begin{assumption}\label{assumption:MNL_general_projection_identification}
The Jacobian
\begin{align}
\left. \nabla_\theta \left(\left(\sum_{k=1}^K\sum_{m=1}^2 q_{\iota(\theta)} (k, l, m) \right)_{l\in[L^N]}, \sum_{k=1}^K \sum_{l=1}^{L^N} q_{\iota(\theta)}(k,l,1) \right) \right|_{\theta^\star}
\end{align}
has full column rank. Moreover, for any $k\in[K]$ and $\beta \in \Bcal$, if we have that
\begin{align}
\text{for any } r \in [M], y \in \tilde{a}_r, s(\beta, \tilde{a}_r, y) = s(\beta_k^\star, \tilde{a}_r, y) , \text{ and } g(\beta) = g(\beta_k^\star) ,
\end{align}
then we have that $\beta = \beta_k^\star$.
\end{assumption}

\begin{remark}[Single-component MNL case]
Suppose that $K=1$. Then $w_1=1$, $\theta=\beta$, and $q_\beta^N(1,l,m) = e_l^N\bar{t}^N_l (\beta) \gamma(m,\beta)$. Consequently, we have that
$\sum_{m=1}^2 q_\beta^N (1,l,m) = e_l^N\bar{t}^N_l (\beta)$, and $\sum_{l=1}^{L^N}q _\beta^N(1,l,1) = g(\beta)$. Since $e_l^N>0$, the
full-column-rank condition in
\Cref{assumption:MNL_general_projection_identification} reduces to
\begin{align}\label{equation:weriouweriu}
\text{rank} \, \nabla_\beta \left(\left( s(\beta,\tilde a_r,y) \right)_{r\in[M],\,y\in\tilde a_r}, g(\beta) \right) \bigg|_{\beta^\star} = d .
\end{align}
For the multinomial logit kernel, we show in the proofs (see \eqref{equation:wriewriu}) that $\nabla_\beta s(\beta,\tilde a_r,y) = s(\beta,\tilde a_r,y)\{
z_y-\sum_{j\in\tilde a_r}
s(\beta,\tilde a_r,j)z_j \}$. Hence the rank condition in \eqref{equation:weriouweriu} is equivalent to
\begin{align}
\Span(\{z_j-z_{j'} \colon j,j'\in \tilde{a}_r,\ r\in[M] \} \cup \left\{\nabla g(\beta^\star)\right\}) = \RR^d ,
\end{align}
which corresponds to \Cref{assumption:MNL_general_richness}.
\end{remark}

\begin{restatable}{lemma}{lemmamnlgeneralprojectionregularities}\label{lemma:mnl_general_projection_regularities}
Under \Cref{assumption:MNL_general_design,assumption:MNL_general_kernel,assumption:interior_point,assumption:MNL_general_richness,assumption:MNL_general_projection_identification}, we have that \Cref{assumption:missing_info,assumption:local_selection} hold.
\end{restatable}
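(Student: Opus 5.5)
The plan is to verify the two assumptions separately, using the closed forms of the two projections. \Cref{lemma:left_proj_solution} gives the left projection and \Cref{lemma:right_proj_solution} gives the right projection. In the multinomial logit specification of \Cref{assumption:MNL_general_kernel}, every $\bar t^N_l$ and every $g$ is strictly positive and smooth on the compact convex set $\Bcal$, and $1-g>0$ because $\Card(\tilde a^\star)\geq 2$. Hence $q^N_{\iota(\theta)}$ has strictly positive entries and is $\Ccal^\infty$ in $\theta$ on $\Theta_0$. This also gives \Cref{assumption:left_projection_support} for free.

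\textbf{Step 1: smoothness of $\rho$ (\Cref{assumption:missing_info}, first part).} By \Cref{lemma:left_proj_solution}, the minimizer of \eqref{equation:left_projection_problem} is an exponential tilt
\begin{align}
r_\theta(k,l,m)=q_{\iota(\theta)}(k,l,m)\exp(\mu_l+\nu_m) .
\end{align}
The dual variables $(\mu,\nu)$ solve the two marginal equations defining $\Rcal^{N,K}(P^N,\psi)$, after one normalization removes the additive shift between $\mu$ and $\nu$. I will write these equations as $F(\theta,\mu,\nu)=0$. The map $F$ is the gradient of a strictly convex dual function $\Phi_\theta(\mu,\nu)$, which is a log-sum-exp against the positive weights $q$. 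Its Jacobian in $(\mu,\nu)$ is the covariance matrix of the indicator features of $l$ and $m$ under $r_\theta$; on the normalized coordinates this matrix is nonsingular because all entries are positive.

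At $\theta^\star$, \Cref{assumption:interior_point} gives $q_{\iota(\theta^\star)}=r^\star\in\Rcal^{N,K}(P^N,\psi)$, so $(\mu,\nu)=0$ solves $F(\theta^\star,\mu,\nu)=0$. The implicit function theorem then gives a $\Ccal^2$ map $\theta\mapsto(\mu(\theta),\nu(\theta))$ near $\theta^\star$. Consequently
\begin{align}
\rho(\theta)=\KL(r_\theta\|q_{\iota(\theta)})=\sum_l\bar\alpha^N_l\mu_l(\theta)+\sum_m\varpsi(m)\nu_m(\theta)
\end{align}
is $\Ccal^2$ near $\theta^\star$. Moreover $\rho(\theta^\star)=0$ and $\nabla\rho(\theta^\star)=0$, since $\rho\geq 0$.

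\textbf{Step 2: positive definite Hessian of $\rho$ (\Cref{assumption:missing_info}, second part; the main obstacle).} I will expand KL to second order, where it behaves like $\tfrac12\chi^2$. Write $\Delta q = Dq_{\iota}(\theta^\star)\Delta\theta$ and let $\langle\cdot,\cdot\rangle_{r^\star}$ be the inner product weighted by $1/r^\star$. Then
\begin{align}
\nabla^2\rho(\theta^\star)[\Delta\theta,\Delta\theta]=\min_{\Delta r\in T}\langle \Delta q-\Delta r,\Delta q-\Delta r\rangle_{r^\star},
\end{align}
where $T$ is the tangent space of $\Rcal^{N,K}(P^N,\psi)$, namely the perturbations with zero $l$-marginal and zero $m$-marginal. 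This minimum is the squared norm of the component of $\Delta q/r^\star$ lying in the span of functions of $l$ alone plus functions of $m$ alone. That component vanishes exactly when the induced perturbations of the $l$-marginal and of the $m$-marginal of $q$ both vanish.

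The first statement of \Cref{assumption:MNL_general_projection_identification} says precisely that the Jacobian of these marginals has full column rank at $\theta^\star$. So the quadratic form is strictly positive for every $\Delta\theta\neq 0$, and $\nabla^2\rho(\theta^\star)\succ0$. The delicate part is justifying the second-order expansion rigorously. I plan to do this by differentiating the identity $F(\theta,\mu(\theta),\nu(\theta))=0$ twice, and by checking that the Schur complement of the dual covariance block equals the projected Fisher form above.

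\textbf{Step 3: local selection (\Cref{assumption:local_selection}).} By \Cref{lemma:right_proj_solution}, the weights are explicit and smooth: $w_k=\sum_{l,m}r(k,l,m)$. The atom locations decouple across $k$: each $\beta_k$ minimizes
\begin{align}
f_{k,r}(\beta)=-\sum_{l,m}r(k,l,m)\log\{\bar t^N_l(\beta)\gamma(m,\beta)\},
\end{align}
which is convex by \Cref{lemma:mnl_general_convexity} and \Cref{lemma:convex_atom_update}.

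At $r=r^\star$, the point $\beta_k^\star$ minimizes $f_{k,r^\star}$, because $(w^\star,\beta^\star)$ attains zero KL. Its Hessian there is the $\beta_k$-block of the complete-data information matrix, which is positive definite by \Cref{lemma:mnl_general_complete_curvature} (via \Cref{assumption:Icom_nonsingular}). The block structure holds because the cross terms between distinct $\beta_k$ vanish. By continuity in $(r,\beta)$, the Hessian of $f_{k,r}$ stays positive definite for $r$ near $r^\star$ and $\beta$ near $\beta_k^\star$.

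The implicit function theorem applied to $\nabla f_{k,r}(\beta)=0$ then yields a $\Ccal^1$ interior critical point $\beta_k(r)$. Since $f_{k,r}$ is convex, this critical point is a global minimizer. Its set of minimizers is convex, and a convex set containing an isolated point is that point, so the minimizer is unique.

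If the isolation argument needs global strictness, I will use the second statement of \Cref{assumption:MNL_general_projection_identification} as a fallback. It implies that $f_{k,r^\star}$ has no other minimizer, because any other minimizer would reproduce the same choice probabilities and the same value of $g$. A compactness argument then extends this uniqueness to $r$ in a small neighborhood $W_0$ of $r^\star$.
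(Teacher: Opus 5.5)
Your Steps 1 and 2 are sound. Step 2 is the paper's own argument. It uses the second-order (chi-square) expansion, in which the minimum over tangent directions of $\Rcal^{N,K}(P^N,\psi)$ vanishes exactly when $\nabla_\theta q_{\iota(\theta)}|_{\theta^\star}\,v$ has zero $l$- and $m$-marginals; the full-column-rank part of \Cref{assumption:MNL_general_projection_identification} excludes this for $v\neq 0$. Step 1 obtains $\rho\in\Ccal^2$ by the implicit function theorem on the whole dual system. The paper instead combines the scalar-multiplier result of \Cref{lemma:left_projection_smoothness} with the envelope identity $\nabla_\theta\rho(\theta)=F(\eta(\theta),\theta)$ from the proof of \Cref{theorem:local_basin_certify}. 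Your version is more self-contained and gives an explicit formula for $\rho$. One correction is needed there. A log-sum-exp dual has as Hessian the covariance of the indicator features, which is singular along both $(\mathbf{1},0)$ and $(0,\mathbf{1})$, so one normalization does not make it invertible. Use the unnormalized dual $\sum_{k,l,m}q_{\iota(\theta)}(k,l,m)e^{\mu_l+\nu_m}-\sum_l\bar\alpha^N_l(P^N)\mu_l-\psi\nu_1-(1-\psi)\nu_2$. This is what your formulas $r_\theta=q_{\iota(\theta)}e^{\mu_l+\nu_m}$ and $\rho=\sum_l\bar\alpha^N_l\mu_l+\ldots$ actually correspond to, and its Hessian form $\sum_{k,l,m}r_\theta(k,l,m)(a_l+b_m)^2$ has the single null direction $(\mathbf{1},-\mathbf{1})$.

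The step that fails is the global convexity of $f_{k,r}$ in Step 3. \Cref{lemma:convex_atom_update} requires $1-g$ to be log-concave. \Cref{lemma:mnl_general_convexity} does not give this, and it is false for the logit target once $\Card(\tilde a^\star)\geq 3$. Take $d=1$, $\tilde a^\star=\{1,2,3\}$, $z_1=0$, $z_2=1$, $z_3=-1$, $y^\star=1$. Then $1-g(\beta)=2\cosh\beta/(1+2\cosh\beta)$ and $\{\log(1-g)\}''(0)=1/3>0$. The $\gamma$-part of $f_{k,r^\star}$ is $w_k^\star\{-g(\beta_k^\star)\log g-(1-g(\beta_k^\star))\log(1-g)\}$, and its second derivative at $0$ is $w_k^\star\{3g(\beta_k^\star)-1\}/3<0$ whenever $\beta_k^\star\neq 0$. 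Suppose the observed sets carry little curvature, e.g.\ $K=1$ and a single observed pair $\{4,5\}$ with $z_5-z_4=\epsilon$ small, which is compatible with every hypothesis of the lemma. Then $f_{k,r^\star}''(0)<0$. Neither ``an interior critical point is a global minimizer'' nor ``the set of minimizers is convex'' is available, so your implicit-function branch $\beta_k(r)$ is not yet shown to be the right projection for $r\neq r^\star$.

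Your fallback is therefore the actual argument, not an option, and it is exactly the paper's. It runs as follows.
\begin{enumerate}
\item Because $r^\star=q_{w^\star,\beta^\star}$, the difference $f_{k,r^\star}(\beta)-f_{k,r^\star}(\beta_k^\star)$ equals $w_k^\star\sum_{r}e^N_r\KL\{s(\beta_k^\star,\tilde a_r,\cdot)\|s(\beta,\tilde a_r,\cdot)\}$ plus $w_k^\star$ times a Bernoulli KL in $g$.
\item The second part of \Cref{assumption:MNL_general_projection_identification} then makes $\beta_k^\star$ the unique global minimizer of $f_{k,r^\star}$.
\item Since $\log\bar t^N_l$ and $\log\gamma(m,\cdot)$ are continuous and bounded on the compact $\Bcal$, $f_{k,r}\to f_{k,r^\star}$ uniformly. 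Hence for $r$ near $r^\star$ every minimizer lies in a small convex ball $V_k\ni\beta_k^\star$ on which $\nabla^2 f_{k,r}\succ 0$.
\item Strict convexity on $V_k$ leaves only one critical point there, namely your $\beta_k(r)$.
\end{enumerate}
This gives uniqueness of the right projection, and the implicit function theorem gives its $\Ccal^1$ dependence on $r$.
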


Note that mixed MNLs are widely used in discrete choice analysis to represent preference heterogeneity and substitution patterns. \Cref{lemma:mnl_general_complete_curvature,lemma:mnl_general_projection_regularities}
make explicit the local regularity conditions required by \Cref{theorem:local_basin_certify} to hold in the case of mixed MNLs.

\section{Conclusions}

Finitely many observed choice probabilities need not identify either an unrestricted preference distribution or a linear functional of it. We therefore conduct inference on the set of values of the linear functional of interest compatible with observed choice probabilities, without imposing restrictions that force point identification. We represent the upper endpoint of this set by an infinite-dimensional linear program and a finite-dimensional dual. A nondegenerate primal optimizer yields strong duality of the primal program, uniqueness of the dual optimizer, pathwise differentiability of the endpoint, and an asymptotically normal plug-in estimator. We provide conditions under which the identified set also coincides with the minimizers of a profile negative log-likelihood. A finite-support representation yields a KL-based membership criterion and an alternating-projection algorithm with local convergence guarantees. In applications such as streaming platforms, analysts can experimentally vary the choice sets shown to users and the frequencies with which users face them. Our results provide a basis for formulating experimental design as the joint selection of choice sets and assignment frequencies to narrow the identified set and increase the precision of inference on a prespecified counterfactual choice probability.

\bibliography{sample}

\appendix

\section{Supporting Lemmas}

\begin{lemma}\label{lemma:equivalence_matching_moments}
    Let $P^N \in \Mcal^{np,N}$ such that $P^N$ is realizable under $(\Qcal_\beta, s)$, for any $Q_\beta \in \Qcal_\beta, M_{\Ycal^N}(P^{F,s,N}(Q_\beta))=P^N$ if, and only if $\int_\Bcal s(\beta, a, y) dQ_\beta(\beta) = Q^{np,N}_{Y \mid a}(P^N)(a,y)$ for any $a \in \{a^N_1,\ldots,a^N_N\}, y \in a$.
\end{lemma}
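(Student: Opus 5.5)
The plan is to prove the two implications separately. Both rest on one product-form formula for the marginal of the complete-data model. By definition of $P^{F,s,N}(Q_\beta)$ and of $M_{\Ycal^N}$, integrating out each $\beta_i$ gives, for any $Q_\beta\in\Qcal_\beta$ and $(y^N_1,\ldots,y^N_N)\in\Ycal^N$,
\begin{align}
M_{\Ycal^N}(P^{F,s,N}(Q_\beta))(y^N_1,\ldots,y^N_N)=\prod_{i=1}^N p_{Q_\beta}(a^N_i,y^N_i),\qquad p_{Q_\beta}(a,y)=\int_\Bcal s(\beta,a,y)\,dQ_\beta(\beta).
\end{align}
To justify this, take the product sets $B_i=\Bcal$, $Y_i=\{y^N_i\}$ in the defining formula. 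For each fixed $a$, the map $p_{Q_\beta}(a,\cdot)$ is a probability mass function on $a$, by the conditions in \Cref{definition:kernel_s}.

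\emph{Forward direction.} Assume $M_{\Ycal^N}(P^{F,s,N}(Q_\beta))=P^N$. Summing the product over all coordinates except the $i$-th, and using that each $p_{Q_\beta}(a^N_j,\cdot)$ sums to one over $a^N_j$, gives $Q^{np,N}_i(P^N)(y)=p_{Q_\beta}(a^N_i,y)$. Averaging over $i\in\couNA$, all of which share the same $a$, gives $Q^{np,N}_{Y\mid a}(P^N)(a,y)=p_{Q_\beta}(a,y)$.

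\emph{Converse.} Here realizability is essential. Pick $Q^\circ_\beta$ with $P^N=M_{\Ycal^N}(P^{F,s,N}(Q^\circ_\beta))$. By the forward direction, $Q^{np,N}_{Y\mid a}(P^N)(a,y)=p_{Q^\circ_\beta}(a,y)$ for every $a$ in the design and every $y\in a$. By hypothesis the same quantities equal $p_{Q_\beta}(a,y)$, so $p_{Q_\beta}(a,y)=p_{Q^\circ_\beta}(a,y)$ on all of $\pairay^N$. Substituting into the product formula gives
\begin{align}
M_{\Ycal^N}(P^{F,s,N}(Q_\beta))(y^N_1,\ldots,y^N_N)=\prod_{i}p_{Q^\circ_\beta}(a^N_i,y^N_i)=P^N(y^N_1,\ldots,y^N_N).
\end{align}

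The main obstacle is the converse: the averaged marginals alone cannot recover the joint law $P^N$. Without realizability, $P^N$ could have dependence across indices, or unequal marginals within a design cell, that the averaged marginals do not see. Realizability is used exactly here, to guarantee that $P^N$ already has product form with identical marginals within each cell. The remaining bookkeeping (Fubini/Tonelli for the nonnegative integrands, and finiteness of $\Ycal^N$) is routine.
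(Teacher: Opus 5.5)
Your proposal is correct and follows essentially the same route as the paper: both derive the product-form marginal $\prod_i \int_\Bcal s(\beta,a^N_i,y^N_i)\,dQ_\beta(\beta)$, get the forward direction by summing out the other coordinates and averaging within design cells, and prove the converse by using realizability to pick a generating measure. That measure's cell-level choice probabilities must then coincide with those of $Q_\beta$, and substituting back into the product formula recovers $P^N$. Your remark on why realizability is needed (cross-index dependence and unequal within-cell marginals are invisible to the averaged marginals) matches the counterexample the paper gives right after the lemma.
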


\begin{proof}[Proof of \Cref{lemma:equivalence_matching_moments}]
Let $P^N \in \Mcal^{np,N}$ such that $P^N$ is realizable under $(\Qcal_\beta, s)$, and $Q_\beta \in \Qcal_\beta$. Consider $P^{F,s,N}(Q_\beta) \in \Mcal^{F,s,N}$ such that for any $Z^N =((B_1,Y^N_1),\ldots,(B^N, Y^N_N)) \in \mathscr{Z}^N$, we have
\begin{align}
    P^{F,s,N}(Q_\beta)(Z^N) =  \prod_{i=1}^N \int_{B_i} dQ_\beta(\beta_i) \sum_{y \in Y^N_i} s(\beta_i, a^N_i, y).
\end{align}
For any
$(y^N_1,\ldots,y^N_N)\in \Ycal^N$, we have
\begin{align}
M_{\Ycal^N}(P^{F,s,N}(Q_\beta))(y^N_1,\ldots,y^N_N) & = \int_{\beta_1 \in \Bcal} \ldots \int_{\beta_N \in \Bcal} d P^{F,s,N} (\beta_1,y^N_1,\ldots,\beta_N,y^N_N) \\
& = \int_{\beta_1 \in \Bcal} \ldots \int_{\beta_N \in \Bcal} \prod_{i=1}^N
 s(\beta_i,a^N_i,y^N_i)dQ_\beta(\beta_i) \\
& \label{equation:prodtilde} = \prod_{i=1}^N \int_{\Bcal} dQ_\beta(\beta_i) s(\beta_i, a^N_i, y^N_i) ,
\end{align}
and for any $a \in \{a^N_1, \ldots, a^N_N\}, y \in a$, we have
\begin{align}
    & Q^{np,N}_{Y\mid a}(M_{\Ycal^N}(P^{F,s,N}(Q_\beta)))(a,y) \\
    & \quad = \frac{1}{n_a} \sum_{k \in [N] \colon a^N_k=a} \sum_{y^N_{-k}}M_{\Ycal^N}(P^{F,s,N}(Q_\beta))(y^N_1, .. , y^N_{k-1}, y, y^N_{k+1}, .., y^N_N) \\
    & \quad = \frac{1}{n_a} \sum_{k \in [N] \colon a^N_k=a} \sum_{y^N_{-k}} \int_{\Bcal} dQ_\beta(\beta) s(\beta, a, y)  \prod_{i \in [N], i\ne k} \int_{\Bcal} dQ_\beta(\beta_i) s(\beta_i, a^N_i, y^N_i)  \\
    & \quad = \frac{1}{n_a} \sum_{k \in [N] \colon a^N_k=a} \int_{\Bcal} dQ_\beta(\beta) s(\beta, a, y) \sum_{y^N_{-k}}  \prod_{i\in [N], i\ne k} \int_{\Bcal} dQ_\beta(\beta_i) s(\beta_i, a^N_i, y^N_i)  \\
    & \quad =\frac{1}{n_a} \sum_{k \in [N] \colon a^N_k=a}\int_{\Bcal} dQ_\beta(\beta) s(\beta, a, y) \prod_{i\in [N], i\ne k}  \sum_{y^N_i \in a^N_i} \int_{\Bcal} dQ_\beta(\beta_i) s(\beta_i, a^N_i, y^N_i) \\
    & \quad =\frac{1}{n_a} \sum_{k \in [N] \colon a^N_k=a}\int_{\Bcal} dQ_\beta(\beta) s(\beta, a, y) \prod_{i\in [N], i\ne k} \int_{\Bcal} \underbrace{\sum_{y^N_i \in a^N_i} s(\beta_i, a^N_i, y^N_i)}_{=1} dQ_\beta(\beta_i) \\
      & \quad =\frac{1}{n_a} \sum_{k \in [N] \colon a^N_k=a}\int_{\Bcal} dQ_\beta(\beta) s(\beta, a, y) \prod_{i \in [N], i \ne k} \underbrace{\int_{\Bcal} dQ_\beta(\beta_i)}_{=1} \\
       & \quad =\frac{1}{n_a} \sum_{k \in [N] \colon a^N_k=a}\int_{\Bcal} dQ_\beta(\beta) s(\beta, a, y) \\
       & \label{equation:rwerk} \quad = \int_{\Bcal} dQ_\beta(\beta) s(\beta, a, y) ,
\end{align}
which proves that
\begin{align}\label{equation:dflkjerlkj}
    Q^{np,N}_{Y\mid a}(M_{\Ycal^N}(P^{F,s,N}(Q_\beta)))(\cdot,\cdot) = \int_{\Bcal} dQ_\beta(\beta) s(\beta, \cdot,\cdot) .
\end{align}
Suppose first that $M_{\Ycal^N}(P^{F,s,N}(Q_\beta))=P^N$. We thus have that $Q^{np,N}_{Y\mid a}(M_{\Ycal^N}(P^{F,s,N}(Q_\beta)))= Q^{np,N}_{Y\mid a}(P^N)$, hence \Cref{equation:rwerk} gives that for any $a \in \{a^N_1,\ldots,a^N_N\}, y \in a$, we have
\begin{align}
    Q^{np,N}_{Y\mid a}(M_{\Ycal^N}(P^{F,s,N}(Q_\beta)))(a,y) = \int_{\Bcal} dQ_\beta(\beta) s(\beta, a, y) = Q^{np,N}_{Y\mid a}(P^N)(a,y),
\end{align}
hence the first direction of the result.

Conversely, suppose that $Q_\beta \in \Qcal_\beta$ such that for any $a\in\{a^N_1,\ldots,a^N_N\}$ and every
$y\in a$,
\begin{align}\label{equation:moment_match}
\int_{\Bcal}s(\beta,a,y)\,dQ_\beta(\beta)
= Q^{np,N}_{Y\mid a}(P^N)(a,y) .
\end{align}
Since $P^N$ is realizable under $(\Qcal_\beta, s)$, there exists
$Q_\beta^\star\in\Qcal_\beta$ such that
\begin{align}\label{equation:realizable_star}
P^N=M_{\Ycal^N}(P^{F,s,N}(Q_\beta^\star)).
\end{align}
Applying the first direction of the proof to $Q_\beta^\star$, we obtain that,
for every $a\in\{a^N_1,\ldots,a^N_N\}$ and every $y\in a$,
\begin{align}\label{equation:star_moment_match}
\int_{\Bcal}s(\beta,a,y)\,dQ_\beta^\star(\beta) = Q^{np,N}_{Y\mid a}(P^N)(a,y).
\end{align}
Combining \eqref{equation:moment_match} and \eqref{equation:star_moment_match}, we get that,
for every $a\in\{a^N_1,\ldots,a^N_N\}$ and every $y\in a$,
\begin{align}\label{equation:same_choice_probs}
\int_{\Bcal}s(\beta,a,y)\,dQ_\beta(\beta)
=  Q^{np,N}_{Y\mid a}(P^N)(a,y) = \int_{\Bcal}s(\beta,a,y)\,dQ_\beta^\star(\beta).
\end{align}
Now let $(y^N_1,\ldots,y^N_N)\in\Ycal^N$. For any $i \in [N]$, we can apply \eqref{equation:same_choice_probs} to
$(a^N_i,y^N_i)$, which gives that
\begin{align}
\int_{\Bcal}s(\beta,a^N_i,y^N_i)\,dQ_\beta(\beta) = \int_{\Bcal} s(\beta,a^N_i,y^N_i) dQ_\beta^\star (\beta) ,
\end{align}
and the products of these terms are also equal, that is
\begin{align}
\prod_{i=1}^N
\int_{\Bcal}s(\beta,a^N_i,y^N_i)\,dQ_\beta(\beta) = \prod_{i=1}^N
\int_{\Bcal}s(\beta,a^N_i,y^N_i)\,dQ_\beta^\star(\beta).
\end{align}
Using the product representation of the marginal law, we therefore have
\begin{align}
M_{\Ycal^N}(P^{F,s,N}(Q_\beta))(y^N_1,\ldots,y^N_N)
&= \prod_{i=1}^N
\int_{\Bcal}s(\beta,a^N_i,y^N_i)\,dQ_\beta(\beta) \\
&= \prod_{i=1}^N
\int_{\Bcal}s(\beta,a^N_i,y^N_i)\,dQ_\beta^\star(\beta) \\
&= M_{\Ycal^N}(P^{F,s,N}(Q_\beta^\star))(y^N_1,\ldots,y^N_N) \\
&= P^N(y^N_1,\ldots,y^N_N),
\end{align}
where the last equality follows from \eqref{equation:realizable_star}. Consequently, we have that $M_{\Ycal^N}(P^{F,s,N}(Q_\beta))=P^N$, which proves the reverse direction and concludes the proof.
\end{proof}

\begin{example}
The equivalence in \Cref{lemma:equivalence_matching_moments} may fail without
realizability. Let $N=2$, $T=1$, $J=2$, and $a^2_1=a^2_2=a=\{1,2\}$. Define
$P^N\in\Mcal^{np,N}$ by
\begin{align}
P^N(1, 1) = 1/2 , \quad P^N(2, 2) = 1/2 , \quad P^N(1, 2) = P^N(2, 1) = 0 .
\end{align}
Then, we have $Q^{np,N}_{Y\mid a}(P^N)(a,1)
= Q^{np,N}_{Y\mid a}(P^N)(a,2) = 1/2$. Now let $\Bcal = \{\beta_0 \}$, $Q_\beta = \delta_{\beta_0}$, and define the choice
kernel by
\begin{align}
s(\beta_0,a,1)= 1 / 2,
\qquad
s(\beta_0,a,2)= 1 / 2 .
\end{align}
Therefore, for $y\in\{1,2\}$, we have
\begin{align}
\int_\Bcal s(\beta,a,y)\,dQ_\beta(\beta)
= \frac{1}{2} = Q^{np,N}_{Y\mid a}(P^N)(a,y).
\end{align}
Thus the moment restrictions are satisfied. However, the marginal law on
$\Ycal^N$ is
\begin{align}
M_{\Ycal^N}(P^{F,s,N}(\delta_{\beta_0}))(y^2_1,y^2_2) = \prod_{i=1}^2
\int_\Bcal s(\beta,a^2_i,y^2_i)\,dQ_\beta(\beta)=\frac{1}{4} ,
\end{align}
for every $(y^2_1,y^2_2)\in\{1,2\}^2$. In particular,
\begin{align}
M_{\Ycal^N}(P^{F,s,N}(\delta_{\beta_0}))(1,2) = 1/4 \neq 0 = P^N(1,2),
\end{align}
which proves that in the absence of realizability, if one matches the choice probabilities $Q^{np,N}_{Y\mid a}(P^N)(\cdot, \cdot) = \int_\Bcal s(\beta, \cdot, \cdot) dQ_\beta(\beta)$, it does not necessarily imply $M_{\Ycal^N}(P^{F,s,N}(\delta_{\beta_0})) = P^N$ .
\end{example}

\begin{lemma}\label{lemma:redundant_constraints_realizable}
Let $P^N\in\Mcal^{np,N}$ be realizable under $(\Qcal_\beta,s)$. Then, for any
$Q_\beta\in\Qcal_\beta$ satisfying
\begin{align}
\int_\Bcal t_l^N(\beta)\,dQ_\beta(\beta)=b_l^N(P^N),
\qquad l=1,\ldots,\tilde L^N,
\end{align}
we also have
\begin{align}
\int_\Bcal t_l^N(\beta)\,dQ_\beta(\beta)
= \Bar{b}^N_l(P^N),
\quad l=\tilde{L}^N+1,\ldots,L^N.
\end{align}
\end{lemma}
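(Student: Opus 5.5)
The plan is to combine the linear-span relations among the moment functions with the realizability of $P^N$. By definition of $\tilde L^N$, for every $l\in\{\tilde L^N+1,\ldots,L^N\}$ the function $t^N_l$ lies in $\Span\{1,t^N_1,\ldots,t^N_{\tilde L^N}\}$. So there are scalars $c^{(l)}_0,c^{(l)}_1,\ldots,c^{(l)}_{\tilde L^N}$ such that, pointwise on $\Bcal$,
\begin{align}
t^N_l(\beta)=c^{(l)}_0+\sum_{j=1}^{\tilde L^N}c^{(l)}_j t^N_j(\beta).
\end{align}
All the functions involved take values in $[0,1]$. Hence they are integrable against any $Q_\beta\in\Qcal_\beta$.

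First, integrate this identity against an arbitrary probability measure $Q'\in\Qcal_\beta$. Since $Q'(\Bcal)=1$, this gives
\begin{align}
\int t^N_l\,dQ'=c^{(l)}_0+\sum_{j=1}^{\tilde L^N}c^{(l)}_j\int t^N_j\,dQ'.
\end{align}
Second, apply this with $Q'=Q^\star_\beta$, where $Q^\star_\beta$ is a measure witnessing realizability, $P^N=M_{\Ycal^N}(P^{F,s,N}(Q^\star_\beta))$. The forward direction of \Cref{lemma:equivalence_matching_moments} gives $\int t^N_j\,dQ^\star_\beta=\Bar b^N_j(P^N)$ for every $j\in[L^N]$. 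Indeed, $t^N_j(\beta)=s(\beta,a(j),y(j))$ and $\Bar b^N_j(P^N)=Q^{np,N}_{Y\mid a}(P^N)(a(j),y(j))$. Therefore the observed moments satisfy the same affine relation:
\begin{align}
\Bar b^N_l(P^N)=c^{(l)}_0+\sum_{j=1}^{\tilde L^N}c^{(l)}_j\,\Bar b^N_j(P^N).
\end{align}
This uses $b^N_j=\Bar b^N_j$ for $j\le\tilde L^N$.

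Third, apply the integrated identity with $Q'=Q_\beta$, the measure in the statement. The hypothesis $\int t^N_j\,dQ_\beta=b^N_j(P^N)$ for $j\le\tilde L^N$ turns its right-hand side into $c^{(l)}_0+\sum_j c^{(l)}_j b^N_j(P^N)$. By the second step, this equals $\Bar b^N_l(P^N)$, which is the claim.

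I do not expect a real obstacle. The only point needing care is the step where realizability is used: without a witness $Q^\star_\beta$, the vector $\Bar b^N(P^N)$ need not satisfy the affine relations among the $t^N_l$. For example, the empirical frequencies for a single choice set need not obey linear dependencies of the kernel. This step is exactly what makes the redundant constraints automatically satisfied.
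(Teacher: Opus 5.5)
Your proof is correct and follows the paper's argument essentially verbatim: write each redundant $t^N_l$ as an affine combination of $1,t^N_1,\ldots,t^N_{\tilde L^N}$, use a realizability witness $Q^\star_\beta$ to transfer that affine relation to the observed moments, and then integrate the same relation against $Q_\beta$. Your explicit appeal to the forward direction of Lemma~\ref{lemma:equivalence_matching_moments} to obtain $\int t^N_j\,dQ^\star_\beta=\Bar{b}^N_j(P^N)$ spells out a step that the paper leaves implicit.
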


\begin{proof}
Fix $l>\tilde L^N$. Since
$t_l^N\in\mathrm{Span}\{1,t_1^N,\ldots,t_{\tilde L^N}^N\}$, there exist
$c_{l0},c_{l1},\ldots,c_{l\tilde L^N}\in\RR$ such that for any $\beta\in\Bcal$
\begin{align}
t_l^N(\beta)=c_{l0}+\sum_{m=1}^{\tilde L^N}c_{lm}t_m^N(\beta) .
\end{align}
Since $P^N$ is realizable, there exists $Q_\beta^\star$ such that
$P^N=M_{\Ycal^N}(P^{F,s,N}(Q_\beta^\star))$, hence
\begin{align}
\Bar{b}^N_l(P^N) = \int_\Bcal t_l^N(\beta)\,dQ_\beta^\star(\beta) = c_{l0}+\sum_{m=1}^{\tilde{L}^N}c_{lm}b_m^N(P^N).
\end{align}
If $Q_\beta \in \Qcal_\beta$ satisfies the first $\tilde L^N$ moment restrictions, then
\begin{align}
\int_\Bcal t_l^N(\beta)\,dQ_\beta(\beta) = c_{l0}+\sum_{m=1}^{\tilde L^N}c_{lm}
\int_\Bcal t_m^N(\beta)\,dQ_\beta(\beta) = c_{l0}+\sum_{m=1}^{\tilde{L}^N}c_{lm}b_m^N(P^N) ,
\end{align}
and combining the two displays gives the claim.
\end{proof}

\section{Proofs of the Von Mises expansion of the target functional}

\theoremrepresentation*

\subsection{Proof of \Cref{theorem:representation1}}

\begin{proof}[Proof of \Cref{theorem:representation1}]
Let $P^N \in \Mcal^{np,N}$ such that $P^N$ is realizable under $(\Qcal_\beta, s)$. By definition of $\Psi^{+,N}$, we have
\begin{align}
    \Psi^{+,N}(P^N)
    &= \sup \Psi^N(P^N) \\
    &= \sup \left\{\tilde{\Psi}(Q_\beta) \colon Q_\beta \in \Qcal_\beta,
    M_{\Ycal^N}(P^{F,s,N}(Q_\beta))=P^N \right\} \\
    & \label{eqaution:rkeljrklej}= \sup \left\{\int_\Bcal g(\beta)dQ_\beta(\beta) \colon Q_\beta \in \Qcal_\beta,
    M_{\Ycal^N}(P^{F,s,N}(Q_\beta))=P^N \right\}.
\end{align}
Since $P^N$ is realizable under $(\Qcal_\beta, s)$, we can apply
\Cref{lemma:equivalence_matching_moments}. Therefore, for any
$Q_\beta \in \Qcal_\beta$, we have that
\begin{align}
    M_{\Ycal^N}(P^{F,s,N}(Q_\beta))=P^N ,
\end{align}
if, and only if, for any $a \in \{a^N_1,\ldots,a^N_N\}$ and every $y\in a$,
\begin{align}
    \int_\Bcal s(\beta,a,y)dQ_\beta(\beta) = Q^{np,N}_{Y\mid a}(P^N)(a,y).
\end{align}
Hence the set
\begin{align}
    \left\{Q_\beta \in \Qcal_\beta \colon
    M_{\Ycal^N}(P^{F,s,N}(Q_\beta))=P^N \right\}
\end{align}
is equal to
\begin{align}
    \left\{Q_\beta \in \Qcal_\beta \colon
    \int_\Bcal s(\beta,a,y)dQ_\beta(\beta) = Q^{np,N}_{Y\mid a}(P^N)(a,y),
    \forall a \in \{a^N_1,\ldots,a^N_N\}, y\in a
    \right\}.
\end{align}
Substituting this equality of sets in \eqref{eqaution:rkeljrklej} gives that $\Psi^{+,N}(P^N)$ is equal to the value of the linear program
\begin{align}
\sup \left\{\int_\Bcal g(\beta)dQ_\beta(\beta) \colon Q_\beta \in \Qcal_\beta,
    \int_\Bcal s(\beta,a,y)dQ_\beta(\beta) = Q^{np,N}_{Y\mid a}(P^N)(a,y),
    \text{ for any } (a,y) \in \pairay^N
    \right\} .
\end{align}
Using \Cref{lemma:redundant_constraints_realizable} to solely keep the nonredundant constraints and indexing over $[\tilde{L}^N]$, we can rewrite $\Psi^{+,N}(P^N)$ as the value of the linear program
\begin{align}
    \sup_{Q_\beta \in \Qcal_\beta} \int_\Bcal g(\beta) d Q_\beta(\beta) \text{ subject to } \int_\Bcal t^N_l(\beta) dQ_\beta(\beta) = b^N_l(P^N) \text{ for any } l \in [\tilde{L}^N] .
\end{align}
\end{proof}

\subsection{Proof of Proposition \ref{proposition:strong_duality}}

Define
\begin{equation}\label{equation:defScalTcal}
    \begin{aligned}
        \Tcal & = \left\{\int_{\beta \in \Bcal} t^N(\beta)dQ_\beta(\beta) \colon Q_\beta \in \Qcal_\beta\right\} , \\
        \Scal & = \left\{\left(\int_{\beta \in \Bcal}  t^N(\beta)^\top dQ_\beta(\beta), \int_{\beta \in \Bcal}  g(\beta)d Q_\beta(\beta)\right)^\top \colon Q_\beta \in \Qcal_\beta\right\} .
    \end{aligned}
\end{equation}
For any subset $\Xcal$ of a Euclidean space, we denote $\Bar{\Xcal}$ the closure of $\Xcal$.

\begin{lemma}\label{lemma:Scalboundedconvex}
    $\Scal$ is a bounded convex set.
\end{lemma}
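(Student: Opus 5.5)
The plan is to prove boundedness and convexity separately, using only that the functions involved take values in $[0,1]$ and that the map $Q_\beta \mapsto \int \phi \, dQ_\beta$ is affine in the measure.

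\textbf{Boundedness.} Write $\phi(\beta) = (t^N(\beta)^\top, g(\beta))^\top \in \RR^{\tilde L^N+1}$, so that
\begin{align}
\Scal = \left\{\int_\Bcal \phi \, dQ_\beta \colon Q_\beta \in \Qcal_\beta\right\}.
\end{align}
Each coordinate of $\phi$ is either $t^N_l(\beta) = s(\beta,a(l),y(l)) \in [0,1]$ by \Cref{definition:kernel_s}, or $g(\beta) \in [0,1]$ by assumption. Every such function is bounded and $\mathscr{B}$-measurable, so its integral against any probability measure is well defined and lies in $[0,1]$. Hence
\begin{align}
\Scal \subseteq [0,1]^{\tilde L^N+1},
\end{align}
and in particular every element has Euclidean norm at most $\sqrt{\tilde L^N+1}$.

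\textbf{Convexity.} Take $x = \int \phi \, dQ_1$ and $x' = \int \phi \, dQ_2$ with $Q_1, Q_2 \in \Qcal_\beta$, and fix $\theta \in [0,1]$. The mixture $Q_\theta = \theta Q_1 + (1-\theta) Q_2$ is a nonnegative, countably additive set function on $\mathscr{B}$ with total mass one, so $Q_\theta \in \Qcal_\beta$. Linearity of the integral in the measure gives
\begin{align}
\int \phi \, dQ_\theta = \theta x + (1-\theta) x'.
\end{align}
For indicator functions this is the definition of $Q_\theta$; it extends to simple functions by linearity, and to bounded measurable functions by monotone convergence (or uniform approximation). Therefore $\theta x + (1-\theta) x' \in \Scal$.

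Neither step is a real obstacle. The only point needing care is justifying that the integral is linear in the mixing measure, which is standard for bounded measurable integrands.
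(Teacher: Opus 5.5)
Your proof is correct and follows the paper's argument: boundedness comes from the boundedness of $t^N$ and $g$, and convexity comes from mixing the two representing probability measures and using that the integral is linear in the measure. Your observation that $\Scal \subseteq [0,1]^{\tilde L^N+1}$ makes the boundedness step slightly more explicit than in the paper, which only notes that the range $\{(t^N(\beta)^\top, g(\beta))^\top \colon \beta \in \Bcal\}$ is bounded.
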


\begin{proof}[Proof of \Cref{lemma:Scalboundedconvex}]
    Since the mappings $t^N$ and $g$ are bounded, the set 
    \begin{align}
        \left\lbrace(t^{N^\top}(\beta), g(\beta))^\top \colon \beta \in \Bcal \right\rbrace \subset\RR^{\tilde{L}^N+1}
    \end{align}
    is bounded. Let $(u_1^\top,v_1)^\top \in \Scal, (u_2^\top,v_2)^\top \in \Scal$ given by
    \begin{align}
        & (u_1^\top,v_1)^\top = \left(\int t^{N^\top}(\beta)^\top dQ_{\beta,1}(\beta), \int g(\beta)dQ_{\beta,1}(\beta)\right)^\top, \\
        \text{and } & (u_2^\top,v_2)^\top = \left(\int t^{N^\top}(\beta)^\top dQ_{\beta,2}(\beta), \int g(\beta)dQ_{\beta,2}(\beta) \right)^\top .
    \end{align}
    Let $\theta \in (0,1)$. We then have that
    \begin{align}
         \int t^{N^\top}(\beta) (\theta dQ_{\beta,1}(\beta) + (1-\theta)dQ_{\beta,2}(\beta)) &= \theta \int t^{N^\top}(\beta) dQ_{\beta,1}(\beta)  + (1-\theta) \int t^{N^\top}(\beta) dQ_{\beta,2}(\beta) \\
        & = \theta u_1 + (1-\theta) u_2, \\
        \text{and  }
        \int g(\beta) (\theta dQ_{\beta,1}(\beta) + (1-\theta)dQ_{\beta,2}(\beta)) &= \theta \int g(\beta) dQ_{\beta,1}(\beta)  + (1-\theta) \int g(\beta) dQ_{\beta,2}(\beta)\\
        &= \theta v_1 + (1-\theta) v_2,
    \end{align}
    which proves that $\theta (u_1^\top,v_1)^\top + (1-\theta) (u_2^\top,v_2)^\top \in \Scal$ since $\theta dQ_{\beta,1}(\beta) + (1-\theta)dQ_{\beta,2}(\beta) \in \Pcal(\Bcal)$, which is the desired claim.
\end{proof}

\begin{lemma}\label{lemma:b_is_interior}
Let $P^N \in \Mcal^{np,N}$ and suppose that $P^N$ is nondegenerate in the sense of \Cref{definition:nondegeneracy}, then $b^N(P^N)$ lies in the interior of $\Tcal$.
\end{lemma}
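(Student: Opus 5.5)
The plan is to use the nondegenerate maximizer $Q=\sum_{k=1}^{\tilde L^N+1}Q_k\delta_{\beta_k}$ of \Cref{definition:nondegeneracy} and show that the map from weights to moments is a local homeomorphism around $(Q_1,\ldots,Q_{\tilde L^N+1})$. Write $L=\tilde L^N$ and define the affine map
\begin{align}
F \colon \RR^{L+1} \to \RR^{L+1}, \quad w \mapsto M(Q) w = \left( \sum_{k=1}^{L+1} w_k t^N(\beta_k)^\top , \sum_{k=1}^{L+1} w_k \right)^\top .
\end{align}
Since $Q$ is feasible for $\textsc{Primal}(P^N)$, we have $F(Q_1,\ldots,Q_{L+1}) = (b^N(P^N)^\top,1)^\top$. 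Since $M(Q)$ is full rank, hence invertible, $F$ is a linear bijection with continuous inverse $w(c)=M(Q)^{-1}c$.

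First, given $b\in\RR^{L}$, I set $w(b)=M(Q)^{-1}(b^\top,1)^\top$. This is continuous in $b$, and $w(b^N(P^N))=(Q_1,\ldots,Q_{L+1})$ has strictly positive entries. By continuity there is $\epsilon>0$ such that $\|b-b^N(P^N)\|<\epsilon$ implies $w_k(b)>0$ for every $k$. The last row of $M(Q)$ forces $\sum_k w_k(b)=1$, so $w(b)\in\Delta_{L+1}$.

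Next, for any such $b$, the measure $\sum_k w_k(b)\delta_{\beta_k}\in\Qcal_\beta$ has $\int t^N\,d(\cdot)=\sum_k w_k(b)t^N(\beta_k)=b$, so $b\in\Tcal$. Hence the open ball of radius $\epsilon$ around $b^N(P^N)$ lies in $\Tcal$, which is exactly the claim that $b^N(P^N)$ is interior.

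I do not expect a genuine obstacle. The only points needing care are that invertibility of $M(Q)$ is exactly what makes the weights solvable for every nearby moment vector, and that the row of ones keeps the weights summing to one. \Cref{assumption:g_not_in_span} and realizability are not needed for this step; only nondegeneracy is used.
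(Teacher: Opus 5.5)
Your proposal is correct and follows essentially the same route as the paper: keep the atoms $\beta_1,\ldots,\beta_{\tilde L^N+1}$ fixed, solve for the weights through $M(Q)^{-1}$, and use strict positivity of $Q(0)$ together with the row of ones to get a probability vector reproducing each nearby moment vector. The only difference is that you get the ball in one step from continuity of $b\mapsto M(Q)^{-1}(b^\top,1)^\top$ at $b^N(P^N)$. The paper instead argues along each direction $\delta b$ and then makes the radius uniform by compactness of the unit sphere, so your version is slightly more direct.
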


\begin{proof}[Proof of \Cref{lemma:b_is_interior}]
Let $\delta b \in \RR^{\tilde{L}^N } \setminus \{0\}$ and $Q \colon \RR\rightarrow \RR^{\tilde{L}^N+1}$ be given for every $x\in \RR$ by 
\begin{align}
    Q(x) = M(Q_\beta)^{-1}\left(\begin{bmatrix}
b^N(P^N) \\1 \end{bmatrix}+x \begin{bmatrix}
\delta b\\ 0\end{bmatrix} \right) ,
\end{align}
where $M(Q_\beta)$ is as in \Cref{definition:nondegeneracy}. Since $P^N$ is nondegenerate, $M(Q_\beta)$ is invertible and
$Q(0)=(Q_l \colon l \in \{1,\dots, \tilde{L}^N+1\})>0$ componentwise, where $Q_l$'s are as in \Cref{definition:nondegeneracy}. By continuity of $Q$, there exists $x_0>0$ such that $Q(x)>0$ componentwise for any $x\in [0,x_0]$. Let $x$ be an arbitrary element of $(0,x_0)$. By definition of $M(Q_\beta)$, the last row of $M(Q_\beta)$ is a vector of ones, and therefore taking the last component of $M(Q_\beta) Q(x)$ gives that
\begin{align}
\sum_{l=1}^{\tilde{L}^N+1} Q_l(x)&=[M(Q_\beta) Q(x)]_{\tilde{L}^N+1} \\
& = \left[M(Q_\beta) M(Q_\beta)^{-1} \left(\begin{bmatrix}
b^N(P^N) \\ 1 \end{bmatrix}+x \begin{bmatrix}
\delta b \\ 0\end{bmatrix}\right)\right ]_{\tilde{L}^N+1}\\
& = \begin{bmatrix}
    b^N(P^N)+x\delta b\\ 1
\end{bmatrix}_{\tilde{L}^N+1} \\
& = 1 .
\end{align}
By definition, of $M(Q_\beta)$, for any $l = 1, \ldots, \tilde{L}^N$, the $l$-th row of $M(Q_\beta)$ is $(t^N_l(\beta_k) \colon k = 1, \ldots, \tilde{L}^N+1)$, and thus, 
\begin{align}
    b^N_l(P^N) + x \delta b_l &= \left[M(Q_\beta) M(Q_\beta)^{-1} \left(\begin{bmatrix}
b^N(P^N) \\1 \end{bmatrix}+x \begin{bmatrix}
\delta b\\ 0\end{bmatrix} \right) \right]_l\\
&= \left[M(Q_\beta) Q(x)\right]_l \\
& = \sum_{k=1}^{\tilde{L}^N+ 1} Q_k(x) t^N_l(\beta_k) .
\end{align}
Therefore, $b^N(P^N)+x\delta b$ is a convex combination of the vectors $t(\beta_1),\dots, t(\beta_{\tilde{L}^N+1})$, which implies that $b^N(P^N)+x\delta b \in \Tcal$.

It remains to make this argument uniform over directions. Define
\begin{align}
Q(x,u) = Q(0)+xM(Q_\beta)^{-1} \begin{bmatrix} u\\ 0 \end{bmatrix}.
\end{align}
Since $Q(0)>0$ componentwise and the unit sphere is compact, there exists
$r>0$ such that $Q(x,u)>0$
componentwise for any \(\|u\|=1\) and any $x\in[0,r]$. Thus, for any
$h$ such that $\|h\|\leq r$, choosing $u = h / \|h\|, x=\|h\|$ for $h\ne 0$ gives that $b^N(P^N)+h\in\Tcal$, hence the ball centered in $b^N(P^N)$ of radius $r$ belongs to $\Tcal$, which proves that $b^N(P^N)$ belongs to the interior of $\Tcal$.
\end{proof}

\begin{lemma}\label{lemma:b1b2existencedifferentvalues}
Suppose there exists $b_1$ such that there exist $\psi_{1,1}\neq\psi_{1,2}$ and $(b_1,\psi_{1,1}),(b_1,\psi_{1,2})\in \Scal$. Suppose that $b_0$ is in the interior of $\Tcal$, then there exist $\psi_{0,1}\neq \psi_{0,2}$ such that $(b_0,\psi_{0,1}) \in \Scal, (b_0,\psi_{0,2})\in \Scal$.
\end{lemma}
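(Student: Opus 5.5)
Since $b_0$ lies in the interior of $\Tcal$, I will write $b_0$ as a strict convex combination of $b_1$ and a third point, and then lift that combination to $\Scal$ using convexity (\Cref{lemma:Scalboundedconvex}). If $b_0=b_1$ the claim holds trivially with $\psi_{0,i}=\psi_{1,i}$, so I assume $b_0\neq b_1$. Because $b_0\in\mathrm{int}(\Tcal)$, there is $\epsilon>0$ such that
\begin{align}
b_2 = b_0 + \epsilon (b_0 - b_1)\in\Tcal .
\end{align}
Setting $\theta = \epsilon/(1+\epsilon)\in(0,1)$, a direct check gives
\begin{align}
b_0 = \theta b_1 + (1-\theta) b_2 .
\end{align}

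Since $b_2\in\Tcal$, there is some $Q_{\beta,2}\in\Qcal_\beta$ whose $t^N$-moment vector is $b_2$. Let $\psi_2 = \int g\, dQ_{\beta,2}$, so that $(b_2,\psi_2)\in\Scal$. By \Cref{lemma:Scalboundedconvex}, $\Scal$ is convex, so for $i=1,2$,
\begin{align}
\theta (b_1,\psi_{1,i}) + (1-\theta)(b_2,\psi_2) = \bigl(b_0,\ \theta\psi_{1,i} + (1-\theta)\psi_2\bigr) \in \Scal .
\end{align}
Equivalently, this point is realized by the mixture $\theta Q_{\beta,1,i} + (1-\theta) Q_{\beta,2}$.

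I then define $\psi_{0,i} = \theta\psi_{1,i} + (1-\theta)\psi_2$ for $i=1,2$. Their difference is $\psi_{0,1}-\psi_{0,2} = \theta(\psi_{1,1}-\psi_{1,2})$, which is nonzero because $\theta>0$ and $\psi_{1,1}\neq\psi_{1,2}$. This gives the required pair.

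The one step that needs care is choosing the point $b_2$ on the far side of $b_0$ from $b_1$ so that $\theta$ is strictly positive. This is exactly where interiority of $b_0$ is used, since it guarantees that a small extension of the segment from $b_1$ through $b_0$ stays inside $\Tcal$. Apart from that, the argument is just linearity of the moment maps.
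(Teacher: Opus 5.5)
Your proposal is correct and follows essentially the same route as the paper's proof: extend the segment from $b_1$ through $b_0$ to a point $b_0+\epsilon(b_0-b_1)\in\Tcal$ using interiority, write $b_0$ as the convex combination with weights $\epsilon/(1+\epsilon)$ and $1/(1+\epsilon)$, and lift both $\psi_{1,i}$ via convexity of $\Scal$, so the two second coordinates differ by $\tfrac{\epsilon}{1+\epsilon}(\psi_{1,1}-\psi_{1,2})\neq 0$. Your separate treatment of $b_0=b_1$ is harmless but unnecessary, since the same construction already works in that case.
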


\begin{proof}[Proof of \Cref{lemma:b1b2existencedifferentvalues}]
Since $b_0$ is in the interior of $\Tcal$, there exists $\epsilon>0$ such that $b_\epsilon = b_0 + \epsilon(b_0 - b_1) \in \Tcal$. Moreover, by definition of $\Tcal$, there exists $Q_\epsilon \in \Qcal_\beta$ such that $b_\epsilon = \int d Q_\epsilon(\beta) t(\beta)$.  Let $\psi_\epsilon$ be such that $(b_\epsilon,\psi_\epsilon)\in \Scal$. Such a $\psi_\epsilon$ exists because $b_\epsilon$ is feasible.  Then  $b_0= 1/(1+\epsilon) b_\epsilon + \epsilon / (1+\epsilon) b_1$. Therefore, by convexity of $\Scal$, 
\begin{align}
    \left(b_0, \frac{1}{1+\epsilon}\psi_\epsilon + \frac{\epsilon}{ 1+\epsilon}\psi_{1,1} \right), \left( b_0, \frac{1}{1+\epsilon}\psi_\epsilon +
\frac{\epsilon}{1+\epsilon}\psi_{1,2}
\right)
\in \Scal. 
\end{align}
The second coordinate of the two couples above are distinct because $\psi_{1,1}\ne\psi_{1,2}$.
\end{proof}

\begin{lemma}\label{lemma:dualitygap_g_inspan}
If $g \not\in \Span \{t^N_1,\dots, t^N_{\tilde{L}^N},1\}$, then there exists $b \in \Tcal$, $\psi_1, \psi_2 \in [0,1]$ such that $\psi_1 \neq \psi_2$ and $(b, \psi_1), (b, \psi_2) \in \Scal$.
\end{lemma}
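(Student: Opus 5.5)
We argue by contraposition: if no $b\in\Tcal$ admits two distinct values of $\int g\,dQ_\beta$ among measures with moment vector $b$, then $g$ lies in $\Span\{1,t^N_1,\ldots,t^N_{\tilde L^N}\}$. Under this hypothesis every fiber of $\Scal$ over $b\in\Tcal$ is a single point. So there is a function $\phi\colon\Tcal\to[0,1]$ with $\Scal=\{(b,\phi(b))\colon b\in\Tcal\}$. The values lie in $[0,1]$ because $g$ takes values in $[0,1]$. The plan is to show that $\phi$ is affine on $\Tcal$, and then to evaluate it at Dirac measures.

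\emph{Step 1 ($\phi$ is affine).} Take $b_1,b_2\in\Tcal$ and $\theta\in(0,1)$. By the convexity argument of \Cref{lemma:Scalboundedconvex}, the point $\theta(b_1,\phi(b_1))+(1-\theta)(b_2,\phi(b_2))$ belongs to $\Scal$. Its first coordinate is $\theta b_1+(1-\theta)b_2$. Since the fiber over that point is a singleton, we get
\begin{align}
\phi(\theta b_1+(1-\theta)b_2)=\theta\phi(b_1)+(1-\theta)\phi(b_2).
\end{align}
So $\phi$ is affine on the convex set $\Tcal$.

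\emph{Step 2 (extension to an affine function on $\RR^{\tilde L^N}$).} An affine function on a convex set extends to an affine function on its affine hull. We do this explicitly. Fix $b_0\in\Tcal$. Choose points $b_1,\ldots,b_m\in\Tcal$ such that the differences $b_j-b_0$ form a basis of $\mathrm{aff}(\Tcal)-b_0$. Define $\lambda$ on that subspace by linearity, and extend it arbitrarily to all of $\RR^{\tilde L^N}$. Affinity of $\phi$ on $\Tcal$ then gives $\phi(b)=\nu+\lambda^\top b$ for every $b\in\Tcal$, with $\nu=\phi(b_0)-\lambda^\top b_0$.

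This step is the main technical point, since $\Tcal$ need not be full-dimensional. In fact, linear independence of $1,t^N_1,\ldots,t^N_{\tilde L^N}$ should force $\mathrm{aff}(\Tcal)=\RR^{\tilde L^N}$. If not, there would be a nonzero $c$ with $c^\top t^N(\beta)$ constant in $\beta$, which contradicts that independence. Either way, the extension is harmless.

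\emph{Step 3 (conclusion).} Apply the representation to the Dirac mass $\delta_\beta$, which lies in $\Qcal_\beta$. The corresponding point of $\Scal$ is $(t^N(\beta),g(\beta))$. Hence $g(\beta)=\phi(t^N(\beta))=\nu+\lambda^\top t^N(\beta)$ for every $\beta\in\Bcal$. So $g\in\Span\{1,t^N_1,\ldots,t^N_{\tilde L^N}\}$, which contradicts the hypothesis.

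Therefore some $b\in\Tcal$ admits distinct $\psi_1,\psi_2$ with $(b,\psi_1),(b,\psi_2)\in\Scal$. Both values lie in $[0,1]$ because $0\le g\le 1$.
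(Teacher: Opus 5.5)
Your proof is correct, but it takes a different route from the paper's.

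\textbf{The paper's route.} The paper argues constructively. Since $1,t^N_1,\ldots,t^N_{\tilde L^N},g$ are linearly independent, it picks $\tilde L^N+2$ points at which their evaluation matrix $R$ is nonsingular. It takes a nonzero $c$ in the kernel of the $(t^N,1)$ block and notes that $\bm g^\top c\neq 0$. Splitting $c$ into its positive and negative parts gives two discrete probability measures $Q^+_\beta,Q^-_\beta$ with equal $t^N$-moments and different $g$-moments. This yields explicit witnesses supported on at most $\tilde L^N+2$ points. The cost is an implicit step the paper does not justify: linearly independent functions admit a nonsingular evaluation matrix at some finite set of points. The fact is standard.

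\textbf{Your route.} You argue by contraposition and convex geometry. Convexity of $\Scal$ (\Cref{lemma:Scalboundedconvex}) forces the fiber map to be affine on $\Tcal$ whenever fibers are singletons. Dirac masses then show that $g$ is affine in $t^N$. This avoids any point-selection lemma. It also proves the full dichotomy: fibers are singletons if and only if $g\in\Span\{1,t^N_1,\ldots,t^N_{\tilde L^N}\}$. It is, however, non-constructive and gives no bound on the support of the witnesses.

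\textbf{One compressed step.} In Step 2, the coefficients $\mu_j$ in $b-b_0=\sum_j\mu_j(b_j-b_0)$ can be negative or larger than one. So ``affinity gives'' should be spelled out as follows. Move the negative-coefficient terms to the other side, normalize by the common total weight, and compare the two resulting convex combinations of points of $\Tcal$. Each is handled directly by convexity of $\Scal$ together with the singleton-fiber property. This is standard and goes through.

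Your side remark that linear independence of $1,t^N_1,\ldots,t^N_{\tilde L^N}$ forces $\mathrm{aff}(\Tcal)=\RR^{\tilde L^N}$ is also correct, though not needed.
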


\begin{proof}[Proof of \Cref{lemma:dualitygap_g_inspan}] 
Suppose $g \not\in \Span \{1, t^N_1,\dots, t^N_{\tilde{L}^N}\}$. Then there exists a full rank matrix $R \in \RR^{(\tilde{L}^N + 2) \times (\tilde{L}^N+ 2)}$ of the form
\begin{align}
    R = \begin{bmatrix}
        \tilde R \\
        \bm{g}^\top
    \end{bmatrix},
    \quad \text{where} \quad \tilde R = 
    \begin{bmatrix}
     t^N_1(\beta_1) & \cdots & t^N_1(\beta_{\tilde{L}^N + 2}) \\
    \vdots & \ddots & \vdots \\
    t^N_{\tilde{L}^N}(\beta_1) & \cdots & t^N_{\tilde{L}^N}(\beta_{\tilde{L}^N + 2}) \\
    1 & \cdots & 1
    \end{bmatrix},
\end{align}
$\beta_1,\ldots,\beta_{\tilde{L}^N + 2} \in \Bcal$, and $\bm{g}^\top = [ g(\beta_1), \ldots, g(\beta_{\tilde{L}^N + 2})]$.
Since $\tilde R$ is in $\RR^{(\tilde{L}^N + 1) \times (\tilde{L}^N + 2)}$ and thus is not full column rank, there exists $c \in \RR^{\tilde{L}^N + 2}$, $c \neq 0$ such that $\tilde R c = 0$. Since $R$ is full rank, $c \neq 0$ and $\tilde R c = 0$, we must have that $\bm g^\top c \neq 0$. 

Let $\mu = \sum_{k \colon c_k > 0} c_k$. Then, since the last row on $\tilde R$ is a vector of ones, $\tilde R c = 0$ implies that $\sum_{k \colon c_k < 0} c_k = -\mu$. Let 
\begin{align}
    Q_\beta^+ = \frac{1}{\mu} \sum_{k \colon c_k > 0} c_k \delta_{\beta_k} \quad \text{and} \quad Q_\beta^- = -\frac{1}{\mu} \sum_{k \colon c_k < 0} c_k \delta_{\beta_k}.
\end{align}
Then $Q_\beta^+, Q_\beta^- \in \Pcal(\Bcal)$, $\int d Q_\beta^+ t  = \int d Q_\beta^- t$ and $\int d Q_\beta^+ g \neq \int d Q_\beta^- g$. Letting $b = \int dQ_\beta^+ t$, $\psi_1 = \int d Q_\beta^+ g$, $\psi_2 = \int d Q_\beta^- g$ yields the claim.
\end{proof}

\propositionstrongduality*

\begin{proof}[Proof of \Cref{proposition:strong_duality}]
Consider $\Scal$ and $\Tcal$ as defined in \eqref{equation:defScalTcal}. Let $P^N \in \Mcal^{np,N}$ and $\Psi^{D,N}(P^N)$ denote the value of $\textsc{Dual}(P^N)$. From weak duality, $\Psi^{D,N}(P^N) \geq \tilde{\Psi}^{+,N}(P^N)$. Therefore, it remains to show that $\Psi^{D,N}(P^N) \leq \tilde{\Psi}^{+,N}(P^N)$.

\paragraph{Step 1: Supporting hyperplane.} By \Cref{lemma:Scalboundedconvex}, $\Scal$ is convex and bounded. Hence $\overline{\Scal}$ is a nonempty compact convex subset of $\RR^{\tilde{L}^N+1}$. Since $P^N$ is nondegenerate, $\textsc{Primal}(P^N)$ admits a maximizer $Q_\beta^\star$, and therefore
\begin{align}
(b^N(P^N),\tilde{\Psi}^{+,N}(P^N))
= \left(\int_\Bcal t^N(\beta)dQ_\beta^\star(\beta),
\int_\Bcal g(\beta)dQ_\beta^\star(\beta)\right)
\in \Scal .
\end{align}
We now prove that this point lies on the boundary of $\overline{\Scal}$. Suppose the contrary. Then there exists $\varepsilon>0$ such that $\left(b^N(P^N),\tilde{\Psi}^{+,N}(P^N)+2\varepsilon\right)\in\overline{\Scal}$.

Let $(b_m,\psi_m)_{m\geq 1} \in \Scal^\NN$ be a sequence converging to
$(b^N(P^N), \tilde{\Psi}^{+,N}(P^N) + 2 \varepsilon)$. By \Cref{lemma:b_is_interior}, $b^N(P^N) \in \mathring{\Tcal}$. Hence, for any $m$ large enough, there exists $\theta_m\in(0,1)$ such that $\theta_m\to 1$ and
\begin{align}
\frac{b^N(P^N)-\theta_m b_m}{1-\theta_m} \in \Tcal .
\end{align}
By definition of $\Scal$ and $\Tcal$, for any $m$ large enough, there exists $\bar{\psi}_m$ such that $((b^N(P^N)-\theta_m b_m)/(1-\theta_m),\bar\psi_m )\in\Scal$. By convexity of $\Scal$, we have
\begin{align}
(1-\theta_m) \left(\frac{b^N(P^N)-\theta_m b_m}{1-\theta_m}, \bar{\psi}_m \right) + \theta_m (b_m, \psi_m) = \left(b^N(P^N),\theta_m\psi_m+(1-\theta_m) \bar\psi_m \right) \in \Scal .
\end{align}
Since $g$ takes values in $[0,1]$, $\bar\psi_m\in[0,1]$, and hence
\begin{align}
\theta_m\psi_m+(1-\theta_m)\bar\psi_m
\to \tilde{\Psi}^{+,N}(P^N)+2\varepsilon \text{ as } m \to \infty .
\end{align}
For any $m$ large enough, we thus have that
\begin{align}
\theta_m\psi_m+(1-\theta_m)\bar\psi_m
> \tilde{\Psi}^{+,N}(P^N),
\end{align}
which contradicts the definition of $\tilde{\Psi}^{+,N}(P^N)$. Therefore,
$(b^N(P^N),\tilde{\Psi}^{+,N}(P^N))$ lies on the boundary of $\overline{\Scal}$. The supporting hyperplane theorem \citep[2.5.2][]{boyd2004convex} applied to the closed convex set $\Bar{\Scal}$ at $(b^N(P^N), \tilde{\Psi}^{+,N} (P^N))$ gives that there exists $(\kappa,\tau) \in \RR^{\tilde{L}^N} \times \RR$, $(\kappa,\tau)\ne 0$, such that, for any $(b,\psi)\in\overline{\Scal}$,
\begin{align}\label{equation:supporting_hyperplane}
\kappa^\top b+\tau\psi \leq \kappa^\top b^N(P^N)+\tau\tilde{\Psi}^{+,N}(P^N) ,
\end{align}
and since $\Scal \subseteq \Bar{\Scal}$, \eqref{equation:supporting_hyperplane} holds for any $(b,\psi) \in \Scal$.

\paragraph{Step 2: Non-nullity of $\tau$.} Suppose that $\tau = 0$, then \eqref{equation:supporting_hyperplane} implies that $b^N(P^N)$ lies on the boundary $\partial \Tcal$ of $\Tcal$. Furthermore, no open ball centered at $b^N(P^N)$ lies fully within $\Tcal$, since for any $x>0$, $\kappa^\top (b^N(P^N)+x\kappa)>\kappa^\top b^N(P^N)$ because $\kappa\neq 0$ since $(\kappa, \tau)\neq 0$ and $\tau = 0$, therefore $b^N(P^N)\in \partial \Tcal$.

However, \Cref{lemma:b_is_interior} guarantees that if $P^N$ is nondegenerate in the sense of \Cref{definition:nondegeneracy}, $b^N(P^N)$ must lie in the interior of $\Tcal$, which is a contradiction. Therefore, we must have $\tau\neq 0$.

\paragraph{Step 3: Positivity of $\tau$.} By \Cref{assumption:g_not_in_span}, via \Cref{lemma:dualitygap_g_inspan}, there exists $b_1 \in \Tcal$ such that $(b_1,\psi_1), (b_1,\psi_2) \in \Scal$. Since $b^N(P^N)$ is in the interior of $\Tcal$, via \Cref{lemma:b1b2existencedifferentvalues}, there exist $\psi_{0,1}\ne \psi_{0,2}$ such that $(b^N(P^N), \psi_{0,1}), (b^N(P^N), \psi_{0,2}) \in \Scal$. Therefore, since $\psi_{0,1}$, and $\psi_{0,2}$ are distinct and such that $(b^N(P^N), \psi_{0,1}), (b^N(P^N), \psi_{0,2}) \in \Scal$, and since $\tilde{\Psi}^{+,N}(P^N) = \sup \{ \psi \in [0,1] \colon (b^N(P^N), \psi) \in \Scal\}$, at least one of the two scalar quantities  $\psi_{0,1}$ and $\psi_{0,2}$ must be strictly smaller than $\tilde{\Psi}^{+,N}(P^N)$. Without loss of generality, suppose it is $\psi_{0,1}$ that is strictly smaller than $\tilde{\Psi}^{+,N}(P^N)$. Therefore, from the supporting hyperplane inequality \eqref{equation:supporting_hyperplane}, $\tau(\psi_{0,1} - \tilde{\Psi}^{+,N}(P^N)) \leq 0$. Since $\psi_{0,1} - \tilde{\Psi}^{+,N}(P^N) < 0$, we must have $\tau \geq 0$. Since we have already proven non-nullity of $\tau$ in step 2, we thus have that $\tau > 0$.

\paragraph{Step 4: Strong duality and existence of a dual minimizer.} By \cref{equation:supporting_hyperplane}, we have that for any $(b, \psi) \in \Scal$
\begin{align}
    \kappa^\top b + \tau \psi \leq \kappa^\top b^N(P^N) + \tau \tilde{\Psi}^{+,N}(P^N) .
\end{align}
Let $\beta \in \Bcal$. Since $(t(\beta),g(\beta)) \in \Scal$, we have that
\begin{align}
    \kappa^\top t(\beta) + \tau g(\beta) \leq \kappa^\top b^N(P^N) + \tau \tilde{\Psi}^{+,N}(P^N) .
\end{align}
Dividing the above display by $\tau$, of which we have proven the positivity in step 3, and then rearranging yields that
\begin{align}
    g(\beta) \leq \lambda^\top t(\beta) + \nu ,
\end{align} 
with $\lambda(P^N) = -\kappa/\tau, \nu(P^N) = \tilde{\Psi}^{+,N}(P^N) + \kappa^\top b^N(P^N)/\tau$.
Therefore, $(\lambda(P^N), \nu(P^N))$ is feasible for $\textsc{Dual}(P^N)$ and we have
\begin{align}
    \lambda^\top b^N(P^N) + \nu = -\kappa^\top b^N(P^N) / \tau + \tilde{\Psi}^{+,N}(P^N) + \kappa^\top b^N(P^N) / \tau = \tilde{\Psi}^{+,N}(P^N) .
\end{align}
The reverse direction $\Psi^{D,N}(P^N)\geq \tilde{\Psi}^{+,N}(P^N)$ holds by weak duality. Therefore $\Psi^{D,N}(P^N) = \tilde{\Psi}^{+,N}(P^N)$ and strong duality holds. 

\paragraph{Step 5: Uniqueness of the dual optimizer.} For any $\lambda \in \RR^{\tilde{L}^N}$, $\nu \in \RR$, let
\begin{align}
    \Delta_{\lambda, \nu} \colon \Bcal \to \RR \; , \; \beta \mapsto \lambda^\top t(\beta) + \nu - g(\beta) .
\end{align}
Let $(\lambda, \nu)$ be feasible for $\Psi^{D,N}(P^N)$, that is, for any $\beta \in \Bcal, \Delta_{\lambda, \nu}(\beta) \geq 0$. Since $P^N$ is nondegenerate in the sense of \Cref{definition:nondegeneracy}, there exists $Q$ a solution of the primal problem $\textsc{Primal}(P^N)$ of the form $Q = \sum_{k=1}^{\tilde{L}^N + 1} Q_k \delta_{\beta_k}$ such that $Q_k > 0$ for every $k=1,\ldots,\tilde{L}^N+ 1$. Let $Q^\star$ be such. From primal feasibility, 
\begin{align}
    \int t(\beta) dQ^\star(\beta) = b^N(P^N) ,
\end{align}
and, from dual feasibility and integration against a non-negative measure, 
\begin{align}\label{equation:step5_first_refd_disp}
    \int g(\beta) dQ^\star(\beta) \leq \int (\lambda^\top t(\beta) + \nu)dQ^\star(\beta) = \lambda^\top b^N(P^N) + \nu .
\end{align}
Step 4 proves the existence of at least one dual minimizer. Suppose $(\lambda^\star_i, \nu^\star_i), i=1,2$ are two distinct dual minimizers. Specializing \eqref{equation:step5_first_refd_disp} to $(\lambda^\star_i, \nu^\star_i), i=1,2$ yields 
\begin{align}
    {\lambda_i^\star}^\top b^N(P^N) + \nu_i^\star  = \Psi^{D,N}(P^N) = \tilde{\Psi}^{+,N}(P^N) = \int g(\beta) dQ^\star(\beta),
\end{align}
where the first equality in the above display follows from dual-optimality, the second from strong duality, which we prove in step 4, and the last one from primal optimality of $Q^\star$.
Therefore, we have
\begin{align}
    0 = \int ({\lambda^\star_i}^\top t(\beta) + \nu^\star_i -g(\beta)) dQ^\star(\beta) = \sum_{k=1}^{\tilde{L}^N+ 1} Q^\star_k \Delta_{\lambda^\star_i, \nu^\star_i}(\beta_k) .
\end{align}
Since $Q^\star_k >0$ and $\Delta_{\lambda^\star_i, \nu^\star_i}(\beta_k)\geq 0$ for any $k =1, \ldots, \tilde{L}^N+ 1$, we have that $\Delta_{\lambda^\star_i, \nu^\star_i}(\beta_k) = 0$ for every $i=1,2$, $k=1,\ldots, \tilde{L}^N + 1$. Therefore, for any $k \in \{1, \ldots, \tilde{L}^N+1\}$, we have that
\begin{align}
    {\lambda_i^\star}^\top t(\beta_k) + \nu_i^\star = g(\beta_k) , \text{ for } i=1,2 ,
\end{align}
which can be rewritten as
\begin{align}
    M(Q^\star)^\top \begin{bmatrix}
    \lambda^\star_i \\ \nu^\star_i
\end{bmatrix} = \begin{bmatrix}
    g(\beta_1) \\ \vdots \\ g(\beta_{\tilde{L}^N+1})
\end{bmatrix} , \text{ for } i = 1,2,
\end{align}
and therefore
\begin{align}
    M(Q^\star)^\top \begin{bmatrix}
    \lambda^\star_2 - \lambda^\star_1 \\ \nu^\star_2 - \nu^\star_1
\end{bmatrix} = 0.
\end{align}
Since $M(Q^\star)$ is invertible, the two dual optimizers must be equal, which proves the uniqueness claim of this step.
\end{proof}

\subsection{Proof of \Cref{theorem:piecewise_linearity}}

\begin{definition}[Continuity over $\Mcal^{np,N}$]
    For any natural integer $d \geq 1$, say that a functional $f \colon \Mcal^{np,N} \to \RR^d$ is continuous if, for any $\epsilon > 0$, there exists $\eta > 0$ such that, if $P_1, P_2 \in \Mcal^{np,N}$ are such that $\|b^N(P_1) - b^N(P_2) \| \leq \eta$, then $\| f(P_1) - f(P_2) \| \leq \epsilon$, where the (slightly overloaded) notation $\| \cdot \|$ denotes the Euclidean norms over the respective Euclidean spaces. 
\end{definition}

\theorempiecewiselinearity*

\begin{proof}[Proof of \Cref{theorem:piecewise_linearity}]
Let $P^N$ be nondegenerate in the sense of \Cref{definition:nondegeneracy}, $\textsc{Primal}(P^N)$ admits a maximizer of the form $Q^\star = \sum_{k=1}^{\tilde{L}^N+ 1} Q^\star_k \delta_{\beta^\star_k}$, where, for every $k=1,\ldots,\tilde{L}^N+ 1$, $Q^\star_k > 0$  and $\beta^\star_k \in \Bcal$. Let $M(Q^\star)$ be the matrix collecting the constraints vectors evaluated at the atoms $\beta^\star_{k}$, $k = 1, \ldots,\tilde{L}^N+ 1$, as defined in \Cref{definition:nondegeneracy}. $M(Q^\star)$ is invertible by \Cref{definition:nondegeneracy}.

\paragraph{Step 1: Atoms at $P^N$ can realize $b^N(P)$ for $P$ in a neighborhood of $P^N$.} From \Cref{definition:nondegeneracy} at $P^N$ again, \Cref{lemma:b_is_interior} guarantees that $b^N(P^N)$ is in the interior of $\Tcal$. Therefore, from invertibility of $M(Q^\star)$, and continuity of $P \mapsto b^N(P)$, there exists a neighborhood $\Vcal \subseteq \Mcal^{np,N}$ of $P^N$ such that for any $P \in \Vcal$, there exists $Q = (Q_k \colon k = 1, \ldots, \tilde{L}^N+ 1)$ in the interior of the $\tilde{L}^N+ 1$-simplex (that is, for every $k=1,\ldots,\tilde{L}^N+ 1$, $Q_k > 0$) such that, for every $P \in \Vcal$, $b^N(P) = \sum_{k=1}^{\tilde{L}^N+1} Q_k t(\beta_k^\star)$.

\paragraph{Step 2: No dual feasibility slack at the atoms.} From the strong duality guarantee of \Cref{proposition:strong_duality}, there exists $(\lambda^\star_0, \nu^\star_0)$ feasible for $\textsc{Dual}(P^N)$ such that 
\begin{align}\label{equation:lambda0nu0star_realize_primal_value_at_Q0}
    \tilde{\Psi}^{+,N}(P^N) = {\lambda^\star_0}^\top b^N(P^N) + \nu^\star_0 . 
\end{align}
From optimality of $Q^\star$ for $\textsc{Primal}(P^N)$, $\tilde{\Psi}^{+,N}(P^N) = \sum_{k=1}^{\tilde{L}^N+ 1} Q^\star_k g({\beta_k^\star})$. From feasibility of $Q^\star$ for $\textsc{Primal}(P^N)$, $b^N(P^N) = \sum_{k=1}^{\tilde{L}^N+ 1} Q^\star_k t(\beta^\star_k)$. Combining the three equalities stated in this paragraph so far yields that 
\begin{align}
    \sum_{k=1}^{\tilde{L}^N+ 1} Q^\star_k \left(g(\beta^\star_k) - {\lambda^\star_0}^\top t(\beta^\star_k) - \nu^\star_0 \right) = 0 .
\end{align}
Since $(\lambda^\star_0, \nu^\star_0)$ is feasible for $\textsc{Dual}(P^N)$, $g(\beta^\star_k) - {\lambda^\star_0}^\top t(\beta^\star_k) - \nu^\star_0 \leq 0$ for every $k=1,\ldots, \tilde{L}^N+ 1$, and since $Q^\star_k> 0$ for every $k=1,\ldots,\tilde{L}^N+ 1$, we must have that $g(\beta^\star_k) = {\lambda^\star_0}^\top t(\beta^\star_k) + \nu^\star_0$ for every $k = 1,\ldots ,\tilde{L}^N+ 1$.

\paragraph{Step 3: Strong duality at $P$.} For every $P' \in \Mcal^{np,N}$, $\textsc{Primal}(P')$-feasible $Q$, $\textsc{Dual}(P')$-feasible $(\lambda, \nu)$, let $f^{\textsc{Primal}}_{P'}(Q)$ and $f^{\textsc{Dual}}_{P'}(\lambda, \nu)$ be the value of $\textsc{Primal}(P')$'s objective at $Q$ and the value of the $\textsc{Dual}(P')$'s objective at $(\lambda, \nu)$. 

Let $P \in \Vcal \subseteq \Mcal^{np,N}$, where $\Vcal$ is as in step 1. From step 1, there exists $Q$ feasible for $\textsc{Primal}(P)$ of the form $Q = \sum_{k=1}^{\tilde{L}^N+ 1} Q_k \delta_{\beta^\star_k}$, with $(Q_k)_{k \in [\tilde{L}^N+1]}$ in the interior of the $\tilde{L}^N+ 1$-simplex. We have that
\begin{align}
    f_P^{\textsc{Primal}}(Q) &= \sum_{k=1}^{\tilde{L}^N+ 1} Q_k g(\beta^\star_k) \\
    &= \sum_{k=1}^{\tilde{L}^N+ 1} Q_k \left( {\lambda_0^\star}^\top t(\beta^\star_k) + \nu^\star_0 \right) \\
    & = {\lambda_0^\star}^\top b^N(P) +  \nu^\star_0 \\
    & = f_P^{\textsc{Dual}}(\lambda_0^\star, \nu^\star_0).
\end{align}
where the first equality follows from the definition of $Q$, the second one follows from the conclusion of step 2, the third one from $\textsc{Primal}(P)$-feasibility of $Q$, and the last equality follows from the definition of $f_P^{\textsc{Dual}}$.
From, in this order, $\textsc{Primal}(P)$-feasibility of $Q$, weak duality, and $\textsc{Dual}(P)$-feasibility of $(\lambda_0^\star, \nu_0^\star)$, we have
\begin{align}
    f_P^{\textsc{Primal}}(Q) \leq \tilde{\Psi}^{+,N}(P) \leq \Psi^{D,N}(P) \leq f_P^{\textsc{Dual}}(\lambda_0^\star, \nu_0^\star).
\end{align}
Since the leftmost and rightmost quantities are equal from the before-last display,
\begin{align}\label{equation:lambda0nu0_strong_duality_at_Q}
    \tilde{\Psi}^{+,N}(P) =  \Psi^{D,N}(P) = {\lambda_0^\star}^\top b^N(P) + \nu_0^\star ,
\end{align}
that is, strong duality holds at $P$ and $(\lambda_0^\star, \nu_0^\star)$ is $\textsc{Dual}(P)$-optimal. It proves that $\lambda(P) = \lambda(P^N)$.

\paragraph{Step 4: uniqueness of the dual optimizer at $P$.} Uniqueness of dual optimizer at $P$ directly follows from step 5 in the proof of \Cref{proposition:strong_duality}.

\paragraph{Step 5: von Mises expansion.} Taking the difference of \eqref{equation:lambda0nu0star_realize_primal_value_at_Q0} and \eqref{equation:lambda0nu0_strong_duality_at_Q}, and using the fact that $\lambda_0^\star = \lambda(P)$ from step 4, yields
\begin{align}
    \tilde{\Psi}^{+,N}(P) - \tilde{\Psi}^{+,N}(P^N)= \lambda(P)^\top\{b^N(P)-b^N(P^N)\} ,
\end{align}
hence the result.
\end{proof}

\section{Proofs of the inference results}

Suppose that \Cref{assumption:g_not_in_span} holds. Let $N$ be a positive integer, $P^N \in \Mcal^{np,N}$ is nondegenerate in the sense of \Cref{definition:nondegeneracy}, and let $i\in[N]$. By \Cref{theorem:piecewise_linearity}, we have that the solution $(\lambda(P^N), \nu(P^N))$ exists and is unique. Consider $\RR$, and $\Bcal(\RR)$, the Borel $\sigma$-algebra over $\RR$. For any $A \in \Bcal(\RR)$, we write, for any $x \in \RR$, $\1_A\{x\}$ as the indicator function of $A$ at $x$. With a slight abuse of notation, for any $\{y\} \in \Bcal(\RR)$ a singleton, we use $\1\{x=y\}$ instead of $\1_{\{y\}}\{x\}$, for any $a \in \RR$, we use $\1\{a \leq x\}$ instead of $\1_{[a,+\infty)}\{x\}$, and respectively $\1\{a < x\}$, $\1\{a \geq x\}$, and $\1\{a > x\}$ instead of $\1_{(a,+\infty)}\{x\}$, $\1_{(-\infty, a]}\{x\}$, and $\1_{(-\infty, a)}\{x\}$.

Let $(Y_1^N,\ldots,Y_N^N) \sim P^N$, and let
\begin{align}\label{equation:ewlrjkwkle}
    X^N_i & = \frac{1}{\sqrt N} \frac{1}{e^N(a_i^N)} \sum_{l \colon a(l)=a_i^N}\lambda_l(P^N)\left\{\1 \{Y_i^N=y(l)\}-b_l^N(P^N)\right\} .
\end{align}
The Lindeberg condition for the sequence of random variables $\{X^N_i \colon i\in[N]\}_{N \geq 1}$ can be written, for $\epsilon>0$, as
\begin{align}
    \frac{1}{\sum_{i=1}^N \Var_{P^N}(X_i^N)} \sum_{i=1}^N \EE_{P^N} \left[ (X^N_i)^2 \1 \left\{|X^N_i|\geq \epsilon \sqrt{\sum_{i=1}^N \Var_{P^N}(X_i^N)}\right\} \right] \longrightarrow 0 \text{ as } N \to \infty .
\end{align}

\begin{lemma}\label{lemma:sum_variances}
Suppose that \Cref{assumption:g_not_in_span} holds. Let $N$ be a positive integer, $P^N\in\Mcal^{np,N}$ such that $P^N$ is realizable under $(\Qcal_\beta,s)$, nondegenerate in the sense of \Cref{definition:nondegeneracy}, and $(X^N_i)_{i \in [N]}$ defined as in \eqref{equation:ewlrjkwkle}. Then
\begin{align}
    \EE_{P^N}[X^N_i] & = 0 , \\
    \sum_{i=1}^N \Var_{P^N} (X_i^N) & = \sigma_N^2(P^N) .
\end{align}
\end{lemma}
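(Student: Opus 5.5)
The plan is a direct moment computation. Its one nontrivial input is that, under realizability, the marginal law of each $Y^N_i$ depends on $i$ only through its design $a^N_i$, and coincides with $Q^{np,N}_{Y\mid a}(P^N)(a^N_i,\cdot)$. Throughout, the index $l$ in \eqref{equation:ewlrjkwkle} ranges over $[\tilde L^N]$, the coordinates of $\lambda(P^N)$; that vector exists and is unique by \Cref{proposition:strong_duality}. On this range, $b^N_l(P^N)=\bar b^N_l(P^N)$.

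\textbf{Step 1: marginals.} Since $P^N$ is realizable, there is $Q_\beta$ with $P^N=M_{\Ycal^N}(P^{F,s,N}(Q_\beta))$. By the product representation \eqref{equation:prodtilde}, summing out all coordinates except the $i$-th gives
\begin{align}
Q^{np,N}_i(P^N)(y)=\int_\Bcal s(\beta,a^N_i,y)\,dQ_\beta(\beta).
\end{align}
This is the same computation as in the proof of \Cref{lemma:equivalence_matching_moments}, using $\sum_{y\in a}s(\beta,a,y)=1$. The right-hand side depends on $i$ only through $a^N_i$. Averaging over $i\in\couNA$ therefore changes nothing, so $Q^{np,N}_i(P^N)(y)=Q^{np,N}_{Y\mid a}(P^N)(a^N_i,y)$. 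In particular,
\begin{align}
\PP_{P^N}(Y^N_i=y(l))=\bar b^N_l(P^N)
\end{align}
for every $l$ with $a(l)=a^N_i$.

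\textbf{Step 2: mean.} By linearity and Step 1, each summand in \eqref{equation:ewlrjkwkle} has expectation $\lambda_l(P^N)\{b^N_l(P^N)-b^N_l(P^N)\}=0$, so $\EE_{P^N}[X^N_i]=0$.

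\textbf{Step 3: variance.} For a fixed $a=a^N_i$, the pairs $(a,y(l))$ with $a(l)=a$ carry distinct $y(l)$. Hence the indicators $\1\{Y^N_i=y(l)\}$ are mutually exclusive, and their pairwise products vanish. It follows that
\begin{align}
\Var_{P^N}\Bigl(\sum_{l\colon a(l)=a}\lambda_l\1\{Y^N_i=y(l)\}\Bigr)=\sum_{l\colon a(l)=a}b^N_l\lambda_l^2-\Bigl\{\sum_{l\colon a(l)=a}b^N_l\lambda_l\Bigr\}^2 .
\end{align}
Consequently, $\Var_{P^N}(X^N_i)$ equals this bracket multiplied by $1/\{N e^N(a)^2\}$. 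Grouping $i\in[N]$ by design value $a$, each group contributes $n_a$ identical terms. Since $n_a/\{N e^N(a)^2\}=1/e^N(a)$, summing over the groups yields exactly $\sigma^2_N(P^N)$ in \eqref{equation:definitionvarianceXNi}.

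The only step needing care is Step 1, namely justifying that the individual marginal $Q^{np,N}_i$ equals the design-averaged marginal $Q^{np,N}_{Y\mid a}$. Without realizability this can fail, since the marginals could vary within a design group. With realizability, it follows directly from the product form already computed in the proof of \Cref{lemma:equivalence_matching_moments}. No independence across $i$ is needed, because the claim concerns the sum of variances, not the variance of the sum.
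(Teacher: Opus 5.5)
Your proof is correct and follows the same route as the paper's. The paper likewise obtains the mean from $\PP_{P^N}(Y^N_i=y(l))=b^N_l(P^N)$, gets the variance from mutual exclusivity of the indicators, and sums by grouping with $n_a/\{N e^N(a)^2\}=1/e^N(a)$. Your Step 1 is more careful than the paper, which asserts this marginal identity ``by definition of $b^N_l$'' even though it needs realizability to guarantee that each $Q^{np,N}_i(P^N)$ coincides with the design average $Q^{np,N}_{Y\mid a}(P^N)(a^N_i,\cdot)$.
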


\begin{proof}[Proof of \Cref{lemma:sum_variances}]
Let $i\in[N]$. For any $l \in [\tilde{L}^N]$ such that $a(l) = a^N_i$, by definition of $b^N_l$, we have that
\begin{align}
    \PP_{P^N}(Y_i^N=y(l)) = Q^{np,N}_{Y\mid a}(P^N)(a(l),y(l)) = b_l^N(P^N) .
\end{align}
Therefore,
\begin{align}
    \EE_{P^N} \left[\sum_{l \colon a(l)=a} \lambda_l(P^N) \1\{Y_i^N=y(l)\}
    \right] & \nonumber = \sum_{l \colon a(l)=a} \lambda_l(P^N) \EE_{P^N}\left[\1\{Y_i^N=y(l)\} \right] \\
    &= \nonumber \sum_{l \colon a(l)=a} \lambda_l(P^N) \PP_{P^N} \{Y_i^N=y(l)\} \\
    &= \label{equation:rjelrkj1} \sum_{l \colon a(l)=a} \lambda_l(P^N) b_l^N(P^N).
\end{align}
Hence, by the definition of $X_i^N$, we have
\begin{align}
    X_i^N & = \frac{1}{\sqrt N} \frac{1}{e^N(a_i^N)} \sum_{l \colon a(l)=a_i^N}\lambda_l(P^N)\left\{\1 \{Y_i^N=y(l)\}-b_l^N(P^N)\right\} \\
    & = \frac{1}{\sqrt N} \frac{1}{e^N(a_i^N)} \left( \sum_{l \colon a(l)=a_i^N} \lambda_l(P^N) \1 \{Y_i^N=y(l)\}-\sum_{l \colon a(l)=a_i^N} \lambda_l(P^N) b_l^N(P^N) \right) \\
    & = \frac{1}{\sqrt N e^N(a_i^N)}
    \left( \sum_{l \colon a(l)= a_i^N} \lambda_l(P^N)\1 \{Y_i^N=y(l)\} - \EE_{P^N}\left[\sum_{l \colon a(l)=a} \lambda_l(P^N)\1 \{Y_i^N=y(l)\} \right] \right) ,
\end{align}
which proves that $\EE_{P^N}[X^N_i] = 0$. It follows that
\begin{align}
    \Var_{P^N}(X_i^N) = \frac{1}{N e^N(a^N_i)^2} \Var_{P^N}\left( \sum_{l \colon a(l) = a_i^N} \lambda_l(P^N)\1 \{Y_i^N=y(l)\} \right).
\end{align}
We now compute the variance on the right-hand side. First,
\begin{align}
    & \label{equation:rjelrkj2} \Var_{P^N}\left( \sum_{l \colon a(l)=a^N_i} \lambda_l(P^N)\1 \{Y_i^N=y(l) \} \right) \\
    & \nonumber \quad = \EE_{P^N}\left[ \left( \sum_{l \colon a(l)=a_i^N}\lambda_l(P^N) \1 \{Y_i^N=y(l)\} \right)^2 \right] - \EE_{P^N} \left[ \sum_{l \colon a(l) = a_i^N} \lambda_l(P^N)\1 \{Y_i^N=y(l)\} \right]^2 .
\end{align}
For fixed $a^N_i$, the events $\{Y_i^N=y(l)\}$ are mutually exclusive across $l$ such that $a(l)=a_i^N$. Thus, for $l\neq m$ such that $a(l)=a(m)=a^N_i$, we have
\begin{align}
    \1 \{Y_i^N=y(l)\}\1 \{Y_i^N=y(m)\} = 0 ,
\end{align}
which gives that
\begin{align}
    \left( \sum_{l \colon a(l)= a^N_i}\lambda_l(P^N) \1 \{Y_i^N=y(l)\}
    \right)^2 = \sum_{l \colon a(l)=a^N_i}\lambda_l(P^N)^2\1 \{ Y_i^N=y(l) \}.
\end{align}
Taking expectations gives
\begin{align}
    \EE_{P^N} \left[ \left( \sum_{l \colon a(l)=a}\lambda_l(P^N)\1 \{Y_i^N=y(l) \} \right)^2 \right]
    &= \sum_{l \colon a(l)=a^N_i}\lambda_l(P^N)^2
    \EE_{P^N} \left[\1 \{Y_i^N=y(l)\}\right] \\
    & = \sum_{l \colon a(l)=a^N_i} \lambda_l(P^N)^2
    \PP_{P^N}\{Y_i^N=y(l)\} \\
    & = \sum_{l \colon a(l)=a^N_i} b_l^N(P^N)\lambda_l(P^N)^2.
\end{align}
The expression of the second term on the right-hand side of \eqref{equation:rjelrkj2} is computed from \eqref{equation:rjelrkj1}, and we obtain that
\begin{align}
    \Var_{P^N}(X_i^N) & = \frac{1}{N e^N(a^N_i)^2} \Var_{P^N}\left( \sum_{l \colon a(l) = a_i^N} \lambda_l(P^N)\1 \{Y_i^N=y(l)\} \right) \\
    & = \frac{1}{N\{e^N(a^N_i)\}^2}\left[\sum_{l \colon a(l)=a^N_i} b_l^N(P^N)\lambda_l(P^N)^2-\left\{\sum_{l \colon a(l)=a^N_i } b_l^N(P^N) \lambda_l(P^N) \right\}^2 \right] .
\end{align}
Summing over $i \in [N]$ yields
\begin{align}
    & \sum_{i=1}^N \Var_{P^N}(X_i^N) \\
    & \quad = \sum_{i=1}^N \frac{1}{N\{e^N(a^N_i)\}^2}\left[\sum_{l \colon a(l)=a^N_i} b_l^N(P^N)\lambda_l(P^N)^2-\left\{\sum_{l \colon a(l)=a^N_i } b_l^N(P^N) \lambda_l(P^N) \right\}^2 \right] \\
    & \quad = \sum_{a\in\{a_1^N,\ldots,a_N^N\}}\sum_{i:a_i^N=a}
    \frac{1}{Ne^N(a)^2} \left[
    \sum_{l \colon a(l) = a} b_l^N (P^N) \lambda_l(P^N)^2-\{\sum_{l \colon a(l)=a} b_l^N(P^N) \lambda_l(P^N)\}^2\right] \\
    & \quad = \sum_{a\in\{a_1^N,\ldots,a_N^N\}} \frac{n_a}{N \{e^N(a) \}^2} \left[\sum_{l \colon a(l)=a} b_l^N(P^N) \lambda_l (P^N)^2 - \left\{\sum_{l \colon a(l)=a}b_l^N(P^N)\lambda_l(P^N) \right\}^2 \right] \\
    & \quad = \sum_{a\in \{ a_1^N,\ldots, a_N^N \}} \frac{1}{e^N(a)} \left[ \sum_{l \colon a(l)=a} b_l^N(P^N) \lambda_l (P^N)^2 - \left\{ \sum_{l \colon a(l)=a} b_l^N(P^N) \lambda_l(P^N) \right\}^2 \right] \\
    & \quad = \sigma_N^2(P^N) ,
\end{align}
by definition of $\sigma_N^2(P^N)$.
\end{proof}

\begin{lemma}
\label{lemma:lindeberg_condition_satisfied}
Suppose that \Cref{assumption:g_not_in_span} holds. Let $(P^N)_{N \geq 1}$ be a sequence such that, for any positive integer $N$, $P^N \in \Mcal^{np,N}$ is realizable under $(\Qcal_\beta,s)$ in the sense of \Cref{definition:correct_fit}, nondegenerate in the sense of \Cref{definition:nondegeneracy}, and that
\Cref{assumption:stabilizing_design}, \Cref{assumption:Pconverges}, and \Cref{assumption:nondegenerate_limiting_variance} hold. Then the sequence of random variables $\{X^N_i \colon i \in[N]\}_{N \geq 1}$ satisfies the Lindeberg condition: for any $\epsilon>0$,
\begin{align}
    \frac{1}{\sum_{i=1}^N \Var_{P^N}(X_i^N)} \sum_{i=1}^N \EE_{P^N} \left[ (X^N_i)^2
        \1 \left\{|X^N_i|\geq \epsilon \sqrt{\sum_{i=1}^N \Var_{P^N}(X_i^N)}\right\} \right] \longrightarrow 0 \text{ as } N \to \infty.
\end{align}
\end{lemma}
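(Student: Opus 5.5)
The plan is to show that each $X^N_i$ is uniformly bounded by a constant times $N^{-1/2}$, while the total variance $\sum_{i=1}^N \Var_{P^N}(X_i^N)$ stays bounded away from zero. The indicator in the Lindeberg sum then vanishes identically for all $N$ large enough.

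\textbf{Step 1: uniform bound on $X^N_i$.} By \Cref{assumption:stabilizing_design}, for $N\geq N_0$ the design takes values in the fixed finite set $\{a_1,\ldots,a_M\}$, and $e^N(a)\to e(a)>0$. Hence there exist $N'\geq N_0$ and $\underline{e}>0$ such that $e^N(a)\geq \underline{e}$ for every $a$ and every $N\geq N'$. By \Cref{assumption:Pconverges}, for $N\geq N_1$ we have $\lambda(P^N)=\lambda$ and $b^N(P^N)=b$, with $\lambda,b$ fixed. (The dual minimizer exists and is unique by \Cref{proposition:strong_duality}, using nondegeneracy and \Cref{assumption:g_not_in_span}.) Indicators and the entries of $b$ lie in $[0,1]$, and the number of indices $l$ is $\tilde L$, which is fixed. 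It follows that
\begin{align}
|X^N_i|\leq \frac{1}{\sqrt N}\,\frac{1}{\underline e}\sum_{l=1}^{\tilde L}|\lambda_l| =: \frac{C}{\sqrt N}
\end{align}
for all $i\in[N]$ and all $N\geq \max(N',N_1)$.

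\textbf{Step 2: the variance is bounded below.} By \Cref{lemma:sum_variances}, $\sum_{i}\Var_{P^N}(X_i^N)=\sigma_N^2(P^N)$. For large $N$, substituting $b^N(P^N)=b$ and $\lambda(P^N)=\lambda$ into \eqref{equation:definitionvarianceXNi} gives
\begin{align}
\sigma_N^2(P^N)=\sum_{a}\frac{1}{e^N(a)}\Big[\sum_{l\colon a(l)=a}b_l\lambda_l^2-\Big\{\sum_{l\colon a(l)=a}b_l\lambda_l\Big\}^2\Big].
\end{align}
The bracketed terms are constant in $N$, and $1/e^N(a)\to 1/e(a)$. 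So $\sigma_N^2(P^N)$ converges to the limiting quantity in \Cref{assumption:nondegenerate_limiting_variance}, which is strictly positive. Therefore $\sigma_N^2(P^N)\geq \sigma^2/2>0$ for all sufficiently large $N$, where $\sigma^2$ denotes that limit.

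\textbf{Step 3: conclusion.} Fix $\epsilon>0$. For $N$ large, $C/\sqrt N<\epsilon\,\sigma/\sqrt2\leq \epsilon\sqrt{\sigma_N^2(P^N)}$. The event $\{|X^N_i|\geq \epsilon\sqrt{\sum_i\Var(X_i^N)}\}$ is then empty for every $i$, so the Lindeberg sum equals zero for all large $N$, and the convergence follows.

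The only delicate point is the bookkeeping in Steps 1 and 2. The constants $\lambda$ and $b$ must be fixed for large $N$, which requires the dual minimizer to be unique so that \Cref{assumption:Pconverges} applies. The design frequencies must be bounded below, which is where the strict positivity of $e(a)$ in \Cref{assumption:stabilizing_design} is used.
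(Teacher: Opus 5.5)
Your proposal is correct and follows essentially the same argument as the paper. You bound $|X^N_i|\leq C/\sqrt N$ using the eventual lower bound on $e^N(a)$ and the eventually constant $\lambda$, and you show that $\sigma_N^2(P^N)$ converges to the positive limit in \Cref{assumption:nondegenerate_limiting_variance}, so the indicator vanishes for large $N$. You also make explicit that uniqueness of the dual minimizer, needed to invoke \Cref{assumption:Pconverges}, follows from \Cref{proposition:strong_duality}; the paper leaves this step implicit.
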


\begin{proof}[Proof of \Cref{lemma:lindeberg_condition_satisfied}]
Let $\epsilon >0$. From \Cref{lemma:sum_variances}, we have that for any positive integer $N, P^N \in \Mcal^{np,N}$
    \begin{align}
        & \sum_{i=1}^N \Var_{P^N}(X_i^N) \\
        & \quad = \sum_{a\in\{a_1,\ldots,a_M\}} \frac{1}{e^N(a)}\left[ \sum_{l \colon a(l)=a} b_l^N(P^N)\lambda_l(P^N)^2-\left\{\sum_{l \colon a(l) = a} b_l^N(P^N)\lambda_l(P^N) \right\}^2 \right] .
    \end{align}
By \Cref{assumption:Pconverges}, there exists a positive integer $N_0$ such that for any $N \geq N_0$, we have that $b^N(P^N) = b, \lambda(P^N) = \lambda$. Therefore, for any $N \geq N_0$
\begin{align}
        \sum_{i=1}^N \Var_{P^N}(X_i^N) & = \sum_{a\in\{a^N_1,\ldots,a^N_N\}} \frac{1}{e^N(a)}\left[ \sum_{l \colon a(l)=a} b_l \lambda_l^2-\left\{\sum_{l \colon a(l)=a} b_l\lambda_l \right\}^2 \right] ,
    \end{align}
and by \Cref{assumption:stabilizing_design}, we have the existence of $a_1,\ldots,a_M$ such that for any $N$ large enough, $\{a^N_1, \ldots, a^N_N\} = \{a_1,\ldots,a_M\}$ and $e^N(a) \to e(a)>0 \text{ as } N \to \infty$ for any $a\in\{a_1,\ldots,a_M\}$. Consequently, we have that
\begin{align}\label{equation:rjerioeriu}
    \sum_{i=1}^N \Var_{P^N}(X_i^N) \to \sum_{a\in\{a_1,\ldots,a_M\}}
    \frac{1}{e(a)}
    \left[\sum_{l \colon a(l)=a} b_l\lambda_l^2 - \left\{\sum_{l \colon a(l)=a} b_l\lambda_l \right\}^2 \right] \text{ as } N \to \infty ,
\end{align}
and thus \Cref{assumption:nondegenerate_limiting_variance} ensures that
\begin{align}
    \sum_{a\in\{a_1,\ldots,a_M\}}
    \frac{1}{e(a)}
    \left[\sum_{l \colon a(l)=a} b_l\lambda_l^2 - \left\{\sum_{l \colon a(l)=a} b_l\lambda_l \right\}^2 \right]  > 0 .
\end{align}
Therefore, there exists $M_0 >0$ such that for any $N$ large enough, we have
\begin{align}
    \sum_{i=1}^N \Var_{P^N}(X_i^N) \geq M_0 >0 .
\end{align}
Moreover, by \Cref{assumption:stabilizing_design}, there exists $c>0$ such that,
for any $N$ large enough and any $a\in\{a_1,\ldots,a_M\}, e^N(a)\geq c$, and $\Card(\pairay^N) = L$ is fixed, and $\Card\{l \colon a(l)=a_i^N\}$ too, for any $N \geq 1, i \in [N]$. By \Cref{assumption:Pconverges}, for any $l \in [\tilde{L}]$, the sequence
$\{\lambda_l(P^N)\}_N$ is bounded. Therefore, there exists $C<\infty$ such that, for any $N$ large enough and any $i\in[N]$,
\begin{align}
    |X^N_i| \leq \frac{1}{\sqrt N}
    \frac{1}{e^N(a_i^N)} \sum_{l \colon a(l)=a_i^N} |\lambda_l(P^N)| \underbrace{\left|\1\{Y_i^N=y(l)\} - b_l^N(P^N) \right|}_{\leq 1} \leq \frac{C}{\sqrt{N}} .
\end{align}
For $N$ large enough, we have that $C / \sqrt{N} < \epsilon \sqrt{M_0}$, and consequently for any $i \in [N]$
\begin{align}
    |X^N_i| \leq \frac{C}{\sqrt{N}} < \epsilon \sqrt{M_0} \leq \epsilon \sqrt{\sum_{i=1}^N \Var_{P^N}(X_i^N)} .
\end{align}
and in that case, we have that
\begin{align}
    \1 \left\{|X^N_i|\geq \epsilon \sqrt{\sum_{i=1}^N \Var_{P^N}(X_i^N)}\right\} = 0.
\end{align}
We can thus conclude that
\begin{align}
    \frac{1}{\sum_{i=1}^N \Var_{P^N}(X_i^N)} \sum_{i=1}^N \EE_{P^N}
    \left[ (X^N_i)^2 \1\left\{|X^N_i|\geq \epsilon \sqrt{\sum_{i=1}^N \Var_{P^N}(X_i^N)}\right\} \right] \longrightarrow 0 \text{ as } N \to \infty ,
\end{align}
which proves the Lindeberg condition.
\end{proof}

\begin{lemma}\label{lemma:bN_empirical_consistency}
Suppose that \Cref{assumption:stabilizing_design}, and \Cref{assumption:Pconverges} hold.
Let $P^N \in \Mcal^{np,N}$ such that $P^N$ is realizable under $(\Qcal_\beta, s)$ in the sense of \Cref{definition:correct_fit}. Let $(Y^N_1,\ldots, Y^N_N)\sim P^N$ and $P_N = \delta_{(Y^N_1, \ldots, Y^N_N)}$. Then
\begin{align}
    b^N(P_N)-b^N(P^N)=o_p(1).
\end{align}
\end{lemma}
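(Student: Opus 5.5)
The plan is to write each coordinate of $b^N(P_N)-b^N(P^N)$ as an average of independent, bounded, centered indicators and apply Chebyshev's inequality. Fix $l\in[\tilde L]$ and let $a=a(l)$. By definition of $Q^{np,N}_{Y\mid a}$, evaluated at the Dirac mass $P_N=\delta_{(Y^N_1,\ldots,Y^N_N)}$, the $i$-th marginal $Q^{np,N}_i(P_N)$ is the point mass at $Y^N_i$. Hence
\begin{align}
b^N_l(P_N)=\frac{1}{n_a}\sum_{i\in \couNA}\1\{Y^N_i=y(l)\}.
\end{align}
By the computation in the proof of \Cref{lemma:sum_variances}, $b^N_l(P^N)=\frac{1}{n_a}\sum_{i\in\couNA}\PP_{P^N}(Y^N_i=y(l))$. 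Therefore $b^N_l(P_N)-b^N_l(P^N)$ is an average of $n_a$ centered terms, each bounded by $1$ in absolute value.

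Next I would use realizability. By \Cref{definition:correct_fit} and the product representation following it, the $Y^N_i$ are independent under $P^N$. So the centered terms are independent, and
\begin{align}
\Var_{P^N}\{b^N_l(P_N)\}\leq \frac{1}{4n_a}.
\end{align}
Chebyshev's inequality then gives, for any $\epsilon>0$,
\begin{align}
\PP_{P^N}\left(|b^N_l(P_N)-b^N_l(P^N)|\geq\epsilon\right)\leq \frac{1}{4n_a\epsilon^2}.
\end{align}

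It remains to show $n_a\to\infty$ uniformly over the finitely many cells. By \Cref{assumption:stabilizing_design}, for $N\geq N_0$ the design set equals $\{a_1,\ldots,a_M\}$. Also $n_a=Ne^N(a)$ with $e^N(a)\to e(a)>0$, so $n_a\geq Ne(a)/2$ for all $N$ large enough. Since $\tilde L$ is fixed, a union bound over $l\in[\tilde L]$ gives $\|b^N(P_N)-b^N(P^N)\|=o_p(1)$. \Cref{assumption:Pconverges} is not needed for this argument beyond fixing $\tilde L$ and the ordering of the cells.

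The only delicate point is the independence across $i$. It relies on realizability, since $\Mcal^{np,N}$ alone allows arbitrary dependence between the $Y^N_i$; with dependence the variance bound could fail. Realizability supplies exactly the product form needed. Everything else is routine.
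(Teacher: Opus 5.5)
Your proposal is correct and matches the paper's proof step for step. Both write $b^N_l(P_N)$ as an empirical cell frequency, use independence across units from realizability to get the variance bound $1/(4n_a)$, apply Chebyshev's inequality, and obtain $n_a\to\infty$ from \Cref{assumption:stabilizing_design}; your remark that \Cref{assumption:Pconverges} is never actually used also holds for the paper's argument.
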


\begin{proof}[Proof of \Cref{lemma:bN_empirical_consistency}]
Let $l\in\{1,\ldots,\tilde{L}^N\}$. By definition,
\begin{align}
b_l^N(P_N) = \frac{1}{n_{a(l)}}\sum_{i \colon a^N_i = a(l)}\1\{Y^N_i=y(l)\}.
\end{align}
Since $P^N$ is realizable under $(\Qcal_\beta, s)$, there exists $Q_\beta^\star$ such that
$P^N=M_{\Ycal^N}(P^{F,s,N}(Q_\beta^\star))$. Hence, for every $i \in [N]$ such that $a^N_i = a$,
\begin{align}
P^N(y^N_i=y)
= \int_\Bcal s(\beta,a,y)dQ_\beta^\star(\beta) = b_l^N(P^N) ,
\end{align}
hence $\EE_{P^N}[b_l^N (P_N)] = b_l^N(P^N)$. 

Moreover, by independence of $Y^N_1, \ldots, Y^N_N$ under
$M_{\Ycal^N}(P^{F,s,N}(Q_\beta^\star))$, we have
\begin{align}
\Var_{P^N}\{b_l^N(P_N)\}
= \frac{1}{n_a^2}\sum_{i \colon a^N_i = a(l)}
\Var_{P^N}(\1\{y^N_i=y\}) \leq \frac{1}{4n_a} \to 0 \text{ as } N \to \infty ,
\end{align}
since $n_a \to \infty$ as $N \to \infty$ by \Cref{assumption:stabilizing_design}. Thus $b_l^N(P_N)-b_l^N(P^N)=o_p(1)$ by Chebyshev's inequality. Since
$\tilde{L}^N = \tilde{L} <\infty$ for any $N$ large enough, the componentwise result implies
$\|b^N(P_N)-b^N(P^N)\| = o_p(1)$.
\end{proof}

\begin{lemma}\label{lemma:ratio_variances}
Suppose that \Cref{assumption:g_not_in_span},
\Cref{assumption:stabilizing_design},
\Cref{assumption:Pconverges}, \Cref{assumption:stability_dual_N_grows}, and \Cref{assumption:nondegenerate_limiting_variance} hold. Let
$(P^N)_{N\geq 1}$ be a sequence such that, for each $N$,
$P^N\in\Mcal^{np,N}$ is realizable under $(\Qcal_\beta,s)$ in the sense of
\Cref{definition:correct_fit}, and is nondegenerate in the sense of
\Cref{definition:nondegeneracy}. Then $\sigma_N(P_N)$ is well-defined with
probability tending to one and
\begin{align}
    \frac{\sigma_N(P_N)}{\sigma_N(P^N)}
    \xrightarrow{p} 1
    \quad \text{as } N\to\infty .
\end{align}
\end{lemma}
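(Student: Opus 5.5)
The plan is to reduce the ratio statement to a statement about the two inputs of $\sigma_N^2$, namely the moment vector $b^N(\cdot)$ and the dual slope $\lambda(\cdot)$. We then show that, with probability tending to one, the empirical slope coincides exactly with the population slope, while the empirical moments are consistent.

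\textbf{Step 1: the empirical slope equals the population slope on a high-probability event.} Let $N \geq \max\{N_0,N_1,N_2\}$, with $N_0,N_1,N_2$ from \Cref{assumption:stabilizing_design,assumption:Pconverges,assumption:stability_dual_N_grows}. Since $P^N$ is nondegenerate and \Cref{assumption:g_not_in_span} holds, \Cref{proposition:strong_duality} shows that $\textsc{Dual}(P^N)$ has a unique minimizer. \Cref{assumption:Pconverges} then gives $b^N(P^N)=b$ and $\lambda(P^N)=\lambda$. Define the event
\begin{align}
E_N=\{\|b^N(P_N)-b^N(P^N)\|<\eta\},
\end{align}
where $\eta$ is as in \Cref{assumption:stability_dual_N_grows}. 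By \Cref{lemma:bN_empirical_consistency}, $\PP_{P^N}(E_N)\to 1$. On $E_N$, \Cref{assumption:stability_dual_N_grows} applied with $\bar P^N=P_N$ guarantees two things: $\textsc{Dual}(P_N)$ admits a unique minimizer, so $\sigma_N(P_N)$ is well defined, and $\lambda(P_N)=\lambda(P^N)=\lambda$. This already proves the well-definedness claim.

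\textbf{Step 2: continuity of the variance formula.} On $E_N$ the only remaining randomness in $\sigma_N^2(P_N)$ is $b^N(P_N)$. Define
\begin{align}
F_N(\tilde b)=\sum_{a\in\{a_1,\ldots,a_M\}}\frac{1}{e^N(a)}\Big[\sum_{l\colon a(l)=a}\tilde b_l\lambda_l^2-\Big\{\sum_{l\colon a(l)=a}\tilde b_l\lambda_l\Big\}^2\Big].
\end{align}
On $E_N$ we then have $\sigma_N^2(P_N)=F_N(b^N(P_N))$ and $\sigma_N^2(P^N)=F_N(b)$.

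By \Cref{assumption:stabilizing_design}, there is $c>0$ with $e^N(a)\geq c$ for all $N$ large. The functions $F_N$ are polynomials of degree two in $\tilde b$ with coefficients bounded uniformly in $N$. They are therefore uniformly Lipschitz on the bounded set $[0,1]^{\tilde L}$, so
\begin{align}
|F_N(b^N(P_N))-F_N(b)|\leq C\|b^N(P_N)-b\|=o_p(1),
\end{align}
again by \Cref{lemma:bN_empirical_consistency}.

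\textbf{Step 3: the denominator stays away from zero.} As in the proof of \Cref{lemma:lindeberg_condition_satisfied}, \Cref{assumption:stabilizing_design,assumption:nondegenerate_limiting_variance} give $F_N(b)\to\sigma^2>0$. Hence $\sigma_N^2(P^N)\geq \sigma^2/2$ for large $N$.

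Combining the three steps, $\sigma_N^2(P_N)/\sigma_N^2(P^N)\to 1$ in probability, with the behaviour off $E_N$ negligible since $\PP_{P^N}(E_N)\to 1$. Taking square roots, which is continuous at $1$, gives $\sigma_N(P_N)/\sigma_N(P^N)\xrightarrow{p}1$.

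One indexing subtlety must be checked: $\sigma_N^2$ sums over $l\colon a(l)=a$, which should be read within $[\tilde L]$, and the ordering is fixed consistently for $N\ge N_0$.

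The main obstacle is Step 1. Consistency of $b^N(P_N)$ alone does not make $\lambda(P_N)$ consistent, because the dual solution map may be discontinuous. The argument relies on \Cref{assumption:stability_dual_N_grows} supplying a neighbourhood radius $\eta$ that is uniform in $N$; without that uniformity the event $E_N$ could shrink with $N$. Given the assumption as stated, the step goes through directly.
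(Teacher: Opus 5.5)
Your proposal is correct and takes the same route as the paper. On the event $\{\|b^N(P_N)-b^N(P^N)\|<\eta\}$, whose probability tends to one by \Cref{lemma:bN_empirical_consistency}, \Cref{assumption:stability_dual_N_grows} gives $\lambda(P_N)=\lambda$; consistency of $b^N(P_N)$, the lower bound on $e^N(a)$ and the positive limit of $\sigma_N^2(P^N)$ then yield the ratio result through the continuous mapping theorem. Your explicit uniform Lipschitz bound on $F_N$, and your point that $\eta$ must be the uniform radius supplied by the assumption rather than an arbitrary $\eta>0$ as the paper's wording suggests, are slightly more careful than the paper's write-up.
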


\begin{proof}[Proof of \Cref{lemma:ratio_variances}]
To ensure that all the objects are defined, for any $\bar{P}^N \in \Mcal^{np,N}$ if $\textsc{Dual}(\bar P^N)$ does not admit a unique minimizer, we set $\sigma_N^2(\bar P^N)=1$. From \Cref{lemma:sum_variances} and \eqref{equation:rjerioeriu}, we have that
\begin{align}\label{equation:dreowriuwieoru}
\sigma_N^2(P^N) 
&\to \sum_{a\in\{a_1,\ldots,a_M\}}
\frac{1}{e(a)} \left[ \sum_{l\colon a(l)=a} b_l\lambda_l^2 - \left\{ \sum_{l\colon a(l)=a} b_l\lambda_l \right\}^2 \right] >0 \text{ as } N \to \infty ,
\end{align}
From \Cref{lemma:bN_empirical_consistency}, we have that $\|b^N(P_N) - b^N(P^N)\| \to 0$ in probability as $N \to \infty$, hence for any $\eta >0$, we have $\PP_{P^N}
(\|b^N(P_N)-b^N(P^N)\|<\eta) \to 1$ as $N \to \infty$. On the event
$\|b^N(P_N)-b^N(P^N)\|<\eta$,
\Cref{assumption:stability_dual_N_grows} gives
\begin{align}
\lambda(P_N)=\lambda(P^N).
\end{align}
Thus, for $N$ large enough, with probability tending to one,
\begin{align}
\sigma_N^2(P_N)-\sigma_N^2(P^N)
&= \sum_{a\in\{a_1,\ldots,a_M\}}
\frac{1}{e^N(a)} \Bigg[ \sum_{l\colon a(l)=a}
\left\{ b_l^N(P_N)-b_l^N(P^N) \right\} \lambda_l(P^N)^2 \\
&\qquad - \left\{ \sum_{l\colon a(l)=a}
b_l^N(P_N)\lambda_l(P^N)
\right\}^2 + \left\{ \sum_{l\colon a(l)=a} b_l^N(P^N)\lambda_l(P^N) \right\}^2 \Bigg] \\
&=o_p(1),
\end{align}
because $\tilde L^N=\tilde L$ for all $N$ large enough,
$\lambda(P^N)=\lambda$, $e^N(a)$ is bounded away from zero, and
$b_l^N(P_N)-b_l^N(P^N)=o_p(1)$ for every $l\in[\tilde L]$.
Consequently,
\begin{align}
\frac{\sigma_N^2(P_N)}{\sigma_N^2(P^N)}
&= 1+ \frac{\sigma_N^2(P_N)-\sigma_N^2(P^N)}{\sigma_N^2(P^N)} \xrightarrow{p}1.
\end{align}
Since $\sigma_N(P^N)>0$ by \eqref{equation:dreowriuwieoru}, the continuous mapping theorem \citep[][Theorem 2.3.]{van2000asymptotic} yields
\begin{align}
\frac{\sigma_N(P_N)}{\sigma_N(P^N)}
= \left\{\frac{\sigma_N^2(P_N)}{\sigma_N^2(P^N)} \right\}^{1/2} \xrightarrow{p} 1 .
\end{align}
\end{proof}

\theoremasymptoticnormality*

\begin{proof}[Proof of \Cref{theorem:asymptotic_normality}]
The sequence of random variables $\{X^N_i \colon i \in [N]\}_{N \geq 1}$ satisfies the Lindeberg condition (\Cref{lemma:lindeberg_condition_satisfied}) where $\EE[X^N_i] = 0$ for any $N \geq 1, i \in [N]$ (\Cref{lemma:sum_variances}). For any $N$ sufficiently large, by realizability of $P^N$ under $(\Qcal_\beta,s)$, we have that $\{X^N_i\}_{i \in [N]}$ is a sequence of independent random variables. Therefore, by the Lindeberg central limit theorem \citep[][Theorem 27.2]{billingsley1986probability}, we have that
\begin{align}\label{equation:erjeljrlrur}
\frac{\sum_{i=1}^N X^N_i}
{\sqrt{\sum_{i=1}^N \Var_{P^N}(X^N_i)}}
\xrightarrow{d} \Ncal(0,1) .
\end{align}
Using \Cref{lemma:sum_variances} one more time, we can rewrite \eqref{equation:erjeljrlrur} as follows
\begin{align}\label{equation:kewjrlkjrk}
    \frac{\sum_{i=1}^N X^N_i}
{\sigma_N(P^N)}
\xrightarrow{d} \Ncal(0,1) .
\end{align}
Let $\eta > 0$ be as in \Cref{assumption:stability_dual_N_grows}. By \Cref{lemma:bN_empirical_consistency}, we have
\begin{align}
\|b^N(P_N) - b^N(P^N)\|=o_p(1) ,
\end{align}
and hence $\PP_{P^N}(\{\|b^N(P_N)-b^N(P^N)\|< \eta\})\to 1$ as $N \to \infty$. On the event $\{\|b^N(P_N)-b^N(P^N)\| < \eta\}$, \Cref{assumption:stability_dual_N_grows} implies that the dual minimizer $\lambda(P_N)$ is unique, that $\lambda(P_N)=\lambda(P^N)$, and that the local von Mises expansion holds. Therefore, on the event $\{\|b^N(P_N) - b^N(P^N)\|<\eta\}$, we can write
\begin{align}
\tilde{\Psi}^{+,N}(P_N)-\tilde{\Psi}^{+,N}(P^N) = \lambda(P^N)^\top (b^N(P_N)-b^N(P^N)) .
\end{align}
and this holds with probability tending to $1$, since $\PP_{P^N}(\{\|b^N(P_N) - b^N(P^N)\|<\eta\})\to 1$. Multiplying by $\sqrt N$ and using the definition of $b^N(P_N)$ yields
\begin{align}
& \sqrt N\left\{\tilde{\Psi}^{+,N}(P_N)-\tilde{\Psi}^{+,N}(P^N)\right\} \\
& \quad = \sqrt N
\sum_{l=1}^{\tilde L^N}
\lambda_l(P^N) \left\{b_l^N(P_N)-b_l^N(P^N)\right\} \\
& \quad = \sqrt N \sum_{l=1}^{\tilde L^N} \lambda_l(P^N) \left[\frac{1}{n_{a(l)}}
\sum_{i \colon a_i^N=a(l)} \left\{\1\{Y_i^N=y(l)\}-b_l^N(P^N)\right\} \right] \\
& \quad  = \sum_{l=1}^{\tilde L^N} \frac{\lambda_l(P^N)}{\sqrt N\, e^N(a(l))} \sum_{i\colon a_i^N=a(l)} \left\{\1\{Y_i^N=y(l)\}-b_l^N(P^N)\right\} \\
& \quad  = \sum_{i=1}^N
\frac{1}{\sqrt N}
\frac{1}{e^N(a_i^N)}
\sum_{l\colon a(l)=a_i^N}
\lambda_l(P^N)
\left\{\1\{Y_i^N=y(l)\}-b_l^N(P^N)\right\} \\
& \quad  = \sum_{i=1}^N X^N_i .
\end{align}
which gives with \eqref{equation:kewjrlkjrk} that
\begin{align}
    \sqrt{N} \frac{\tilde{\Psi}^{+,N}(P_N)-\tilde{\Psi}^{+,N}(P^N)}{\sigma_N(P^N)} \xrightarrow{d} \Ncal(0,1) .
\end{align}
\Cref{lemma:ratio_variances} and Slutsky's lemma thus give
\begin{align}
\sqrt{N} \frac{
\tilde{\Psi}^{+,N}(P_N)-\tilde{\Psi}^{+,N}(P^N)}{\sqrt{\sigma_N^2(P_N)}} \xrightarrow{d} \Ncal(0,1).
\end{align}
Finally, since $P^N$ is realizable under $(\Qcal_\beta,s)$,
\Cref{theorem:representation1} gives
\begin{align}
\tilde{\Psi}^{+,N}(P^N)=\Psi^{+,N}(P^N) ,
\end{align}
and therefore,
\begin{align}
\frac{ \sqrt N\left\{\tilde{\Psi}^{+,N} (P_N)-\Psi^{+,N}(P^N) \right\}}{\sqrt{\sigma_N^2(P_N)}} \xrightarrow{d} \Ncal(0,1),
\end{align}
which proves the theorem.
\end{proof}

\lemmapopulationmomentsdualfixed*

\begin{proof}[Proof of \Cref{lemma:population_moments_dual_fixed}]
By \Cref{assumption:stabilizing_design}, there exist $N_0 \geq 1$ $a_1,\ldots,a_M$ such that,
for any $N \geq N_0$ large enough, $L = L^N, \tilde{L} = \tilde{L}^N$,
\begin{align}\label{equation:eiwieiqwieu}
    \{a^N_1,\ldots,a^N_N\}=\{a_1,\ldots,a_M\} ,
\end{align}
and the ordering of $\{(a(l),y(l))\}_{l \in [L]}$ is fixed.

Under \Cref{assumption:fixed_dgp}, there exists $Q_\beta \in \Qcal_\beta$, a positive integer $N_1$ such that for any $N \geq N_1$
\begin{align}
    P^N=M_{\Ycal^N}\left(P^{F,s,N}(Q_\beta)\right) .
\end{align}
Let $\Bar{N} = \max\{N_0, N_1\}$, for any $N \geq \Bar{N}$, we have, for any $l \in [\tilde{L}]$, by definition of
$b_l^N(P^N)$
\begin{align}
    b_l^N(P^N) = Q^{np,N}_{Y\mid a}(P^N)(a(l),y(l))= \int_\Bcal s(\beta,a(l),y(l))\,dQ_\beta(\beta).
\end{align}
The right-hand side does not depend on $N$, thus we define for any $l \in [\tilde{L}]$
\begin{align}
    b_l = \int_\Bcal s(\beta,a(l),y(l))\,dQ_\beta(\beta) ,
\end{align}
and $b^N(P^N)=b$ for any $N \geq \Bar{N}$.

Let $\textsc{Dual}(b, t, g)$ be the linear program
\begin{align}\label{equation:dfjsfjk}
    \inf_{\lambda\in\RR^{\tilde L},\,\nu\in\RR}
    \lambda^\top b+\nu
    \quad
    \text{subject to}
    \quad
    \lambda^\top t(\beta)+\nu\geq g(\beta)
    \text{ for every }\beta\in\Bcal .
\end{align}
Let $N \geq \Bar{N}$, we have that $b^N(P^N)=b$, hence $\textsc{Dual}(b, t, g) = \textsc{Dual}(P^N)$. Since \Cref{assumption:g_not_in_span} holds and $P^N$ is nondegenerate in the sense of \Cref{definition:nondegeneracy}, by \Cref{proposition:strong_duality} $\textsc{Dual}(P^N)$ admits a unique solution $(\lambda(P^N), \nu(P^N)) \in \RR^{\tilde{L}} \times \RR$, which is thus also a solution of  $\textsc{Dual}(b, t, g)$ and does not depend on $N$. Writing $(\lambda, \nu) = (\lambda(P^N), \nu(P^N))$ such a minimizer, we have that \eqref{equation:fjsefrewijr} holds, hence \Cref{assumption:Pconverges} is satisfied.

Since \Cref{assumption:g_not_in_span} holds, and $P^N$ is nondegenerate in the sense of \Cref{definition:nondegeneracy}, by \Cref{theorem:piecewise_linearity}, applied to the dual problem \eqref{equation:dfjsfjk}, there exists a neighborhood $v$ of $b$ such that, for any $\Bar{P}^N \in \Mcal^{np,N}$ such that $b^N(\Bar{P}^N)\in v$, we have
\begin{align}
    \lambda(\Bar{P}^N) = \lambda(P^N) = \lambda ,
\end{align}
where $(\lambda(\Bar{P}^N),\nu(\Bar{P}^N))$ is the unique minimizer of $\textsc{Dual}(\Bar{P}^N)$, and the von Mises expansion from \Cref{theorem:piecewise_linearity} holds at $\Bar{P}^N$. Since $v$ is a neighborhood of $b$, there exists $\eta>0$ such that
\begin{align}
    \{c\in\RR^{\tilde L}\colon \|c-b\|<\eta\}\subseteq v .
\end{align}
Since $b^N(P^N)=b$, we have that if  $\|b^N(\Bar{P}^N)-b^N(P^N)\|<\eta$, then $\|b^N(\Bar{P}^N)-b\| = \|b^N(\Bar{P}^N)-b^N(P^N)\| < \eta$, and $b^N(\Bar{P}^N)\in v$. Consequently
\begin{align}
    \lambda(\Bar{P}^N)=\lambda(P^N) ,
\end{align}
which proves that \Cref{assumption:stability_dual_N_grows} holds.
\end{proof}

\section{Proofs for the representation of the plug-in as an NPMLE for a finite mixture}

\begin{lemma}\label{lemma:rewriting_risk}
    Let $P^N \in \Mcal^{np, N}$, and $Q_\beta \in \Qcal_\beta$. Then, we have that
    \begin{align}
        \risk(P^N, P^{F,s,N}(Q_\beta)) & = \sum_{a \in \{a^N_1,\ldots, a^N_N\}} n_a \KL(Q^{np,N}_{Y\mid a}(P^N)(a,\cdot)\|  \int_\Bcal s(\beta,a,\cdot)dQ_\beta(\beta)) - C^N(P^N) .
\end{align}
where $C^N(P^N) = \sum_{(a,y) \in \pairay^N} n_a  Q^{np,N}_{Y\mid a}(P^N)(a,y) \log Q^{np,N}_{Y\mid a}(P^N)(a,y)$.
\end{lemma}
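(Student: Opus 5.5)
The plan is to compute the risk directly from the product structure of the marginal law and then regroup the resulting sum by choice-set tuple.

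The first step is the factorization already established in the proof of \Cref{lemma:equivalence_matching_moments}, display \eqref{equation:prodtilde}. For any $(y^N_1,\ldots,y^N_N)\in\Ycal^N$,
\begin{align}
M_{\Ycal^N}(P^{F,s,N}(Q_\beta))(y^N_1,\ldots,y^N_N)=\prod_{i=1}^N p_{Q_\beta}(a^N_i,y^N_i),
\end{align}
where $p_{Q_\beta}(a,y)=\int_\Bcal s(\beta,a,y)\,dQ_\beta(\beta)$. Taking the negative logarithm turns this into the sum $\loglike(P^{F,s,N}(Q_\beta))(y^N_1,\ldots,y^N_N)=-\sum_{i=1}^N\log p_{Q_\beta}(a^N_i,y^N_i)$. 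This identity holds on $[0,+\infty]$, with the usual conventions when some factor vanishes.

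The second step is to integrate against $P^N$. The $i$-th summand depends on $(y^N_1,\ldots,y^N_N)$ only through $y^N_i$. Summing out the other coordinates therefore replaces $P^N$ by its $i$-th marginal $Q^{np,N}_i(P^N)$, which gives
\begin{align}
\risk(P^N,P^{F,s,N}(Q_\beta))=-\sum_{i=1}^N\sum_{y\in a^N_i}Q^{np,N}_i(P^N)(y)\log p_{Q_\beta}(a^N_i,y).
\end{align}
Next I group the indices $i$ by the value $a=a^N_i$. Within each group $p_{Q_\beta}(a,y)$ does not depend on $i$, so the definition of $Q^{np,N}_{Y\mid a}$ as an average over $\couNA$ collapses the inner sum. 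The risk becomes
\begin{align}
-\sum_{a}n_a\sum_{y\in a}Q^{np,N}_{Y\mid a}(P^N)(a,y)\log p_{Q_\beta}(a,y).
\end{align}

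The final step is to add and subtract $\sum_a n_a\sum_{y}Q^{np,N}_{Y\mid a}(P^N)(a,y)\log Q^{np,N}_{Y\mid a}(P^N)(a,y)$. This is exactly $C^N(P^N)$ written as a sum over $\pairay^N$. The cross-entropy term then becomes $n_a\KL\bigl(Q^{np,N}_{Y\mid a}(P^N)(a,\cdot)\,\big\|\,p_{Q_\beta}(a,\cdot)\bigr)$ minus the entropy contribution, which is the stated identity.

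The main obstacle is the bookkeeping with infinite values and the stated conventions. I would check two cases separately. The first is when $p_{Q_\beta}(a,y)=0$ while $Q^{np,N}_{Y\mid a}(P^N)(a,y)>0$: then some $Q^{np,N}_i(P^N)(y)>0$, so both the risk and the KL divergence are $+\infty$, and the identity holds in $\overline{\RR}$. The second is when $Q^{np,N}_{Y\mid a}(P^N)(a,y)=0$: then every $Q^{np,N}_i(P^N)(y)$ with $a^N_i=a$ vanishes, and the conventions $0\log0=0$ and $0\log(0/q)=0$ give zero contributions on both sides. Because $C^N(P^N)$ is always finite, subtracting it never creates an indeterminate form, and the identity holds in all cases.
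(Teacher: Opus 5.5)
Your proof is correct and follows the paper's own argument: write the risk using the product form of the marginal likelihood, turn the log of the product into a sum, collapse to $Q^{np,N}_{Y\mid a}(P^N)$ by grouping indices by design tuple, and add and subtract $C^N(P^N)$. Making the marginalization through the individual $Q^{np,N}_i(P^N)$ explicit, and treating the $+\infty$ and zero-probability cases separately, tightens steps that the paper's chain of equalities performs without comment.
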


\begin{proof}[Proof of \Cref{lemma:rewriting_risk}]
    For any $P^N \in \Mcal^{np,N}$, and $Q_\beta \in \Qcal_\beta$, we have that
\begin{align}
     & \risk(P^N, P^{F,s,N}(Q_\beta)) \\
     & \quad = \sum_{(y^1,\ldots,y^N) \in \Ycal^N} P^N(y^1,\ldots,y^N) \loglike(P^{F,s,N}(Q_\beta)) (y^1,\ldots,y^N) \\
     & \quad = - \sum_{(y^1,\ldots,y^N) \in \Ycal^N} P^N(y^1,\ldots,y^N) \log(M_{\Ycal^N}(P^{F,s,N}(Q_\beta))(y^1,\ldots, y^N))  \\
     & \quad = - \sum_{(y^1,\ldots,y^N) \in \Ycal^N} P^N(y^1,\ldots,y^N) \log\prod_{i=1}^N \int_\Bcal dQ_\beta(\beta_i) s(\beta_i, a^N_i,y^i)  \\
    & \quad = - \sum_{(y^1,\ldots,y^N) \in \Ycal^N} P^N(y^1,\ldots,y^N) \sum_{i=1}^N \log \int_\Bcal s(\beta,a^N_i,y^i)dQ_\beta(\beta)
    \\
    & \quad = - \sum_{i=1}^N \sum_{(y^1,\ldots,y^N) \in \Ycal^N} P^N(y^1,\ldots,y^N)  \log \int_\Bcal dQ_\beta(\beta_i) s(\beta_i, a^N_i,y^i) \\
    & \quad = -\sum_{a \in \{a^N_1,\ldots, a^N_N\}} \sum_{(y^1,\ldots,y^N) \in \Ycal^N} \sum_{i \in \couNA} P^N(y^1,\ldots,y^N)  \log \int_\Bcal s(\beta,a^N_i,y^i)dQ_\beta(\beta) \\
    & \quad = - \sum_{(a,y) \in \pairay^N} n_a Q^{np,N}_{Y\mid a}(P^N)(a,y) \log  \int_\Bcal s(\beta,a,y)dQ_\beta(\beta) .
\end{align}
Adding and subtracting $\log Q^{np,N}_{Y\mid a}(P^N)(a,y)$ for any $a, y$, we have
\begin{align}
    \risk(P^N, P^{F,s,N}(Q_\beta)) & = \sum_{(a,y) \in \pairay^N} n_a Q^{np,N}_{Y\mid a}(P^N)(a,y) \log \left(\frac{Q^{np,N}_{Y\mid a}(P^N)(a,y)}{\int_\Bcal s(\beta,a,y)dQ_\beta(\beta)}\right) \\
    & \quad - C^N(P^N) \\
    & =  \sum_{a \in \{a^N_1,\ldots, a^N_N\}} n_a \sum_{y \in a} Q^{np,N}_{Y\mid a}(P^N)(a,y) \log \left(\frac{Q^{np,N}_{Y\mid a}(P^N)(a,y)}{\int_\Bcal s(\beta,a,y)dQ_\beta(\beta)}\right) \\
    & \quad - C^N(P^N) \\
    & = \sum_{a \in \{a^N_1,\ldots, a^N_N\}} n_a \KL(Q^{np,N}_{Y\mid a}(P^N)(a,\cdot)\|  \int_\Bcal s(\beta,a,\cdot)dQ_\beta(\beta)) - C^N(P^N) .
\end{align}
where
\begin{align}
    C^N(P^N) = \sum_{(a,y) \in \pairay^N} n_a  Q^{np,N}_{Y\mid a}(P^N)(a,y) \log Q^{np,N}_{Y\mid a}(P^N)(a,y) .
\end{align}
\end{proof}

\begin{lemma}\label{lemma:profile_attainment}
Suppose that \Cref{assumption:compact_continuity_certify} holds, and let $P^N\in\Mcal^{np,N}$. Then, for every $\psi \in [0,1]$ such that
\begin{align}
\{Q_\beta \in \Qcal_\beta \colon \tilde{\Psi}(Q_\beta) = \psi\} \neq \varnothing,
\end{align}
we have that $\inf_{Q_\beta \in \Qcal_\beta \colon \Tilde{\Psi}(Q_\beta) = \psi} \risk(P^N, P^{F,s,N}(Q_\beta))$ is attained for some $Q_\beta \in \Qcal_\beta$.
\end{lemma}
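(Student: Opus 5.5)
The plan is the direct method: weak-* compactness of $\Qcal_\beta$ combined with lower semicontinuity of the risk. Under \Cref{assumption:compact_continuity_certify}, $\Bcal$ is a compact metric space. By Prokhorov's theorem, $\Qcal_\beta=\Pcal(\Bcal,\mathscr{B})$ is then compact for the topology of weak convergence, and it is metrizable. The constraint set $F_\psi=\{Q_\beta\in\Qcal_\beta \colon \tilde\Psi(Q_\beta)=\psi\}$ is nonempty by hypothesis. Since $g$ is continuous and bounded, $Q_\beta\mapsto\int g\,dQ_\beta$ is weakly continuous, so $F_\psi$ is closed and therefore compact.

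Next I rewrite the objective using \Cref{lemma:rewriting_risk}:
\begin{align}
\risk(P^N,P^{F,s,N}(Q_\beta)) = -\sum_{(a,y)\in\pairay^N} n_a\, Q^{np,N}_{Y\mid a}(P^N)(a,y)\log \int_\Bcal s(\beta,a,y)\,dQ_\beta(\beta).
\end{align}
Each map $\beta\mapsto s(\beta,a,y)$ is one of the $\bar t^N_l$, so it is continuous and bounded by \Cref{assumption:compact_continuity_certify}. Hence $Q_\beta\mapsto\int s(\cdot,a,y)\,dQ_\beta$ is weakly continuous with values in $[0,1]$. The map $x\mapsto-\log x$ is continuous from $[0,1]$ to $[0,+\infty]$, with value $+\infty$ at $0$, and the weights $n_a Q^{np,N}_{Y\mid a}(P^N)(a,y)$ are nonnegative. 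I use the conventions $0\cdot(+\infty)=0$, so terms with zero weight drop out. With these conventions the risk is a finite sum of continuous $[0,+\infty]$-valued functions, so it is continuous, and in particular lower semicontinuous, on $\Qcal_\beta$.

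To conclude, let $m=\inf_{F_\psi}\risk$. If $m=+\infty$, every element of $F_\psi$ attains it, and $F_\psi\neq\varnothing$. Otherwise, take a minimizing sequence $(Q_j)\subset F_\psi$. By compactness it has a weakly convergent subsequence with limit $Q^\star\in F_\psi$, and lower semicontinuity gives $\risk(Q^\star)\le\liminf_j\risk(Q_j)=m$. So $Q^\star$ attains the infimum.

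The main obstacle I expect is handling the value $+\infty$ cleanly. A term $\log\int s\,dQ_\beta$ can diverge when a cell has positive weight but zero model probability. I therefore need the risk to be lower semicontinuous in the extended-real sense rather than merely continuous where finite. I would verify this directly: if $\int s\,dQ_j\to\int s\,dQ^\star=0$, then $-\log\int s\,dQ_j\to+\infty$, which is consistent with lower semicontinuity. I would also note that the weak topology is needed precisely because $\Bcal$ may be uncountable, so this is where compactness of $\Bcal$ and continuity of the $\bar t^N_l$ are essential.
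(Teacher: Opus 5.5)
Your proposal is correct and follows essentially the same route as the paper. Both arguments use weak compactness of $\Qcal_\beta$ for compact $\Bcal$, closedness of $\{Q_\beta \colon \tilde{\Psi}(Q_\beta)=\psi\}$ by continuity of $g$, the rewriting of the risk from \Cref{lemma:rewriting_risk}, and lower semicontinuity inherited from continuity of $Q_\beta\mapsto\int \bar t^N_l\,dQ_\beta$. The differences are cosmetic: you use the cross-entropy form with extended-valued continuity of $-\log$ rather than lower semicontinuity of the Kullback--Leibler divergence, and you treat the case of an infinite infimum explicitly, which the paper leaves implicit.
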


\begin{proof}[Proof of \Cref{lemma:profile_attainment}]
Since $\Bcal$ is a compact metric space, $\Qcal_\beta = \Pcal(\Bcal,\mathscr{B})$ is compact under the weak topology induced by the metric topology on $\Bcal$. Since $g$ is continuous and bounded, the map $\tilde{\Psi}$ is continuous. Hence the set
\begin{align}
\Qcal_{\beta,\psi} = \{Q_\beta \in \Qcal_\beta \colon \tilde\Psi(Q_\beta) = \psi\}
\end{align}
is closed in $\Qcal_\beta$, and therefore compact. Since $\bar{t}_l^N$ is continuous and bounded, for any $l \in [L^N]$, the map
\begin{align}
\Qcal_\beta \to \RR , \quad Q_\beta\mapsto \int_\Bcal \bar t_l^N(\beta)\,dQ_\beta(\beta)
\end{align}
is continuous. Therefore the
model choice probabilities are continuous functions of $Q_\beta$. From \Cref{lemma:rewriting_risk}, we can rewrite the risk as
\begin{align}
\risk(P^N,P^{F,s,N}(Q_\beta)) = \sum_{a\in\{a_1^N,\ldots,a_N^N\}} n_a
\KL( Q^{np,N}_{Y\mid a}(P^N)(a,\cdot) \|
\int_\Bcal s(\beta, a, \cdot) dQ_\beta(\beta)) - C^N(P^N) ,
\end{align}
where $C^N(P^N)$ does not depend on $Q_\beta$. Since $\KL$ divergence is lower
semicontinuous on a finite simplex, the risk is lower semicontinuous in $Q_\beta$. Hence, by compactness of $\Qcal_{\beta,\psi}$, the infimum $\inf_{Q_\beta \in \Qcal_\beta \colon \Tilde{\Psi}(Q_\beta) = \psi}$ $\risk(P^N, P^{F,s,N}(Q_\beta))$ is attained for some $Q_\beta \in \Qcal_{\beta, \psi}$.
\end{proof}

\theoremmatchingmle*

\begin{proof}[Proof of \Cref{theorem:matching_mle}]
Suppose that \Cref{assumption:compact_continuity_certify} holds. Since $P^N$ is
realizable under $(\Qcal_\beta,s)$, there exists $Q_\beta^0\in\Qcal_\beta$ such
that $M_{\Ycal^N}(P^{F,s,N}(Q_\beta^0))=P^N$. By \Cref{lemma:equivalence_matching_moments}, for any
$a \in \{a^N_1,\ldots,a^N_N\}$ and $y\in a$, we have
\begin{align}
\int_\Bcal s(\beta,a,y)dQ_\beta^0(\beta)
=
Q^{np,N}_{Y\mid a}(P^N)(a,y).
\end{align}
Therefore, by \Cref{lemma:rewriting_risk}, we have $\risk(P^N,P^{F,s,N}(Q_\beta^0)) = - C^N(P^N)$. Moreover, for any $Q_\beta\in\Qcal_\beta$, \Cref{lemma:rewriting_risk} gives that
\begin{align}
\risk(P^N,P^{F,s,N}(Q_\beta)) & = \sum_{a\in\{a^N_1,\ldots,a^N_N\}}
n_a \KL (Q^{np,N}_{Y\mid a}(P^N)(a,\cdot) \| \int_\Bcal s(\beta,a,\cdot)dQ_\beta(\beta)) -C^N(P^N) \\
& \geq - C^N(P^N) ,
\end{align}
and hence
\begin{align}\label{equation:ereurliu}
    \inf_{\psi \in [0,1]} \profileloglike(P^N,\psi) = - C^N(P^N) .
\end{align}

We first prove that $\Psi^N(P^N)\subseteq \Psi^{\mle,N}(P^N)$. Let $\psi \in \Psi^N(P^N)$. By definition of $\Psi^N$, there exists
$Q_\beta\in\Qcal_\beta$ such that
\begin{align}
\tilde{\Psi}(Q_\beta)=\psi,
\qquad
M_{\Ycal^N}(P^{F,s,N}(Q_\beta))=P^N .
\end{align}
By \Cref{lemma:equivalence_matching_moments}, for any
$a\in\{a^N_1,\ldots,a^N_N\}$ and $y\in a$, we have
\begin{align}
\int_\Bcal s(\beta,a,y)dQ_\beta(\beta)
=
Q^{np,N}_{Y\mid a}(P^N)(a,y).
\end{align}
Therefore, by \Cref{lemma:rewriting_risk}, we have $\risk(P^N,P^{F,s,N}(Q_\beta)) = - C^N(P^N)$.
Since $\tilde{\Psi}(Q_\beta)=\psi$, we have
\begin{align}
\profileloglike(P^N,\psi)\leq - C^N(P^N) ,
\end{align}
but from \eqref{equation:ereurliu}, we have $\profileloglike(P^N,\psi)\geq - C^N(P^N)$, and thus $\profileloglike(P^N,\psi)= - C^N(P^N)$. Consequently, $\psi\in\Psi^{\mle,N}(P^N)$.

We now prove that $\Psi^{\mle,N}(P^N)\subseteq \Psi^N(P^N)$. Let $\psi \in \Psi^{\mle,N}(P^N)$. Since
\begin{align}
\inf_{\psi' \in [0,1]} \profileloglike(P^N, \psi') = - C^N(P^N) ,
\end{align}
we have $\profileloglike(P^N,\psi)= - C^N(P^N)$, and hence $\{Q_\beta\in\Qcal_\beta\colon \tilde{\Psi}(Q_\beta)=\psi\}\neq\varnothing$. By \Cref{lemma:profile_attainment}, there exists $Q_\beta^\psi\in\Qcal_\beta$
such that
\begin{align}
\tilde{\Psi}(Q_\beta^\psi)=\psi, \quad \risk(P^N,P^{F,s,N}(Q_\beta^\psi)) = \profileloglike(P^N,\psi) = -C^N(P^N) .
\end{align}
By \Cref{lemma:rewriting_risk},
\begin{align}
\sum_{a\in\{a^N_1,\ldots,a^N_N\}} n_a \KL(Q^{np,N}_{Y\mid a}(P^N)(a,\cdot) \| \int_\Bcal s(\beta,a,\cdot)dQ_\beta^\psi(\beta)
) = 0 .
\end{align}
Since each term in the sum is nonnegative and $n_a>0$ for any
$a\in\{a^N_1,\ldots,a^N_N\}$, we have
\begin{align}
\KL\left(
Q^{np,N}_{Y\mid a}(P^N)(a,\cdot)
\|
\int_\Bcal s(\beta,a,\cdot)dQ_\beta^\psi(\beta)
\right)
=
0 .
\end{align}
Therefore, for any $a\in\{a^N_1,\ldots,a^N_N\}$ and $y\in a$, we have
\begin{align}
\int_\Bcal s(\beta,a,y)dQ_\beta^\psi(\beta)
=
Q^{np,N}_{Y\mid a}(P^N)(a,y).
\end{align}
and thus, by \Cref{lemma:equivalence_matching_moments}, $M_{\Ycal^N}(P^{F,s,N}(Q_\beta^\psi))=P^N $. Since $\tilde{\Psi}(Q_\beta^\psi)=\psi$, it follows that $\psi\in\Psi^N(P^N)$, which concludes the proof.
\end{proof}

\begin{definition}[Convex hull]
    For any subset $U$ of a real affine space, the convex hull of $U$, which we write $\mathrm{Conv}(U)$, is defined as the set of all finite convex combinations of elements of $U$, that is
    \begin{align}
        \mathrm{Conv}(U) = \left\lbrace \sum_{i=1}^n \lambda_i u_i \colon \exists n \geq 1,\lambda_1,\ldots, \lambda_n \in \RR^+, \sum_{i=1}^n \lambda_i=1, u_1,\ldots,u_n \in U \right\rbrace.
    \end{align}
\end{definition}

\begin{definition}[Affine hull]
    For any subset $U$ of a real affine space, the affine hull of $U$, which we write $\mathrm{aff}(U)$, is defined as the set of all finite affine combinations of elements of $U$, that is
    \begin{align}
        \mathrm{aff}(U) =
        \left\lbrace
        \sum_{i=1}^n \lambda_i u_i \colon
        \exists n \geq 1,\ 
        \lambda_1,\ldots,\lambda_n \in \RR,\
        \sum_{i=1}^n \lambda_i = 1,\
        u_1,\ldots,u_n \in U
        \right\rbrace.
    \end{align}
\end{definition}

\begin{lemma}\label{lemma:h_in_closure_hull}
    Let $L$ be a positive integer, $h \colon \Bcal \to \RR^L$ be a bounded and measurable map, and $H = \{h(\beta) \colon \beta \in \Bcal \} \subseteq \RR^L$ denote the range of $h$. Suppose that $H$ is bounded. Then for any $\pi \in \Pcal(\Bcal, \mathscr{B})$, we have that
    \begin{align}
        \int_{\Bcal} h d\pi \in \overline{\Conv(H)} .
    \end{align}
\end{lemma}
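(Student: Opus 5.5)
The plan is a separating hyperplane argument in $\RR^L$. Set $C = \overline{\Conv(H)}$. Since $H$ is bounded, $C$ is a nonempty, compact, convex subset of $\RR^L$. Because $h$ is bounded and measurable, the barycenter $m = \int_\Bcal h \, d\pi$ is well defined coordinatewise. We argue by contradiction and suppose that $m \notin C$.

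\textbf{Step 1 (separation).} Since $C$ is closed and convex and $m \notin C$, the strict separating hyperplane theorem gives $u \in \RR^L$, $u \neq 0$, and $c \in \RR$ such that
\begin{align}
u^\top m > c \geq u^\top x \quad \text{for any } x \in C .
\end{align}
The right inequality holds in particular for every $x = h(\beta)$ with $\beta \in \Bcal$, since $H \subseteq \Conv(H) \subseteq C$. So $u^\top h(\beta) \leq c$ for all $\beta \in \Bcal$.

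\textbf{Step 2 (integration).} The map $\beta \mapsto u^\top h(\beta)$ is measurable and bounded, so it is $\pi$-integrable. Integrating the pointwise bound against the probability measure $\pi$, and using linearity of the integral coordinatewise, gives
\begin{align}
u^\top m = \int_\Bcal u^\top h(\beta) \, d\pi(\beta) \leq c \, \pi(\Bcal) = c .
\end{align}
This contradicts $u^\top m > c$. Hence $m \in C$, which is the claim.

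The only point that needs care is the interchange $u^\top \int h \, d\pi = \int u^\top h \, d\pi$. It follows from linearity of the integral applied to each coordinate of $h$, and every coordinate is integrable because $h$ is bounded. Apart from this, the argument rests on the standard strict separation of a point from a closed convex set in finite dimension. We therefore do not expect a genuine obstacle: boundedness of $H$ is used only to make $C$ compact and the integrals finite, and closedness of $C$ is what allows the separation to be strict.
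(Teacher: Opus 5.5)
Your proof is correct, but it takes a different route from the paper.

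\textbf{The paper's route.} The paper argues constructively. Using boundedness of $H$, it partitions $H$ into finitely many Borel cells $C_{n,k}$ of diameter at most $1/n$ and picks a point $x_{n,k}$ in each cell. It then defines the simple function $\phi_n$ equal to $x_{n,k}$ on the preimage $\Bcal_{n,k} = h^{-1}(C_{n,k})$. Each $\int_\Bcal \phi_n \, d\pi = \sum_k \pi(\Bcal_{n,k}) x_{n,k}$ is a finite convex combination of points of $H$. Since $\phi_n \to h$ uniformly, $\int_\Bcal h \, d\pi$ is a limit of points of $\Conv(H)$.

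\textbf{Your route.} You argue by duality: strict separation of the barycenter from $\overline{\Conv(H)}$, followed by monotonicity of the integral.

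\textbf{What each buys.} Your argument is shorter and slightly more general. Boundedness of $h$ is needed only for integrability, and compactness of $\overline{\Conv(H)}$ plays no role. A point can be strictly separated from any nonempty closed convex set in $\RR^L$: take $u = m - p$ with $p$ the Euclidean projection of $m$ onto the set. So your remark that boundedness is used to make $C$ compact is unnecessary. Your mechanism is also the same supporting-hyperplane idea the paper uses in the induction step of \Cref{lemma:h_in_convex_hull}, so adopting it would make the exposition more uniform. The paper's approximation argument, by contrast, avoids any separation theorem. It produces explicit finitely supported approximations whose barycenters converge to $\int_\Bcal h \, d\pi$, which is closer in spirit to the Carath\'eodory/Tchakaloff reduction used afterwards. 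However, it relies essentially on boundedness of $H$ to obtain finite partitions into small cells.
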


\begin{proof}[Proof of \Cref{lemma:h_in_closure_hull}]
Since $H$ is bounded, for any $n \in \NN$, there exists $N_n \in \NN$ and a finite Borel partition $C_n = (C_{n,k})_{k=1}^{N_n}$ of $H$ such that for every $k=1,\ldots,N_n$, $C_{n,k}$ has Euclidean diameter at most $1/n$. Let $(C_n)_{n \geq 1}$ be a sequence of such partitions.

For any $n \in \NN, k = 1, \ldots, N_n$, let $x_{n,k}$ be an arbitrary element of $C_{n,k}$, and $\Bcal_{n,k} = \{\beta \in \Bcal \colon h(\beta) \in C_{n,k}\}$ be the preimage of $C_{n,k}$ by $h$. For any $n \in \NN$, let $\phi_n \colon \Bcal \to \RR^L$ be defined, for any $\beta \in \Bcal$, by
\begin{align}\label{equation:ewroiuw4eri}
    \phi_n(\beta) = \sum_{k=1}^{N_n} \1 \{ \beta \in \Bcal_{n,k} \} x_{n,k} .
\end{align}
Observe that since $h$ is measurable, every $\Bcal_{n,k} \in \mathscr{B}$, and therefore every $\phi_n$ is $\mathscr{B}$-measurable.
For any $\beta \in \Bcal$ and $n \in \NN$, there exists $k =1,\ldots,N_n$ such that $h(\beta) \in C_{n,k}$ and therefore, since $x_{n,k}$ is in $C_{n,k}$ and $C_{n,k}$ has diameter at most $1/n$, $\| \phi_n(\beta) - h(\beta)\| \leq 1/n$. Since $\beta$ is arbitrary and the right-hand side does not depend on $\beta$, $\phi_n$ converges uniformly to $h$. From \eqref{equation:ewroiuw4eri}, we have that
\begin{align}
    \int_\Bcal \phi_n d\pi \in \Conv(H).
\end{align}
By uniform convergence of $(\phi_n)_{n \geq 1}$ to $h$, we have that
    \begin{align}
        \lim_{n \to \infty} \int_\Bcal \phi_n d\pi =  \int_\Bcal \lim_{n \to \infty} \phi_n d\pi = \int_\Bcal h d\pi ,
    \end{align}
    and consequently we have that $\int_\Bcal h d\pi \in \overline{\Conv(H)}$ since $\int_\Bcal h d\pi$ is a limit of a convergent sequence with elements in $\Conv(H)$.
\end{proof}

\begin{lemma}\label{lemma:one_dimensional_case}
Let $h \colon \Bcal \to \RR$ be bounded and measurable with respect to $\mathscr{B}$, and $H = \{h(\beta) \colon \beta \in \Bcal\}$. For any $\pi \in \Pcal(\Bcal, \mathscr{B})$, we have that
    \begin{align}
        \int_\Bcal h(\beta) d\pi(\beta) \in \Conv(H) .
    \end{align}
\end{lemma}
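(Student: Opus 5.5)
In one dimension, $\Conv(H)$ is simply an interval: it is the convex hull of a subset of $\RR$. My plan is therefore to show directly that the mean $m = \int_\Bcal h\,d\pi$ lies between two points of $H$, rather than passing through the closure argument of \Cref{lemma:h_in_closure_hull}.

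\textbf{Step 1: setup.} Let $s_- = \inf H$ and $s_+ = \sup H$; both are finite because $h$ is bounded. Since $s_- \leq h(\beta) \leq s_+$ for all $\beta$ and $\pi$ is a probability measure, integrating gives $s_- \leq m \leq s_+$.

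\textbf{Step 2: interior case.} Suppose $s_- < m < s_+$. By definition of infimum and supremum, there exist $\beta_1, \beta_2 \in \Bcal$ with $h(\beta_1) < m < h(\beta_2)$. Then $m$ is a convex combination of $h(\beta_1)$ and $h(\beta_2)$, with weight $\theta = (h(\beta_2)-m)/(h(\beta_2)-h(\beta_1)) \in (0,1)$. Hence $m \in \Conv(H)$.

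\textbf{Step 3: boundary case (the only delicate step).} Suppose $m = s_+$; the case $m = s_-$ is symmetric. The function $s_+ - h$ is nonnegative, and its integral against $\pi$ equals $s_+ - m = 0$. So $h = s_+$ holds $\pi$-almost everywhere. Because $\pi(\Bcal) = 1 > 0$, the set $\{\beta \colon h(\beta) = s_+\}$ is nonempty; this set is measurable because $h$ is measurable. Thus the supremum is attained, so $s_+ \in H \subseteq \Conv(H)$.

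Combining the two cases shows $m \in \Conv(H)$. Note that the argument needs neither compactness nor continuity, only boundedness and measurability. This is what will later allow it to serve as the base case of an induction on the dimension $L$ in the Tchakaloff-type extension. There, the same dichotomy applies: either the mean lies in the relative interior of $\Conv(H)$, or a supporting hyperplane through it forces $\pi$ to concentrate on a face, which reduces the dimension.
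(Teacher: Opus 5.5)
Your proof is correct and matches the paper's argument: bound the mean between $\inf_\Bcal h$ and $\sup_\Bcal h$, treat the boundary case by noting that $h$ equals the extremum $\pi$-almost everywhere, so the extremum is attained, and treat the strict interior case by convexity. Your version is a bit more explicit than the paper's, since you justify nonemptiness of the level set via $\pi(\Bcal)=1$ and exhibit the two points of $H$ that bracket the mean, but the route is the same.
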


\begin{proof}[Proof of \Cref{lemma:one_dimensional_case}]
    Since $h$ has its image in $\RR$, for any $\beta \in \Bcal$, we have that
    \begin{align}
        \inf_\Bcal h \leq h(\beta) \leq \sup_\Bcal h ,
    \end{align}
    and integrating with a positive measure gives
    \begin{align}
        \inf_\Bcal h \leq \int_\Bcal h(\beta) d\pi(\beta) \leq \sup_\Bcal h .
    \end{align}
    If $\inf_\Bcal h = \int_\Bcal h(\beta) d\pi(\beta)$, then we have that $h(\beta) = \inf_\Bcal h$ $\pi$-almost everywhere, which implies that $\inf_\Bcal h \in H$, and hence $\int_\Bcal h(\beta) d\pi(\beta) \in H \subseteq \Conv(H)$. The same holds if we have $\sup_\Bcal h = \int_\Bcal h(\beta) d\pi(\beta)$. Finally, if $\inf_\Bcal h < \int_\Bcal h(\beta) d\pi(\beta) < \sup_\Bcal h$, then $(\inf_\Bcal h, \sup_\Bcal h) \subseteq \Conv(H)$ and hence $\int_\Bcal h(\beta) d\pi(\beta) \in \Conv(H)$.
\end{proof}

\begin{lemma}\label{lemma:h_in_convex_hull}
    Let $d$ and $L$ be positive integers, $h \colon \Bcal \to \RR^L$ be bounded and measurable with respect to $\mathscr{B}$, and $H = \{h(\beta) \colon \beta \in \Bcal\} \subseteq \RR^L$. Then for any $\pi \in \Pcal(\Bcal)$ a probability measure on $\Bcal$, we have that
    \begin{align}
        \int_\Bcal h d\pi \in \Conv(H) .
    \end{align}
\end{lemma}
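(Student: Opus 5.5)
We argue by induction on the dimension $L$, following the classical proof that the barycenter of a probability measure lies in the convex hull of the support of the image measure. The base case $L=1$ is \Cref{lemma:one_dimensional_case}. For the inductive step, fix $L\geq 2$ and set $x=\int_\Bcal h\,d\pi$. By \Cref{lemma:h_in_closure_hull}, $x\in\overline{\Conv(H)}$, and since $H$ is bounded this closure is a compact convex set. We split into two cases according to whether $x$ lies in the relative interior of $\Conv(H)$ or on its relative boundary. Throughout, we may replace $\RR^L$ by $\mathrm{aff}(H)$, so that $\Conv(H)$ has nonempty interior there. If $\dim \mathrm{aff}(H)<L$, we reparametrize by an affine isomorphism onto a lower-dimensional space and apply the induction hypothesis directly.

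Case 1: $x$ lies in the interior of $\overline{\Conv(H)}$. The interior of the closure of a convex set equals the interior of the set itself in finite dimensions. Hence $x\in\mathrm{int}\,\Conv(H)\subseteq\Conv(H)$, and we are done.

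Case 2: $x$ lies on the boundary of $\overline{\Conv(H)}$. The supporting hyperplane theorem gives a nonzero $u\in\RR^L$ with $u^\top y\leq u^\top x$ for all $y\in\overline{\Conv(H)}$. In particular $u^\top h(\beta)\leq u^\top x$ for every $\beta\in\Bcal$. Integrating against $\pi$ gives
\begin{align}
\int_\Bcal \{u^\top x-u^\top h(\beta)\}\,d\pi(\beta)=0 .
\end{align}
The integrand is nonnegative, so the set $\Bcal_0=\{\beta\colon u^\top h(\beta)=u^\top x\}$ is measurable and satisfies $\pi(\Bcal_0)=1$. We then restrict $\pi$ to $\Bcal_0$ to obtain a probability measure $\pi_0$ on $\Bcal_0$ with $\int h\,d\pi_0=x$. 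The range $H_0=h(\Bcal_0)$ lies in the affine hyperplane $\{y\colon u^\top y=u^\top x\}$, which has dimension $L-1$. Identifying this hyperplane affinely with $\RR^{L-1}$ and applying the induction hypothesis to $h$ restricted to $\Bcal_0$ gives $x\in\Conv(H_0)\subseteq\Conv(H)$. Barycenters commute with affine maps, so the identification is harmless.

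The main obstacle is the bookkeeping for the dimension reduction. Two points need care: the induction must be carried out over an arbitrary measurable space, since it is applied to the trace $\sigma$-field on $\Bcal_0$; and the interior-of-closure fact must be used in the relative sense when $\mathrm{aff}(H)\neq\RR^L$. We handle both by stating the induction hypothesis for all measurable spaces and all bounded measurable maps into $\RR^{L'}$ with $L'<L$. At each step we first pass to $\mathrm{aff}(H)$, so that Case 1 is always an honest interior statement. The dimension strictly decreases at each step, so the recursion terminates at \Cref{lemma:one_dimensional_case}, or at a point mass when the affine hull is a single point.
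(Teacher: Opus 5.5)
Your proof is correct and follows essentially the same route as the paper's proof: induction on the dimension of the affine hull, \Cref{lemma:h_in_closure_hull} to place the barycenter in $\overline{\Conv(H)}$, a supporting hyperplane at a relative boundary point that forces $\pi$ to concentrate on a lower-dimensional face, and the induction hypothesis applied to that face. The differences are cosmetic. You induct on the ambient dimension after passing to $\mathrm{aff}(H)$ and handle the interior case via $\mathrm{int}\,\overline{C}=\mathrm{int}\,C$, whereas the paper splits directly on whether $\int_\Bcal h\,d\pi\in\Conv(H)$; your explicit remark about stating the induction over arbitrary measurable spaces cleanly covers a step the paper leaves implicit.
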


\begin{proof}[Proof of \Cref{lemma:h_in_convex_hull}]
    We prove the lemma by induction on $L = \dim(\aff(H))$.
    \paragraph{Initialization.} The case $\dim(\aff(H))=0$ being trivial, we consider $\dim(\aff(H))=1$. Let $x_0 \in H$, there exists  $v \in \RR^{L}$ such that $\aff(H) = x_0 + \RR v$. Let $R = \{r \in \RR \text{ such that } x_0 + r v \in H \}$. $H$ being bounded implies that $R$ is bounded. Therefore, there exists $r \colon \Bcal \to \RR$ measurable such that for any $\beta \in \Bcal, h(\beta) = x_0 + r(\beta) v$. Let $\Bar{r} = \int_\Bcal r(\beta) d\pi(\beta)$, we have that
    \begin{align}
        \int_\Bcal h(\beta) d\pi(\beta) = \int_\Bcal (x_0 + r(\beta) v) d\pi(\beta) = x_0 + \int_\Bcal r(\beta) d\pi(\beta) \, v = x_0 + \Bar{r} v \eqsp.
    \end{align}
    Therefore, \Cref{lemma:one_dimensional_case} gives that $\Bar{r} \in \Conv(R)$. Hence, there exists $r_1, \ldots, r_m \in R, \lambda_1, \ldots, \lambda_m$ $\in (0,1)^m$ such that $\sum_{l=1}^m\lambda_l = 1, \Bar{r} = \sum_{l=1}^m \lambda_l r_l$ where $x_0 + r_l v \in H$ for any $l = 1, \ldots, m$ and we have
    \begin{align}
        \int_\Bcal h(\beta) d\pi(\beta) = x_0 + \Bar{r} v = x_0 + \sum_{l=1}^m \lambda_l r_l v = \sum_{l=1}^m \lambda_l \underbrace{(x_0 +r_l v)}_{\in H} \in \Conv(H) .
    \end{align}
    Therefore, our initialization holds.

    \paragraph{Induction.} Now assume that the property holds for any $H$ such that $\dim(\aff(H)) \leq L-1$. Let $L = \dim(\aff(H))$. By \Cref{lemma:h_in_closure_hull}, we have that $\int_\Bcal h d\pi \in \overline{\Conv(H)}$. If $\int_\Bcal h d\pi \in \Conv(H)$, then we have the result. Otherwise, we have that
    $\int_\Bcal h d\pi \in \overline{\Conv(H)} \backslash \Conv(H)$ and hence $\int_\Bcal h d\pi \in \partial \Conv(H)$.

Since $H \subset \aff(H)$ and $\aff(H)$ is closed, $\int_\Bcal h\,d\pi\in \aff(H)$. Thus $\int_\Bcal h\,d\pi$ lies on
the relative boundary of $\Conv(H)$ in the affine space $\aff(H)$. Therefore, by
the supporting hyperplane theorem \citep[2.5.2][]{boyd2004convex} applied in the space $\aff(H)$, there
exist $a\in\RR^L$ and $c\in\RR$ such that
\begin{align}
    & a^\top u \leq c
    \quad \text{for any } u \in \Conv(H), \\
    & \label{equation:dfjleskjf}
    a^\top \int_\Bcal h\,d\pi = c ,
\end{align}
and such that the affine hyperplane $\{x\in\RR^L:a^\top x=c\}$ does not contain $\aff(H)$. This gives that for any $\beta \in \Bcal$,
\begin{align}\label{equation:dfjewlrtkj}
    a^\top h(\beta) \leq c ,
\end{align}
since $h(\beta)\in H\subseteq \Conv(H)$. Rewriting
\Cref{equation:dfjleskjf} using the fact that $\pi$ integrates to $1$, we
have
\begin{align}\label{equation:dfjelkaqq}
    \int_\Bcal (a^\top h(\beta)-c)\,d\pi(\beta)=0 .
\end{align}
Since \Cref{equation:dfjewlrtkj} ensures that the integrand in
\Cref{equation:dfjelkaqq} is everywhere non-positive, we have
$a^\top h(\beta)=c$ $\pi$-almost everywhere. Let
\begin{align}
\Bcal_0=\{\beta\in\Bcal:a^\top h(\beta)=c\}.
\end{align}
Then $\pi(\Bcal_0)=1$, hence $\int_\Bcal h\,d\pi=\int_{\Bcal_0}h\,d\pi$. Moreover,
\begin{align}
    h(\Bcal_0)\subseteq H\cap\{x\in\RR^L \colon a^\top x=c\}.
\end{align}
Since the affine hyperplane $\{x\in\RR^L:a^\top x=c\}$ does not contain
$\aff(H)$, the intersection $\aff(H) \cap \{x\in\RR^L \colon a^\top x = c\}$ is an affine subspace of $\aff(H)$ satisfying $\dim(\aff(H) \cap\{x\in\RR^L \colon a^\top x=c\})\leq L-1$. Since $h(\Bcal_0) \subseteq \aff(H) \cap\{x \in\RR^L \colon a^\top x = c\}$,
we obtain $\dim(\aff(h(\Bcal_0)))\leq L-1$. Applying the induction hypothesis to $h(\Bcal_0)$ gives that
\begin{align}
    \int_{\Bcal_0} h\,d\pi\in \Conv(h(\Bcal_0)).
\end{align}
Since $h(\Bcal_0)\subseteq H$, we have $\Conv(h(\Bcal_0))\subseteq \Conv(H)$, thus
\begin{align}
    \int_\Bcal h\,d\pi\in \Conv(H),
\end{align}
hence the induction step and the result.
\end{proof}

A more general result than \Cref{lemma:h_in_convex_hull} is given in \citet[][Lemma 2.16.]{schafer2025beyond}.

We now define
\begin{align}
h \colon \Bcal & \to \RR^{\tilde L^N+1}, \\
\beta &\mapsto (t^N(\beta)^\top,g(\beta))^\top ,
\end{align}
which is bounded and measurable with respect to $\mathscr{B}$. For any
$P^N \in \Mcal^{np, N}$, let
\begin{align}
\Lcal^N(P^N) = \{R^{F,N} \in \Pcal(\Zcal^N, \mathscr{Z}^N)
\colon
M_{\Ycal^N}(R^{F,N}) = P^N \}.
\end{align}

\begin{lemma}\label{lemma:finite_support_realization}
Let $P^N\in\Mcal^{np,N}$ be realizable under $(\Qcal_\beta,s)$ in the sense
of \Cref{definition:correct_fit}. Let $Q_\beta\in\Qcal_\beta$ be such that $P^{F,s,N}(Q_\beta)\in \Lcal^N(P^N)$ and let $\psi=\int_\Bcal g(\beta)dQ_\beta(\beta)$. Then there exist $B^N\subset \Bcal$ and $Q_\beta^N\in\Qcal_\beta$ such that $1\leq \Card(B^N)\leq \tilde L^N+2, Q_\beta^N(B^N)=1$, and
\begin{align}
P^{F,s,N}(Q_\beta^N)\in \Lcal^N(P^N),
\quad \int_\Bcal g(\beta) dQ_\beta^N (\beta)=\psi .
\end{align}
\end{lemma}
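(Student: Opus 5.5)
Apply \Cref{lemma:h_in_convex_hull} to the map $h=(t^N{}^\top,g)^\top\colon\Bcal\to\RR^{\tilde L^N+1}$ just defined. Then use Carathéodory's theorem to cap the number of atoms. Finally, transfer the moment equalities back to the marginalization constraint through \Cref{lemma:equivalence_matching_moments,lemma:redundant_constraints_realizable}.

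\textbf{Step 1 (reduce to moments).} Since $P^{F,s,N}(Q_\beta)\in\Lcal^N(P^N)$, i.e.\ $M_{\Ycal^N}(P^{F,s,N}(Q_\beta))=P^N$, \Cref{lemma:equivalence_matching_moments} (using realizability) gives $\int_\Bcal \bar t^N_l\,dQ_\beta=\bar b^N_l(P^N)$ for every $l\in[L^N]$. In particular $\int_\Bcal h\,dQ_\beta=(b^N(P^N)^\top,\psi)^\top$.

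\textbf{Step 2 (finite atoms).} The map $h$ is bounded, since $t^N_l$ and $g$ take values in $[0,1]$, and it is $\mathscr{B}$-measurable. So \Cref{lemma:h_in_convex_hull} gives $\int_\Bcal h\,dQ_\beta\in\Conv(H)$, where $H=h(\Bcal)\subseteq\RR^{\tilde L^N+1}$. By Carathéodory's theorem in $\RR^{\tilde L^N+1}$, this point is a convex combination of at most $\tilde L^N+2$ points $h(\beta_1),\ldots,h(\beta_K)$ with weights $q\in\Delta_K$. We may discard zero weights, merge repeated $\beta_k$, and keep $K\ge1$. Set $B^N=\{\beta_1,\ldots,\beta_K\}$ and $Q^N_\beta=\sum_k q_k\delta_{\beta_k}$. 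Then $Q^N_\beta(B^N)=1$, $1\le\Card(B^N)\le\tilde L^N+2$, $\int g\,dQ^N_\beta=\psi$, and $\int t^N_l\,dQ^N_\beta=b^N_l(P^N)$ for $l\le\tilde L^N$.

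\textbf{Step 3 (recover all constraints and the marginal).} \Cref{lemma:redundant_constraints_realizable} extends the moment equalities to the redundant indices $l>\tilde L^N$. Hence $\int s(\beta,a,y)\,dQ^N_\beta=Q^{np,N}_{Y\mid a}(P^N)(a,y)$ for all $(a,y)\in\pairay^N$. The converse direction of \Cref{lemma:equivalence_matching_moments} then yields $M_{\Ycal^N}(P^{F,s,N}(Q^N_\beta))=P^N$, that is, $P^{F,s,N}(Q^N_\beta)\in\Lcal^N(P^N)$.

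The main obstacle is the step that places the barycenter in $\Conv(H)$ itself rather than in its closure. This is exactly what \Cref{lemma:h_in_convex_hull} provides, so here it is only invoked. The remaining points need minor care: measurability of $\{\beta\}$ so that Dirac masses are in $\Qcal_\beta$ (Diracs are defined on any measurable space, so this is fine), and the atom-count bound coming from Carathéodory's theorem in dimension $\tilde L^N+1$.
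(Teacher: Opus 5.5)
Your proposal is correct and follows essentially the same route as the paper's proof. Both reduce the marginal constraint to moments via \Cref{lemma:equivalence_matching_moments}, place $\int_\Bcal h\,dQ_\beta$ in $\Conv(H)$ via \Cref{lemma:h_in_convex_hull}, cap the number of atoms at $\tilde L^N+2$ by Carath\'eodory in dimension $\tilde L^N+1$, and then recover the redundant constraints and the marginal through the converse direction of \Cref{lemma:equivalence_matching_moments}. The only cosmetic difference is that you cite \Cref{lemma:redundant_constraints_realizable} where the paper repeats the linear-combination argument inline.
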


\begin{proof}[Proof of \Cref{lemma:finite_support_realization}]
Since $P^{F,s,N}(Q_\beta)\in \Lcal^N(P^N)$, we have
\begin{align}
M_{\Ycal^N}(P^{F,s,N}(Q_\beta))=P^N .
\end{align}
By \Cref{lemma:equivalence_matching_moments},
\begin{align}
\int_\Bcal t^N(\beta)dQ_\beta(\beta)=b^N(P^N).
\end{align}
Moreover,
\begin{align}
(b^N(P^N)^\top, \psi)^\top = \int_\Bcal (t^N(\beta)^\top,g(\beta))^\top
dQ_\beta(\beta) = \int_\Bcal h(\beta)dQ_\beta(\beta).
\end{align}
By \Cref{lemma:h_in_convex_hull},
\begin{align}
(b^N(P^N)^\top,\psi)^\top
\in \Conv\left( \{(t^N(\beta)^\top, g(\beta))^\top \colon \beta\in\Bcal\}
\right) .
\end{align}
Since $\Conv\left( \{(t^N(\beta)^\top, g(\beta))^\top \colon \beta\in\Bcal\}
\right)$ is a convex subset of $\RR^{\tilde{L}^N+1}$, Carathéodory's theorem
implies that there exist $K\leq \tilde L^N+2$,
$\beta_1,\ldots,\beta_K\in\Bcal$, and
$q_1,\ldots,q_K\in[0,1]$ with $\sum_{k=1}^K q_k=1$ such that
\begin{align}
(b^N(P^N)^\top,\psi)^\top = \sum_{k=1}^K q_k (t^N(\beta_k )^\top , g(\beta_k))^\top .
\end{align}
Let
\begin{align}
B^N=\{\beta_1,\ldots,\beta_K\}, \quad Q_\beta^N=\sum_{k=1}^K q_k\delta_{\beta_k}.
\end{align}
Then $Q_\beta^N(B^N)=1$, $1\leq \Card(B^N)\leq K\leq \tilde L^N+2$, and
\begin{align}
\int_\Bcal t^N(\beta)dQ_\beta^N(\beta)
= b^N(P^N), \qquad \int_\Bcal g(\beta)dQ_\beta^N(\beta) = \psi . 
\end{align}
The remaining moment restrictions indexed by
$l=\tilde L^N+1,\ldots,L^N$ are linear combinations of
$1,t^N_1,\ldots,t^N_{\tilde L^N}$, hence are also matched. Therefore, for any
$(a, y) \in \pairay^N$,
\begin{align}
\int_\Bcal s(\beta,a,y)dQ_\beta^N(\beta) = Q^{np,N}_{Y\mid a}(P^N)(a,y).
\end{align}
Applying \Cref{lemma:equivalence_matching_moments} again gives
\begin{align}
M_{\Ycal^N}(P^{F,s,N}(Q_\beta^N))=P^N .
\end{align}
Thus
\begin{align}
P^{F,s,N}(Q_\beta^N)\in \Lcal^N(P^N) ,
\end{align}
which concludes the proof.
\end{proof}

\theoremdefinitionpsiwithsets*

\begin{proof}[Proof of \Cref{theorem:definition_psi_with_sets}]
By definition,
\begin{align}
\Psi^{+,N}(P^N) = \sup\left\{
\int_\Bcal g(\beta)dQ_\beta(\beta)
\colon Q_\beta\in\Qcal_\beta,\ 
P^{F,s,N}(Q_\beta)\in\Lcal^N(P^N) \right\}.
\end{align}
Let $Q_\beta\in\Qcal_\beta$ be such that
$P^{F,s,N}(Q_\beta)\in\Lcal^N(P^N)$, and let
\begin{align}
\psi=\int_\Bcal g(\beta)dQ_\beta(\beta).
\end{align}
By \Cref{lemma:finite_support_realization}, there exists
$Q_\beta^N\in\Qcal_\beta$ with support $B^N=\{\beta_1,\ldots,\beta_K\}$
such that $K \leq \tilde{L}^N+2, Q_\beta^N=\sum_{k=1}^K q_k\delta_{\beta_k},
\sum_{k=1}^K q_k=1$, and
\begin{align}
P^{F,s,N}(Q_\beta^N)\in\Lcal^N(P^N),
\quad \int_\Bcal g(\beta)dQ_\beta^N(\beta)=\psi .
\end{align}
By \Cref{lemma:equivalence_matching_moments}, $\sum_{k=1}^K q_k t^N(\beta_k)=b^N(P^N)$, and therefore
\begin{align}
\sum_{k=1}^K q_k g(\beta_k)=\psi,
\qquad
\sum_{k=1}^K q_k t^N(\beta_k)=b^N(P^N).
\end{align}
Hence every value $\psi \in \Psi^N(P^N)$ belongs to the set
\begin{equation}\label{equation:definition_set_proof_atoms}
\begin{aligned}
    \bigg\{ \psi\in[0,1] \colon & \exists K \leq \tilde{L}^N+2,\ 
\beta_1,\ldots,\beta_K \in \Bcal,\ 
q_1,\ldots, q_K \in [0,1],\ 
\sum_{k=1}^K q_k =1 , \\
& \sum_{k=1}^K q_k g(\beta_k) = \psi,\quad
\sum_{k=1}^K q_k t^N(\beta_k) = b^N(P^N)
\bigg\}.
\end{aligned}
\end{equation}
Conversely, suppose that there exist $K \leq \tilde{L}^N+2, \beta_1,\ldots,\beta_K \in \Bcal$, and $(q_1,\ldots,q_K) \in \Delta_K$ such that $\sum_{k=1}^K q_k g(\beta_k)=\psi$ and $\sum_{k=1}^K q_k t^N(\beta_k)=b^N(P^N)$. Let $Q_\beta=\sum_{k=1}^K q_k\delta_{\beta_k}$, then
\begin{align}
\int_\Bcal g(\beta)dQ_\beta(\beta)=\psi, \quad \int_\Bcal t^N(\beta)dQ_\beta(\beta)=b^N(P^N).
\end{align}
The remaining moment restrictions indexed by
$l=\tilde{L}^N+1,\ldots,L^N$ are linear combinations of
$1,t^N_1,\ldots,t^N_{\tilde{L}^N}$, hence are also matched. Therefore,
\Cref{lemma:equivalence_matching_moments} gives that $P^{F,s,N}(Q_\beta)\in\Lcal^N(P^N)$, hence $\psi \in \Psi^N(P^N)$.

Therefore, the sets $\Psi^N(P^N)$ and the one defined in \eqref{equation:definition_set_proof_atoms} are equal, taking suprema gives the desired result.
\end{proof}

\section{Proofs for the computation of the estimator via the expectation-maximization algorithm}\label{section:appendix_optimization_procedure}

\subsection{Supporting Lemmas}

\begin{restatable}{lemma}{lemmarightprojsolution}\label{lemma:right_proj_solution}
Let $r \in \Rcal^{N,K}(P^N,\psi)$. Then, for any $(w,\beta) \in \Delta_K \times \Bcal^K$ such that
\begin{align}
(w,\beta) \in \argmin_{w \in \Delta_K, (\beta_1,\ldots,\beta_K) \in \Bcal^K} \sum_{k=1}^K \sum_{l=1}^{L^N} \sum_{m=1}^2 r(k,l,m)\log\frac{r(k,l,m)}{w_k e_l^N \bar{t}_l^N(\beta_k)\gamma(m,\beta_k)}
\end{align}
we have that, for any $k \in [K]$,
\begin{align}
w_k \label{equation:objective_w_left} &= \sum_{l=1}^{L^N} \sum_{m=1}^2 r(k,l,m) , \\
\beta_k \label{equation:objective_beta} & \in \argmin_{\beta \in \Bcal} - \sum_{l=1}^{L^N} \sum_{m=1}^2 r(k,l,m) \left\{ \log \bar{t}_l^N(\beta) + \log \gamma(m,\beta) \right\} .
\end{align}
\end{restatable}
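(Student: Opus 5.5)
The plan is to expand the logarithm and split the objective into terms that depend on the weights only and terms that depend on the atoms only. Write $\rho_k = \sum_{l=1}^{L^N}\sum_{m=1}^2 r(k,l,m)$. Since $r$ is fixed, the entropy-type term $\sum r\log r$ does not depend on $(w,\beta)$. The term $-\sum r(k,l,m)\log e_l^N$ does not depend on $(w,\beta)$ either. The objective then equals a constant plus
\begin{align}
F_1(w) + \sum_{k=1}^K F_{2,k}(\beta_k), \quad F_1(w) = -\sum_{k=1}^K \rho_k \log w_k ,
\end{align}
where $F_{2,k}(\beta) = -\sum_{l,m} r(k,l,m)\{\log \bar t_l^N(\beta) + \log\gamma(m,\beta)\}$. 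We use the conventions $0\log 0 = 0$ and $r\log(r/0)=+\infty$, so a term with $r(k,l,m)=0$ contributes nothing.

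Because the objective is separable, any joint minimizer $(w,\beta)$ has the following properties. First, $w$ minimizes $F_1$ over $\Delta_K$; otherwise replacing $w$ while keeping $\beta$ fixed would strictly decrease the objective. Second, for each $k$, $\beta_k$ minimizes $F_{2,k}$ over $\Bcal$, by the same coordinate-replacement argument applied to one atom at a time. This second property is exactly \eqref{equation:objective_beta}.

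For the weights, note that $\sum_k \rho_k = 1$ since $r\in\Delta_{K\times L^N\times 2}$, so $\rho\in\Delta_K$. Then for any $w\in\Delta_K$,
\begin{align}
F_1(w) - F_1(\rho) = \sum_k \rho_k\log(\rho_k/w_k) = \KL(\rho\|w) \geq 0 .
\end{align}
By Gibbs' inequality, equality holds if and only if $w=\rho$. Hence the unique minimizer of $F_1$ is $w=\rho$, which gives \eqref{equation:objective_w_left}.

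The step that needs care is the infinite-value bookkeeping. The coordinate-replacement argument is only valid if the minimum value is finite, since otherwise every point is trivially a minimizer. I would argue finiteness by exhibiting one feasible point with finite objective, namely $w=\rho$ together with atoms at which $\bar t_l^N$ and $\gamma$ are positive wherever $r$ puts mass. If the minimum were $+\infty$, the statement would degenerate, and I would restrict to the nontrivial case where a minimizer has finite value.

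Components with $\rho_k = 0$ also need a remark. There $F_{2,k}\equiv 0$, so any $\beta_k$ is optimal and \eqref{equation:objective_beta} holds trivially. For $w_k$, the minimization of $F_1$ still forces $w_k=\rho_k=0$: if some $w_k>0$ with $\rho_k=0$, moving that mass to components with $\rho_j>0$ strictly decreases $F_1$, and the KL identity above already captures this.
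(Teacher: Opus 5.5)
Your proposal is correct and follows the paper's proof: you drop the $(w,\beta)$-independent terms, use separability of the objective, identify $w=\rho$ through Gibbs' inequality $\KL(\rho\|w)\geq 0$, and obtain the atom condition by optimizing each $\beta_k$ separately. Your handling of the case where the minimum is $+\infty$, which the paper's proof passes over silently, is a refinement rather than a different route. That case cannot arise under \Cref{assumption:convexity_beta_optim}, where $\bar t_l^N>0$ and $0<g<1$.
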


\begin{proof}[Proof of \Cref{lemma:right_proj_solution}]
Let $(w,\beta) \in \Delta_K \times \Bcal^K$ such that
\begin{align}\label{equation:wrpiwrp}
(w,\beta) \in \argmin_{w \in \Delta_K, (\beta_1,\ldots,\beta_K) \in \Bcal^K} \sum_{k=1}^K \sum_{l=1}^{L^N} \sum_{m=1}^2 r(k,l,m)\log\frac{r(k,l,m)}{w_k e_l^N \bar{t}_l^N(\beta_k)\gamma(m,\beta_k)} .
\end{align}
Dropping all terms that do not depend on $w$ or $\beta$, the optimization problem in \eqref{equation:wrpiwrp} is equivalent to maximizing
\begin{align}
\sum_{k=1}^K \sum_{l=1}^{L^N} \sum_{m=1}^2 r(k,l,m) \left\{ \log w_k + \log \bar{t}_l^N(\beta_k) + \log \gamma(m,\beta_k) \right\}.
\end{align}
The part of \eqref{equation:wrpiwrp} depending on $w$ is
\begin{align}
\sum_{k=1}^K  \sum_{l=1}^{L^N} \sum_{m=1}^2 r(k,l,m) \log w_k .
\end{align}
For any $w \in \Delta_K$, we have that
\begin{align}
\sum_{k=1}^K  \sum_{l=1}^{L^N} \sum_{m=1}^2 r(k,l,m) \log\frac{ \sum_{l=1}^{L^N} \sum_{m=1}^2 r(k,l,m)}{w_k} \geq 0 ,
\end{align}
with equality if, and only if $w_k =  \sum_{l=1}^{L^N} \sum_{m=1}^2 r(k,l,m)$ for any $k \in [K]$. By optimality in \eqref{equation:wrpiwrp}, we necessarily have that
\begin{align}
w_k = \sum_{l=1}^{L^N} \sum_{m=1}^2 r(k,l,m) .
\end{align}
It remains to characterize the coordinates $\beta_k$ in \eqref{equation:wrpiwrp}. Once $w$ is fixed, we have that
\begin{align}
(\beta_1, \ldots, \beta_K) \in \argmin_{\Bcal^K} \sum_{k=1}^K \sum_{l=1}^{L^N} \sum_{m=1}^2 r(k,l,m)\log\frac{r(k,l,m)}{w_k e_l^N \bar{t}_l^N(\beta_k)\gamma(m,\beta_k)} ,
\end{align}
and keeping only the part depending on $\beta_1, \ldots, \beta_K$, we obtain
\begin{align}
    (\beta_1, \ldots, \beta_K) \in \argmin_{\Bcal^K} - \sum_{k=1}^K \sum_{l=1}^{L^N} \sum_{m=1}^2 r(k,l,m) \left\{ \log \bar{t}_l^N(\beta_k) + \log \gamma(m,\beta_k) \right\} .
\end{align}
Therefore, for any $k \in [K]$, we have that
\begin{align}
\beta_k \in \argmin_{\Bcal}  -\sum_{l=1}^{L^N} \sum_{m=1}^2 r(k,l,m) \left\{ \log \bar{t}_l^N(\beta) + \log \gamma(m,\beta) \right\}.
\end{align}
If $\sum_{l=1}^{L^N} \sum_{m=1}^2 r(k,l,m) = 0$, then $r(k,l,m)=0$ for any $(l, m) \in [L^N] \times [2]$, and the objective is independent of $\beta_k$. It concludes the proof.
\end{proof}

\begin{restatable}{lemma}{lemmaconvexatomupdate}\label{lemma:convex_atom_update}
Let $r\in\Rcal^{N,K}(P^N, \psi)$ and $k\in[K]$. Suppose that \Cref{assumption:convexity_beta_optim} holds, and that $1 - g$ is log-concave. Then, the optimization problem
\begin{align}
\argmin_{\beta\in\Bcal}
-\sum_{l=1}^{L^N}\sum_{m=1}^2r(k,l,m)\{\log\bar{t}_l^N(\beta)+\log\gamma(m,\beta)\}
\end{align}
is convex.
\end{restatable}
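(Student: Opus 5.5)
The plan is to show that the objective is a nonnegative combination of convex functions of $\beta$, minimized over a convex set. By \Cref{assumption:convexity_beta_optim}, $\Bcal$ is a convex subset of $\RR^d$, so the feasible set is convex. Write the objective as
\begin{align}
F_k(\beta) = \sum_{l=1}^{L^N} \Big( \sum_{m=1}^2 r(k,l,m) \Big) \big\{-\log \bar t_l^N(\beta)\big\} + \sum_{l=1}^{L^N} r(k,l,1)\big\{-\log g(\beta)\big\} + \sum_{l=1}^{L^N} r(k,l,2)\big\{-\log (1-g(\beta))\big\} ,
\end{align}
using that $\gamma(1,\beta) = g(\beta)$ and $\gamma(2,\beta) = 1-g(\beta)$ by definition of $\gamma$.

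Each piece is then checked in turn. Under \Cref{assumption:convexity_beta_optim}, $\bar t_l^N$ takes values in $(0,\infty)$ and is log-concave, so $-\log \bar t_l^N$ is a finite convex function on $\Bcal$. Likewise $g$ takes values in $(0,1)$ and is log-concave, so $-\log g$ is finite and convex. The hypothesis that $1-g$ is log-concave, together with $1-g \in (0,1)$, gives that $-\log(1-g)$ is finite and convex.

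The weights $r(k,l,m)$ are nonnegative because $r \in \Rcal^{N,K}(P^N,\psi) \subseteq \Delta_{K\times L^N\times 2}$. A nonnegative combination of convex functions is convex, so $F_k$ is convex and real-valued on the convex set $\Bcal$, and minimizing a convex function over a convex set is a convex problem. If $\sum_{l,m} r(k,l,m)=0$, the objective is identically zero, which is trivially convex; this is consistent with the remark at the end of the proof of \Cref{lemma:right_proj_solution}.

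The only delicate point is finiteness: strict positivity of $\bar t_l^N$, $g$ and $1-g$ ensures that no $\log 0$ terms appear, so the objective is a genuine convex function and not an extended-valued one. Convexity itself follows immediately from the definition of log-concavity, since $\log f$ concave means $-\log f$ is convex.
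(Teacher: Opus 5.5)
Your proposal is correct and follows essentially the same route as the paper: strict positivity and log-concavity of $\bar t_l^N$, $g$ and $1-g$ make each $-\log$ term a finite convex function, the weights $r(k,l,m)\geq 0$ preserve convexity of the sum, and $\Bcal$ is convex. Your regrouping of the terms by $m$ and your remark on the case $\sum_{l,m} r(k,l,m)=0$ are harmless additions. The paper additionally notes that the set of minimizers is then convex.
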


\begin{proof}[Proof of \Cref{lemma:convex_atom_update}]
Since $\bar{t}_l^N$, $g$, and $1-g$ are positive and log-concave on $\Bcal$, the mappings $-\log\bar{t}^N_l$, $- \log g$, and $-\log (1-g)$ are convex on $\Bcal$. Since $r(k,l,m) \geq 0$ for every $l\in[L^N]$ and $m \in [2]$, the mapping
\begin{align}
\beta \mapsto - \sum_{l=1}^{L^N} \sum_{m=1}^2 r(k,l,m) \{\log \bar{t}_l^N(\beta) + \log\gamma(m,\beta)\}
\end{align}
is convex on $\Bcal$. The set of minimizers of a convex mapping over a convex set is convex.
\end{proof}

\begin{lemma}\label{lemma:attainmentKL}
    Under \Cref{assumption:compact_continuity_certify}, $D_K^N(P^N,\psi)=0$ if, and only if there exist $w\in\Delta_K$, $\beta \in \Bcal^K$, and $r \in \Rcal^{N,K} (P^N, \psi)$ such that $r = q_{w,\beta}^N$.
\end{lemma}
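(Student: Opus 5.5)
The plan is to treat the two directions separately. The reverse direction is immediate and needs no assumption. The forward direction rests on showing that the double infimum in \eqref{equation:finite_atom_kl_distance} is attained, which I will get from compactness under \Cref{assumption:compact_continuity_certify}.

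\emph{Reverse direction.} If $r=q^N_{w,\beta}$ for some $w\in\Delta_K$, $\beta\in\Bcal^K$ and $r\in\Rcal^{N,K}(P^N,\psi)$, then $\KL(r\|q^N_{w,\beta})=0$. Since KL divergence is nonnegative, this gives $D_K^N(P^N,\psi)=0$ directly.

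\emph{Forward direction.} Assume $D_K^N(P^N,\psi)=0$. Pick sequences $(w^{(n)},\beta^{(n)})\in\Delta_K\times\Bcal^K$ and $r^{(n)}\in\Rcal^{N,K}(P^N,\psi)$ with
\begin{align}
\KL\big(r^{(n)}\,\|\,q^N_{w^{(n)},\beta^{(n)}}\big)\to 0 .
\end{align}
The relevant sets are compact. $\Delta_K$ is compact, and $\Bcal^K$ is compact because $\Bcal$ is a compact metric space. $\Rcal^{N,K}(P^N,\psi)$ is a closed subset of the finite-dimensional simplex $\Delta_{K\times L^N\times 2}$, being cut out by finitely many linear equalities, and hence is compact. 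Passing to a subsequence, I may assume $w^{(n)}\to w$, $\beta^{(n)}\to\beta$ and $r^{(n)}\to r\in\Rcal^{N,K}(P^N,\psi)$.

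Next I pass to the limit in the model. The maps $g$ and $\bar t^N_l$ are continuous, so $\gamma(m,\cdot)$ is continuous. Each coordinate $q^N_{w,\beta}(k,l,m)=w_k e_l^N\bar t_l^N(\beta_k)\gamma(m,\beta_k)$ is therefore jointly continuous in $(w,\beta)$, and $q^N_{w^{(n)},\beta^{(n)}}\to q^N_{w,\beta}$. I also need $q^N_{w,\beta}$ to be a probability vector, which holds because $\sum_{l\colon a(l)=a}\bar t^N_l=1$, $\sum_a e^N(a)=1$ and $\sum_m\gamma(m,\cdot)=1$.

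The key step is lower semicontinuity of $(r,q)\mapsto\KL(r\|q)$ on a finite simplex, including the $+\infty$ convention. It follows because each summand $r\log(r/q)$ is lower semicontinuous on $[0,1]^2$ with the stated conventions. Applying it gives
\begin{align}
\KL\big(r\,\|\,q^N_{w,\beta}\big)\le\liminf_{n}\KL\big(r^{(n)}\,\|\,q^N_{w^{(n)},\beta^{(n)}}\big)=0 .
\end{align}
Hence $\KL(r\|q^N_{w,\beta})=0$, and Gibbs' inequality yields $r=q^N_{w,\beta}$, which is the required triple.

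The main obstacle is the boundary behavior in the lower semicontinuity step. Some coordinates of $q^N_{w^{(n)},\beta^{(n)}}$ may tend to zero, and I must check that this cannot make the limiting divergence smaller than the liminf. I will handle it termwise. Where $r>0$ and $q\to0$, the summand tends to $+\infty$, which contradicts the divergences tending to $0$. Where $r=0$, the summand $x\log(x/y)$ is at least $x-y$, which is bounded below and tends to $0$. All remaining summands are continuous at the limit point.
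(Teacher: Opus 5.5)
Your proof is correct and follows the paper's argument: under \Cref{assumption:compact_continuity_certify}, $\Delta_K\times\Bcal^K\times\Rcal^{N,K}(P^N,\psi)$ is compact, $(w,\beta)\mapsto q^N_{w,\beta}$ is continuous, and KL is lower semicontinuous, so the infimum is attained and $\KL(r\|q^N_{w,\beta})=0$ forces $r=q^N_{w,\beta}$. Your sequential version also checks explicitly, term by term at the boundary, the lower semicontinuity that the paper only asserts.
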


\begin{proof}[Proof of \Cref{lemma:attainmentKL}]
For any $w \in \Delta_K, \beta \in \Bcal^K$, we have that
\begin{align}
    \sum_{k=1}^K \sum_{l=1}^{L^N} \sum_{m=1}^2 q_{w,\beta}^N (k,l, m) & = \sum_{k=1}^K \sum_{l=1}^{L^N} \sum_{m=1}^2 w_k e_l^N \bar{t}_l^N(\beta_k) \gamma (m, \beta_k) \\
    & = \sum_{k=1}^K \sum_{l=1}^{L^N} w_k e_l^N \bar{t}_l^N(\beta_k) \underbrace{\sum_{m=1}^2 \{\1_{\{1\}}(m) g(\beta_k) + \1_{\{1\}}(m - 1) (1-g(\beta_k))\}}_{=1} \\
    & = \sum_{k=1}^K w_k \underbrace{\sum_{l=1}^{L^N}  e_l^N \bar{t}_l^N(\beta_k)}_{= 1} \\
    & = 1 ,
\end{align}
hence for any $w \in \Delta_K, \beta \in \Bcal^K$, we have that $q_{w,\beta}^N \in \Delta_{K \times L^N \times 2}$. Under \Cref{assumption:compact_continuity_certify}, the feasible set
\begin{align}
\{(w,\beta,r)\in \Delta_K \times \Bcal^K \times \Rcal^{N,K}(P^N, \psi) \}
\end{align}
is compact. Moreover, the map
\begin{align}
\Delta_K \times \Bcal^K \times \Rcal^{N,K}(P^N, \psi) & \to \bar{\RR} , \\
(w,\beta,r) & \mapsto \KL(r \| q_{w,\beta}^N)
\end{align}
is lower semicontinuous. Hence, whenever $D_K^N(P^N,\psi)=0$, the infimum
defining $D_K^N(P^N,\psi)$ is attained. Consequently, if $D_K^N(P^N,\psi)=0$, then there exist $w \in \Delta_K$, $\beta\in\Bcal^K$, and $r \in \Rcal^{N,K}(P^N, \psi)$ such that $\KL(r \| q_{w,\beta}^N) = 0$. Since both $r$ and $q_{w,\beta}^N$ are probability vectors on $\Delta_{K \times L^N \times 2}$, we thus have that $r = q_{w,\beta}^N$.

The reverse direction is immediate, if there exists $w \in \Delta_K, \beta \in \Bcal^K, r \in \Rcal^{N,K}(P^N, \psi)$ such that $r=q_{w,\beta}^N$, we thus have that $\KL(r \| q_{w,\beta}^N) = 0$, hence $D^N_K(P^N, \psi) = 0$, hence the equivalence.
\end{proof}

\subsection{Proofs of \Cref{proposition:validity_idealized_certify} and \Cref{theorem:local_basin_certify}}

\propositionvalidityidealizedcertify*

\begin{proof}[Proof of \Cref{proposition:validity_idealized_certify}]
Let $P^N\in\Mcal^{np,N}$ be realizable under $(\Qcal_\beta,s)$. Under
\Cref{assumption:compact_continuity_certify}, by \Cref{theorem:matching_mle}, we have that
\begin{align}
\Psi^{\mle,N}(P^N) = \Psi^N(P^N).
\end{align}
By the finite-support characterization in the proof of \Cref{theorem:definition_psi_with_sets}, $\psi\in\Psi^N(P^N)$ if and only if there exist $K\leq \tilde L^N+2$, $w\in\Delta_K$, and $\beta\in\Bcal^K$ such that $\sum_{k=1}^K w_k g(\beta_k) = \psi, \sum_{k=1}^K w_k t^N(\beta_k) = b^N(P^N)$. Since $P^N$ is realizable under $(\Qcal_\beta, s)$, this is equivalent
\begin{align}\label{equation:finite_atom_condition_certify}
\sum_{k=1}^K w_k g(\beta_k) &= \psi,
\qquad
\sum_{k=1}^K w_k \bar t^N(\beta_k) = \bar b^N(P^N).
\end{align}

Suppose first that $\psi\in\Psi^{\mle,N}(P^N)$. There exists $K \leq \tilde L^N+2$, $w \in \Delta_K$, and $\beta \in \Bcal^K$ satisfying \eqref{equation:finite_atom_condition_certify}. Define $r, q^N_{w, \beta} \in \Delta_{K \times L^N \times 2}$ as $r(k, l, m) = q_{w,\beta}^N (k,l,m) = w_k e_l^N \bar{t}^N_l(\beta_k) \gamma (m, \beta_k)$ for any $(k,l,m) \in [K] \times[L^N] \times[2]$.
We now verify that $r \in \Rcal^{N,K}(P^N,\psi)$. For any $l \in [L^N]$, we have
\begin{align}
\sum_{k=1}^K \sum_{m=1}^2 r(k,l,m)
& = \sum_{k=1}^K\sum_{m=1}^2
w_k e_l^N \bar{t}^N_l (\beta_k) \gamma(m, \beta_k) \\
& = e_l^N\sum_{k=1}^K \{w_k \bar{t}^N_l (\beta_k) \sum_{m=1}^2 \gamma (m, \beta_k) \} \\
& = e_l^N\sum_{k=1}^K w_k \bar{t}^N_l (\beta_k) \\
& = e_l^N\bar b_l^N(P^N) \\
& = \bar{\alpha}^N_l(P^N) .
\end{align}
Moreover, for $m=1$, we have
\begin{align}
\sum_{k=1}^K\sum_{l=1}^{L^N} r(k,l,1) & = \sum_{k=1}^K\sum_{l=1}^{L^N}
w_k e_l^N \bar{t}^N_l (\beta_k) g(\beta_k) \\
& = \sum_{k=1}^K w_k g(\beta_k) \sum_{l=1}^{L^N} e_l^N \bar{t}^N_l (\beta_k) \\
& = \sum_{k=1}^K w_k g(\beta_k) \\
& = \psi ,
\end{align}
which is equal to $\varpsi(1)$. Similarly, we obtain that $\sum_{k=1}^K\sum_{l=1}^{L^N} r(k,l,2) = \varpsi(2)$, which proves that $r \in \Rcal^{N,K}(P^N, \psi)$. We also have that $q^N_{w, \beta} \in \Mcal^N_K$. Since $r = q^N_{w, \beta}$, it proves that
\begin{align}
    \inf_{w \in \Delta_K, \beta \in \Bcal^K} \inf_{r \in \Rcal^{N,K}(P^N, \psi)} \KL(r \| q_{w, \beta}^N) = 0 ,
\end{align}
hence $D^N_K(P^N, \psi) = 0$, and we have the first direction.

Conversely, suppose that there exists $K \leq \tilde{L}^N+2$ such that
$D_K^N(P^N,\psi) = 0$. By \Cref{lemma:attainmentKL}, there exist
$w \in \Delta_K, \beta \in \Bcal^K$, and $r \in \Rcal^{N,K} (P^N,\psi)$ such that
\begin{align}
r=q_{w,\beta}^N .
\end{align}
Since $r\in\Rcal^{N,K}(P^N,\psi)$, for any $l\in[L^N]$, we have that $\sum_{k=1}^K\sum_{m=1}^2 r(k,l,m)
= \bar{\alpha}^N_l(P^N)$. Using $r = q_{w,\beta}^N$, it gives
\begin{align}
\bar\alpha_l^N(P^N) = \sum_{k=1}^K\sum_{m=1}^2
w_k e_l^N \bar{t}^N_l(\beta_k) \gamma(m, \beta_k) = e_l^N\sum_{k=1}^K w_k \bar{t}^N_l (\beta_k).
\end{align}
Since $\bar\alpha_l^N(P^N) = e_l^N\bar b_l^N(P^N)$ and $e_l^N>0$, we obtain for any $l \in [L^N]$
\begin{align}
\sum_{k=1}^K w_k\bar t_l^N(\beta_k)
= \bar{b}^N_l (P^N) .
\end{align}
Similarly, using the marginal constraint for $m=1$, we have
\begin{align}
\psi = \sum_{k=1}^K\sum_{l=1}^{L^N} r(k,l,1) = \sum_{k=1}^K \sum_{l=1}^{L^N}
w_k e_l^N\bar{t}^N_l (\beta_k) g(\beta_k) = \sum_{k=1}^K w_k g(\beta_k).
\end{align}
Therefore \eqref{equation:finite_atom_condition_certify} holds. Hence
$\psi\in\Psi^N(P^N)$. Since
$\Psi^N(P^N)=\Psi^{\mle,N}(P^N)$, we conclude that
\begin{align}
\psi \in \Psi^{\mle,N} (P^N) ,
\end{align}
which proves the equivalence.
\end{proof}

\begin{restatable}{lemma}{lemmaleftprojectionmultiplier}
\label{lemma:left_projection_multiplier}
Fix $N, K, P^N, w, \beta$. Suppose that \Cref{assumption:left_projection_support} holds. Let $G$ be defined as
\begin{align}
G \colon \RR & \to \RR , \\
\nu & \mapsto \sum_{l \in [L^N] \colon \bar{\alpha}_l^N(P^N) > 0} \bar{\alpha}^N_l(P^N) \frac{\sum_{k=1}^K q^N_{w,\beta}(k,l,1)e^\nu}{\sum_{k=1}^K q^N_{w,\beta}(k,l,1)e^\nu + \sum_{k=1}^K q^N_{w,\beta}(k,l,2)}.
\end{align}
Then, for every $\psi \in (0,1)$, there exists a unique $\nu^\star \in \RR$ such that $G(\nu^\star) = \psi$.
\end{restatable}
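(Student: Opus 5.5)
The plan is to show that $G$ is a continuous, strictly increasing bijection from $\RR$ onto $(0,1)$. Existence of $\nu^\star$ will then follow from the intermediate value theorem and uniqueness from strict monotonicity. For every $l\in[L^N]$ with $\bar\alpha^N_l(P^N)>0$, write $A_l=\sum_{k=1}^K q^N_{w,\beta}(k,l,1)$ and $B_l=\sum_{k=1}^K q^N_{w,\beta}(k,l,2)$. Then
\begin{align}
G(\nu)=\sum_{l\colon \bar\alpha^N_l(P^N)>0}\bar\alpha^N_l(P^N)\,\frac{A_le^\nu}{A_le^\nu+B_l}.
\end{align}
The first step is to invoke \Cref{assumption:left_projection_support} with $m=1$ and $m=2$. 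It gives $A_l>0$ and $B_l>0$ for every index in the sum. Hence every denominator is strictly positive for all $\nu\in\RR$, so $G$ is well defined and continuous, indeed $\Ccal^\infty$.

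The second step is monotonicity. Each summand is a logistic function of $\nu$, namely $\sigma(\nu+\log A_l-\log B_l)$ with $\sigma(x)=1/(1+e^{-x})$. Its derivative is $A_lB_le^\nu/(A_le^\nu+B_l)^2>0$, using $A_l,B_l>0$. Because the summands carry positive weights $\bar\alpha^N_l(P^N)$ and the index set is nonempty, $G'(\nu)>0$ everywhere, so $G$ is strictly increasing. The index set is nonempty because $\sum_{l=1}^{L^N}\bar\alpha^N_l(P^N)=1$, as noted after \eqref{equation:definition_objects_alpha}.

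The third step computes the limits. As $\nu\to-\infty$ each summand tends to $0$, so $G(\nu)\to0$. As $\nu\to+\infty$ each summand tends to $\bar\alpha^N_l(P^N)$, so $G(\nu)\to\sum_{l\colon\bar\alpha^N_l>0}\bar\alpha^N_l(P^N)=\sum_{l=1}^{L^N}\bar\alpha^N_l(P^N)=1$. Since $G$ is strictly increasing, $G(\RR)\subseteq(0,1)$, and by continuity $G(\RR)=(0,1)$. For $\psi\in(0,1)$, the intermediate value theorem therefore yields $\nu^\star$ with $G(\nu^\star)=\psi$, and strict monotonicity makes it unique.

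The only delicate point is the positivity of $B_l$, and not just of $A_l$. If some $B_l$ vanished, the corresponding summand would be constant and equal to $\bar\alpha^N_l$. Strictness would then survive only through the other terms, and the range of $G$ could fail to be all of $(0,1)$. This is exactly what \Cref{assumption:left_projection_support}, applied for both values of $m$, rules out, so no further obstacle is expected.
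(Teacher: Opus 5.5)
Your proposal is correct and follows essentially the same route as the paper. Both use \Cref{assumption:left_projection_support} for $m=1,2$ to get strictly positive partial sums, deduce continuity and a strictly positive derivative (with a nonempty index set because the $\bar\alpha^N_l(P^N)$ sum to one), compute the limits $0$ and $1$, and conclude existence by the intermediate value theorem and uniqueness by strict monotonicity.
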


\begin{proof}[Proof of \Cref{lemma:left_projection_multiplier}]
For any $l \in [L^N]$ such that $\bar{\alpha}_l^N(P^N) > 0$, \Cref{assumption:left_projection_support} gives that
\begin{align}
\sum_{k=1}^K q^N_{w,\beta}(k,l,1) > 0, \qquad \sum_{k=1}^K q^N_{w,\beta}(k,l,2) > 0.
\end{align}
Hence, for any such $l$, the map
\begin{align}
\nu \mapsto \frac{\sum_{k=1}^K q^N_{w,\beta}(k,l,1)e^\nu}{\sum_{k=1}^K q^N_{w,\beta}(k,l,1)e^\nu + \sum_{k=1}^K q^N_{w,\beta}(k,l,2)}
\end{align}
is continuous on $\RR$. Since $G$ is a finite sum of continuous functions, $G$ is continuous on $\RR$. Moreover, for any $l \in [L^N]$ such that $\bar{\alpha}_l^N(P^N) > 0$, we have, for any $\nu \in \RR$,
\begin{align}
\frac{d}{d\nu} \frac{\sum_{k=1}^K q^N_{w,\beta}(k,l,1)e^\nu}{\sum_{k=1}^K q^N_{w,\beta}(k,l,1)e^\nu + \sum_{k=1}^K q^N_{w,\beta}(k,l,2)} & = \frac{\left(\sum_{k=1}^K q^N_{w,\beta}(k,l,1)\right) \left(\sum_{k=1}^K q^N_{w,\beta}(k,l,2)\right)e^\nu}{\left( \sum_{k=1}^K q^N_{w,\beta}(k,l,1)e^\nu + \sum_{k=1}^K q^N_{w,\beta}(k,l,2) \right)^2} \\
& > 0.
\end{align}
Since $\sum_{l=1}^{L^N}\bar{\alpha}_l^N(P^N)=1$, there exists $l \in [L^N]$ such that $\bar{\alpha}_l^N(P^N) > 0$. Therefore, $G$ is strictly increasing on $\RR$. For any $l \in [L^N]$ such that $\bar{\alpha}_l^N(P^N) > 0$, we have
\begin{align}
\lim_{\nu \to -\infty} \frac{\sum_{k=1}^K q^N_{w,\beta}(k,l,1)e^\nu}{\sum_{k=1}^K q^N_{w,\beta}(k,l,1)e^\nu + \sum_{k=1}^K q^N_{w,\beta}(k,l,2)} & = 0 , \\
\lim_{\nu \to +\infty} \frac{\sum_{k=1}^K q^N_{w,\beta}(k,l,1)e^\nu}{\sum_{k=1}^K q^N_{w,\beta}(k,l,1)e^\nu + \sum_{k=1}^K q^N_{w,\beta}(k,l,2)} & = 1 ,
\end{align}
and hence, we have that
\begin{align}
\lim_{\nu \to -\infty} G(\nu) = 0, \qquad \lim_{\nu \to +\infty} G(\nu) = \sum_{l \in [L^N] \colon \bar{\alpha}_l^N(P^N) > 0} \bar{\alpha}_l^N(P^N) = 1.
\end{align}
Since $G$ is continuous, for every $\psi \in (0,1)$, there exists $\nu^\star \in \RR$ such that $G(\nu^\star) = \psi$. Since $G$ is strictly increasing, this $\nu^\star$ is unique.
\end{proof}

\begin{restatable}{lemma}{lemmaleftprojsolution}
\label{lemma:left_proj_solution}
Let $P^N \in \Mcal^{np,N}, \psi \in (0,1), w \in \Delta_K,\beta \in \Bcal^K$. Suppose that
\Cref{assumption:left_projection_support} holds. Then, there exists $\nu^\star \in \RR$ such that
\begin{align}
\sum_{l=1}^{L^N}
\bar{\alpha}^N_l(P^N) \frac{\sum_{k=1}^K q^N_{w, \beta}(k,l,1)e^{\nu^\star}}{\sum_{k=1}^K q^N_{w, \beta}(k,l,1)e^{\nu^\star} + \sum_{k=1}^K q^N_{w,\beta}(k,l,2)} = \psi ,
\end{align}
and \eqref{equation:left_projection_problem} admits a unique minimizer $r^\star$. For any $l\in[L^N]$ such that
$\bar\alpha_l^N(P^N)>0$, and $k \in [K]$, we have that
\begin{align}\label{equation:dlsfjskf}
r^\star(k,l,1) & = \bar{\alpha}^N_l(P^N)
\frac{q^N_{w, \beta}(k,l,1) e^{\nu^\star}}{\sum_{u=1}^K q^N_{w,\beta}(u,l,1)e^{\nu^\star} + \sum_{u=1}^K q^N_{w,\beta}(u,l,2)} , \\
r^\star(k,l,2) &= \bar\alpha_l^N(P^N)
\frac{ q^N_{w, \beta}(k,l,2)}{\sum_{u=1}^K q^N_{w, \beta}(u,l,1)e^{\nu^\star} + \sum_{u=1}^K q^N_{w,\beta}(u,l,2)}.
\end{align}
For any $l\in[L^N]$ such that $\bar{\alpha}^N_l (P^N) = 0$, we have for any $ k\in[K],\ m\in[2], r^\star (k,l,m) = 0$.
\end{restatable}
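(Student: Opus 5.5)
The plan is to treat \eqref{equation:left_projection_problem} as a strictly convex minimization over a polytope and solve it through its KKT conditions. Existence of $\nu^\star$ is exactly \Cref{lemma:left_projection_multiplier}, which already gives a unique root of $G(\nu)=\psi$ for $\psi\in(0,1)$ under \Cref{assumption:left_projection_support}. Note that $G$ sums only over $l$ with $\bar\alpha_l^N(P^N)>0$. Terms with $\bar\alpha_l^N(P^N)=0$ contribute zero to the displayed sum, so the equation in the statement coincides with $G(\nu^\star)=\psi$.

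\textbf{Step 1 (zero cells).} If $\bar\alpha_l^N(P^N)=0$, the marginal constraint $\sum_{k,m} r(k,l,m)=0$ together with $r\geq 0$ forces $r(k,l,m)=0$ for every feasible $r$. This settles the last claim and lets us restrict attention to the cells $l$ with $\bar\alpha_l^N(P^N)>0$.

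\textbf{Step 2 (candidate and feasibility).} Define $r^\star$ by \eqref{equation:dlsfjskf}. Summing over $k$ and $m$ gives $\bar\alpha_l^N(P^N)$ directly. Summing the $m=1$ entries over $k$ and $l$ gives $G(\nu^\star)=\psi$. The $m=2$ marginal then equals $1-\psi$, because the total mass is $\sum_l\bar\alpha_l^N=1$. Hence $r^\star\in\Rcal^{N,K}(P^N,\psi)$. It also has finite KL with respect to $q^N_{w,\beta}$, since $r^\star(k,l,m)>0$ only where $q^N_{w,\beta}(k,l,m)>0$, and the denominators are positive by \Cref{assumption:left_projection_support}.

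\textbf{Step 3 (optimality via a Gibbs-type inequality).} Write $r^\star(k,l,m)=q^N_{w,\beta}(k,l,m)\exp\{\mu_l+\nu^\star\1\{m=1\}\}$, where
\begin{align}
\mu_l=\log\bar\alpha_l^N(P^N)-\log\Big(\sum_{u}q^N_{w,\beta}(u,l,1)e^{\nu^\star}+\sum_u q^N_{w,\beta}(u,l,2)\Big).
\end{align}
For any feasible $r$ with finite KL, its support lies inside the support of $q^N_{w,\beta}$ restricted to cells $l$ with $\bar\alpha_l^N(P^N)>0$, and this coincides with the support of $r^\star$. Then
\begin{align}
\KL(r\|q^N_{w,\beta})=\KL(r\|r^\star)+\sum_{k,l,m}r(k,l,m)\{\mu_l+\nu^\star\1\{m=1\}\}.
\end{align}
The last sum equals $\sum_l\bar\alpha_l^N\mu_l+\nu^\star\psi$ by the marginal constraints, so it is the same for every feasible $r$, and in particular equals $\KL(r^\star\|q^N_{w,\beta})$. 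Therefore $\KL(r\|q^N_{w,\beta})\geq \KL(r^\star\|q^N_{w,\beta})$, with equality if and only if $\KL(r\|r^\star)=0$, that is, $r=r^\star$. This gives existence and uniqueness of the minimizer at once, with no compactness argument needed.

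\textbf{Main obstacle.} The delicate point is the support bookkeeping in Step 3. One must check that $r\ll r^\star$ whenever $\KL(r\|q^N_{w,\beta})<\infty$, which holds because $r^\star$ is positive on exactly the positive cells of $q^N_{w,\beta}$ inside the active $l$'s. One must also handle the conventions $0\log 0$ consistently, so that the decomposition identity is valid term by term.
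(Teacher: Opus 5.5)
Your proposal is correct and follows essentially the paper's proof. The paper also forces $r=0$ on cells with $\bar\alpha_l^N(P^N)=0$, takes $r^\star$ to be the exponential tilting of $q^N_{w,\beta}$ with $\nu^\star$ supplied by \Cref{lemma:left_projection_multiplier}, and obtains optimality and uniqueness from $\KL(r\|q^N_{w,\beta})-\KL(r^\star\|q^N_{w,\beta})=\KL(r\|r^\star)$, since $\log(r^\star/q^N_{w,\beta})$ is a combination of the constraint functions; it differs only in first motivating the tilted form through Lagrangian stationarity, which you skip, while your explicit check that $r\ll r^\star$ is slightly more careful than the paper's.
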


\begin{proof}[Proof of \Cref{lemma:left_proj_solution}]
For any $l \in [L^N]$ such that $\bar\alpha_l^N(P^N) = 0$, the constraint $\sum_{k=1}^K \sum_{m=1}^2 r(k, l, m)$ $= \bar{\alpha}^N_l(P^N) $ in $\Rcal^{N,K}(P^N, \psi)$ implies that
\begin{align}
\sum_{k=1}^K\sum_{m=1}^2 r(k, l, m) = 0,
\end{align}
and hence $r(k, l, m) = 0$ for any $k \in [K], m \in [2]$. Let $S = \{(k, l, m) \colon q_{w, \beta}^N(k, l, m) > 0\}$. Any feasible $r$ with finite $\KL(r \| q^N_{w, \beta})$ satisfies $r(k, l, m) = 0$ for any $(k, l, m) \notin S$. For any $(k, l, m) \in S, \lambda \in \RR^{L^N}$, and $\nu \in \RR$, consider the Lagrangian
\begin{align}
\Lcal(r, \lambda, \nu) &= \sum_{(k, l, m) \in S} r(k, l, m)\log\frac{r(k, l, m)}{q_{w, \beta}^N(k, l, m)} + \sum_{l=1}^{L^N}\lambda_l \left( \sum_{k=1}^K\sum_{m=1}^2 r(k, l, m) - \bar{\alpha}^N_l(P^N) \right) \\
&\quad - \nu \left( \sum_{k=1}^K \sum_{l=1}^{L^N} r(k, l, 1) - \psi \right) .
\end{align}
The first-order condition with respect to $r(k, l, m)$, for $(k, l, m) \in S$, gives that
\begin{align}\label{equation:rweoiru}
\log\frac{r(k, l, m)}{q_{w, \beta}^N(k, l, m)} + 1 + \lambda_l - \nu \1\{m = 1\} = 0 ,
\end{align}
and thus $r(k, l, m) = q_{w, \beta}^N(k, l, m) \exp\{-1 - \lambda_l + \nu \1\{m = 1\}\}$. The constraint $ \sum_{k=1}^K \sum_{m=1}^2 r(k, l, m) = \bar{\alpha}^N_l(P^N)$ gives, for any $l$ such that $\bar{\alpha}_l^N(P^N) > 0$,
\begin{align}
\exp(-1 - \lambda_l) = \frac{\bar{\alpha}^N_l (P^N)}{ \sum_{u=1}^K q_{w, \beta}^N(u, l, 1)e^\nu + \sum_{u=1}^K q_{w, \beta}^N(u, l, 2) }.
\end{align}
Therefore, any stationary point satisfying the constraints must be of the form
\begin{align}
r_\nu(k, l, 1) &= \bar\alpha_l^N(P^N) \frac{q_{w, \beta}^N(k, l, 1)e^\nu}{ \sum_{u=1}^K q_{w, \beta}^N(u, l, 1)e^\nu + \sum_{u=1}^K q_{w, \beta}^N(u, l, 2) }, \\
r_\nu(k, l, 2) &= \bar\alpha_l^N(P^N) \frac{q_{w, \beta}^N(k, l, 2)}{ \sum_{u=1}^K q_{w, \beta}^N(u, l, 1)e^\nu + \sum_{u=1}^K q_{w, \beta}^N(u, l, 2) }.
\end{align}
By \Cref{assumption:left_projection_support}, the denominators are strictly positive whenever $\bar\alpha_l^N(P^N) > 0$. By \Cref{lemma:left_projection_multiplier}, there exists $\nu^\star$ such that $G(\nu^\star) = \psi$, which yields the candidate minimizer $r^\star = r_{\nu^\star}$.

By construction, for any $l \in [L^N]$, we have that $\sum_{k=1}^K \sum_{m=1}^2 r^\star(k,l,m) = \bar{\alpha}^N_l(P^N)$. Moreover,
\begin{align}
\sum_{k=1}^K\sum_{l=1}^{L^N} r^\star(k,l,1) = G(\nu^\star) = \psi .
\end{align}
Since $\sum_{l=1}^{L^N} \bar{\alpha}_l^N(P^N)=1$, we also have that $\sum_{k=1}^K \sum_{l=1}^{L^N} r^\star(k, l, 2) = 1 -\psi $. Therefore, $r^\star \in \Rcal^{N,K}(P^N,\psi)$.

To verify optimality, since $r^\star$ satisfies \eqref{equation:rweoiru}, on the support of $q_{w, \beta}^N$, we have that
\begin{align}
\log \frac{r^\star(k, l, m)}{q_{w, \beta}^N(k, l, m)} =  \nu^\star \1\{m = 1\} - 1 -\lambda_l .
\end{align}
Hence, for any $r \in \Rcal^{N,K}(P^N, \psi)$ such that $\KL (r \| q_{w,\beta}^N)$ is finite, we have
\begin{align}
& \sum_{(k,l, m) \in S} \{r(k,l,m)-r^\star (k, l, m)\} \log \frac{r^\star(k,l,m)}{q_{w, \beta}^N(k,l,m)} \\
& \quad = \sum_{k,l,m}
\{r(k,l,m)-r^\star(k,l,m)\}
\left(-1-\lambda_l + \nu^\star \1\{m = 1\} \right) \\
& \quad = \sum_{l \in [L^N]} (-1-\lambda_l)
\sum_{k,m}\{r(k,l,m)-r^\star(k,l,m)\} + \nu^\star \sum_{k = 1}^K \sum_{l = 1}^{L^N} \{r(k, l, 1) - r^\star(k,l,1)\}.
\end{align}
Since $r$ and $r^\star$ satisfy the same marginal constraints, for every $l \in [L^N]$, we have
\begin{align}
\sum_{k=1}^K\sum_{m=1}^2 r(k,l,m) = \sum_{k=1}^K\sum_{m=1}^2 r^\star(k,l,m) = \bar{\alpha}_l^N(P^N) ,
\end{align}
and thus $\sum_{k=1}^K \sum_{m=1}^2
\{r(k,l,m)-r^\star(k,l,m) \}
=0$. Similarly, considering the constraint $\sum_{k=1}^K \sum_{l=1}^{L^N} r(k, l, 2)
$ $= \sum_{k=1}^K \sum_{l=1}^{L^N} r^\star(k, l, 2) = 1 - \psi$, we can conclude that
\begin{align}
    \sum_{(k,l, m) \in S} \{r(k,l,m)-r^\star (k, l, m)\}
\log \frac{r^\star(k,l,m)}{q_{w, \beta}^N (k,l,m)} = 0 .
\end{align}
Therefore, we have that
\begin{align}\label{equation:rweroiwjr}
\KL(r \| q_{w, \beta}^N) - \KL(r^\star \| q_{w, \beta}^N) = \sum_{k, l, m}r(k, l, m)\log\frac{r(k, l, m)}{r^\star(k, l, m)} = \KL(r \| r^\star) \geq 0.
\end{align}
If $\KL(r \| q_{w, \beta}^N) = +\infty$, \eqref{equation:rweroiwjr} is immediate. Therefore, we have that $r^\star$ is a minimizer. Since $\KL(r \| r^\star) = 0$ if and only if $r = r^\star$, the minimizer $r^\star$ is unique.
\end{proof}

\begin{lemma}\label{lemma:left_projection_smoothness}
Suppose that \Cref{assumption:convexity_beta_optim,assumption:interior_point,assumption:local_identifiability,assumption:smoothness_certify} hold. Then there exists an open neighborhood $U\subseteq\Theta_0$ of $\theta^\star$ such that, for every $\theta\in U$, the left projection problem admits a unique minimizer, and the mapping
\begin{align}
\eta\colon U \to\Rcal^{N,K}(P^N,\psi) \; , \; \theta \mapsto \argmin_{r\in\Rcal^{N,K}(P^N,\psi)} \KL(r \| q_{\iota(\theta)}^N)
\end{align}
is continuously differentiable.
\end{lemma}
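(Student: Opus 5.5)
The plan is to use the explicit form of the left projection from \Cref{lemma:left_proj_solution} and the implicit function theorem applied to the scalar multiplier equation of \Cref{lemma:left_projection_multiplier}.

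First, we check that \Cref{assumption:left_projection_support} holds for all $\theta$ in a neighborhood of $\theta^\star$. By \Cref{assumption:interior_point}, $r^\star = q^N_{\iota(\theta^\star)} \in \Rcal^{N,K}(P^N,\psi)$. Hence for each $l$ with $\bar\alpha^N_l(P^N)>0$ we have $\sum_{k,m} q^N_{\iota(\theta^\star)}(k,l,m) = \bar\alpha^N_l(P^N) >0$. Under \Cref{assumption:convexity_beta_optim}, $g$ takes values in $(0,1)$ and $\bar t^N_l>0$, so $\gamma(m,\beta_k)>0$ for both $m$. Since $w^\star_k>0$, both sums $\sum_k q^N_{\iota(\theta^\star)}(k,l,m)$, $m=1,2$, are strictly positive. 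By \Cref{assumption:smoothness_certify} and \Cref{assumption:local_identifiability}(i), the map $\theta\mapsto q^N_{\iota(\theta)}(k,l,m) = w_k e^N_l \bar t^N_l(\beta_k)\gamma(m,\beta_k)$ is $\Ccal^1$ (indeed $\Ccal^2$) on a neighborhood of $\theta^\star$ contained in $\Theta_0$ with $\beta_k\in B_0$. Strict positivity is therefore preserved on some open $U_1\ni\theta^\star$. We also note $\psi\in(0,1)$: it equals $\sum_k w_k^\star g(\beta_k^\star)$ with $g\in(0,1)$. Consequently \Cref{lemma:left_proj_solution} applies for every $\theta\in U_1$, giving a unique minimizer $\eta(\theta)=r_{\nu^\star(\theta)}$ in closed form.

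Second, we show $\theta\mapsto\nu^\star(\theta)$ is $\Ccal^1$. Define
\begin{align}
F(\theta,\nu) = \sum_{l\colon \bar\alpha^N_l(P^N)>0}\bar\alpha^N_l(P^N)\frac{A_l(\theta)e^\nu}{A_l(\theta)e^\nu + B_l(\theta)} - \psi ,
\end{align}
with $A_l(\theta)=\sum_k q^N_{\iota(\theta)}(k,l,1)$ and $B_l(\theta)=\sum_k q^N_{\iota(\theta)}(k,l,2)$. On $U_1\times\RR$, $F$ is $\Ccal^1$ because its denominators are positive. The derivative computation in \Cref{lemma:left_projection_multiplier} gives $\partial_\nu F>0$ everywhere. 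At $\theta^\star$ the root is $\nu^\star(\theta^\star)=0$, since $r^\star=q^N_{\iota(\theta^\star)}$ already satisfies the constraints and the tilting with $\nu=0$ reproduces it. The implicit function theorem yields an open $U\subseteq U_1$ and a $\Ccal^1$ map $\theta\mapsto\nu(\theta)$ with $F(\theta,\nu(\theta))=0$. Because the root is unique for each $\theta$ (by \Cref{lemma:left_projection_multiplier}), this local solution coincides with $\nu^\star(\theta)$.

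Finally, $\eta(\theta)(k,l,m)$ is given by the explicit formulas \eqref{equation:dlsfjskf}, which are compositions of $\Ccal^1$ maps with nonvanishing denominators when $\bar\alpha^N_l(P^N)>0$; otherwise $\eta(\theta)(k,l,m)=0$. This gives continuous differentiability on $U$.

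The main obstacle is the first step: ensuring the support condition and the strict positivity of both $A_l$ and $B_l$ uniformly near $\theta^\star$. This needs $g\in(0,1)$ strictly, which is where \Cref{assumption:convexity_beta_optim} enters. We also need $\iota$ to be $\Ccal^1$ into $\mathrm{int}(\Bcal)$ with atoms in $B_0$, which is where \Cref{assumption:local_identifiability,assumption:smoothness_certify} enter, possibly after shrinking $U$.
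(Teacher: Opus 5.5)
Your proposal is correct and follows essentially the same route as the paper: verify $\psi\in(0,1)$ and \Cref{assumption:left_projection_support} near $\theta^\star$, invoke \Cref{lemma:left_projection_multiplier,lemma:left_proj_solution} for the explicit tilted minimizer, and apply the implicit function theorem to the multiplier equation at $(\theta^\star,0)$, using $\partial_\nu>0$ and uniqueness of the root. The only cosmetic difference is that you obtain positivity of $A_l(\theta)$ and $B_l(\theta)$ by continuity from $\theta^\star$, whereas the paper reads it off directly from positive weights and the global positivity of $\bar t^N_l$, $g$ and $1-g$ under \Cref{assumption:convexity_beta_optim}.
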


\begin{proof}[Proof of \Cref{lemma:left_projection_smoothness}]
By \Cref{assumption:interior_point}, we have $r^\star = q_{\iota(\theta^\star)}^N
\in\Rcal^{N,K}(P^N,\psi)$, and thus
\begin{align}
\psi = \sum_{k=1}^K w_k^\star g(\beta_k^\star) \in(0,1) ,
\end{align}
where the strict inequalities follow from
\Cref{assumption:convexity_beta_optim}. By \Cref{assumption:interior_point,assumption:local_identifiability}, there exists an open
neighborhood $U \subseteq \Theta_0$ such that, for every $\theta \in U$,
all the weights $w_1, \ldots, w_K$ in $\iota(\theta)$ are positive and all the atoms $\beta_1, \ldots, \beta_K$ belong to $B_0$. Let
\begin{align}
S = \{l\in[L^N] \colon \bar{\alpha}^N_l(P^N) > 0  \} .
\end{align}
For any $l \in S$, we have $e^N_l > 0$. Moreover,
\Cref{assumption:convexity_beta_optim} gives
$\bar{t}^N_l > 0$, $g > 0$, and $1 - g > 0$. It follows that, for any $\theta \in U, k \in [K], l \in S, m \in [2]$
\begin{align}
q_{\iota(\theta)}^N (k,l,m) > 0 .
\end{align}
Thus \Cref{assumption:left_projection_support} holds for $q_{\iota(\theta)}^N$. By
\Cref{lemma:left_projection_multiplier,lemma:left_proj_solution}, the left projection therefore has a unique minimizer for every $\theta\in U$,
given by \eqref{equation:dlsfjskf}, with multiplier $\nu$ equal to the unique solution of
\begin{align}
G_\theta(\nu) = \psi .
\end{align}
Define $H \colon \Theta_0 \times \RR \to \RR , (\theta,\nu) \mapsto G_\theta(\nu)-\psi$. By the definition of $q_{\iota(\theta)}^N$ and
\Cref{assumption:local_identifiability,assumption:smoothness_certify},
the map $H$ is continuously differentiable on $U \times \RR$. Since $q_{\iota(\theta^\star)}^N = r^\star$ satisfies the defining marginal
restrictions of $\Rcal^{N,K}(P^N,\psi)$,
\begin{align}
H(\theta^\star, 0) = 0.
\end{align}
Writing
\begin{align}
A_l(\theta)=\sum_{k=1}^Kq_{\iota(\theta)}^N(k,l,1),
\quad B_l(\theta) = \sum_{k=1}^K q_{\iota(\theta)}^N (k,l,2),
\end{align}
we have
\begin{align}
\partial_\nu H(\theta, \nu) = \sum_{l \in S}\bar\alpha_l^N(P^N)
\frac{A_l(\theta)B_l(\theta)e^\nu}
     {\{A_l(\theta)e^\nu+B_l(\theta)\}^2}>0.
\end{align}
In particular, $\partial_\nu H(\theta^\star,0)>0$. The implicit function
theorem therefore yields an open neighborhood $V\subseteq U$ of
$\theta^\star$ and a continuously differentiable map
$\bar{\nu} \colon V \to \RR$ such that
\begin{align}
G_\theta(\bar{\nu}(\theta)) = \psi.
\end{align}
By the uniqueness assertion in
\Cref{lemma:left_projection_multiplier}, $\bar{\nu}(\theta)$ is precisely the multiplier appearing in \Cref{lemma:left_proj_solution}.

Substituting $\bar{\nu}(\theta)$ into \eqref{equation:dlsfjskf} shows that every
coordinate of the unique left projection $\eta(\theta)$ is continuously
differentiable on $V$; coordinates corresponding to $l\notin S$ are
identically zero. Hence $\eta$ is continuously differentiable on $V$, hence the result.
\end{proof}

\localbasincertify*

In order to help the understanding of the proof, we first provide a proof sketch of \Cref{theorem:local_basin_certify}.

\begin{proof}[Proof sketch of \Cref{theorem:local_basin_certify}]
Importantly, we mostly work in the local coordinates $\theta$ induced by $\iota$, and denote the left and right projection maps by
\begin{align}
\eta(\theta) &=\mathrm{LeftProjection}\bigl(\psi,\iota(\theta),P^N\bigr) , & \tau(r) & = \iota^{-1}\bigl(\mathrm{RightProjection}(K,\Bcal,r)\bigr).
\end{align}
The left and right projection update in these coordinates is therefore $T = \tau \circ \eta$. The assumptions and the preceding regularity results ensure that $\eta$, $\tau$, and $T$ are continuously differentiable in neighborhoods of $r^\star$ and $\theta^\star$. Since $r^\star = q^N_{\iota(\theta^\star)}$ belongs to
$\Rcal^{N,K}(P^N,\psi)$, its left projection is: $\eta(\theta^\star) = r^\star$. Local uniqueness of the right projection similarly gives $\tau(r^\star) = \theta^\star$, and hence $T(\theta^\star) = \theta^\star$, which proves that the optimum $\theta^\star$ is a fixed point of $T$. Let
\begin{align}
\Ecal(r,\theta)= \KL(r \| q^N_{\iota(\theta)}) ,
\; F(r,\theta) = \nabla_\theta\Ecal(r,\theta) , \; \rho(\theta) = \Ecal(\eta(\theta), \theta) .
\end{align}
The first-order condition for the right projection is
\begin{align}\label{equation:proof_sketch_foc}
F(\eta(\theta),T(\theta))=0.
\end{align}
Moreover, the left projection is taken over a set defined by fixed
affine marginal constraints. Its first-order condition and the differentiated feasibility constraints imply that
\begin{align}
\nabla_\theta \rho(\theta) = F(\eta(\theta), \theta) .
\end{align}
Define $u \colon \theta \mapsto \Ecal(r^\star,\theta)-\rho(\theta)$. Since $r^\star$ is feasible for the left projection, $u \geq 0$, while
$u(\theta^\star)=0$. Consequently, we have that $\nabla_\theta^ 2u(\theta^\star) \succeq 0$. The definition of $u$ also gives that
\begin{align}
\nabla_\theta^2 \Ecal(r^\star,\theta^\star)  = \nabla_\theta^2\rho(\theta^\star)+ \nabla_\theta^ 2u(\theta^\star).
\end{align}
Differentiating \eqref{equation:proof_sketch_foc} at $\theta^\star$ and
using the envelope identity yields
\begin{align}
\nabla_\theta T(\theta^\star)=\{\nabla_\theta^2 \Ecal(r^\star,\theta^\star)  \}^{-1} \nabla_\theta^2 u(\theta^\star) .
\end{align}
By \Cref{assumption:Icom_nonsingular}, $\nabla_\theta^2 \Ecal(r^\star,\theta^\star)  \succ 0$, whereas
\Cref{assumption:missing_info} gives that
$\nabla_\theta^2 \Ecal(r^\star,\theta^\star) -  \nabla_\theta^2 u(\theta^\star) = \nabla_\theta^2 \rho(\theta^\star) \succ 0$. Therefore,
\begin{align}
S
=
\{\nabla_\theta^2 \Ecal(r^\star,\theta^\star) \}^{-1/2} \nabla_\theta^ 2u(\theta^\star) \{\nabla_\theta^2 \Ecal(r^\star,\theta^\star) \}^{-1/2}
\end{align}
is symmetric and has all its eigenvalues in $[0,1)$ ($S \succeq 0$, and two lines of algebra give that $\text{Id} - S \succ 0$). Since
$\nabla_\theta T(\theta^\star)$ is similar to $S$, it is a strict contraction at $\theta^\star$ in the norm $\|x\|_{\nabla_\theta^2 \Ecal(r^\star,\theta^\star) } = \|\{\nabla_\theta^2 \Ecal(r^\star,\theta^\star) \}^{1/2} x\|_2$. By continuity of
$\nabla_\theta T$, this contraction holds uniformly on a sufficiently
small closed ball around $\theta^\star$. Thus this ball is invariant under $T$, and every initialization in the ball satisfies
\begin{align}
\|\theta^{(h)}-\theta^\star \|_{\nabla_\theta^2 \Ecal(r^\star, \theta^\star)} \leq y^h\| \theta^{(0)} - \theta^\star\|_{\nabla_\theta^2 \Ecal(r^\star,\theta^\star) } ,
\end{align}
for some $y\in(0,1)$, and we thus have that
\begin{align}
    \|\theta^{(h)}-\theta^\star \|_{\nabla_\theta^2 \Ecal(r^\star, \theta^\star)} \to 0 \text{ as } h \to \infty .
\end{align}
Finally, let $U$ be the image of this ball under $\iota$. For every
$(w^{(0)},\beta^{(0)})\in U$, we have
$\theta^{(h)}\to\theta^\star$ and
\begin{align}
d^{(h)}(\psi) = \KL (\eta(\theta^{(h)})
\| q^N_{\iota(\theta^{(h)})} ) = \rho(\theta^{(h)}) \to
\rho(\theta^\star) = 0 \text{ as } h \to \infty .
\end{align}
The set $U$ has positive relative Lebesgue measure, hence $\PP((w^{(0)}, \beta^{(0)}) \in U) > 0$, hence the result
\end{proof}

\begin{proof}[Formal proof of \Cref{theorem:local_basin_certify}]
    Let $U_1 \subseteq U_0$ be an open neighborhood of $(w^\star, \beta^\star)$ such that $\bar{U_1} \subseteq \mathrm{int}(\Delta_K) \times B_0^K$, and let $W_1 \subseteq W_0$ such that for any $r \in W_1$, we have $\mathrm{RightProjection}(K,\Bcal,r)$ $\in U_1$. Up to restricting $\Theta_0$ to a neighborhood of $\theta^\star$ on which $\eta$ is defined (see \Cref{lemma:left_proj_solution}), we can define
    \begin{align}
        \tau \colon W_1 & \to \Theta_0 \; , \;  r \mapsto  \iota^{-1}(\mathrm{RightProjection}(K,\Bcal,r)) , \\
        \eta \colon \Theta_0 & \to \Rcal^{N,K}(P^N, \psi) \; , \; \theta \mapsto  \mathrm{LeftProjection}(\psi, \iota(\theta), P^N) , \\
        T \colon \Theta_0 \cap \eta^{-1}(W_1) & \to \RR^{K-1} \times \RR^{K \dim(\Bcal)} \; , \; \theta \mapsto \tau (\eta(\theta)) ,
    \end{align}
    where the existence of $\eta$ is guaranteed by \Cref{lemma:left_proj_solution}, and the existence of $\tau$ is guaranteed by \Cref{assumption:local_selection}. Let $V = \Theta_0 \, \cap \, \eta^{-1}(W_1)$.
    
\textbf{Existence of a Fixed Point.} By \Cref{assumption:interior_point}, there exists $r^\star = q_{\iota(\theta^\star)} \in \Rcal^{N,K}(P^N, \psi)$, and $\theta^\star$ is thus a global minimizer of $\theta \mapsto \KL(r^\star \| q_{\iota(\theta)}) \geq 0$. By \Cref{assumption:local_selection}, we have that $\mathrm{RightProjection}(K,\Bcal,r^\star) = (w^\star, \beta^\star)$, and since $\iota$ is a bijection in a neighborhood of $\iota^{-1}(w^\star, \beta^\star)$, we have that $\iota^{-1}(\mathrm{RightProjection}(K,\Bcal,r^\star)) = \iota^{-1}((w^\star, \beta^\star))$, and hence $\tau(r^\star) = \theta^\star$, which gives that $T(\theta^\star) = \tau(\eta(\theta^\star)) = \theta^\star$. Therefore, $\theta^\star$ is a fixed point of the map $T$.

Define
\begin{align}
    \Ecal \colon \Rcal^{N, K}(P^N, \psi) \times V & \to \RR_+ \; , \; (r, \theta) \mapsto \KL(r \| q_{\iota(\theta)}) , \\
    F \colon \Rcal^{N, K}(P^N, \psi) \times V & \to \RR^{K - 1 + K d} \; , \; (r, \theta) \mapsto \nabla_\theta \Ecal(r, \theta) .
\end{align}
By \Cref{assumption:smoothness_certify}, there exists an open $B_0 \subseteq \mathrm{int}(\Bcal)$ such that for any $k \in [K], \beta_k \in B_0$, and for any $l \in [L^N], \bar{t}_l^N(\cdot) \in \Ccal^2(B_0), g \in \Ccal^2(B_0), g(\cdot) \in (0,1)$. Let $\Theta_1 = \{\theta \in V \colon \iota(\theta)_k \in B_0 \text{ for any } k \in \{K, \ldots, 2 K-1\}\}$; since $\iota(\theta^\star) = (w^\star, \beta^\star)$ with $\beta_k^\star \in B_0$ for any $k \in [K], \Theta_1$ is an open neighborhood of $\theta^\star$. We thus have that $\Ecal, F$ are respectively jointly $\Ccal^2$, and $\Ccal^1$ on $\Rcal^{N, K}(P^N, \psi) \times \Theta_1$. For any $\theta\in V$, the definition of $T$ gives
\begin{align}
    \iota(T(\theta))
    =\mathrm{RightProjection}
      \bigl(K,\Bcal,\eta(\theta)\bigr)
    \in U_1 .
\end{align}
Consequently, $T(\theta)$ is an interior minimizer of
\begin{align}
    \theta' \mapsto \Ecal(\eta(\theta), \theta').
\end{align}
Since this map is differentiable on $\Theta_1$, the first order condition yields, for any $\theta \in \eta^{-1}(W_0)$
\begin{align}\label{equation:rpwirpoi}
    F(\eta(\theta),T(\theta))
    = \nabla_{\theta}  \Ecal(\cdot, \cdot)
    \big|_{\eta(\theta), T(\theta)} = 0 .
\end{align}
We can then differentiate $\theta \mapsto \nabla_\theta \KL(r^\star \| q_{\iota(\theta)})$, which gives by \Cref{assumption:Icom_nonsingular}, that
\begin{align}
    \nabla_\theta F(r, \theta)\big|_{r^\star, \theta^\star} \succ 0 .
\end{align}

\paragraph{Show that $\nabla_r \Ecal(\eta(\theta), \theta) \nabla \eta(\theta) =0$.} Define, for any $l = 1, \ldots, L^N$, the vector $c_l$ as $c_l(k, l', m) = \1\{l = l'\}$, as well as $c_{L^N+1}(k, l, m) = \1\{m=1 \}$. For any $r \in \Rcal^{N, K}(P^N, \psi)$, the constraints satisfied by $r$ can thus be written $\langle c_l, r\rangle = \alpha^N_l (P^N)$ for any $l = 1, \ldots, L^N$, and $\langle c_{L^N +1}, r \rangle = \psi$. Let $C$ be the matrix whose $l$-th row is $c_l^T$, and $\alpha = ((\alpha^N_l(P^N))_{l \in [L^N]}^T, \psi)^T$, we thus have that $\Rcal^{N, K}(P^N, \psi)$ can be written
\begin{align}
    \Rcal^{N, K}(P^N, \psi) = \{r \in \RR_+^{2 \times K \times L^N} \colon C r = \alpha \} .
\end{align}
Therefore, the left projection optimization problem can be written
\begin{align}
    \min_{r \in \RR_+^{2 K L^N}} \Ecal(r, \theta) \; \text{ subject to } C r = \alpha .
\end{align}
The Lagrangian of the problem can be written
\begin{align}\label{equation:roeurioe}
    \Lcal(\theta, r, \lambda, \nu) = \Ecal(r, \theta) + \sum_{l = 1}^{L^N} \lambda_l (\langle c_l, r \rangle - \alpha_l) - \nu(\langle c_{L^N+1}, r \rangle - \psi)
\end{align}
Since \eqref{equation:roeurioe} is minimized at $r = \eta(\theta)$, we thus have that
\begin{align}
    \nabla_r \Ecal(\eta(\theta), \theta) + \sum_{l=1}^{L^N} \lambda(\theta) c_l - \nu(\theta) c_{L^N + 1} = 0 ,
\end{align}
and hence $\nabla_r \Ecal(\eta(\theta), \theta) = - \sum_{l=1}^{L^N} \lambda(\theta) c_l + \nu(\theta) c_{L^N + 1}$, which gives that
\begin{align}
    \nabla_r \Ecal(\eta(\theta), \theta) \in \Span(c_1, \ldots, c_{L^N+1}) .
\end{align}
Since for any $\theta \in \Theta_0, \eta(\theta)$ is feasible, we thus have that for any $\theta \in \Theta_0$
\begin{align}\label{equation:reirpeir}
    C \eta(\theta) = \alpha .
\end{align}
Since the right-hand side $\alpha$ is fixed and does not depend on $\theta$, and \Cref{lemma:left_projection_smoothness} ensures that $\eta \in \Ccal^1(V)$ for some neighborhood $V \subseteq \Theta_0$, we can thus differentiate \eqref{equation:reirpeir}, and obtain $C \nabla \eta(\theta) = 0$, which proves that any column of $\nabla \eta (\theta)$ belongs to $\ker (C)$. Since $\nabla_r \Ecal(\eta(\theta), \theta) \in \range (C^\top)$, and $\range(C^\top) = \ker(C)^\perp$, we thus have for any $\theta \in V$
\begin{align}\label{equation:roeurerer}
    \nabla_r \Ecal(\eta(\theta), \theta) \nabla \eta(\theta) = 0 .
\end{align}
By \Cref{assumption:smoothness_certify}, $\Ecal(\cdot, \theta)$ is $\Ccal^2$, and we showed that $\eta$ is $\Ccal^1$. Therefore, since for any $\theta \in \Theta_0, \rho(\theta) = \Ecal(\eta(\theta), \theta)$, we have that for any $\theta \in V \subseteq \Theta_0$
\begin{align}\label{equation:ereuoriueir}
    \nabla_\theta \rho(\theta) = \nabla_\theta \Ecal(\cdot, \cdot) \big|_{\eta(\theta), \theta} + \underbrace{\nabla_r \Ecal(\cdot, \cdot) \big|_{\eta(\theta), \theta} \nabla \eta(\theta)}_{= 0 \text{ by } \eqref{equation:roeurerer}} = F(\eta(\theta), \theta) .
\end{align}
Since \Cref{lemma:left_projection_smoothness} proves that there exists a neighborhood $V$ of $\theta^\star$ such that $\eta \in \Ccal^1(V)$, and we showed that $F \in \Ccal^1(\Rcal^{N, K}(P^N, \psi) \times \Theta_1)$. By composition, $\theta \mapsto F(\eta(\theta), \theta) \in \Ccal^1(\Theta_1 \cap V)$. Therefore, by \eqref{equation:ereuoriueir}, we have that $\rho \in \Ccal^2(\Theta_1 \cap V)$, and
\begin{align}\label{equation:reoriuwnvsd}
    \nabla_\theta^2 \rho \big|_{\theta^\star} = \nabla \{\theta \mapsto F(\eta(\theta), \theta) \}\big|_{\theta^\star} .
\end{align}
We can define 
\begin{align}
    u \colon \Theta_0 \to \RR \; , \; \theta \mapsto \Ecal(r^\star, \theta) - \rho(\theta) .
\end{align}
Since $r^\star \in \Rcal^{N, K}(P^N, \psi)$ is feasible for the left projection, we have that for any $\theta \in \Theta_0, u(\theta) \geq 0$, and $u(\theta^\star) = 0$. Since $\theta^\star$ is a global minimizer of $u$ in $\Theta_0$ (and is in the interior), and $u \in \Ccal^2(\Theta_0)$, we have that
\begin{align}
    \nabla_\theta u(\cdot)\big|_{\theta^\star} = 0 \; , \; \nabla^2_\theta u(\cdot)\big|_{\theta^\star} \succeq 0 ,
\end{align}
as well as
\begin{align}
    \nabla^2_\theta u(\cdot)\big|_{\theta^\star} = \nabla^2_\theta \KL(r^\star \| q_{\iota(\theta)}) \big|_{\theta^\star} 
    - \nabla^2_\theta \rho(\cdot) \big|_{\theta^\star} \eqsp,
\end{align}
and hence
\begin{align}\label{equation:rwieroivdns}
    \nabla^2_\theta \KL(r^\star \| q_{\iota(\theta)}) \big|_{\theta^\star} = \nabla^2_\theta \rho(\cdot) \big|_{\theta^\star} + \nabla^2_\theta u(\cdot)\big|_{\theta^\star} ,
\end{align}
where $\nabla^2_\theta u(\cdot)\big|_{\theta^\star}  \succeq 0$.

\textbf{Express $\nabla_\theta T$.} We showed that there exists a neighborhood of $\theta^\star$ on which $\eta$ is $\Ccal^1$. By \Cref{assumption:local_selection}, we thus have that there exists a neighborhood $\Theta_2 \subseteq \Theta_1$ of $\theta^\star$ such that $T \in \Ccal^1(\Theta_2)$. Since $F, \eta, T \in \Ccal^1(\Theta_2)$, we can differentiate \eqref{equation:rpwirpoi} at any $\theta \in \eta^{-1}(W_0)$, which gives that
\begin{align}
    \nabla_\theta F(\cdot, \cdot) \big|_{r^\star, \theta^\star} \nabla_\theta T(\cdot)\big|_{\theta^\star} + \nabla_r F (\cdot, \cdot) \big|_{r^\star, \theta^\star} \nabla_\theta \eta(\cdot)\big|_{\theta^\star} = 0 ,
\end{align}
and hence
\begin{align}\label{equation:wepriweopri}
    \nabla_\theta T(\cdot) \big|_{\theta^\star} = - (\nabla_\theta F(\cdot, \cdot) \big|_{r^\star, \theta^\star})^{-1} \nabla_r F(\cdot, \cdot) \big|_{r^\star, \theta^\star} \nabla \eta(\cdot)\big|_{\theta^\star} .
\end{align}
We can differentiate \eqref{equation:ereuoriueir}, and use the chain rule, which gives that
\begin{align}
    \nabla_\theta \{\theta \mapsto F(\eta(\theta), \theta)\}\big|_{\theta^\star} & = \nabla_r F(\cdot, \cdot) \big|_{r^\star, \theta^\star} \nabla_\theta \eta (\cdot)\big|_{\theta^\star}  + \nabla_\theta F(\cdot, \cdot) \big|_{r^\star, \theta^\star} \\
    & = \nabla_r F(\cdot, \cdot)\big|_{r^\star, \theta^\star} \nabla_\theta \eta(\cdot)\big|_{\theta^\star} + \nabla_\theta^2 \KL(r^\star \| q_{\iota(\theta)}) \big|_{\theta^\star} ,
\end{align}
and thus, using \eqref{equation:reoriuwnvsd}, we obtain that
\begin{align}
    \nabla_r F(\cdot, \cdot)\big|_{r^\star, \theta^\star} \nabla_\theta \eta(\cdot)\big|_{\theta^\star} = \nabla_\theta^2 \rho (\cdot)\big|_{\theta^\star} - \nabla_\theta^2 \KL(r^\star \| q_{\iota(\theta)} )\big|_{\theta^\star} = - \nabla^2_\theta u(\cdot)\big|_{\theta^\star} ,
\end{align}
where we use \eqref{equation:rwieroivdns} in the second equality. Plugging this expression in \eqref{equation:wepriweopri} yields
\begin{align}\label{equation:slfjskdfj}
    \nabla_\theta T(\cdot)\big|_{\theta^\star} = (\nabla_\theta F(\cdot, \cdot) \big|_{r^\star, \theta^\star})^{-1} \nabla^2_\theta u(\cdot)\big|_{\theta^\star} .
\end{align}
\textbf{Range of eigenvalues of $\nabla_\theta T(\theta^\star)$.} By \Cref{assumption:Icom_nonsingular}, there exists a symmetric positive-definite square root of $\nabla_\theta F\big|_{r^\star, \theta^\star}$, which we refer to as $(\nabla_\theta F(r^\star, \theta^\star))^{1/2}$, and it is invertible. By \eqref{equation:slfjskdfj}, we have that
\begin{align}
   \nabla_\theta T(\theta^\star) = (\nabla_\theta F(r^\star, \theta^\star))^{-1/2} S (\nabla_\theta F(r^\star, \theta^\star))^{1/2}
\end{align}
where $S = (\nabla_\theta F(r^\star, \theta^\star))^{-1/2} \nabla_\theta^2 u(\theta^\star) (\nabla_\theta F(r^\star, \theta^\star))^{-1/2} \succeq 0$, where $(\nabla_\theta F(r^\star, \theta^\star))^{-1/2} \succ 0$ is a symmetric matrix. Therefore, $\nabla_\theta T(\theta^\star)$, and $S$, have the same eigenvalues. We also have that
\begin{align}
    \text{Id} - S & = (\nabla_\theta F(r^\star, \theta^\star))^{-1/2} (\nabla_\theta F(r^\star, \theta^\star)) (\nabla_\theta F(r^\star, \theta^\star))^{-1/2} \\
    & \quad - (\nabla_\theta F(r^\star, \theta^\star))^{-1/2} \nabla_\theta^2 u(\theta^\star) (\nabla_\theta F(r^\star, \theta^\star))^{- 1/2} \\
    & = (\nabla_\theta F(r^\star, \theta^\star))^{-1/2} (\nabla_\theta F(r^\star, \theta^\star) - \nabla_\theta^2 u(\theta^\star)) (\nabla_\theta F(r^\star, \theta^\star))^{-1/2} \\
    & =  (\nabla_\theta F(r^\star, \theta^\star))^{-1/2} \nabla_\theta^2 \rho(\theta^\star) (\nabla_\theta F(r^\star, \theta^\star))^{-1/2} \succ 0 ,
\end{align}
where $\text{Id}$ denotes the identity matrix, and positive definiteness holds by \Cref{assumption:missing_info}. Therefore, we have that the eigenvalues of $S$ all lie in $[0, 1)$, and as $S$ is symmetric, we have that $\|S\|_2 < 1$ (where $\| \cdot \|_2$ is the operator norm defined, for any positive integer $d$, and matrix $A \in \RR^{d \times d}$, as $\| \cdot \|_2 \colon \RR^{d \times d} \to \RR_+, A \mapsto \sup_{x \ne 0} (\sum_{i=1}^d [Ax]_i^2)^{1/2} / (\sum_{i=1}^d x_i^2)^{1/2}$). Since $(\nabla_\theta F(r^\star, \theta^\star))^{1/2} \succ 0$, we define a norm $\|\cdot\|_{2, F(r^\star, \theta^\star))^{1/2}} = \|(\nabla_\theta F(r^\star, \theta^\star))^{1/2} \cdot \|_2$, and we have that $\|\nabla_\theta T(\theta^\star)\|_{2, F(r^\star, \theta^\star))^{1/2}} = \|S\|_2 < 1$. Let $y \in (\|S\|_2, 1)$, by continuity of $\nabla_\theta T(\theta^\star)$, there exists a convex neighborhood $\Theta_3 \subseteq \Theta_2$ of $\theta^\star$ such that $T \in \Ccal^1(\Theta_3)$. Therefore, $\nabla_\theta T$ is continuous on $\Theta_3$, and for any $\theta \in \Theta_3$, we have
\begin{align}
    \|\nabla_\theta T(\theta^\star) - \nabla_\theta T(\theta)\|_{2, F(r^\star, \theta^\star))^{1/2}} < y - \|S\|_2 .
\end{align}
For any $\theta \in \Theta_3$, we have, by the triangle inequality, that
\begin{align}
    \|\nabla_\theta T(\theta)\|_{2, F(r^\star, \theta^\star))^{1/2}} & = \|\nabla_\theta T(\theta) - \nabla_\theta T(\theta^\star) + \nabla_\theta T(\theta^\star) \|_{2, F(r^\star, \theta^\star))^{1/2}} \\
    & \leq \|\nabla_\theta T(\theta^\star)\|_{2, F(r^\star, \theta^\star))^{1/2}} + \|\nabla_\theta T(\theta^\star) - \nabla_\theta T(\theta)\|_{2, F(r^\star, \theta^\star))^{1/2}} \\
    & \leq y - \|S\|_2 + \|S\|_2 \\
    & = y .
\end{align}
Let $\theta_1, \theta_2 \in \Theta_3$, and consider the path $\gamma \colon [0,1] \to \Theta_3, t \mapsto \theta_2 + t (\theta_1 - \theta_2)$ (the range of $\gamma$ is contained in $\Theta_3$ by convexity of $\Theta_3$). Therefore, we have that
\begin{align}
    \|T(\theta_1) - T(\theta_2)\|_{2, F(r^\star, \theta^\star))^{1/2}} & = \|\int_{t=0}^1 \nabla T(\gamma(t)) (\theta_1 - \theta_2) dt\|_{2, F(r^\star, \theta^\star))^{1/2}} \\
    & \leq \|\int_{t=0}^1 \|\nabla T(\gamma(t))\|_{2, F(r^\star, \theta^\star))^{1/2}} \|\theta_1 - \theta_2\|_{2, F(r^\star, \theta^\star))^{1/2}} dt \\
    & \leq \|\int_{t=0}^1 y \|\theta_1 - \theta_2\|_{2, F(r^\star, \theta^\star))^{1/2}} dt \\
    & = y \|\theta_1 - \theta_2\|_{2, F(r^\star, \theta^\star))^{1/2}} .
\end{align}
Let $r_0 > 0$ such that $\Bar{\mathrm{B}}(\theta^\star, r_0) \subseteq \Theta_3$
Consequently, since $\theta^\star \in \Bar{\mathrm{B}}(\theta^\star, r_0)$, and $T(\theta^\star) = \theta^\star$, we have that for any $\theta \in \Bar{\mathrm{B}}(\theta^\star, r_0), \|T(\theta) - T(\theta^\star)\|_{2, F(r^\star, \theta^\star))^{1/2}} = \|T(\theta) - \theta^\star\|_{2, F(r^\star, \theta^\star))^{1/2}} \leq y \| \theta - \theta^\star \|_{2, F(r^\star, \theta^\star))^{1/2}} < r_0$. Consequently, if $\theta^{(0)} \in \Bar{\mathrm{B}}(\theta^\star, r_0)$, we can define  for any $h \geq 0, \theta^{(h+1)} = T(\theta^{(h)})$, and we have that $\theta^{(h)} \in \Bar{\mathrm{B}}(\theta^\star, r_0)$. A trivial induction also gives
\begin{align}
    \|\theta^{(h)} - \theta^\star\|_{2, F(r^\star, \theta^\star))^{1/2}} & = \|T(\theta^{(h-1)}) - \theta^\star\|_{2, F(r^\star, \theta^\star))^{1/2}} \\
    & \leq y \|\theta^{(h-1)} - \theta^\star\|_{2, F(r^\star, \theta^\star))^{1/2}} \\
    & \leq \ldots \\
    & \leq y^h \|\theta^{(0)} - \theta^\star\|_{2, F(r^\star, \theta^\star))^{1/2}} , 
\end{align}
and consequently, we have that
\begin{align}
     \|\theta^{(h)} - \theta^\star\|_{2, F(r^\star, \theta^\star))^{1/2}} \to 0 \text { as } h \to \infty .
\end{align}
Since $ \|\cdot \|_{2, F(r^\star, \theta^\star ))^{1/2}}$ is a norm, it implies that $\theta^{(h)} \to \theta^\star$ as $h \to \infty$.

Let $U = \iota(\Bar{\mathrm{B}}(\theta^\star, r_0))$ a neighborhood of $(w^\star, \beta^\star)$. Since $\iota$ is a $\Ccal^1$ bijection onto its image, $U$ is a neighborhood of $(w^\star,\beta^\star)$ in $\Delta_K\times\Bcal^K$, and it has positive measure. Consider any
initialization $(w^{(0)},\beta^{(0)}) \in U$, and define
\begin{align}
\theta^{(0)}
    =\iota^{-1}(w^{(0)},\beta^{(0)}),
\qquad
\theta^{(h+1)}=T(\theta^{(h)}).
\end{align}
Then $\theta^{(0)} \in \overline{\mathrm{B}}(\theta^\star,r_0)$, and the
preceding argument gives that $\theta^{(h)} \to \theta^\star$ as $h\to\infty$. For any $h \geq 0$, let $(w^{(h)},\beta^{(h)}) = \iota(\theta^{(h)}), r^{(h)} = \eta(\theta^{(h)}) = \mathrm{LeftProjection} (\psi, \iota(\theta^{(h)}), P^N)$. By the definition of $\rho$, we therefore have
\begin{align}
d^{(h)}(\psi) = \KL (r^{(h)} \| q^N_{w^{(h)}, \beta^{(h)}} ) = \KL (\eta(\theta^{(h)}) \| q^N_{\iota(\theta^{(h)})} )
= \rho (\theta^{(h)}) .
\end{align}
Since $\rho$ is continuous in a neighborhood of $\theta^\star$,
$\eta(\theta^\star)=r^\star$, and
$q^N_{\iota(\theta^\star)}=q^N_{w^\star,\beta^\star} = r^\star$, it follows
that
\begin{align}
d^{(h)}(\psi) \to \rho(\theta^\star) = \KL(r^\star \| q^N_{w^\star, \beta^\star}) = 0 \text{ as } h \to \infty ,
\end{align}
hence the first part of the claim. As a consequence, for any $\epsilon > 0$, there exists a positive integer $H$ such that for any $h \geq H$, we have that $d^{(h)}(\psi) < \epsilon$, and hence $\mathrm{Certify}(\psi, P^N, K, \epsilon, h) = \mathrm{True}$. The above holds if $(w^{(0)}, \beta^{(0)}) \in \iota(\Bar{\mathrm{B}}(\theta^\star, r_0))$. By \Cref{assumption:compact_continuity_certify}, $\Bcal$ is compact. Since $U$ has positive measure, we can conclude that for any $h \geq H, \PP(\mathrm{Certify}(\psi,P^N,K,\epsilon, h) = \mathrm{True}) > 0$.
\end{proof}

\subsection{Proofs of the instantiation example with a mixed MNL}

\lemmamnlgeneralconvexity*

\begin{proof}[Proof of \Cref{lemma:mnl_general_convexity}]
By \Cref{assumption:MNL_general_kernel}, $\Bcal$ is a convex subset of $\RR^d$. For any nonempty $\tilde{a} \subseteq [J]$, $y \in \tilde{a}$, and $\beta \in \RR^d$, we have
\begin{align}
\log s(\beta, \tilde{a}, y) = z_y^\top \beta-\log \left\{\sum_{j\in \tilde{a}} \exp(z_j^\top \beta) \right\}.
\end{align}
Consequently,
\begin{align}
\nabla_\beta^2\log s(\beta, \tilde{a}, y) = - \sum_{j \in \tilde{a}}s(\beta, \tilde{a}, j)
\left\{z_j-\sum_{a \in \tilde{a}}s(\beta, \tilde{a}, a)z_a\right\} \{z_j-\sum_{a\in \tilde{a}}s(\beta, \tilde{a}, a) z_a\}^\top
\preceq0.
\end{align}
Thus, $\beta \mapsto s(\beta, \tilde{a}, y)$ is log-concave on $\RR^d$ and hence on $\Bcal$. Moreover, it is strictly positive. For any $l \in [L^N]$, there exists $r \in [M]$ such that $a(l) = \tilde{a}_r$, and
\begin{align}
\bar{t}^N_l(\beta) = s(\beta, \tilde{a}_r, y(l)).
\end{align}
Therefore, $\bar{t}^N_l \colon \Bcal \to (0,\infty)$ is log-concave. Similarly,
\begin{align}
g(\beta) = s(\beta, \tilde{a}^\star, y^\star)
\end{align}
is log-concave. Since $\Card(\tilde{a}^\star)\geq2$ and every exponential term is strictly positive, we have that for any $\beta \in \Bcal, 0 < g(\beta) < 1$. Consequently, \Cref{assumption:convexity_beta_optim} holds.
\end{proof}

\lemmamnlgeneralcompletecurvature*

\begin{proof}[Proof of \Cref{lemma:mnl_general_complete_curvature}]
For any nonempty $\tilde{a} \subseteq [J], j\in \tilde{a}$, and $\beta \in \RR^d$
\begin{align}\label{equation:ewpriwirp}
& \bar{z}_{\tilde{a}}(\beta) = \sum_{j \in \tilde{a}} s(\beta, \tilde{a}, j) z_j \; , \; \Cov_{\tilde{a}}(\beta) = \sum_{j\in \tilde{a}} s(\beta, \tilde{a}, j) \{z_j-\bar{z}_{\tilde{a}}(\beta)\} \{z_j-\bar{z}_{\tilde{a}}(\beta)\}^\top , \\
& \Ical_{\mathrm{obs}}(\beta) = \sum_{r=1}^M e_r^N \Cov_{\tilde{a}_r}(\beta) \; , \; \Ical_g(\beta)=\frac{\nabla g(\beta)\nabla g(\beta)^\top}{g(\beta)\{1-g(\beta)\}} \; , \; \Ical_{\mathrm{com}}(\beta) = \Ical_{\mathrm{obs}}(\beta)+\mathcal I_g(\beta) .
\end{align}
For any nonempty $\tilde{a} \subseteq [J]$, and $j \in \tilde{a}$, the map $\beta \mapsto \log s(\beta, \tilde{a},j)$ is infinitely differentiable on $\RR^d$, since
\begin{align}\label{equation:weriwpeoir}
\log s(\beta, \tilde{a}, j) =z_j^\top\beta - \log\left(\sum_{j \in \tilde{a}} \exp(z_j^\top \beta) \right) ,
\end{align}
and the argument of the logarithm is strictly positive. Differentiating $\beta \mapsto \log s(\beta, \tilde{a}, j)$ with respect to $\beta$ gives that
\begin{align}\label{equation:wriewriu}
\nabla_\beta \log s(\beta, \tilde{a}, j) & = z_j-
  \frac{\sum_{j \in \tilde{a}} \exp(z_j^\top \beta) z_j}{\sum_{j \in \tilde{a}} \exp(z_j^\top \beta)} = z_j - \sum_{j \in \tilde{a}} s(\beta, \tilde{a}, j) z_j = z_j - \bar{z}_{\tilde{a}}(\beta) .
\end{align}
By definition of $s$ in \eqref{equation:s_instantiated_mnl}, $\nabla_\beta s(\beta, \tilde{a}, j) = s(\beta, \tilde{a}, j) \{z_j - \bar{z}_{\tilde{a}}(\beta)\}$, and we thus have
\begin{align}
\nabla_\beta \bar{z}_{\tilde{a}}(\beta) & = \sum_{j \in \tilde{a}} z_j s(\beta, \tilde{a}, j) \{z_j - \bar{z}_{\tilde{a}}(\beta)\}^\top \\
& = \sum_{j \in \tilde{a}} s(\beta, \tilde{a}, j) \{z_j - \bar{z}_{\tilde{a}}(\beta)\} \{z_j -\bar{z}_{\tilde{a}}(\beta)\}^\top \\
& = \Cov_{\tilde{a}}(\beta) ,
\end{align}
where we use
\begin{align}
    \sum_{j \in \tilde{a}} s(\beta, \tilde{a}, j) \bar{z}_{\tilde{a}}(\beta) \{z_j -\bar{z}_{\tilde{a}}(\beta)\}^\top = \bar{z}_{\tilde{a}}(\beta) \underbrace{\sum_{j \in \tilde{a}} s(\beta, \tilde{a}, j) z_j^\top}_{= \bar{z}_{\tilde{a}}(\beta)^\top} - \bar{z}_{\tilde{a}}(\beta) \bar{z}_{\tilde{a}}(\beta)^\top \underbrace{\sum_{j \in \tilde{a}} s(\beta, \tilde{a}, j)}_{=1} = 0 ,
\end{align}
and consequently, we have that
\begin{align}
\nabla_\beta^2\log s(\beta, \tilde{a}, j) = -\Cov_{\tilde{a}}(\beta).
\end{align}
\eqref{equation:weriwpeoir} shows that $\log g$ is infinitely differentiable on $\RR^d$, and we have
\begin{align}
\nabla_\beta^2\log g(\beta) & = \frac{\nabla_\beta^2g(\beta)}{g(\beta)} - \frac{\nabla g(\beta)\nabla g(\beta)^\top}{g(\beta)^2} , \\
\nabla_\beta^2 \log \{1-g(\beta) \}
& = - \frac{\nabla_\beta^2g(\beta)}{1-g(\beta)}
  - \frac{\nabla g(\beta)\nabla g(\beta)^\top}{\{1-g(\beta)\}^2} .
\end{align}
Since $\gamma(1, \beta) = g(\beta), \gamma(2, \beta) = 1 - g(\beta)$, and $0 < g(\beta) < 1$, we have that
\begin{align}
- \sum_{m=1}^2 \gamma(m,\beta) \nabla^2 \log\gamma(m,\beta) & = - g(\beta) \nabla_\beta^2\log g(\beta) - (1-g(\beta)) \nabla_\beta^2 \log\{1-g(\beta) \} \\
 & = \left\{\frac1{g(\beta)} + \frac1{1-g(\beta)}\right\} \nabla g(\beta)\nabla g(\beta)^\top \\
 & = \Ical_g(\beta)
\end{align}
By \Cref{assumption:MNL_general_richness}, for any $k\in[K], \Ical_{\mathrm{com}}(\beta_k^\star) \succ 0$.  Let $F \colon \beta \mapsto \left((s(\beta, \tilde{a}_r, y))_{r \in [M], y\in \tilde{a}_r}, g(\beta) \right)$. By definition of $\Ical_{\text{obs}}$ in \eqref{equation:ewpriwirp}, we have that
\begin{align}
    v^\top \Ical_{\text{obs}}(\beta) v & = \sum_{r=1}^M e_r^N v^\top \Cov_{\tilde{a}_r}(\beta) v \\
    & = \sum_{r=1}^M e_r^N v^\top \sum_{j\in \tilde{a}_r} s(\beta, \tilde{a}_r, j) \{z_j-\bar{z}_{\tilde{a}_r}(\beta)\} \{z_j-\bar{z}_{\tilde{a}_r}(\beta)\}^\top v \\
    & = \sum_{r=1}^M e_r^N \sum_{j\in \tilde{a}_r} s(\beta, \tilde{a}_r, j) v^\top \nabla_\beta \{\log s(\beta, \tilde{a}_r, j)\} \nabla_\beta \{\log s(\beta, \tilde{a}_r, j)\}^\top v \\
    & = \sum_{r=1}^M e_r^N \sum_{j\in \tilde{a}_r} v^\top s(\beta, \tilde{a}_r, j) \frac{\nabla_\beta s(\beta, \tilde{a}_r, j) \{\nabla_\beta s(\beta, \tilde{a}_r, j)\}^\top}{s(\beta, \tilde{a}_r, j)^2} v ,
\end{align}
and hence using the definition of $\Ical_{g}$ in \eqref{equation:ewpriwirp}, we have that for any $v \in \RR^d$
\begin{align}
v^\top \Ical_{\text{com}}(\beta) v = \sum_{r=1}^M \sum_{j \in \tilde{a}_r}
\frac{e_r^N}{s(\beta, \tilde{a}_r, j)} \{\nabla s(\beta, \tilde{a}_r, j)^\top v\}^2 + \frac{\{\nabla g(\beta)^\top v\}^2}{g(\beta) \{1-g(\beta)\}} .
\end{align}
Let $(w^\star_1, \ldots, w^\star_K, \beta^\star_1, \ldots, \beta^\star_K)$, and $r^\star$ be the parameters whose existence is given by \Cref{assumption:interior_point}. By definition of $F$, we have that $\nabla F(\beta_k^\star)v=0$ implies $v^\top \Ical_{\mathrm{com}}(\beta_k^\star) v = 0$. Since $\Ical_{\mathrm{com}}(\beta_k^\star)\succ 0$, it necessarily implies that $v = 0$. Consequently, $\nabla F(\beta_k^\star)$ has full column rank $d$. Therefore, by the inverse function theorem \citep[][Theorem 4.5]{lee2003smooth}, there exists a neighborhood $\tilde{V}_k$ of $\beta_k^\star$ on which $F$ is injective. Since $\beta_k^\star \in \operatorname{int}(\Bcal)$, we can choose open neighborhoods $V_k$ satisfying
\begin{align}
\beta_k^\star\in V_k , \quad \overline{V_k} \subseteq \tilde{V}_k\cap\operatorname{int}(\Bcal) .
\end{align}
Let $\Omega = \left\{u \in \RR^{K-1} \colon u_i > 0 , \sum_{i=1}^{K-1} u_i < 1 \right\}$. Since $w^\star\in\operatorname{int}(\Delta_K)$, there exists an open neighborhood $W$ of
$(w_1^\star,\ldots,w_{K-1}^\star)$ such that
$\overline W\subseteq\Omega$. Define
\begin{equation}\label{equation:wroiuweoiruwier}
\begin{aligned}
\Theta_0 & = W \times V_1 \times \cdots \times V_K , \\
\iota \colon \Theta_0 & \to \Delta_K \times \Bcal^K \; , \; (u,\beta_1,\ldots,\beta_K) \mapsto \left(u_1,\ldots,u_{K-1}, 1-\sum_{i=1}^{K-1}u_i, \beta_1,\ldots,\beta_K
\right) , \\
U_0 & = \iota(\Theta_0) .
\end{aligned}
\end{equation}
Then $\iota$ is a smooth bijection from $\Theta_0$ onto $U_0$,
$\iota(\theta^\star)=(w^\star,\beta^\star)$, and $\overline{U_0} \subseteq \operatorname{int}(\Delta_K) \times \operatorname{int}(\Bcal)^K$. To prove uniqueness, suppose that $(w,\beta) \in \overline{U_0}$ and $q_{w, \beta}^N = q_{w^\star, \beta^\star}^N$. For any $k \in [K]$, we have
\begin{align}
\sum_{l=1}^{L^N}\sum_{m=1}^2q_{w,\beta}^N(k, l, m)
& = w_k \left(\sum_{l=1}^{L^N}e_l^N\bar{t}^N_l(\beta_k) \right) \left(\sum_{m=1}^2\gamma(m,\beta_k)\right) = w_k ,
\end{align}
because both factors in parentheses equal one. Therefore,
$q_{w,\beta}^N=q_{w^\star,\beta^\star}^N$ implies that $w_k = w_k^\star$ for any $k \in [K]$. Similarly, summing over $m$ gives that for any $k \in [K], l \in [L^N], \bar{t}^N_l(\beta_k) = \bar{t}^N_l(\beta_k^\star)$, and summing over $l$ with $m=1$ gives that $g(\beta_k) = g(\beta_k^\star)$. Thus $F(\beta_k) = F(\beta_k^\star)$. Since
$\beta_k\in\overline{V_k}\subseteq\widetilde V_k$, and $F$ is injective on $\tilde{V}_k$, we obtain that $\beta_k = \beta_k^\star$ for any $k \in [K]$. Hence $(w, \beta) = (w^\star, \beta^\star)$. Therefore, \Cref{assumption:local_identifiability} is satisfied.

As shown above, $\bar{t}^N_l$ and $g$ are infinitely differentiable on $\RR^d$. Therefore, \Cref{assumption:smoothness_certify} holds.

Let $\theta \in \Theta_0$, and $(w_1, \ldots w_K, \beta_1, \ldots, \beta_K) = \iota(\theta)$. By definition of $\Theta_0$, we have that for any $k \in [K], w_k >0$, and $\bar{t}^N_l(\beta_k) > 0, \gamma(m, \beta_k) >0$ for any $m \in [2]$. By composition, the map $\theta \mapsto \KL(r^\star \|  q_{\iota(\theta)})$ is twice continuously
differentiable on $\Theta_0$. Moreover,
\begin{align}
\KL(r^\star \| q_{\iota(\theta)}) = C^\star + \Lcal_w(w) + \sum_{k=1}^K \Lcal_k(\beta_k),
\end{align}
where $C^\star$ is independent of $\theta$,
\begin{align}
\Lcal_w(w) & = -\sum_{i=1}^{K-1}w_i^\star\log w_i
-w_K^\star\log\left(1-\sum_{i=1}^{K-1}w_i\right) , \\
\Lcal_k(\beta) & = - w_k^\star\sum_{l=1}^{L^N}
e_l^N\bar t_l^N(\beta_k^\star)\log\bar t_l^N(\beta) -w_k^\star \sum_{m=1}^2 \gamma(m,\beta_k^\star) \log \gamma(m,\beta).
\end{align}
Therefore, for any $k, l$, we have that
\begin{align}
    \frac{\partial^2}{\partial w_l \partial \beta_k} \KL(r^\star \| q_{\iota(\theta)}) = 0 ,
\end{align}
and hence the Hessian of $\theta \mapsto \KL (r^\star \| q_{\iota(\theta)})$ is block diagonal. The $w$-block at $\theta^\star$ is
\begin{align}
\operatorname{diag}\left(
\frac1{w_1^\star},\ldots,\frac1{w_{K-1}^\star}
\right) + \frac1{w_K^\star} (1, \ldots, 1)^\top (1, \ldots, 1) \succ 0 .
\end{align}
For any $k\in[K]$, the $\beta_k$-block is
\begin{align}
- w_k^\star\sum_{l=1}^{L^N}
e_l^N \bar{t}^N_l(\beta_k^\star)
\nabla_\beta^2\log \bar{t}^N_l(\beta_k^\star)
- w_k^\star \sum_{m=1}^2 \gamma(m,\beta_k^\star)
\nabla_\beta^2 \log\gamma(m,\beta_k^\star) & =
w_k^\star\left\{ \Ical_{\mathrm{obs}}(\beta_k^\star)
+\Ical_g(\beta_k^\star)
\right\} \\
& = w_k^\star\Ical_{\mathrm{com}}(\beta_k^\star)
\succ0.
\end{align}
Thus the Hessian of $\theta \mapsto \KL(r^\star \| q_{\iota(\theta)})$ is positive definite, and
\Cref{assumption:Icom_nonsingular} holds.
\end{proof}

\lemmamnlgeneralprojectionregularities*

\begin{proof}[Proof of \Cref{lemma:mnl_general_projection_regularities}]
Let $(w^\star_1, \ldots, w^\star_K, \beta^\star_1, \ldots \beta^\star_K), r^\star$ be the parameters whose existence is given by \Cref{assumption:interior_point}, and $\theta^\star = \iota^{-1}(w^\star_1, \ldots, w^\star_K, \beta^\star_1, \ldots \beta^\star_K)$, where $\iota$ is defined in \eqref{equation:wroiuweoiruwier}, and its existence justified by \Cref{lemma:mnl_general_complete_curvature}. Suppose that \Cref{assumption:MNL_general_design,assumption:MNL_general_kernel,assumption:interior_point,assumption:MNL_general_richness,assumption:MNL_general_projection_identification} hold. By \Cref{lemma:mnl_general_convexity}, \Cref{assumption:convexity_beta_optim} holds, and by \Cref{lemma:mnl_general_complete_curvature}, we have that \Cref{assumption:local_identifiability,assumption:smoothness_certify,assumption:Icom_nonsingular} hold. Therefore, we have that \Cref{assumption:convexity_beta_optim,assumption:interior_point,assumption:local_identifiability,assumption:smoothness_certify} hold, and thus, \Cref{lemma:left_projection_smoothness} ensures the mapping
\begin{align}
\eta\colon U \to\Rcal^{N,K}(P^N,\psi) \; , \; \theta \mapsto \argmin_{r\in\Rcal^{N,K}(P^N,\psi)}\KL(r \| q_{\iota(\theta)}^N)
\end{align} is well-defined and $\Ccal^1$ in a neighborhood $\Theta_0$ of $\theta^\star$. By definition of $\rho$ and $\eta$, we have that for any $\theta \in \Theta_0$
\begin{align}
    \rho(\theta) = \KL(\eta(\theta) \| q_{\iota(\theta)}) ,
\end{align}
and as shown in the proof of \Cref{theorem:local_basin_certify}, there exists a neighborhood $\theta_1 \subseteq \Theta_0$ of $\theta^\star$ such that $\rho \in \Ccal^2(\theta_1)$.

Since $\Rcal^{N,K}(P^N,\psi)$ is affine, its feasible first-order perturbations $u$ of $r^\star$ are exactly those satisfying
\begin{align}\label{equation:weruweoiruwir}
\sum_{k=1}^K\sum_{m=1}^2 u(k,l,m)&=0 , \text{ for any } l \in [L^N] , \\
\sum_{k=1}^K\sum_{l=1}^{L^N}u(k,l,1)&=0,
\end{align}
where the analogous constraint for $m=2$ is redundant. Thus, we define
\begin{align}
\Ucal = \bigg\{u \colon \sum_{k=1}^K \sum_{m=1}^2 u(k,l,m) = 0
\text{ for any }l\in[L^N], \; \sum_{k=1}^K \sum_{l=1}^{L^N} u(k,l,1) = 0 \bigg\}
\end{align}
Since $r^\star$ belongs to the relative interior of $\Rcal^{N,K}(P^N,\psi)$, any such perturbation $t u, u \in \Ucal$ generates a feasible path for all sufficiently small $t$.

Let $u \in \Ucal$, and $v \in \RR^{K-1+Kd}$. Since $r^\star=q_{\theta^\star}$ has strictly positive coordinates, the mappings
$t \mapsto \log (r^\star+t u)$, and $t \mapsto q_{\iota(\theta^\star+ t v)}$ are twice differentiable in a neighborhood of $t=0$. We thus have that for $t$ small enough
\begin{align}
q_{\theta^\star+tv}(k,l,m) & = r^\star(k,l,m)
+t \nabla_\theta q_{\iota(\theta)} (k,l,m)^\top
\big|_{\theta^\star} v + \frac{t^2}{2} v^\top \nabla_\theta^2 q_{\iota(\theta)} (k,l,m) v
\big|_{\theta^\star} + o(t^2) .
\end{align}
A second-order expansion of each summand in the $\KL(r^\star+t u\|q_{\theta^\star + t v})$ gives that
\begin{align}
& \{r^\star(k,l,m) + t u(k,l,m)\} \log \frac{r^\star(k,l,m)+tu(k,l,m)}{q_{\theta^\star + t v}(k,l,m)} \nonumber \\
& \quad = t \left\{u(k,l,m)
-\nabla_\theta q_{\iota(\theta)}(k,l,m)^\top
\big|_{\theta^\star} v \right\} + \frac{t^2}{ 2r^\star(k,l,m)} \{ u(k,l,m)
-\nabla_\theta q_{\iota(\theta)}(k,l,m)^\top
\big|_{\theta^\star}v\}^2 \\
& \quad \quad - \frac{t^2}{2} v^\top\nabla_\theta^2 q_{\iota(\theta)}(k,l,m) v
\big|_{\theta^\star} + o(t^2) .
\end{align}
Summing the coordinatewise expansions therefore yields
\begin{align}
\KL(r^\star + tu \| q_{\theta^\star+tv}) = \frac{t^2}{2} \sum_{k=1}^K \sum_{l=1}^{L^N} \sum_{m=1}^2 \frac{\left\{u(k,l,m) -\nabla_\theta q_{\iota(\theta)}(k,l,m)^\top \big|_{\theta^\star}v
\right\}^2}{r^\star(k,l,m)} + o(t^2) .
\end{align}
By definition of $\rho$ and optimality of $\theta^\star$, we have that for $t$ small enough
\begin{align}
\rho(\theta^\star + tv) & \nonumber = \inf_{u \in \Ucal} \frac{t^2}{2} \sum_{k=1}^K \sum_{l=1}^{L^N}\sum_{m=1}^2
\frac{\left\{u(k,l,m) - \nabla_\theta q_{\iota(\theta)} (k,l,m)^\top \big|_{\theta^\star}v
\right\}^2 }{r^\star(k,l,m)} + o(t^2). \\
& \label{equation:roiweriufwuotiu} = \frac{t^2}{2} v^\top \nabla_\theta^2 \rho(\theta^\star) v + o(t^2) ,
\end{align}
and $v^\top \nabla_\theta^2 \rho(\theta^\star) v$ is null if, and only if every squared term $u(k,l,m) - \nabla_\theta q_{\iota(\theta)}(k,l,m)^\top \big|_{\theta^\star} v$ is equal zero, that is, if, and only if there exists $u \in \Ucal$ such that
\begin{align}
u(k, l, m) = \nabla_\theta  q_{\iota(\theta)}(k,l,m)^\top \big|_{\theta^\star} v
\end{align}
for any $k \in [K], l \in[L^N]$, and $m\in[2]$. Substituting this expression into the feasibility restrictions in \eqref{equation:weruweoiruwir} gives that
\begin{align}
\text{for any } l \in [L^N] , \quad \left. \sum_{k=1}^K \sum_{m=1}^2
\nabla_\theta q_{\iota(\theta)}(k,l,m)^\top v \right|_{\theta^\star} = 0 \text{, and } \; \sum_{k=1}^K \sum_{l=1}^{L^N}
\nabla_\theta q_{\iota(\theta)}(k,l,1)^\top v \big|_{\theta^\star} = 0 .
\end{align}
Equivalently, we have that
\begin{align}\label{equation:priwriwitu}
\left. \nabla_\theta \left(\left( \sum_{k=1}^K \sum_{m=1}^2 q_{\iota(\theta)}(k,l,m) \right)_{l \in [L^N]}, \sum_{k=1}^K \sum_{l=1}^{L^N} q_{\iota(\theta)} (k,l,1)
\right) v \right|_{\theta^\star} = 0.
\end{align}
Thus, the quadratic form in \eqref{equation:roiweriufwuotiu} is zero if, and only if the Jacobian in \eqref{equation:priwriwitu} cancels $v$.
By \Cref{assumption:MNL_general_projection_identification}, it implies that $v=0$. Consequently,
\begin{align}
\nabla_\theta^2 \rho(\theta)\big|_{\theta^\star} \succ 0 ,
\end{align}
and \Cref{assumption:missing_info} holds.

We now consider the right projection. From the expression in \eqref{equation:eroiueroiuwe}, the terms in the right-projection objective that depend on $w$ are
\begin{align}
-\sum_{k=1}^K\left\{\sum_{l=1}^{L^N}\sum_{m=1}^2r(k,l,m)\right\}\log w_k.
\end{align}
Since $r$ is a probability vector, for any $w \in \Delta_K$, we have that
\begin{align}
\sum_{k=1}^K
\left\{\sum_{l=1}^{L^N} \sum_{m=1}^2 r(k,l,m)\right\}
\log \frac{\sum_{l=1}^{L^N}\sum_{m=1}^2r(k,l,m)}{w_k} = \KL(\sum_{l=1}^{L^N} \sum_{m=1}^2 r(\cdot,l,m)\| w) \geq 0 ,
\end{align}
with equality if and only if
\begin{align}\label{equation:roiwjerlijdv}
w_k = \sum_{l=1}^{L^N} \sum_{m=1}^2r(k,l,m)
\end{align}
for any $k \in [K]$. Hence $(w_k)_{k \in [K]}$ in \eqref{equation:roiwjerlijdv} uniquely minimize the right-projection objective. The formula holds for any $r\in\Rcal^{N,K}(P^N,\psi)$; and since $r^\star$ has every entry strictly positive, there exists a neighborhood $U_r$ of $r^\star$ such that for any $r \in U_r, w_k >0$ for any $k \in [K]$, with $w, r$ given in \eqref{equation:roiwjerlijdv}.

For any $k\in[K]$, the part of the right-projection objective that depends on $\beta_k$ is
\begin{align}\label{equation:erpiwepriweut}
-\sum_{l=1}^{L^N}\sum_{m = 1}^2r^\star(k,l,m)
\left\{\log\bar t_l^N(\beta_k)+\log\gamma(m,\beta_k)\right\} .
\end{align}
Since $r^\star(k,l,m) = w_k^\star e_l^N \bar{t}^N_l (\beta_k^\star) \gamma(m, \beta_k^\star)$, the value of the objective in \eqref{equation:erpiwepriweut} at $\beta_k$ minus its value at $\beta_k^\star$ is
\begin{equation}\label{equation:rweoriuweioru}
\begin{aligned}
& \sum_{l=1}^{L^N}\sum_{m=1}^2 r^\star(k,l,m)
\log \frac{\bar{t}^N_l (\beta_k^\star) \gamma(m, \beta_k^\star)}{\bar{t}^N_l (\beta_k) \gamma(m, \beta_k)} \\
& \quad = w_k^\star \sum_{r=1}^M e^N_r \sum_{y \in \tilde{a}_r} s(\beta_k^\star, \tilde{a}_r, y) \log \frac{s(\beta_k^\star, \tilde{a}_r, y)}{s(\beta_k, \tilde{a}_r, y)} \\
& \quad \quad + w_k^\star g(\beta_k^\star)
\log \frac{g(\beta_k^\star)}{g(\beta_k)}
+ w_k^\star \{1 - g(\beta_k^\star)\} \log \frac{1-g(\beta_k^\star)}{1 - g(\beta_k)}.
\end{aligned}
\end{equation}
For any $\tilde{a}_r \subseteq [J]$, we have that
\begin{align}
\sum_{y\in \tilde{a}} s(\beta_k^\star, \tilde{a}_r, y) \log \frac{s(\beta_k^\star, \tilde{a}_r, y)}{s(\beta_k, \tilde{a}_r, y)} \geq 0,
\end{align}
with equality if and only if $s(\beta_k, \tilde{a}_r, y) = s(\beta_k^\star, \tilde{a}_r, y)$ for any $y \in \tilde{a}$. Similarly, for any $k \in [K]$, we have that
\begin{align}
g(\beta_k^\star) \log \frac{g(\beta_k^\star)}{g(\beta_k)} + \{1 - g(\beta_k^\star) \}
\log \frac{1 - g(\beta_k^\star)}{1 - g(\beta_k)} \geq 0 ,
\end{align}
with equality if and only if $g(\beta_k) = g(\beta_k^\star)$. Since $w_k^\star > 0$, and $e_r^N > 0$, the difference in \eqref{equation:rweoriuweioru} is nonnegative. It is zero only if
\begin{align}
\text{for any } r \in [M], y \in \tilde{a}_r, s(\beta_k, \tilde{a}_r, y) & = s(\beta_k^\star, \tilde{a}_r, y) , \text{ and } g(\beta_k) = g(\beta_k^\star) .
\end{align}
By \Cref{assumption:MNL_general_projection_identification}, these equalities imply that $\beta_k = \beta_k^\star$. Hence $\beta_k^\star$ is the unique minimizer of \eqref{equation:erpiwepriweut} over $\Bcal$.
Finally, differentiating twice the objective
\begin{align}
    \beta \mapsto - \sum_{l=1}^{L^N} \sum_{m=1}^2 r^\star(k,l,m) \left\{\log \bar{t}^N_l (\beta) + \log \gamma(m, \beta) \right\}
\end{align}
from \eqref{equation:erpiwepriweut} at $\beta  = \beta_k^\star$ (which is allowed since \Cref{assumption:smoothness_certify} holds by \Cref{lemma:mnl_general_complete_curvature}) gives that
\begin{align}
- \sum_{l=1}^{L^N} \sum_{m=1}^2 r^\star(k,l,m)
\nabla_\beta^2 \left\{\log\bar{t}^N_l(\beta) + \log\gamma(m,\beta)\right\}
\bigg|_{\beta_k^\star} & = w_k^\star \left\{
\Ical_{\mathrm{obs}}(\beta_k^\star) + \Ical_g (\beta_k^\star) \right\} \\
& = w_k^\star \Ical_{\mathrm{com}}(\beta_k^\star) \succ 0 ,
\end{align}
where the last inequality follows from
\Cref{assumption:MNL_general_richness}. Since $\beta_k^\star \in \mathrm{int}(\Bcal)$ and
\begin{align}
-\sum_{l=1}^{L^N} \sum_{m=1}^2 r^\star(k,l,m)
\nabla_\beta^2 \left\{\log\bar{t}^N_l(\beta) + \log \gamma(m,\beta) \right\} \bigg|_{\beta_k^\star} \succ 0 ,
\end{align}
continuity implies that there exists an open convex neighborhood $V_k$ of $\beta_k^\star$, with $\overline{V_k} \subseteq \mathrm{int}(\Bcal)$, such that, for every $r$ sufficiently close to $r^\star$ and every $\beta\in V_k$,
\begin{align}
-\sum_{l=1}^{L^N}\sum_{m=1}^2r(k,l,m)
\nabla_\beta^2
\left\{ \log \bar{t}^N_l(\beta) + \log \gamma(m,\beta) \right\} \succ 0 .
\end{align}
At $r = r^\star$, we have already shown that $\beta_k^\star$ is the unique minimizer of \eqref{equation:erpiwepriweut} over $\Bcal$. Therefore, by continuity and compactness of $\Bcal \setminus V_k$ (we chose $V_k$ open), we have
\begin{align}\label{equation:epriweirotwu}
\inf_{\beta\in\Bcal\setminus V_k}
\sum_{l=1}^{L^N} \sum_{m=1}^2 r^\star(k,l,m)
\log \frac{\bar{t}^N_l (\beta_k^\star) \gamma(m, \beta_k^\star)}{\bar{t}^N_l (\beta )\gamma(m, \beta)} > 0 .
\end{align}
The mappings $\log \bar{t}^N_l$ and $\log \gamma(m, \cdot)$ are continuous and bounded on the compact set $\Bcal$. Hence, for $r$ sufficiently close to $r^\star$, the strict inequality in \eqref{equation:epriweirotwu} continues to hold (possibly with a smaller positive lower bound). Consequently, there exists a neighborhood $V_r$ of $r^\star$ such that for any $r \in V_r$, we have that the minimizer of the problem
\begin{align}
\inf_{\beta \in \Bcal} - \sum_{l=1}^{L^N} \sum_{m = 1}^2 r(k,l,m) \left\{\log\bar t_l^N(\beta) + \log \gamma(m, \beta)\right\} .
\end{align}
belongs to $V_k$. Let $r \in V_r$. On $V_k$, the Hessian of the mapping
\begin{align}
\beta \mapsto - \sum_{l=1}^{L^N} \sum_{m=1}^2 r(k,l,m) \left\{ \log \bar{t}^N_l (\beta) + \log \gamma(m, \beta) \right\}
\end{align}
is positive definite, hence this mapping is strictly convex on $V_k$. It therefore has a unique minimizer in $V_k$. Since $V_k \subseteq \mathrm{int}(\Bcal)$, this minimizer satisfies
\begin{align}
\sum_{l=1}^{L^N} \sum_{m=1}^2 r(k,l,m)
\nabla_\beta \left\{\log\bar{t}^N_l (\beta) + \log \gamma(m,\beta) \right\} = 0 .
\end{align}
The derivative of the left-hand side with respect to $\beta$ is nonsingular on $V_k$. The implicit function theorem therefore implies that there exists a neighborhood $W_0$ such that the unique minimizer is continuously differentiable in $r$, for any $r \in W_0$. Since
\begin{align}
w_k = \sum_{l=1}^{L^N} \sum_{m=1}^2r(k,l,m)
\end{align}
is also unique and continuously differentiable in $r$, the right projection is unique and continuously differentiable on a neighborhood $W_0$ of $r^\star$. Hence \Cref{assumption:local_selection} holds.
\end{proof}

\end{document}